\numberwithin{equation}{section}
\newtheorem{theorem}{Theorem}[section]
\newtheorem{lemma}[theorem]{Lemma}
\newtheorem{proposition}[theorem]{Proposition}
\theoremstyle{definition}
\newtheorem*{rem}{Remark}
 \newcommand{\Z}{\mathbb{Z}}
\newcommand{\starsum}{\sideset{}{^*}\sum}
\renewcommand{\phi}{\varphi}
\newcommand{\0}{\mathbf{0}}
\newcommand{\PP}{\mathbb{P}}
\renewcommand{\AA}{\mathbb{A}}
\newcommand{\FF}{\mathbb{F}}
\newcommand{\ZZ}{\mathbb{Z}}
\newcommand{\NN}{\mathbb{N}}
\newcommand{\QQ}{\mathbb{Q}}
\newcommand{\RR}{\mathbb{R}}
\newcommand{\CC}{\mathbb{C}}
\newcommand{\cS}{\mathcal{S}}
\newcommand{\cD}{\mathcal{D}}
\newcommand{\cM}{\mathcal{M}}
\newcommand{\cW}{\mathcal{W}}
\newcommand{\cR}{\mathcal{R}}
\newcommand{\cI}{\mathcal{I}}
\newcommand{\cL}{\mathcal{L}}
\newcommand{\cT}{\mathcal{T}}
\renewcommand{\leq}{\leqslant}
\renewcommand{\geq}{\geqslant}
\renewcommand{\bar}{\overline}
\newcommand{\m}{\mathbf{m}}
\newcommand{\x}{\mathbf{x}}
\newcommand{\y}{\mathbf{y}}
\renewcommand{\c}{\mathbf{c}}
\newcommand{\f}{\mathbf{f}}
\renewcommand{\v}{\mathbf{v}}
\renewcommand{\u}{\mathbf{u}}
\newcommand{\z}{\mathbf{z}}
\renewcommand{\a}{\mathbf{a}}
\newcommand{\h}{\mathbf{h}}
\newcommand{\g}{\mathbf{g}}
\renewcommand{\r}{\mathbf{r}}
\renewcommand{\t}{\mathbf{t}}
\newcommand{\s}{\mathbf{s}}
\newcommand{\w}{\mathbf{w}}
\newcommand{\fm}{\mathfrak{m}}
\newcommand{\ve}{\varepsilon}
\DeclareMathOperator{\supp}{supp}
\DeclareMathOperator{\Spec}{Spec}
\newcommand{\fS}{\mathfrak{S}}
\newcommand{\cH}{\mathcal{H}}
\DeclareMathOperator{\Mod}{mod} 
\renewcommand{\bmod}[1]{\,(\Mod{#1})}
\newcommand{\dl}{\Delta}
\DeclareMathOperator{\Sing}{Sing}
\DeclareMathOperator{\content}{cont}
\DeclareMathOperator{\resultant}{Res}
\DeclareMathOperator{\vol}{vol}
\newcommand{\cN}{\mathcal{N}}
\newcommand{\cY}{\mathcal{Y}}
\newcommand{\cZ}{\mathcal{Z}}
\newcommand{\cP}{\mathcal{P}}
\newcommand{\cU}{\mathcal{U}}
\newcommand{\cX}{\mathcal{X}}
\newcommand{\tR}{B_2}
\newcommand{\bR}{B}
\newcommand{\fI}{\mathfrak{I}}
\newcommand{\fM}{\mathfrak{M}}
\newcommand{\R}{\mathbf{R}}
\newcommand{\e}{\mathbf{e}}
\newcommand{\dashsum}{\sideset{}{'}\sum}
\newcommand{\dashmax}{\sideset{}{'}\max}
\newcommand{\GG}{\mathbb{G}}
\begin{document}
\title{On the Hasse principle for quartic hypersurfaces}
\author{O. Marmon}
 \address{Centre for Mathematical Sciences \\ Lund University \\ Box 118 \\ 221 00 Lund \\ Sweden}
 \email{oscar.marmon@math.lu.se}
\author{P. Vishe}
\address{
Department of Mathematical Sciences,\\
Durham University\\
Durham\\ DH1 3LE\\ UK}
\email{pankaj.vishe@durham.ac.uk}
\begin{abstract}
We establish the Hasse principle for smooth projective quartic hypersurfaces of dimension greater than or equal to $28$ defined over $\QQ$.
\end{abstract}
 \maketitle
\section{Introduction}
\label{sec:intro}

Let $X\subset 
\PP^{n-1}_\QQ $ be a quartic
hypersurface corresponding to the zero locus of a homogeneous quartic polynomial
$F\in \ZZ[x_1,...,x_{n}] $. Determining whether $X$ contains a 
rational 
point is a fundamental question in Diophantine geometry. The variety $X$ is said to satisfy 
the \textit{Hasse principle} if $X$ contains a rational point provided that it contains an ad\`elic point. In other words, $X(\AA_\QQ)  \neq \emptyset\Rightarrow X(\QQ)\neq \emptyset$, where $X(\AA_\QQ) = X(\RR) \times \prod_{p} X(\QQ_p)$ is the set of ad\`elic points of $X$. The aim of this paper is to 
establish conditions on $n$ under which $X$ satisfies the Hasse principle.

A counterexample due to Swinnerton-Dyer \cite{Swinnerton_Dyer00}: $F(\x) = 7x_1^4+8x_2^4-9x_3^4-14x_4^4$, implies that the Hasse principle cannot be expected to be true for all quartic hypersurfaces.
However, this and other known counterexamples are explained by the Brauer-Manin obstruction. By a result of Colliot-Th\'el\`ene \cite[Appendix]{Poonen_Voloch02}, the Brauer-Manin obstruction is void for non-singular hypersurfaces in $\PP^{n-1}_\QQ$ provided that 
$n\geq 5$. As a result, it is conjectured that the Hasse principle holds for a non-singular quartic hypersurface $X$ defined over $\QQ$
as long as $n\geq 5$. 

A long-standing result by Birch \cite{Birch61} shows that $X(\QQ) \neq \emptyset$ provided that $X$ possesses a non-singular ad\`elic point and 
\[
n - \dim \Sing(X) \geq 50,
\]
where $\Sing(X)$ denotes the singular locus of the projective variety $X$. In particular, this establishes the Hasse principle for non-singular quartic hypersurfaces as soon as $n \geq 49$ (recall that the empty set is declared to have dimension $-1$). Birch in fact provides an admissible range of $n$ for a 
hypersurface of arbitrary degree $d$, with a bound depending on $d$. While Birch's 
result has been improved significantly in the cubic case over the years, 
improving upon it when $d\geq 4$ has turned out to be a much more formidable task. A 
breakthrough was achieved by Browning and Heath-Brown \cite{Browning-Heath-Brown09}, when they 
reduced the lower bound for $n$ in the quartic case from $49$ to $41$.  
Hanselmann \cite{Hanselmann} then established the case $n=40$. The methodology in 
\cite{Browning-Heath-Brown09} 
has since been generalised by Browning and Prendiville \cite{Browning_Prendiville}, thus improving 
upon Birch's bounds for every degree $d\geq 5$. In the special case of diagonal forms $F = a_1 x_1^4 + \dotsc + a_n x_n^4$ with all $a_i \neq 0$, Vaughan \cite{Vaughan89} shows that 12 variables suffice.

The main theorem of the present paper records a major improvement in the range of $n$ for which the Hasse principle holds. Let $X_{\mathrm{ns}} = X \setminus \Sing(X)$ denote the non-singular locus of $X$. As in \cite{Browning-Heath-Brown09}, our result takes the quantitative form of a lower bound for the counting function
\[
N(X,P) = \#\{x \in X(\QQ) \mid H(x) \leq P\},
\]
where $H(\cdot)$ is the usual height on $\PP^{n-1}(\QQ)$ defined by $H(x_1:...:x_n)=\max_i\{|x_i|\}$, where $x_1,...,x_n\in\ZZ$ such that $\gcd(x_1,...,x_n)=1$.

\begin{theorem}
\label{thm:main thm}
Let $X\subset \PP^{n-1}_\QQ$ be a quartic hypersurface satisfying 
$n-\dim\Sing(X)\geq 31$. Assume that $X_{\mathrm{ns}}(\AA_\QQ) \neq \emptyset$.
Then there exist constants $P_0 \geq 1$ and $c > 0$ such that
\[
N(X,P) \geq cP^{n-4} 
\]
as soon as $P \geq P_0$.
\end{theorem}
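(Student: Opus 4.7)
The plan is to apply the Hardy–Littlewood circle method in the smoothed $\delta$-symbol form of Heath-Brown, which will encode the counting function as an oscillatory integral indexed by moduli $q \ll Q$ with $Q = P^2$. After fixing a smooth weight $w$ supported near a non-singular real point of $F$ furnished by the assumption $X_{\mathrm{ns}}(\AA_\QQ) \neq \emptyset$, one studies
\[
N_w(F,P) = \sum_{\substack{\x \in \ZZ^n \\ F(\x) = 0}} w(\x/P),
\]
and the desired lower bound for $N(X,P)$ follows by standard pasting once the asymptotic $N_w(F,P) = c_w P^{n-4} + o(P^{n-4})$ with $c_w > 0$ has been established.

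The major arc contribution will produce the expected main term $c P^{n-4}$. Here one expands in Ramanujan-type sums to factor out the singular series $\fS(F)$, which under the ad\`elic solubility hypothesis is a convergent product of positive local densities, and the associated singular integral $\mathcal{J}_\infty$ is positive at the chosen real base point. Convergence of $\fS(F)$ will follow from the same local exponential sum bounds that drive the minor arc analysis, via a Kloosterman-type refinement that trades the trivial bound for a quadratic form Gauss sum estimate depending on $n - \dim\Sing(X)$.

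The bulk of the work lies in the minor arcs. The plan is to attack the exponential sum $S(\alpha) = \sum_{\x} w(\x/P) \e(\alpha F(\x))$ by Weyl (or van der Corput) differencing: after three successive differencings the quartic phase is linearised, and the problem is translated into counting integer tuples $(\h_1,\h_2,\h_3,\y)$ of size $O(P)$ for which the trilinear form $\nabla^3 F(\h_1,\h_2,\h_3;\y)$ satisfies a congruence condition modulo an auxiliary integer depending on $q$ and $\alpha$. Purely geometric bounds on these counts, as in Birch, would only deliver the classical range $n \geq 49$; to reach $n - \dim\Sing(X) \geq 31$ we need to replace the final differencing by Poisson summation on one or two variables, extracting cancellation from Kloosterman or Salié-type sums whose phase is governed by the Hessian of the differenced quartic. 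A careful stratification by the rank of this Hessian, combined with the geometry of the singular locus of $F$, should yield the requisite saving.

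The hard part is unquestionably the interplay between the exponential sum savings and the geometric input. One must show that the auxiliary varieties defined by conditions like ``the trilinear form $\nabla^3 F(\h_1,\h_2,\h_3;\cdot)$ has small rank'' have codimension controlled linearly by $n - \dim\Sing(X)$, and then balance this against the saving from the Hessian-type Kloosterman sums. The bookkeeping of the Dirichlet dissection, the pointwise and average bounds for the differenced sums, and the assembly of all the error terms to fit under the $P^{n-4}$ threshold will be the delicate combinatorial and analytic challenge; in particular it is in optimising this interplay, rather than in the circle method setup itself, that the improvement from 41 to 31 variables must be located.
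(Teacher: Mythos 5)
Your proposal reproduces the Birch/Browning--Heath-Brown framework rather than the mechanism that actually reaches $31$ variables, and the central new idea of the paper is missing entirely. You take $Q = P^2$ in the delta-symbol dissection and propose three successive Weyl/van der Corput differencings to linearise the quartic phase, landing on a counting problem for $\nabla^3 F(\h_1,\h_2,\h_3;\cdot)$ stratified by rank. That is precisely the setup that saturates at $n\ge 49$ (Birch) or, with one differencing replaced by Poisson summation, at $n\ge 40$--$41$ (Browning--Heath-Brown, Hanselmann). What the paper actually does is (i) prove a refined, fully smooth version of the Duke--Friedlander--Iwaniec delta symbol (Proposition \ref{prop:Kloos}) in which the auxiliary function $h(x,y)$ is eliminated and, crucially, the cutoff $Q$ is decoupled from the degree of $F$, permitting the non-obvious choice $Q=P^{8/5+\phi}$; (ii) then apply van der Corput differencing \emph{once}, but to the Kloosterman-averaged sum $S(q,z)=\sum^{*}_{a\bmod q}S(a/q+z)$ rather than to $S(\alpha)$. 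This pre-averaging over $a$ is the source of the extra cancellation: after Poisson summation the resulting complete exponential sums $T(q,\v)$ are sums over the intersection of a quartic and a cubic hypersurface in $\FF_p^n$, and a Deligne-type bound of Katz saves a factor of $b_1^{1/2}$ from the squarefree part of $q$, a saving which the classical approach cannot access. Your appeal to ``Kloosterman or Salié-type sums whose phase is governed by the Hessian of the differenced quartic'' does not describe these sums, and it gives no concrete reason to expect the additional $\approx 10$ variables of saving.

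Beyond that, the geometric control in the proposal is misplaced. The paper does not estimate codimensions of rank-strata of a trilinear form; it uses a Bertini-type slicing lemma (Lemma \ref{lem:subspace}) to simultaneously control the singular loci of $V(F_0)$, $V(F_0^{\h})$, and their intersection over $\RR$ and over all $p\mid q$, an estimate (Lemma \ref{lem:salberger}) for the locus of $\h$ where that singular dimension jumps, and dual-variety bounds $\Phi(\v)$ for the sparse sums over $\v$ with $p\mid\Phi(\v)$. Your proposed route to convergence of the singular series (``quadratic form Gauss sum estimate'') is also not what is needed; the paper refines the $q$-decomposition into parts free of $k$-th powers up to $k=24$ (Lemma \ref{lem:singular_series}) to get power-saving convergence already for $n-\sigma\ge 26$. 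As written, the proposal would at best recover known results in $\ge 40$ variables and contains no mechanism that would close the gap to $31$.
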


In particular, this establishes the Hasse principle for non-singular quartic hypersurfaces as soon as $n \geq 30$, saving 10 variables over the previously best known result. We expect that a more elaborate version of our approach will allow us to save one further variable. We shall devote a follow-up paper dedicated to achieving this improvement. Adapting an idea of Hooley \cite{Hooley-Octonary}, the result can possibly be improved even further under the assumption of a generalised Riemann hypothesis for a class of Hasse-Weil $L$-functions.  It is likely that in the vein of \cite{Browning_Vishe14} and \cite{Browning_Vishe15}, the methods here can be generalised to obtain the Hasse principle and weak approximation in the number field and function field setting. Moreover, it is likely that the techniques will be able to generalise to the setting of homogeneous polynomials of degree $d$.

\subsection{Key ideas}\label{sub:key}Let us briefly discuss the key ideas in the proof of Theorem \ref{thm:main thm}. To begin with, we replace the counting function $N(X,P)$ by the smoothed version
$$N_W(F,P)=\sum\limits_{\substack{\x\in 
\ZZ^n\\
F(\x)=0}}W(\x/P)$$
for a suitably chosen smooth weight function $W: \RR^n \to \RR_{\geq 0}$ with compact support. One clearly has $N(X,P) \gg N_W(F,P)$ if $W$ is chosen appropriately.
The estimation of $N_W(F,P)$ proceeds via a variant of the Hardy-Littlewood circle method. In its classical form, this begins by writing
\begin{equation}\label{circle}
 N_W(F,P)=\int_0^1 S(\alpha)d\alpha,
 \end{equation}
where $S(\alpha)$ is the 
generating function
\begin{equation}
 \label{Salpha}
 S(\alpha)=\sum_{\x\in \ZZ^n}W(\x/P)e(\alpha F(\x)),
\end{equation} 
and splitting the unit interval into major and minor arcs as usual.
Most modern versions of the circle method start with an application of the Poisson 
summation formula to estimate $S(\alpha)$. However, in the present setting, if $F$ is a polynomial of 
degree $4$ or more, the bounds for the exponential integral which emerge out of 
this process turn out to be too large to obtain an admissible bound for the minor arc contribution. This fundamental issue was overcome by Browning and Heath-Brown \cite{Browning-Heath-Brown09}. They used a point-wise van 
der Corput differencing to bound the exponential sum $S(\alpha)$ in the minor arcs. Hanselmann \cite{Hanselmann} further incorporated the averaging 
trick 
introduced by Heath-Brown \cite{Heath-Brown07} along with the van der Corput differencing 
to save an extra variable.

Our main improvement over previous results comes from achieving  non-trivial cancellation in the averages $$\starsum_{a=1}^q S(a/q+z) $$ in the minor arcs, as pioneered by Kloosterman \cite{Kloosterman}. Here the $*$ over the sum indicates that $a$ and $q$ are co-prime.
Let
$$
\delta_0(n) =
\begin{cases}
1,  & \mbox{if $n = 0$}, \\
0,  & \mbox{otherwise},
\end{cases}
$$
denote the delta function detecting when an integer $n=0$. We begin by rewriting the delta symbol method of Duke, Friedlander and Iwaniec \cite{Duke_Friedlander_Iwaniec93} in the following, possibly a little bit more familiar form.
\begin{proposition}
\label{prop:Kloos}
 Let $Q\geq 1$ and let $n$ be an integer. Then, given any $\theta>0$, one has
$$
 \delta_0(n)=
 \sum_{q=1}^Q
~\starsum_{a=1}^q\int_{|z|<(qQ)^{-1+\theta}} p_{q}(z)e((a/q+z)n)\,dz+O_{N,\theta}(Q^{-N\theta}),
$$
where $p_q(z)$ is a smooth function satisfying
\begin{equation}p_q(z)\ll_N 1,\label{eq:pbound1}\end{equation}                                            
and
\begin{equation}p_q(z)=1+O_{N}\left((q/Q)^N\right)\quad \textrm{ for } \quad |z|\leq Q^{-2},\label{eq:pbound2}\end{equation} 
for any $N\geq 0$. 
\end{proposition}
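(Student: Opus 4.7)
The plan is to derive the proposition from the smooth delta-symbol identity of Duke, Friedlander and Iwaniec \cite{Duke_Friedlander_Iwaniec93} (specifically in the refined form due to Heath-Brown) by repackaging its built-in cut-off as a Fourier integral. That identity provides
\[
\delta_0(n) = \frac{c_Q}{Q^2} \sum_{q=1}^\infty \,\starsum_{a=1}^q e(an/q)\, h\!\left(\frac{q}{Q}, \frac{n}{Q^2}\right),
\]
with $c_Q = 1 + O_N(Q^{-N})$ and $h(x,y)$ a smooth test function that is Schwartz-class in $y$ for each fixed $x$, supported in $x \leq \max\{1, 2|y|\}$, and normalised appropriately. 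My first step is to view $h(q/Q, n/Q^2)$ as a Fourier integral in its second argument: setting $H_q(\xi) = \int_\RR h(q/Q, y)\, e(-y\xi)\, dy$, the substitution $z = \xi/Q^2$ in the Fourier inversion formula yields
\[
\frac{1}{Q^2}\, h\!\left(\frac{q}{Q}, \frac{n}{Q^2}\right) = \int_\RR H_q(Q^2 z)\, e(zn)\, dz.
\]
Defining $p_q(z) := c_Q H_q(Q^2 z)$ then recasts the delta-symbol identity as
\[
\delta_0(n) = \sum_{q=1}^\infty \,\starsum_{a=1}^q e(an/q) \int_\RR p_q(z)\, e(zn)\, dz.
\]

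Next I would verify the two local bounds on $p_q(z)$ and truncate the $z$-integral. The bound $|p_q(z)| \ll 1$ follows from $|H_q(\xi)| \leq \|h(q/Q, \cdot)\|_{L^1}$, which is uniformly bounded in $q$ by the size and support properties of $h$. For the flatness statement, note that $|z| \leq Q^{-2}$ forces $|Q^2 z| \leq 1$; a Taylor expansion of $H_q$ at the origin combined with the normalisation $H_q(0) = 1 + O_N((q/Q)^N)$ (built into Heath-Brown's $h$) and uniform bounds on $H_q'$ then yields $p_q(z) = 1 + O_N((q/Q)^N)$. To truncate to $|z| < (qQ)^{-1+\theta}$, I would apply repeated integration by parts in $H_q(\xi)$, exploiting the smoothness and rapid $y$-decay of $h(q/Q, \cdot)$ to obtain $|H_q(\xi)| \ll_N |\xi|^{-N}$ and hence $|p_q(z)| \ll_N (Q^2|z|)^{-N}$. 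For $q \leq Q$ and $|z| \geq (qQ)^{-1+\theta}$ this bound is $\ll_N Q^{-N\theta}$, which after summation over $q$ produces the claimed tail error $O_{N,\theta}(Q^{-N\theta})$; the contribution of $q > Q$ is absorbed into the same error term using that $h(q/Q, n/Q^2) = 0$ unless $|n| \gg qQ$, in which case the resulting $n$-oscillation is easy to exploit.

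The main technical obstacle I anticipate is the Fourier-analytic bookkeeping for $H_q(\xi)$. One has to track precisely how the derivatives of $h(x, \cdot)$ scale with $x = q/Q$, both near $\xi = 0$ (for the Taylor expansion giving the flatness statement) and at infinity (for the integration-by-parts bound), and to verify that the normalisation of Heath-Brown's explicit construction of $h$ indeed yields $H_q(0) = 1 + O_N((q/Q)^N)$. Once these estimates on the auxiliary function $h$ are in hand, the remainder of the argument is a routine interplay of Fourier inversion, Taylor expansion and integration by parts.
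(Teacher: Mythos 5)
Your Fourier--analytic framework is the right one — viewing $h(q/Q, n/Q^2)$ as a Fourier integral and defining $p_q$ from the transform of $h(q/Q,\cdot)$ is exactly the organising idea of the paper, which derives the proposition from an intermediate Lemma (keeping the $z$-integral over all of $\RR$) by then truncating via the decay of $p_q$. However, you have omitted the step the paper explicitly flags as the key ingredient: before Fourier inversion, one inserts the trivial identity $\delta_0(n) = \delta_0(n)\,U(n/Q^2)$ for a fixed bump $U$ supported in $(-1/2,1/2)$ with $U(0)=1$ (a trick attributed to Munshi), and one takes the transform of $h(q/Q,\cdot)U(\cdot)$, not of $h(q/Q,\cdot)$ alone. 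Without this cutoff there are two genuine gaps.

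First, the $q$-sum is not truncated. Heath-Brown's identity has $h(q/Q,n/Q^2)\neq 0$ whenever $q/Q\leq\max\{1,2|n|/Q^2\}$, i.e.\ for all $q\leq\max\{Q,\,2|n|/Q\}$. The applications in the paper take $Q=P^{8/5+\phi}$ but $n=F(\x)$ can have size $P^4\gg Q^2$, so the range $|n|\gg Q^2$ is realised and the $q$-sum runs far beyond $q=Q$. Those $q>Q$ terms are not small individually or in aggregate: they must cancel the $q\leq Q$ contribution to produce $\delta_0(n)=0$. Your remark that one can "exploit the $n$-oscillation" has no content here, because $n$ is a single fixed integer in the statement of the proposition, not a summation variable — there is nothing to average. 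By contrast, inserting $U(n/Q^2)$ kills all terms with $|n|\geq Q^2/2$, after which the support condition $x\leq\max\{1,2|y|\}$ forces $q\leq Q$ \emph{exactly}, with no error introduced in the truncation.

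Second, the flatness bound $p_q(z)=1+O_N((q/Q)^N)$ for $|z|\leq Q^{-2}$ is substantially stronger than what your sketch supplies. The paper cites a result (Lemma 4.1 of Browning--Vishe) that
\[
\int_\RR h(x,z)\,U(z)\,e(-tz)\,dz = 1 + O_N\!\big((1+|t|)^{2N+2}|x|^N\big),
\]
which is specifically about the $U$-weighted integral. Your unsmoothed version requires both the normalisation $\int h(x,y)\,dy=1+O_N(x^N)$ (which you assert is "built into" Heath-Brown's $h$ but would need to be established) \emph{and} control of the higher $y$-moments of $h(x,\cdot)$ to the same order. A Taylor expansion using only the available derivative bound $\partial_y^j h(x,y)\ll x^{-1-j}\min\{1,(|y|/x)^{-N}\}$ gives $H_q^{(k)}(0)\ll (q/Q)^{k}$, so $H_q(\xi)=H_q(0)+O(q/Q)$ on $|\xi|\leq 1$, which is $1+O(q/Q)$, not $1+O_N((q/Q)^N)$ — the extra saving must come from somewhere, and in the paper it comes from the cutoff $U$ via the cited lemma. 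Without filling both of these gaps, the argument does not yield the stated proposition.
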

The proof of Proposition \ref{prop:Kloos} will be carried out in Section \ref{sec:setup}. The functions $p_q(z)$ can be viewed as {\em smooth} symmetrically placed arcs around points $\{a/q: \gcd(a,q)=1\}$ of an approximate length $O((qQ)^{-1+\theta})$. Thus, the proposition 
can be viewed as an exact smooth version of Kloosterman's circle method.  This reinterpretation of the delta symbol method was already implicitly a key idea in \cite{Heath-Brown96},   \cite{Munshi15} and \cite{Browning_Vishe14} etc. The version stated here suppresses the dependence on the mysterious $h(x,y)$ function appearing in the previous works completely.  This is achieved by providing a finer analysis of the functions $p_q(z)$ for ``small'' values of $q$ as well. Moreover, it also allows us to choose $Q$ independent of the degree $d$ of the polynomial $F$. This is crucial in our work as we choose $Q=P^{8/5+\ve}$ which is significantly less than the natural choice $P^2$ permitted by the term $h(q/Q,F(\x)/Q^2)$ arising from the earlier versions. It should be noted that one may also analogously obtain bounds for the derivatives of $p_y(z)$ with respect to $y$, which is often necessary to obtain extra cancellations in the $q$-sum. Given how versatile the delta symbol method has been in its applications, it is likely that our version in Proposition \ref{prop:Kloos} will be of independent interest to the readers.

Applying Proposition \ref{prop:Kloos} to the expression
\begin{align*}
N_W(F,P)
=\sum\limits_{\substack{\x\in 
\ZZ^n}}W(\x/P)\delta_0(F(\x)),
\end{align*}
we obtain the following corollary, which takes the place of the identity \eqref{circle}.
\begin{proposition}
\label{prop:delta_counting}
For any $Q,P\geq 1$,
\begin{align}
\label{eq:kloosterman}
 N_W(F,P)
=\sum_{q=1}^Q \int_{|z|<(qQ)^{-1+\theta}} p_{q}(z)S(q,z)\,dz+O_{N,\theta}(Q^{-N\theta}P^n),
\end{align}
where 
\begin{equation}
 \label{Su}
 S(q,z)=\starsum_{a=1}^q S(a/q +z),
\end{equation}
and $p_q(z)$ is a smooth function satisfying \eqref{eq:pbound1} and \eqref{eq:pbound2}.
\end{proposition}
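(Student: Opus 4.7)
The plan is to derive Proposition \ref{prop:delta_counting} as a direct consequence of Proposition \ref{prop:Kloos} by substitution, interchange of summation and integration, and a crude bound on the support of $W$.

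First, I would start from the identity
\[
N_W(F,P) = \sum_{\x \in \ZZ^n} W(\x/P)\,\delta_0(F(\x)),
\]
which is valid since $F(\x)$ is an integer for every $\x \in \ZZ^n$. Then, for each $\x$, I would apply Proposition \ref{prop:Kloos} with $n = F(\x)$ and the given parameter $\theta > 0$, obtaining
\[
\delta_0(F(\x)) = \sum_{q=1}^Q \;\starsum_{a=1}^q \int_{|z|<(qQ)^{-1+\theta}} p_q(z)\,e\!\bigl((a/q + z)F(\x)\bigr)\,dz + O_{N,\theta}(Q^{-N\theta}).
\]

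Next, I would substitute this into the formula for $N_W(F,P)$ and interchange the sum over $\x$ with the outer sum over $q$, the inner sum over $a$, and the integral over $z$. This interchange is completely justified because $W$ has compact support, so only finitely many $\x$ contribute; the sum over $q$ and $a$ is finite; and $p_q(z)$ is bounded by $O(1)$ on the compact integration range. After the interchange, the inner sum
\[
\sum_{\x \in \ZZ^n} W(\x/P)\,e\!\bigl((a/q + z)F(\x)\bigr)
\]
is precisely $S(a/q + z)$ as defined in \eqref{Salpha}. Summing this over $a$ coprime to $q$ produces $S(q,z)$ as in \eqref{Su}, yielding the main term on the right-hand side of \eqref{eq:kloosterman}.

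Finally, for the error term I would use that $W$ has compact support, so the number of $\x \in \ZZ^n$ contributing to $N_W(F,P)$ is $O(P^n)$, and $|W(\x/P)| \ll 1$. The aggregated contribution of the error from Proposition \ref{prop:Kloos} is therefore bounded by
\[
\sum_{\x \in \ZZ^n} |W(\x/P)| \cdot O_{N,\theta}(Q^{-N\theta}) \ll_{N,\theta} Q^{-N\theta} P^n,
\]
which matches the claimed error. There is no real obstacle here: the statement is a formal corollary of Proposition \ref{prop:Kloos} whose only non-trivial input is the compact support of $W$ to turn the pointwise error into the global bound $Q^{-N\theta}P^n$.
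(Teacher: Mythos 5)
Your proposal is correct and follows precisely the route the paper takes: the paper simply states the proposition after the phrase ``Applying Proposition \ref{prop:Kloos} to the expression $N_W(F,P)=\sum_{\x}W(\x/P)\delta_0(F(\x))$, we obtain the following corollary,'' and your substitution, interchange (justified by compact support and the bound \eqref{eq:pbound1}), and aggregation of the $O(P^n)$ error terms flesh out exactly that argument.
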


The success of our method relies on combining this Kloosterman type circle method with van der Corput differencing process from \cite{Browning-Heath-Brown09} as well as the averaging procedure in \cite{Hanselmann}.
We apply van der Corput differencing process 
to the exponential sum $S(q,z)$ defined in \eqref{Su} rather than to $S(\alpha)$.
This approach still allows us to maintain the key feature of the method in 
\cite{Browning-Heath-Brown09} with regard to the exponential integral arising from the Poisson summation. The resulting exponential sums, however, are of a different nature. In the case where $q$ is squarefree, they may be interpreted as exponential sums on varieties over finite fields which are intersections of a quartic and a cubic hypersurface. To estimate these, Deligne type bounds due to Katz \cite{Katz} come into play. For squarefull $q$ we are able to recycle the bounds in \cite{Browning-Heath-Brown09}
for the averages of cubic exponential sums. We also need to consider sums of such exponential sums over sparser sets, corresponding to certain dual varieties, as in \cite{Heath-Brown83}. The fact that these dual varieties now vary with the parameter $\h$ in van der Corput differencing process, provides an additional difficulty over the situation in \cite{Heath-Brown83}.  

\subsection{Acknowledgements} 
While working on this paper, the first author was supported by the Knut and Alice Wallenberg Foundation. In the course of this work, we have benefited from discussions with Tim Browning, Chris Hall, Nick Katz, Fabien Pazuki, Dan Petersen and Will Sawin. Their help is greatly appreciated. We also thank the anonymous referees for their comments. Their comments/suggestions have improved the overall exposition significantly.

\section{Setup of the circle method}
\label{sec:setup}

We will begin by establishing the proof of Proposition \ref{prop:Kloos} which provides a stepping stone in proving the results in this paper. We start by recalling Heath-Brown's version \cite[Thm 1]{Heath-Brown96} of the delta symbol method: 
\begin{lemma}
\label{lem:DFI}
For any $Q>1$ there is a positive constant $c_Q$, and a smooth function $h(x,y)$ defined on $(0,\infty)\times\mathbb R$, such that
\begin{equation}
\label{eq:DFI}
\delta_0(n)=\frac{c_Q}{Q^2}\sum_{q=1}^{\infty}\;\starsum_{a=1}^q e_q\left(an\right)h\left(\frac{q}{Q},\frac{n} {Q^2}\right)
\end{equation}
for $n\in\mathbb Z$. Here $e_q(x)=e^{2\pi i x/q}$. The constant $c_Q$ satisfies $c_Q=1+O_N(Q^{-N})$ for any $N>0$. Furthermore,  we have $\partial_y^j h(x,y)\ll_N x^{-1-j}\min\left\{1,(|y|/x)^{-N}\right\}$ for all $y$ and $j\geq 0$
and $h(x,y)\neq 0$ only if $x\leq \max\{1,2|y|\} $.\\
\end{lemma}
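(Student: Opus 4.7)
My plan is to follow Heath-Brown's construction from \cite{Heath-Brown96}, which refines the delta symbol method of Duke, Friedlander and Iwaniec. First I fix a non-negative smooth function $w:(0,\infty)\to\RR$ supported in $[1/2,1]$, normalized by $\int_0^\infty w(x)\, dx = 1$ and extended by $w(0)=0$. Set
\[
h(x,y) \;=\; \sum_{k=1}^{\infty} \frac{1}{xk}\left\{w(xk) - w\!\left(\frac{|y|}{xk}\right)\right\}, \qquad x>0,\; y\in\RR.
\]
Since $w$ is supported in $[1/2,1]$, for each fixed pair $(x,y)$ only finitely many $k$ contribute. The first summand is non-zero only if $xk\leq 1$ for some $k\geq 1$, forcing $x\leq 1$; the second is non-zero only if $|y|/(xk)\geq 1/2$ for some $k$, forcing $x\leq 2|y|$. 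Hence $h(x,y)=0$ unless $x\leq\max\{1,2|y|\}$, as required.

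Next I would establish the master identity. The right-hand side of \eqref{eq:DFI} unfolds into $(c_Q/Q^2)\sum_{q\geq 1} c_q(n)\, h(q/Q,n/Q^2)$, where $c_q(n)$ is the Ramanujan sum. Using $c_q(n)=\sum_{d\mid\gcd(q,n)} d\,\mu(q/d)$ and writing $q=dm$, the $q$-sum becomes a double sum over $d\mid n$ and $m\geq 1$ of $\mu(m)\,d\,h(dm/Q,n/Q^2)$. Expanding the definition of $h$ and performing the change of variable $k\leftrightarrow |n|/(dmk)$ in the second term, the two halves pair off; the cancellation is arranged precisely by the reciprocal symmetry $w(xk)\leftrightarrow w(|y|/(xk))$ together with $\sum_{m\geq 1}\mu(m)/m = 0$-type identities. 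For $n\neq 0$ the surviving contribution is an error absorbable into $O_N(Q^{-N})$, while for $n=0$ it equals $Q^2/\gamma_Q$ for a positive constant $\gamma_Q = \sum_{q\geq 1}\varphi(q)\,h(q/Q,0)$ depending only on $w$ and $Q$. Setting $c_Q = \gamma_Q^{-1}Q^2$ enforces \eqref{eq:DFI}, and Euler-Maclaurin applied to the Riemann sum $\gamma_Q$ yields $c_Q = 1 + O_N(Q^{-N})$.

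Finally I would derive the quantitative bounds on $h$ and its $y$-derivatives. The pointwise estimate $h(x,y)\ll x^{-1}$ follows because each of the $O(1)$ non-vanishing summands contributes at most $(xk)^{-1}\ll x^{-1}$. Differentiating $j$ times in $y$ extracts a factor $(xk)^{-j}$ from the second term, giving $\partial_y^j h\ll x^{-1-j}$. The decay factor $(|y|/x)^{-N}$ for $|y|\gg x$ comes from the smoothness of $w$: either by Poisson summation in $k$, or by rewriting the sum via the Mellin transform of $w$, the rapid decay of $\widehat{w}$ translates into polynomial decay in $|y|/x$ of arbitrary order.

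The principal obstacle is the cancellation step in the master identity: verifying that the two terms $w(xk)$ and $w(|y|/(xk))$ in the definition of $h$ are calibrated exactly so that $\sum_q c_q(n)\, h(q/Q,n/Q^2)$ vanishes for $n\neq 0$ up to a controlled error. This is the delicate combinatorial heart of the Heath-Brown construction; by contrast, converting the smoothness of $w$ into the uniform decay estimates on $\partial_y^j h$ is then largely routine harmonic analysis.
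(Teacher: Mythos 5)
This lemma is not proved in the paper at all: it is quoted verbatim as \cite[Thm 1]{Heath-Brown96}, so the relevant comparison is with Heath-Brown's original argument, and your construction reproduces it exactly — the same $h(x,y)=\sum_{k\geq 1}(xk)^{-1}\{w(xk)-w(|y|/(xk))\}$, the same support observation, and the same normalisation of $c_Q$ via Euler--Maclaurin. Two inaccuracies in your sketch are worth fixing. First, the identity \eqref{eq:DFI} is exact; there is no $O_N(Q^{-N})$ error for $n\neq 0$. The clean route is the forward one: start from $\delta_0(n)=\sum_{d\mid n}\bigl(\omega(d)-\omega(|n|/d)\bigr)$, which holds exactly for $n\neq 0$ by the involution $d\mapsto |n|/d$ on divisors and fixes $c_Q$ through $\sum_{d\geq 1}\omega(d)=1$ in the case $n=0$; then detect $d\mid n$ by $d^{-1}\sum_{a\bmod{d}}e_d(an)$ and regroup the fractions $a/d$ by their reduced denominator $q$, writing $d=qk$. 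Done this way, the ``delicate combinatorial heart'' you defer is a one-line rearrangement rather than a cancellation to be engineered out of Ramanujan sums. Second, the number of non-vanishing summands in $h(x,y)$ is $O(1+x^{-1}+|y|x^{-1})$, not $O(1)$: the bound $h(x,y)\ll x^{-1}$ still follows because each surviving term of the first (resp.\ second) sum is $O(1)$ (resp.\ $O(|y|^{-1})$, with the second sum empty unless $|y|\geq x/2$), but the derivative bounds and the decay in $|y|/x$ then genuinely require the Poisson or Mellin analysis you allude to, carried out uniformly over the long range $k\asymp |y|/x$; this occupies several lemmas in \cite{Heath-Brown96} and is the only part of the proof that is more than routine.
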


The following lemma provides the key in proving Proposition \ref{prop:Kloos}. The main ingredient in the proof here is a very simple yet effective trick which has appeared in a work of Munshi \cite{Munshi15}, applied to the Lemma \ref{lem:DFI}.

\begin{lemma}
 \label{lem:klooster}
 Let $Q\geq 1$ and let $n$ be an integer. Then
$$
 \delta_0(n)=
 \sum_{q=1}^Q
~\starsum_{a=1}^q\int_z p_{q}(z)e((a/q+z)n)\,dz+O_N(Q^{-N}),
$$
where $p_q(z)$ is a smooth function satisfying
$$p_q(z)\ll_N (qQz)^{-N}, \textrm{ and }p_q(z)=1+O_N\left((1+Q^2|z|)^{2N+2} (q/Q)^{N}\right).$$
for any  $N\geq 0$. 
\end{lemma}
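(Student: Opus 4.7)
The plan is to apply Fourier inversion in the second argument of $h(x,y)$ to Heath-Brown's identity \eqref{eq:DFI}, following a trick used by Munshi \cite{Munshi15}. Setting $\hat{h}_2(x,\eta):=\int_{\RR}h(x,y)\,e(-\eta y)\,dy$ for the partial Fourier transform in the second variable, Fourier inversion gives
\[
\frac{1}{Q^2}\,h\!\left(\frac{q}{Q},\frac{n}{Q^2}\right)=\int_{\RR}\hat{h}_2\!\left(\frac{q}{Q},Q^2z\right)e(zn)\,dz,
\]
so that defining $p_q(z):=c_Q\,\hat{h}_2(q/Q,Q^2z)$ and substituting into \eqref{eq:DFI} yields the desired identity with the $q$-sum running over all $q\geq 1$. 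Truncating at $q=Q$ then introduces an error that is identically zero whenever $|n|\leq Q^2/2$ by the support property $h(x,y)=0$ for $x>\max\{1,2|y|\}$; the residual range $|n|>Q^2/2$ is controlled using the decay $h(x,y)\ll x^{-1}\min\{1,(x/|y|)^N\}$ combined with the trivial bound $|c_q(n)|\leq q$ on the Ramanujan sum, producing an error of size $O_N(Q^{-N})$.

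The decay estimate $p_q(z)\ll_N(qQz)^{-N}$ follows from $N$-fold integration by parts in the $y$-variable of the integral defining $\hat{h}_2(q/Q,Q^2z)$. Each application transfers a factor $(2\pi iQ^2z)^{-1}$ onto the test function, while the hypothesis $\partial_y^j h(x,y)\ll x^{-1-j}\min\{1,(|y|/x)^{-N'}\}$ renders the resulting integrand absolutely integrable with $y$-integral of size $\ll x^{-j}=(Q/q)^j$ (the minimum function being effectively supported on an interval of length $\asymp x$). Combining these two factors yields $|p_q(z)|\ll(qQ|z|)^{-N}$ as required.

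The expansion $p_q(z)=1+O_N((1+Q^2|z|)^{2N+2}(q/Q)^N)$ is the main obstacle. I expect to prove it by Taylor-expanding the map $x\mapsto\hat{h}_2(x,\eta)$ around $x=0$, exploiting the explicit construction of $h$ in \cite[\S 2]{Heath-Brown96}, where $h$ is built as a superposition of dilates of a fixed test function. This structure allows one to show that $\hat{h}_2(x,\eta)$ tends to $1/c_Q$ as $x\to 0$, with the successive $x$-derivatives admitting bounds that grow at most polynomially in $|\eta|$, producing the factor $(1+Q^2|z|)^{2N+2}$ after setting $\eta=Q^2z$. The normalization $c_Q=1+O_N(Q^{-N})$ is then absorbed into the same error, completing the estimate.
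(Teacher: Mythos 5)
Your proposal misses the crucial pre-processing step $\delta_0(n)=\delta_0(n)\,U(n/Q^2)$, and this is not a cosmetic simplification that can be routed around as you suggest; it is what makes the $q\leq Q$ truncation exact and the error uniform in $n$. Without the factor $U(n/Q^2)$ (compactly supported in $(-1/2,1/2)$ with $U(0)=1$), you must estimate the tail $\frac{c_Q}{Q^2}\sum_{q>Q}c_q(n)\,h(q/Q,n/Q^2)$ for $|n|\geq Q^2/2$. Your claim that the trivial bound $|c_q(n)|\leq q$ plus the decay of $h$ yields $O_N(Q^{-N})$ is false: using $|h(x,y)|\ll x^{-1}\min\{1,(x/|y|)^{N'}\}$ with $x=q/Q$, $y=n/Q^2$, the sum ranges over $Q<q\leq 2|n|/Q$, where $(x/|y|)^{N'}\asymp 1$ near the upper endpoint, and one only gets a bound of size $(Q^2/|n|)^{N'}$ after optimizing over $N'$. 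For $n$ highly composite of size $\asymp Q^2$ (where $c_q(n)\asymp q$ is attained for all $q\leq Q$), this is $\Theta(1)$, not $O_N(Q^{-N})$. The paper sidesteps this entirely: multiplying by $U(n/Q^2)$ annihilates both sides when $|n|\geq Q^2/2$, and in the surviving range the support condition on $h$ makes the truncation exact, with the only residual error coming from $c_Q=1+O_N(Q^{-N})$.

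There is a second, related gap in the proof of the key property $p_q(z)=1+O_N\big((1+Q^2|z|)^{2N+2}(q/Q)^N\big)$. The paper obtains this directly from \cite[Lemma 4.1]{Browning_Vishe14}, which is stated precisely for integrals of the form $\int h(x,y)\,U(y)\,e(-ty)\,dy$ against a compactly supported weight with $U(0)=1$. Your $p_q$ has no such weight, so that lemma does not apply, and you write only that you "expect to prove" the analogous statement by Taylor-expanding $x\mapsto\hat h_2(x,\eta)$ using the explicit construction of $h$. That is a promissory note, not a proof, and it is not obvious that the program goes through without the compact-support localization: the convergence and the polynomial growth rate in $\eta$ both need to be controlled, and the paper's argument buys these for free from $U$. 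In contrast, your derivation of the decay estimate $p_q(z)\ll_N (qQ|z|)^{-N}$ by $N$-fold integration by parts in $y$ is sound; the issue is concentrated in the truncation error and the near-$1$ expansion.
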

\begin{proof}
 Let $U:(-1/2,1/2)\rightarrow \RR$ be a non-negative compactly supported function satisfying $\int U(x) dx=1$ and $U(0)=1$. The starting point of this method is the following simple observation
 $$\delta_0(n)=\delta_0(n)U(n/Q^2).$$
Upon substituting \eqref{eq:DFI} for $\delta_0(n)$ on the right hand side of the above equation, we get
\begin{align}
\label{eq:DFI1}
 \delta_0(n)=\frac{c_Q}{Q^2}\sum_{q=1}^{Q}\;\starsum_{a=1}^qe_q\left(an\right)h\left(\frac{q}{Q},\frac{n} {Q^2}\right)U\left(\frac{n}{Q^2}\right)+O_N(Q^{-N}).
\end{align}
The truncated sum over $q$ is due to the fact that $U(n/Q^2)$ is non-zero only when $|n|<Q^2$, and that $h(x,y)$ is non-zero only if $x\leq \max\{1,2|y|\} $.
Next, we use the Fourier inversion formula to write
\begin{align*}
 h\left(x,y\right)U\left(y\right)=\int_\RR f(t)e(ty)dt,
\end{align*}
where
\begin{align*}
 f(t)=\int_{\RR}h(x,z)U(z)e(-tz)\, dz.
\end{align*}
Upon using repeated integration by parts, we get 
\begin{align*}
 |f(t)|&=|\int_{\RR}(-2\pi i t)^{-j}\partial_z^j(h(x,z)U(z))e(-tz)\, dz|\\
 &\ll_{j,N} |tx|^{-j}\int_{|z|<1}x^{-1}\min\left\{1,(|z|/x)^{-N}\right\}\, dz\\
 &\ll_j |tx|^{-j},
\end{align*}
for any $0<x\leq 1$. Moreover, using Lemma 4.1 of Browning and Vishe \cite{Browning_Vishe14}, which is essentially proved by repeated integration by parts together with the bounds on the derivatives of the function $h(x,z)$, we have
\begin{align*}
 f(t)=\int_{\RR}h(x,z)U(z)e(-tz)=1+O_{N}\Big((1+|t|)^{2N+2}|x|^{N}\Big).
\end{align*}
Substituting this back to \eqref{eq:DFI1}, we get 
\begin{align*}
 \delta_0(n)=\frac{1}{Q^2}\sum_{q=1}^{Q}\;\starsum_{a=1}^qe_q\left(an\right)\int_t f(t)e(tn/Q^2)dt+O_N(Q^{-N}).
\end{align*}
The theorem now ensues upon the change of variable $z=t/Q^2 $ and defining $p_q(z)=f(Q^2z)$.
\end{proof}
By using the decay properties of functions $p_q(z)$, one may easily derive Proposition \ref{prop:Kloos} from Lemma \ref{lem:klooster}.

To begin the circle method analysis, we consider a smooth weight function $\omega$ with a support in a very small region around a non-singular point $\x_0 \in X(\RR)$, a standard choice necessary to gain control over minor arc contribution. The existence of such a point is guaranteed by the assumption that $X$ possesses a non-singular ad\`elic point.

To this end, let $\x_0\in\RR^n$ be a point satisfying $F(\x_0)=0$ and $\nabla F(\x_0)\neq 0$, 
which we will fix 
from now on. Without loss of generality, we can assume that 
$\partial_{x_1}F(\x_0)\neq 0$. Let
$$\gamma(\x):=\begin{cases}
              \prod_j e^{-1/(1-|x_j|)^2}, &\textrm{ if }|\x|<1,\\
              0 &\textrm{ otherwise }.
             \end{cases}
$$
We will use the weight function \begin{equation}
\omega(\x):=\prod_{j=1}^n\gamma(\rho^{-1}(\x-\x_0)),
\label{eq:W def}
\end{equation}
where $\rho \in (0,1]$ is a parameter to be chosen at a later stage.
With this choice of the smooth weight function, our main goal will be to establish the following asymptotic formula:
\begin{theorem}
\label{thm:thm 3}
Let $F$ be a quartic form defined over $\QQ$ satisfying $n-\sigma \geq 31$. Then
\[
N_\omega(F,P) = c_F P^{n-4} + O(P^{n-4-\delta}),
\]
for a positive constant $c_F$, provided $P \gg 1$ and $X_{\mathrm{ns}}(\AA_\QQ) \neq \emptyset$.
\end{theorem}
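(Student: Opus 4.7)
The plan is to apply Proposition \ref{prop:delta_counting} with the choice $Q = P^{8/5+\varepsilon}$, which rewrites $N_\omega(F,P)$, up to a negligible error $O(P^{-A})$, as
\[
\sum_{q=1}^Q \int_{|z|<(qQ)^{-1+\theta}} p_q(z) S(q,z)\,dz,
\]
and then to decompose the integration region into a major-arc piece, where both $q$ and $|z|$ are small, and its complement. My first task would be to extract the asymptotic $c_F P^{n-4}$ with $c_F = \mathfrak{S}\mathfrak{J}$, the usual product of the singular series and singular integral. On the region $q \leq P^\eta$, $|z|\leq P^{-2+\eta}$ for a small $\eta>0$, the bound \eqref{eq:pbound2} lets one replace $p_q(z)$ by $1$ at negligible cost; ordinary Poisson summation inside $S(q,z)$ then factorises the contribution into an exponential integral of the shape $P^n\int \omega(\x)e(zP^4 F(\x))\,d\x$ and a complete character sum modulo $q$, whose accumulation over $q$ produces $\mathfrak{S}$. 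Positivity of $c_F$ then follows from $X_{\mathrm{ns}}(\AA_\QQ)\neq\emptyset$: $\mathfrak{J}>0$ because $\omega$ is supported near a real non-singular point of $X$, while $\mathfrak{S}>0$ is extracted from non-singular $p$-adic points at every prime.

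For the complementary (minor-arc) range the idea is to estimate
\[
S(q,z) = \starsum_{a=1}^q S(a/q+z)
\]
via a combination of van der Corput differencing, as in Browning--Heath-Brown \cite{Browning-Heath-Brown09}, now applied to $S(q,z)$ rather than to $S(\alpha)$, together with Hanselmann's averaging trick \cite{Hanselmann}. After differencing by a shift $\h$ of length at most $H$, followed by Poisson summation in the remaining $\x$-variable, one is reduced to estimating exponential integrals (controlled by stationary phase) multiplied by complete exponential sums modulo $q$ attached to the shifted-difference polynomial $G_\h(\x) = F(\x+\h)-F(\x)$, which is only cubic in $\x$. The crucial extra cancellation, absent from \cite{Browning-Heath-Brown09,Hanselmann}, comes from the Kloosterman-type average over $a \pmod q$: for squarefree $q$, this $a$-sum becomes an exponential sum over $\FF_q$ on the variety cut out by $G_\h=0$ inside the quartic $F=0$, an intersection of a quartic and a cubic hypersurface, which is estimated via Katz's generalisation \cite{Katz} of Deligne's bounds. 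For squarefull $q$ the averaged cubic exponential sum bounds from \cite{Browning-Heath-Brown09} can be recycled.

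An additional layer is needed because for many choices of $\h$ the intersection $\{F=G_\h=0\}$ degenerates in a way that breaks the expected square-root cancellation. Following Heath-Brown \cite{Heath-Brown83}, I would stratify the $\h$-sum by the behaviour of the corresponding dual variety and bound the ``bad'' contribution by a count over a sparser locus where a degeneration condition holds. The novelty highlighted in Section \ref{sub:key} is that these dual varieties now depend on $\h$, so one needs uniform dimension and degree bounds as $\h$ varies. Combining the resulting minor-arc bound for $S(q,z)$ with the length $(qQ)^{-1+\theta}$ of the $z$-integral and summing over $q\leq Q$, the parameters $Q=P^{8/5+\varepsilon}$, the support radius $\rho$ of $\omega$, and the differencing length $H$ must be balanced so that, under the hypothesis $n-\sigma\geq 31$, the total minor-arc contribution is $O(P^{n-4-\delta})$ for some $\delta>0$.

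The main obstacle I anticipate is this minor-arc estimate, and within it the uniform control over all shifts $\h$ of the exponential sums on the $\h$-varying intersection varieties, including the degenerate strata. It is the tension between Katz's bounds on the generic stratum and an explicit bound on the dual-variety contribution that ultimately dictates the variable count $31$. A secondary subtlety is maintaining cancellation in the $q$-sum for squarefull $q$, where Deligne-type bounds are not directly available and one must rely on $p$-adic stationary phase arguments together with the averaged cubic-sum estimates inherited from \cite{Browning-Heath-Brown09}.
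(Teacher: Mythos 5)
Your plan tracks the paper's strategy closely: the delta method of Proposition \ref{prop:delta_counting} with $Q=P^{8/5+\varepsilon}$, a major/minor split, replacement of $p_q(z)$ by $1$ on the major arcs via \eqref{eq:pbound2}, and a minor-arc analysis built on van der Corput differencing applied to $S(q,z)$, Kloosterman-type averaging over $a$, Katz's estimates for the resulting cubic-quartic intersection sums at squarefree moduli, the recycled Browning--Heath-Brown cubic bounds at squarefull moduli, and a stratification of the $\h$-sum via dual varieties. This is essentially the same route as the paper.

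There is, however, one genuine gap in the major-arc step that your outline glosses over. Extracting $c_F=\mathfrak{S}\mathfrak{I}$ with a power saving via Lemma \ref{lem:BHB23} requires not merely convergence of the singular series but the rate $\mathfrak{S}(R)=\mathfrak{S}+O(R^{-\psi})$ for some $\psi>0$. The rate established in \cite{Browning-Heath-Brown09} holds only when $n-\sigma\geq 42$, which is not enough here. The paper closes this gap in Lemma \ref{lem:singular_series}, sharpening the convergence rate to $n-\sigma\geq 26$ by factoring $q=b_1b_2\dotsb b_{\ell-1}q_\ell$ with $\ell=24$ and feeding the finer pieces into the BHB exponential-sum bounds; your plan would need an analogous refinement, since simply "accumulating over $q$" will not give a power saving at $n-\sigma\geq 31$. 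Two smaller points: the major-arc $z$-range should scale like $P^{-4+\eta}$ rather than $P^{-2+\eta}$ (your own integral $\int\omega(\x)e(zP^4F(\x))\,d\x$ indicates the quartic scaling), and the paper actually handles the general singular case by an induction on $\dim\Sing(X_0)$ via hyperplane slicing (Propositions \ref{prop:minor_inhom} and \ref{prop:minor_smooth}), a reduction step not visible in your outline.
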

Here,
\[
\sigma = \dim \Sing(X),
\]
where $\Sing(X)$ denotes the singular locus of the projective variety $X\subset \PP^{n-1}$. As a consequence of Theorem \ref{thm:thm 3}, we have
\[
N(F,P) \geq N_\omega(F,P) \geq c_F P^{n-4}.
\]
This would conclude the proof of Theorem \ref{thm:main thm}. In the remaining section, we devise a strategy to establish Theorem \ref{thm:thm 3}, via key Proposition \ref{prop:minor}, stated later.

To establish Theorem \ref{thm:thm 3}, we begin by using Proposition \ref{prop:delta_counting} with
\begin{equation}\label{eq:phidef}
Q = P^{8/5 + \phi},
\end{equation}
where $0<\phi<1$ is a small parameter to be chosen at a later stage. This choice  of $Q$ is concurrent with the one in \cite{Browning-Heath-Brown09}. It arises from balancing various terms in the bounds coming from van der Corput differencing which are useful for ``large'' $q$'s. This choice is much less than that of Birch $Q=P^2$, since our bounds are significantly better than those of Birch for minor arcs corresponding to large $q$'s. They are supplemented by bounds from Weyl differencing which are necessary for minor arcs around 
small $q$'s.

\subsection{Major arcs considerations} The dominating contribution to the main term in \eqref{eq:kloosterman} is expected to occur from small values of $q$ and $z$. More 
explicitly, given $\Delta>0$, let $S_\fM$ denote
the contribution from the {\em major arcs} regime;
\begin{equation}
 \label{eq:major}
S_\fM=\sum_{1\leq q\leq P^\Delta} \int_{|z|\leq P^{-4+\dl}}p_q(z) S(q,z)\,dz.
\end{equation}
If we take the major arcs to be narrow enough, then we may replace the function $p_q(z)$ inside the integral by $1$, with an admissible error. For this, it will be enough to assume that
\begin{equation}
\label{eq:narrow}
\phi + \frac{\Delta}{2} \leq \frac{2}{5}.
\end{equation}
Indeed, then $|z| \leq P^{-4+\Delta}$ implies $|z| \leq Q^{-2}$ so that $p_q(z) = 1 + O_{N}((q/Q)^{-N})$ for any $N$ by \eqref{eq:pbound2}. Thus we get
\begin{equation}
\label{eq:major2}
S_\fM= S'_\fM  + O(P^{n-N/2}) = S'_\fM  + O(P^{n-5}), 
\end{equation}
say, where
\[
S'_\fM := \sum_{1\leq q\leq P^\Delta} \int_{|z|\leq P^{-4+\dl}}S(q,z)\,dz.
\]
Here and throughout the paper, we adopt the convention that when the quantity $P^{-N}$ appears in an estimate, that estimate is asserted for arbitrary positive integers $N$, and the implied constant is allowed to depend on $N$ without mention. We may use the results from \cite{Browning-Heath-Brown09} to control this contribution. 

To this end, we  recall the standard definition of the \emph{singular series}
\begin{equation}
\label{eq:singular_series}
\fS = \lim_{R \to \infty} \fS(R), \quad \text{where} \quad  \fS(R) = \sum_{q \leq R} \frac{1}{q^n} \starsum_{a=1}^q \sum_{\x \bmod q} e_q(aF(\x)),
\end{equation}
and the \emph{singular integral}
\begin{equation}
\label{eq:singular_integral}
\fI = \lim_{R \to \infty} \fI(R), \quad \text{where} \quad \fI(R) = \int_{-R}^R \int_{\RR^n} \omega(\x)e(zF(\x))\,d\x\,dz.
\end{equation}
The contribution from the main term in \eqref{eq:major2} has been studied in \cite[Lemma 23]{Browning-Heath-Brown09}, which we restate here.
\begin{lemma}
\label{lem:BHB23}
Let $n - \sigma \geq 26$. Suppose that $\fS$ is absolutely convergent, and satifies the estimate
\begin{equation}
\label{eq:BHB_10.2}
\fS(R) = \fS + O_{\psi}(R^{-\psi})
\end{equation}
for some $\psi > 0$. Then $\fI$ is absolutely convergent, and for any choice of $\Delta \in (0,1/5)$ there exists $\delta > 0$ such that
\[
S'_\fM = \fS \fI P^{n-4} + O_\psi(P^{n-4-\delta}).
\]
Furthermore, we have $\fI > 0$ provided that $\rho$ is chosen small enough.
\end{lemma}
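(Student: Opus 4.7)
\textbf{Plan for the proof of Lemma \ref{lem:BHB23}.} The goal is to convert the minor-arc-free portion $S'_\fM$ of the delta-method expansion into the product of local densities $\fS\fI$. The approach is completely standard once Proposition \ref{prop:delta_counting} is in hand: I would approximate $S(q,z)$ by a product of the complete exponential sum modulo $q$ and an archimedean oscillatory integral, then sum the error terms against the convergence hypotheses.

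First, for each $q \leq P^\Delta$, I would split the inner sum in $S(a/q+z)$ into residue classes modulo $q$: writing $\x = q\y + \b$ with $\b \bmod q$ and $\y \in \ZZ^n$, one obtains
\[
S(q,z) = \starsum_{a=1}^{q} \sum_{\b \bmod q} e_q(aF(\b)) \sum_{\y \in \ZZ^n} \omega\!\left(\frac{q\y + \b}{P}\right) e\bigl(z F(q\y+\b)\bigr).
\]
The inner sum over $\y$ is a Riemann sum for a smooth, rapidly decaying integrand. A standard Poisson summation (or direct comparison of a smooth sum with its integral, using that $q \leq P^\Delta$ is small relative to $P$) replaces it by $q^{-n} \int_{\RR^n} \omega(\x/P) e(zF(\x))\,d\x$ up to an error saving an arbitrary power of $P$. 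After the substitution $\x = P\u$ this becomes $(P/q)^n \int \omega(\u) e(zP^4 F(\u))\,d\u$, so that
\[
S(q,z) = \frac{P^n}{q^n}\left(\starsum_{a=1}^q \sum_{\b \bmod q} e_q(aF(\b))\right) \int_{\RR^n}\omega(\u) e(zP^4F(\u))\,d\u + O(P^{-N}).
\]

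Next I would integrate over $|z| \leq P^{-4+\Delta}$ and sum over $q \leq P^\Delta$. Making the change of variable $t = P^4 z$ turns the $z$-integral into the truncated singular integral $\fI(P^\Delta)$ times $P^{-4}$, and the $q$-sum yields the truncated singular series $\fS(P^\Delta)$ times $P^{n-4}$. Thus
\[
S'_\fM = P^{n-4} \fS(P^\Delta) \fI(P^\Delta) + O(P^{-N}).
\]
The hypothesis $\fS(R) = \fS + O(R^{-\psi})$ gives $\fS(P^\Delta) = \fS + O(P^{-\Delta\psi})$. For the archimedean side, absolute convergence of $\fI$ and the rate of convergence $\fI(R) = \fI + O(R^{-\eta})$ for some $\eta > 0$ requires a bound of the shape $\int \omega(\u) e(tF(\u))\,d\u \ll |t|^{-1-\eta}$ for $|t| \to \infty$. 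This is where the dimensional assumption $n - \sigma \geq 26$ enters: one applies repeated integration by parts or a stationary-phase argument, controlling the measure of points $\u$ in the support of $\omega$ where $\nabla F$ is small, via a bound for the codimension of $\Sing(X)$. Combining the two error contributions yields $S'_\fM = \fS\fI P^{n-4} + O_\psi(P^{n-4-\delta})$ for $\delta = \min(\Delta\psi, \Delta\eta)/2$, say.

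Finally, the positivity $\fI > 0$ is obtained by the classical argument using the non-singular real point $\x_0 \in \supp \omega$. Since $\partial_{x_1} F(\x_0) \neq 0$, shrinking $\rho$ in \eqref{eq:W def} one may perform the change of variable $v = F(\x)$ in the $x_1$-direction locally, reducing $\fI$ to the real density $\int \omega(\x) |\partial_{x_1}F|^{-1}\,dx_2\cdots dx_n$ restricted to the smooth hypersurface $F = 0$ near $\x_0$, which is manifestly positive. The main obstacle in the proof is the convergence estimate for the singular integral, and more precisely verifying that $n-\sigma \geq 26$ suffices to give power savings in $\fI(R) - \fI$; this is the content taken from \cite{Browning-Heath-Brown09}, and my write-up would cite that analysis directly rather than reproduce it.
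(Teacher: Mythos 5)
Your proposal outlines the standard major-arc analysis (residue decomposition, Poisson summation, truncation of the singular series and integral), and you correctly conclude by saying you would cite \cite{Browning-Heath-Brown09} for the hard estimates. That matches the paper exactly: the statement is labelled as a restatement of \cite[Lemma 23]{Browning-Heath-Brown09}, and the paper offers no independent proof.

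One inaccuracy worth flagging: you attribute the hypothesis $n - \sigma \geq 26$ to the decay of the archimedean oscillatory integral, via bounding the measure of points where $\nabla F$ is small. In the paper's setup that mechanism is not operative, because the weight $\omega$ in \eqref{eq:W def} is supported in a small ball around a non-singular real point $\x_0$ with $\partial_{x_1}F(\x_0) \neq 0$; shrinking $\rho$ ensures $\nabla F$ is bounded away from zero on all of $\supp(\omega)$, so $\int\omega(\u)e(tF(\u))\,d\u$ decays faster than any power of $|t|$ by repeated integration by parts, independently of $\sigma$. The hypothesis $n - \sigma \geq 26$ in BHB's Lemma 23 is instead tied to pointwise control on the arithmetic exponential sums modulo $q$ (needed to sum the Poisson-summation error terms over $q \leq P^\Delta$), not the real density. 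Your closing positivity argument for $\fI$ via the implicit function theorem near $\x_0$ is correct. Since the overall structure is right and you defer to BHB for the quantitative details, this is a benign mislabelling rather than a gap, but it is worth knowing that the archimedean side is the easy part here precisely because of the local choice of weight.
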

 The absolute convergence and positivity of $\fS$ for $n - \sigma \geq 26$ is also established in \cite{Browning-Heath-Brown09}. Unfortunately, the power saving asymptotic formula \eqref{eq:BHB_10.2}, which is essential in employing Lemma \ref{lem:BHB23}, is established there only for $n$ in the range $n - \sigma \geq 42$. 
We shall improve upon this range by refining arguments used there and proving Lemma \ref{lem:singular_series} stated below.

Before proceeding with a statement and proof of Lemma \ref{lem:singular_series}, we need a standard counting result, which is established in Lemma \ref{lem:20} below. To state this, we introduce the following notation, which will be adopted in the rest of the 
paper. For any $q\in \NN$, 
and for any $i\in\NN$, let
\begin{equation}
\label{eq:factors}
b_i=\prod_{p^i\mid\mid q}p^i,\,\,\, q_i=\prod_{\substack{p^e\mid\mid q\\e\geq 
i}}p^e.
\end{equation}
Thus we have for example
\[
q = b_1 b_2 q_3,
\]
where the factor $q_3$ is the cube-full part and $b_1$ the square-free part of $q$. 
\begin{lemma}
\label{lem:20}
For any positive real numbers $R_1,\dotsc,R_{\ell}$ we have the bound
\[
\sum_{\substack{b_1 \sim R_1, \dotsc,b_{\ell-1} \sim R_{\ell-1}\\
 q_\ell \sim R_\ell}} 1 \ll \prod_{i=1}^{\ell} R_i^{1/i}.
\]
\end{lemma}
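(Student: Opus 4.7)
The plan exploits the multiplicative structure of the decomposition in \eqref{eq:factors}. Every positive integer $q$ admits a unique coprime factorisation
\[
q = b_1 b_2 \cdots b_{\ell-1} q_\ell,
\]
in which $b_1$ is squarefree, each $b_i$ for $2 \leq i \leq \ell-1$ has the form $b_i = m^i$ for some squarefree integer $m$, and $q_\ell$ is $\ell$-full (every prime dividing $q_\ell$ does so with multiplicity at least $\ell$). Since dropping the mutual coprimality conditions between these factors can only enlarge the sum, it suffices to count the admissible $b_1,\dotsc,b_{\ell-1}$ and $q_\ell$ in their respective dyadic ranges independently, and then multiply.

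For $b_1$, the trivial estimate gives $O(R_1)$ choices. For $2 \leq i \leq \ell-1$, writing $b_i = m^i$ with $m$ squarefree forces $m$ to lie in a dyadic interval around $R_i^{1/i}$, giving $O(R_i^{1/i})$ choices. The only non-elementary input required is the classical bound
\[
\#\{n \leq X : n \text{ is $\ell$-full}\} \ll X^{1/\ell},
\]
which furnishes $O(R_\ell^{1/\ell})$ choices for $q_\ell$ in a dyadic window around $R_\ell$. Multiplying the three estimates yields the desired bound $\prod_{i=1}^{\ell} R_i^{1/i}$.

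There is no serious obstacle: apart from the $\ell$-full count, every step is elementary bookkeeping that follows immediately from the shape of the factors $b_i$. The $\ell$-full estimate itself admits a short self-contained proof via the unique parametrisation of any $\ell$-full $n$ as $n = s_\ell^\ell s_{\ell+1}^{\ell+1} \cdots s_{2\ell-1}^{2\ell-1}$ with pairwise coprime squarefree $s_j$, after which bounding $s_\ell \leq X^{1/\ell}$ and estimating the remaining variables trivially produces the stated exponent.
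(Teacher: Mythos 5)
Your main argument is correct and follows the route the paper intends (the paper omits the proof, deferring to the analogous \cite[Lemma 20]{Browning-Heath-Brown09}): factor $q = b_1\cdots b_{\ell-1}q_\ell$ uniquely, discard the coprimality constraints to decouple the count, observe that $b_i = m^i$ with $m$ squarefree gives $\ll R_i^{1/i}$ choices in a dyadic window, and invoke the classical bound $\ll X^{1/\ell}$ for $\ell$-full integers up to $X$.

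Your parenthetical sketch of that classical bound, however, is not quite right. Writing $n = s_\ell^\ell s_{\ell+1}^{\ell+1}\cdots s_{2\ell-1}^{2\ell-1}$ with \emph{all} $s_j$ squarefree and pairwise coprime forces every prime exponent of $n$ into $\{\ell,\dotsc,2\ell-1\}$, so it misses $\ell$-full numbers such as $p^{2\ell}$; the factor $s_\ell$ must be allowed to be an arbitrary positive integer. Moreover, estimating the remaining $s_j$ ``trivially'' by $s_j \leq X^{1/j}$ yields the exponent $\sum_{j=\ell}^{2\ell-1} 1/j$, which exceeds $1/\ell$. The correct step is to bound the number of choices for $s_\ell$ by $\bigl(X/\prod_{j>\ell}s_j^j\bigr)^{1/\ell}$ and then sum over the squarefree $s_{\ell+1},\dotsc,s_{2\ell-1}$, the resulting series $\prod_{j>\ell}\sum_{s_j}s_j^{-j/\ell}$ converging because $j/\ell>1$ for each $j>\ell$. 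Since your main proof only uses the $\ell$-full count as a black box, this does not affect its validity, but the sketch as written would not produce the stated exponent.
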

The proof is standard and similar to that of \cite[Lemma 20]{Browning-Heath-Brown09}, and we omit it here. Equipped with this result, we are now ready to state and prove Lemma \ref{lem:singular_series}.

\begin{lemma}
\label{lem:singular_series}
If $n-\sigma \geq 26$, then the estimate 
\[
\fS(R) = \fS + O_{\psi}(R^{-\psi})
\]
holds for any $\psi \in (0,1/24)$. 
\end{lemma}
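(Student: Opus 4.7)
The plan is to quantify the tail sum
\[
|\fS(R) - \fS| \leq \sum_{q > R} |S_q|, \qquad S_q := \frac{1}{q^n} \starsum_{a=1}^q \sum_{\x \bmod q} e_q(a F(\x)),
\]
where absolute convergence for $n - \sigma \geq 26$ is already known from \cite{Browning-Heath-Brown09} and must be upgraded to a power saving. First, by the Chinese remainder theorem, $S_q$ is multiplicative in $q$; combined with the factorization $q = b_1 b_2 q_3$ from \eqref{eq:factors} this yields $|S_q| = |S_{b_1}| |S_{b_2}| |S_{q_3}|$, so it suffices to bound the three factors separately.

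Second, I would invoke the pointwise estimates for $|S_{p^k}|$ already developed in \cite{Browning-Heath-Brown09}, which split naturally according to $k = 1$, $k = 2$, and $k \geq 3$. At squarefree moduli, Deligne-type input prime by prime (with the finitely many bad primes absorbed into a divisor-bounded factor) gives a strong exponent $|S_{b_1}| \ll b_1^{1 - \alpha + \varepsilon}$, where $\alpha$ grows linearly in $n - \sigma$. At exact squares and at higher prime powers the available Weyl-type bounds are weaker, yielding multiplicatively bounds of the shape $|S_{b_2}| \ll b_2^{1 - \beta + \varepsilon}$ and $|S_{q_3}| \ll q_3^{1 - \gamma + \varepsilon}$ for analogous exponents depending on $n - \sigma$.

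Third, I would perform a dyadic decomposition over $b_1 \sim R_1$, $b_2 \sim R_2$, $q_3 \sim R_3$ and invoke Lemma \ref{lem:20}, which contributes $R_1 \cdot R_2^{1/2} \cdot R_3^{1/3}$ moduli per box. Substituting the pointwise bounds produces a dyadic sum
\[
\sum_{R_1 R_2 R_3 \gtrsim R} R_1^{2 - \alpha + \varepsilon} R_2^{3/2 - \beta + \varepsilon} R_3^{4/3 - \gamma + \varepsilon},
\]
and forcing the largest of $R_1, R_2, R_3$ to absorb the constraint $R_1 R_2 R_3 \gtrsim R$ yields a rate $R^{-\psi + \varepsilon}$ with $\psi = \min\{\alpha - 2,\; \beta - 3/2,\; \gamma - 4/3\}$.

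The main obstacle is verifying $\psi > 1/24$ under the hypothesis $n - \sigma \geq 26$. The squarefree range sits comfortably away from the binding constraint thanks to Deligne. The tight case is expected to be the cube-full range, where the weaker Weyl-type exponent must overcome the reduced $R_3^{1/3}$ counting density coming from Lemma \ref{lem:20}; it is precisely the requirement $\gamma - 4/3 > 1/24$ that drives the threshold $\psi < 1/24$ in the statement. The technical subtlety to be handled carefully is the treatment of bad primes (where the singular locus mod $p$ can exceed the generic dimension $\sigma$), which must be isolated into a bounded factor so that the Deligne and Weyl inputs apply uniformly elsewhere.
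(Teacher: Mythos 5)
Your factorization $q = b_1 b_2 q_3$ is too coarse to reach the threshold $n - \sigma \geq 26$, and the missing ingredient is precisely what the paper's proof supplies. The paper writes $q = b_1 b_2 \dotsb b_{23} q_{24}$ and keeps \emph{all} levels $k = 1, \dotsc, 24$ separate, because the available pointwise bound for the modulus-$p^k$ piece is $\ll p^{k(1 - m/k) + \ve}$ (with $m = n - \sigma - 1$), while the counting density from Lemma~\ref{lem:20} at level $k$ is $R_k^{1/k}$. Keeping the levels apart, each dyadic block contributes $R_k^{1/k + 1 - m/k + \ve}$, and the binding constraint $m \geq k+1$ is tightest at $k = 24$, giving $m \geq 25$, i.e.\ $n - \sigma \geq 26$, with a saving of $R^{-1/24 + O(\ve)}$.

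If instead you lump all primes with $k \geq 3$ into one cube-full factor $q_3$ and insist on a single uniform exponent $|S_{q_3}| \ll q_3^{1 - \gamma + \ve}$, then $\gamma$ must be the worst (smallest) of $m/3, m/4, \dotsc, m/24$, i.e.\ $\gamma = m/24$. Pairing this with the $R_3^{1/3}$ counting density, your own criterion $\gamma - 4/3 > 1/24$ becomes $m/24 > 33/24$, i.e.\ $m > 33$, forcing $n - \sigma \geq 35$ — and for the range $26 \leq n - \sigma \leq 34$ your dyadic sum over $R_3$ does not even converge. So the "main obstacle" you flagged is not a computational detail left to check; it is a genuine failure of the three-way split. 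Your diagnosis that the cube-full range is the binding case is correct in spirit, but the resolution requires stratifying $q_3$ itself into $b_3, \dotsc, b_{23}, q_{24}$ so that the strong bound at exact cubes and the weak bound at $24$-th powers are each paired with the \emph{appropriate} counting density, rather than the worst bound being paired with the best density. With that change the rest of your outline (multiplicativity of $S_q$, pointwise bounds per prime power from \cite{Browning-Heath-Brown09}, Lemma~\ref{lem:20} with dyadic summation) matches the paper's argument.
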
 

\begin{proof}
The proof will follow along similar lines to that of \cite[Theorem 2]{Browning-Heath-Brown09}. The key idea here is a more refined treatment of the exponential sums arising in \eqref{eq:singular_series}, when $q$ is free of any $24$-th power, as established by bounds in \cite[Lemmas 7 and 25]{Browning-Heath-Brown09}. Fix a natural number $\ell \geq 3$ and write $q = b_1b_2\dotsb b_{\ell-1}q_\ell$. Using \cite[Lemmas 7 and 25]{Browning-Heath-Brown09} along with bound \cite[(6.12)]{Browning-Heath-Brown09}, one then has
\begin{align*}
 \left|\starsum_{a=1}^q \sum_{\x \bmod q} e_q(aF(\x))\right| &\ll q^{1+\ve} (b_1b_2)^{(n+\sigma+1)/2} b_3^{(2n +\sigma+1)/3} \dotsb b_{\ell-1}^{((\ell-2)n+\sigma+1)/(\ell-1)} q_\ell^{(23n + \sigma+1)/24} \\
&= \frac{q^{n+1+\ve}}{(b_1b_2)^{m/2}b_3^{m/3}\dotsb b_{\ell-1}^{m/(\ell-1)} q_\ell^{m/24}},
\end{align*}
with $m:= n-\sigma-1$. It follows that
\begin{align*}
|\fS - \fS(R)| &\leq \sum_{q \geq R} q^{-n} |S_q| \ll \sum_{b_1\dotsb b_{\ell-1}q_\ell \geq R} (b_1b_2)^{-(m/2-1-\ve)} b_3^{-(m/3-1-\ve)} \dotsb q_\ell^{-(m/24-1-\ve)}.
\end{align*}
We put $\ell = 24$ and note that for any $3 \leq k \leq 24$ one has
\[
-(\tfrac{m}{k}- 1) \leq -\tfrac{1}{k}
\]
as soon as $m \geq 25$. This gives
\begin{align*}
|\fS - \fS(R)| &\ll R^{-1/24+2\ve} \sum_{b_1\dotsb b_{23} q_{24} \geq R} (b_1b_2)^{-1-\ve} b_3^{-1/3-\ve} \dotsb b_{23}^{-1/23 -\ve} q_{24}^{-1/24-\ve} \\
&\ll R^{-1/24+2\ve} \sum_{b_1,\dotsb,b_{23},q_{24} = 1}^{\infty} (b_1b_2)^{-1-\ve} b_3^{-1/3-\ve} \dotsb b_{23}^{-1/23 -\ve} q_{24}^{-1/24-\ve}.
\end{align*}
Using a dyadic decomposition along with Lemma \ref{lem:20}, one now concludes  that the sum on the right hand side is convergent, so that the right hand side is $O_\ve(R^{-1/24 + \ve})$, as required. This proves Lemma \ref{lem:singular_series}. 
\end{proof}
Combining Lemma \ref{lem:BHB23} with \eqref{eq:major2}, we thus have
\begin{equation}
\label{eq:major3}
S_\fM = c_F P^{n-4} + O(P^{n-4-\delta}),
\end{equation}
where $c_F$ depends on the parameter $\rho$, and $c_F > 0$ if $\rho$ is chosen to be small enough, assuming that $X_{\mathrm{ns}}(\AA_\QQ) \neq \emptyset$. This settles our analysis of the contribution from the major arcs.

\subsection{Minor arcs contribution} The main part of the paper will be devoted to showing that the remaining ranges for $q$, $z$ give a negligible contribution to \eqref{eq:kloosterman}. More specifically, we consider the minor arcs contribution to Proposition \ref{prop:delta_counting}:
\begin{equation}
\label{eq:minor}
\begin{split}
S_\fm &=\sum_{1\leq q\leq P^\dl}\int_{P^{-4+\dl} \leq |z| \leq (qQ)^{-1+\theta}}
|p_q(z) S(q,z)|dz \\
&+\sum_{P^{\Delta}\leq q\leq Q}\int_{|z| \leq (qQ)^{-1+\theta}}
|p_q(z) S(q,z)|dz.
\end{split}
\end{equation}
It should be noted that the choice of minor arcs depend on the parameter $\theta>0$, which arises in \eqref{eq:kloosterman}, as well as the choice of $\Delta>0$. We will sometimes write $S_\fm = S_{\fm,\theta,\Delta}$ to emphasize this dependence. Our main bound for the minor arc contribution will take the following form.
\begin{proposition}
\label{prop:minor}
Let $F$ be a quartic form satisfying $n-\sigma \geq 31$.
Then there is a choice of $\rho \in (0,1]$ and, for any $0<\Delta< 1/5$, there exists $\phi_0>0$, such that for any $0<\phi<\phi_0$, one has 
\[
S_{\fm,\theta,\Delta} = O_{n,\Delta,\theta,\phi,\|F\|}(P^{n-4-\delta}).
\]
for some $\delta = \delta(\Delta,n,\phi) >0$, provided that $\theta \ll_{n,\Delta} 1$. 
Here 
\begin{equation}
\|F\|=\textrm{the maximum modulus of the coefficients of }F .
\end{equation}
\end{proposition}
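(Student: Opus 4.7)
The plan is to bound $S_{\fm,\theta,\Delta}$ by splitting the $q$-sum into a small-$q$ regime and a large-$q$ regime, applying a classical Weyl-type differencing on the former and van der Corput's A-process on the latter. The crucial novelty, outlined after Proposition \ref{prop:delta_counting}, is that the van der Corput step is performed not on the pointwise sum $S(\alpha)$ but on the Kloosterman-averaged sum $S(q,z)$, which preserves the key features of the exponential integral in \cite{Browning-Heath-Brown09} while producing exponential sums of a different (and better) arithmetic nature.

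For the first sum in \eqref{eq:minor}, where $q\leq P^{\Delta}$ and $|z|\geq P^{-4+\Delta}$, I apply standard Weyl differencing to $F$. Since $\alpha=a/q+z$ lies genuinely outside the major arcs, this yields a bound for $|S(\alpha)|$ saving a small positive power of $P$ over $P^n$; the $a$-sum and $z$-integral are then absorbed trivially, and the range $n-\sigma\geq 31$ is comfortable. The bulk of the work is the second sum, with $P^{\Delta}\leq q\leq Q=P^{8/5+\phi}$. Here I apply van der Corput's A-process to the inner $\x$-sum of $S(q,z)$, introducing a shift vector $\h$ of controlled size $H$, so that the differenced exponential involves the cubic polynomial $G_\h(\x):=F(\x+\h)-F(\x)$. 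Poisson summation in $\x$ modulo $q$ then reduces matters to estimating, on average over $\c\in\ZZ^n$ and $\h$, the complete exponential sums
\[
T_q(\c,\h)=\starsum_{a \bmod q}\sum_{\x\bmod q}e_q\bigl(aG_\h(\x)+\c\cdot\x\bigr),
\]
weighted by a suitable oscillatory integral in $\c$.

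Writing $q=b_1b_2q_3$ as in \eqref{eq:factors}, multiplicativity splits the analysis of $T_q(\c,\h)$ into two regimes. For squarefree $q$, $T_q(\c,\h)$ is a complete sum on the intersection of a cubic and a quartic hypersurface over $\FF_p$, and the Katz--Deligne bounds of \cite{Katz} supply square-root cancellation whenever this intersection is sufficiently non-degenerate. For squarefull $q$, one recycles the averaged cubic exponential sum estimates from \cite{Browning-Heath-Brown09}, since $G_\h$ has degree three. A dyadic decomposition combined with Lemma \ref{lem:20} packages these pointwise bounds against the $q$-summation, extending the treatment there to the full range $q\leq Q$.

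The main obstacle, and the point where the argument genuinely departs from earlier work, is the contribution of those $\h$ for which $G_\h$ is singular enough that the Katz--Deligne bound degenerates. As in \cite{Heath-Brown83}, this exceptional locus is cut out by a dual variety condition, but here the dual variety \emph{varies} with $\h$, so one must study a whole family of dual varieties over the $\h$-parameter space and prove a uniform bound both for the measure of the exceptional locus and for $T_q(\c,\h)$ on it. I would split the $\h$-sum accordingly, using the strong Katz--Deligne bound on the generic locus and a weaker bound on the exceptional locus, compensated by its small size. Finally, choosing $H$ of optimal size, taking $\rho$ small enough so that the Poisson-dual region is tightly controlled, and choosing $\phi$ (hence $\theta\ll_{n,\Delta}1$) small enough to absorb the error term $Q^{-N\theta}P^n$ of \eqref{eq:kloosterman}, I balance all contributions. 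The threshold $n-\sigma\geq 31$ is precisely what makes every ingredient — the Weyl step, the squarefree Katz--Deligne step, the squarefull cubic-sum step, and the exceptional-$\h$ step — deliver the required saving of $O(P^{n-4-\delta})$.
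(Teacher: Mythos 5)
Your high-level map — Kloosterman averaging built into $S(q,z)$, van der Corput on the large-$q$ range, Poisson summation, multiplicativity $q=b_1b_2q_3$, Katz for squarefree and recycled BHB bounds for squarefull, a delicate optimization of $H$ and $\phi$ — matches the paper's route. But there are two concrete gaps.

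First, the exponential sum you display after Poisson,
\[
T_q(\c,\h)=\starsum_{a \bmod q}\sum_{\x\bmod q}e_q\bigl(aG_\h(\x)+\c\cdot\x\bigr),
\]
cannot be ``a complete sum on the intersection of a cubic and a quartic hypersurface over $\FF_p$,'' because only the cubic $G_\h$ appears: summing over $a$ merely detects $G_\h(\x)\equiv 0$, giving a cubic hypersurface sum, which is what \cite{Browning-Heath-Brown09} already had. The paper's crucial manoeuvre is different: before differencing, it peels off the squarefull part by bounding $|S(q,z)|\le\starsum_{a\bmod q_2}|S_a(q,z)|$ as in \eqref{eq:sum_c}, and applies Cauchy--Schwarz only to the $s$-sum over the squarefree part $b_1$. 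After squaring, the resulting sum $\cT_{a,\h}$ in \eqref{eq:Tuhdef} carries a \emph{double} sum over $s_1,s_2\bmod b_1$ with phase $s_1F_\h(\x)+(s_1-s_2)F(\x)$; it is precisely the $(s_1-s_2)F(\x)$ term that reintroduces the quartic and produces, modulo primes $p\mid b_1$, the cubic-quartic intersection sums to which Katz's theorem applies, yielding the extra $b_1^{1/2}$ saving over \cite{Browning-Heath-Brown09}. Your sketch as written skips the Cauchy--Schwarz-on-the-squarefree-part step and therefore never generates the quartic factor; that is not a presentational slip but the engine of the whole argument.

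Second, Proposition \ref{prop:minor} is stated for $n-\sigma\ge 31$, not for non-singular $F$, and the paper proves it by induction on $\dim\Sing(X_0)$: Proposition \ref{prop:minor} is reduced to the inhomogeneous Proposition \ref{prop:minor_inhom}, which is deduced from the non-singular base case Proposition \ref{prop:minor_smooth} by hyperplane slicing (Section \ref{sec:slicing}, via Lemma \ref{lem:subspace}). Your proposal never addresses singular $X$, never introduces the inhomogeneous formulation, and never mentions the slicing induction; without it, the Weyl-type and Katz-type inputs you invoke are only available for the leading form being non-singular, and the stated range $n-\sigma\ge 31$ is out of reach. You should also note that the small-$q$, large-$t$ regime in the paper is not handled by a one-shot ``standard Weyl differencing'' as you suggest, but by comparing several bounds — one van der Corput step followed by three Weyl steps (Proposition \ref{prop:v1w3}), four Weyl steps (Proposition \ref{prop:w4}), and a pointwise (unaveraged) van der Corput bound for very small $T$ — in a genuine linear-programming optimization.
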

Here and in the following propositions, $\rho$ is as in \eqref{eq:W def}, and $\phi$ is as in \eqref{eq:phidef} and \eqref{eq:narrow}. Fixing $\Delta$ and taking $\theta$ small enough that the conclusion of Proposition \ref{prop:minor} is valid, 
we put together the bounds in \eqref{eq:kloosterman}, \eqref{eq:major3} and Proposition \ref{prop:minor} to obtain Theorem \ref{thm:thm 3}. Thus, from now on, it is enough to concentrate on proving Proposition \ref{prop:minor}.

\subsection{A more general minor arcs bound}
\label{sec:minor_general}

The arguments used to bound the minor arcs contribution will not depend on the fact that $F$ is homogeneous. We shall thus deduce the bound in Proposition \ref{prop:minor} in a less restrictive setting. For a general polynomial $f \in \ZZ[x_1,\dotsc,x_n]$, not necessarily homogeneous, we introduce the alternative ``height function"
\[
\Vert f \Vert_P := \Vert P^{-\deg(f)} f(Px_1,\dotsc,Px_n) \Vert.
\]
Suppose now that $F \in \ZZ[x_1,\dotsc,x_n]$ is a quartic polynomial, not necessarily homogeneous. Let $F_0$ be its leading form, defining a quartic hypersurface $X_0 \subseteq \PP^{n-1}_\QQ$. Rather than fixing the weight function in \eqref{eq:W def}, we shall obtain a uniform estimate for an entire class $\cW_n$ of weight functions.
Given  a positive real number $c$ and a sequence $(c_j)_{j=0}^\infty$ of positive real numbers, we let $\cW_n = \cW_n(\underline{\c})$, where $\underline{\c}= (c,(c_j)_j)$ for short, be the set of infinitely differentiable functions $W:\RR^n \to \RR_{\geq 0}$ with support inside $[-c,c]^n$ that satisfy
\[
\max \left\{\left|\frac{\partial^{j_1+\dotsb+j_n}}{\partial x_1^{j_1} \dotsb \partial x_n^{j_n}}W(\x) \right|\mid \x \in \RR^n, j_1+\dotsb+j_n = j\right\} \leq c_j
\]
for all $j \geq 0$. In the sequel, we shall often suppress the dependence on $\underline{\c}$ in our estimates. 

We shall assume that there is a constant $M>0$ such that the following properties hold:
\begin{gather}
\label{eq:M_1}
\min_{\x \in P \supp(W)} \left|\partial_{x_1} F(\x)\right| \geq M P^3.\\
\label{eq:M_2}
\max_i \max_{\x_1,\x_2 \in P\supp(W)} |\partial_{x_i} F(\x_2) - \partial_{x_i} F(\x_1)| \leq \frac{M}{8^n \sqrt{n!}} P^3.
\end{gather}

\begin{proposition}
\label{prop:minor_inhom}
Let $F$ be a quartic polynomial and suppose that $n-\dim \Sing(X_0) \geq 31$, where $X_0 \subseteq \PP^{n-1}_\QQ$ is the hypersurface defined by the leading quartic form $F_0$. Let $W \in \cW_n$ and assume further that \eqref{eq:M_1}--\eqref{eq:M_2} hold.
Then, for any $0 < \Delta < 1/5$, there exists $\phi_0>0$ such that for any $0<\phi\leq \phi_0$,
\[
S_\fm = O_{n,\Delta,\theta, \phi,\|F\|_P,\underline{\c}}(P^{n-4-\psi})
\]
for some $\psi = \psi(n,\Delta,\phi)> 0$, provided that $\theta \ll_{n,\Delta,\phi} 1$. \end{proposition}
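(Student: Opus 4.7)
The plan is to treat the two sums in \eqref{eq:minor} separately, by methods calibrated to a common power saving through the choice of parameters. For the small-$q$ piece ($q\leq P^\Delta$ with $|z|\geq P^{-4+\Delta}$), I would apply Weyl differencing directly to $S(\alpha)$ with $\alpha=a/q+z$: iterating three times and using the non-vanishing of $\partial_{x_1}F$ on the support of $W$ guaranteed by \eqref{eq:M_1}--\eqref{eq:M_2}, one reduces to lattice-point counts on an auxiliary variety governed by the leading form $F_0$. The hypothesis $n-\dim\Sing(X_0)\geq 31$ is much stronger than what Birch's method needs here, so the required saving is obtained in a routine manner as long as one is away from rational approximations with small denominators and small height, which is exactly what $|z|\geq P^{-4+\Delta}$ encodes.

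For the large-$q$ piece ($P^\Delta\leq q\leq Q$), I would apply van der Corput differencing to the Kloosterman-averaged sum $S(q,z)=\starsum_{a=1}^{q}S(a/q+z)$ rather than to $S(\alpha)$. Introducing a shift parameter $H\leq P$ and applying Cauchy--Schwarz in a box of $\h$'s of side length $H$, one is reduced to bounding an average over $\h$ of
\[
T(q,z;\h)=\starsum_{a=1}^{q}\sum_{\x}W_\h(\x/P)\,e\!\left((a/q+z)\bigl(F(\x+\h)-F(\x)\bigr)\right),
\]
where $W_\h$ is a smooth weight in the same class $\cW_n$. Crucially, $F(\x+\h)-F(\x)$ is a cubic polynomial in $\x$ with $\h$-dependent coefficients, so the $\x$-sum is now amenable to Poisson summation. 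This produces a dual sum over $\c\in\ZZ^n$ involving a complete exponential sum $S_q(\c;\h)$ modulo $q$ (once the $a$-average is carried out) and an oscillatory integral $I(z;\c,\h)$.

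The key estimates are then the following. Integration by parts truncates the $\c$-sum effectively in a range depending on $q,z,P,H$. The factorization $q=b_1b_2q_3$ from \eqref{eq:factors} reduces the bounds for $S_q(\c;\h)$ to squarefree and cube-full components. For the squarefree factor $b_1$, the inner sum becomes a character sum on a variety cut out by a quartic and a cubic over $\FF_p$, to which Katz's extension \cite{Katz} of Deligne's bounds applies once one controls the singular locus uniformly in $\h$ and $\c$. For the cube-full factor $q_3$ one recycles the averaged cubic exponential sum bounds in \cite{Browning-Heath-Brown09}. Summing over $q$ via Lemma \ref{lem:20} and optimising the parameters $H$, $\phi$ and $\Delta$ then produces the desired saving $P^{n-4-\psi}$, provided $\theta$ is chosen small enough that the extension $|z|\leq(qQ)^{-1+\theta}$ of the minor arcs does not erode the saving.

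I expect the main obstacle to be the handling of ``atypical'' $\c$ lying on the $\h$-dependent dual variety $V_\h^*$ of the cubic $F(\x+\h)-F(\x)$, for which the Katz-type bounds degenerate. Unlike in \cite{Heath-Brown83}, where a single dual variety is fixed throughout the argument, here the variety genuinely varies with $\h$, so one must count pairs $(\c,\h)$ with $\c$ singular for the $\h$-dependent cubic, and then re-sum. The hypothesis $n-\dim\Sing(X_0)\geq 31$ is what forces the dimension of this exceptional locus to be small enough that its contribution is absorbed by the saving on the generic range of $\c$. All other steps (Poisson summation, stationary phase, recombination of the $q$-factors, and the final choice of $H$) are expected to be technical but essentially mechanical once this dimensional bound on $V_\h^*$ is in place.
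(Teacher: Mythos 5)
Your proposal is essentially a sketch of the proof of the \emph{non-singular base case}, Proposition~\ref{prop:minor_smooth}, not of Proposition~\ref{prop:minor_inhom} itself. Everything you describe --- van der Corput differencing applied to $S(q,z)$, Poisson summation on the differenced cubic, factorisation $q=b_1b_2q_3$ with Katz-type bounds on the square-free part and the recycled bounds of Browning--Heath-Brown on the cube-full part, summation over $q$ via Lemma~\ref{lem:20}, and the treatment of the $\h$-dependent dual variety --- is the machinery of Sections~\ref{sec:vdC} through~\ref{sec:minor}, which culminates in the estimate for $S_\fm$ \emph{under the assumption that $F_0$ is non-singular}. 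What is missing is the step that actually proves Proposition~\ref{prop:minor_inhom}: the paper proceeds by induction on $\dim\Sing(X_0)$, using the hyperplane-slicing Lemma~\ref{lem:subspace} (with $\Pi=\varnothing$ and $\f=\nabla F(P\x_0)$) to write $S(q,z)=\sum_{\t\in T} S(q,z;F_\t,W_\t,P/L,n-1)$ over $O(P)$ cosets of the lattice spanned by $\e_1,\dotsc,\e_{n-1}$, where each slice has its singular locus dimension lowered by one. The proof then verifies that each $F_\t$, $W_\t$ satisfies the hypotheses \eqref{eq:M_1}--\eqref{eq:M_2} with a comparable constant $M'\asymp M$ (this is exactly where the $\pi/3$-angle condition on $\e_1$ from Lemma~\ref{lem:subspace} is used), invokes the induction hypothesis on each slice to get $O(P^{n-5-\psi})$, and re-sums to obtain $O(P^{n-4-\psi})$. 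None of this appears in your sketch.

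The substitute mechanism you propose --- that the hypothesis $n-\dim\Sing(X_0)\geq 31$ directly forces the dimension of the exceptional locus of $\c$ (the $\h$-dependent dual variety) to be small enough to absorb --- is not how the paper argues and would be problematic as stated. The Katz bound \cite[Thm.~4]{Katz} used in Lemma~\ref{lem:prime} requires the projective complete intersection $V_p(F_0,G_0)$ to have codimension two; when $F_0$ itself is singular the quantity $s_p'$ is inflated and the bounds degenerate in a way the method is not designed to control directly without first slicing. The slicing within Section~\ref{sec:vdC2} (see Proposition~\ref{prop:sum_h}) handles the singularities coming from the \emph{pair} $(F_0,F_0^\h)$ as $\h$ varies, but it is calibrated for a non-singular $F_0$; the possible singularity of $F_0$ itself is dealt with only by the separate induction in Section~\ref{sec:slicing}. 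So there is a genuine gap: you need the hyperplane-slicing induction to pass from the non-singular case to the statement you were asked to prove.
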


Let us verify that Proposition \ref{prop:minor} follows from Proposition \ref{prop:minor_inhom}. In the former, $F$ is assumed to be homogeneous, which implies that $\Vert F \Vert_P = \Vert F \Vert$. Thus it only remains to verify the conditions \eqref{eq:M_1}--\eqref{eq:M_2}. By assumption, we have $M_0=|\partial_{x_1} F(\x_0)| > 0$. By choosing  $\rho$ sufficiently small, we may ensure that none of the $\partial_{x_i} F$ vary more than $M_0/(2\cdot 8^n \sqrt{n!})$ on $\supp(\omega)$. Since $F$ is now a homogenenous polynomial by assumption, so are its derivatives $\partial_{x_i} F$. Therefore, for arbitrary $\y_1,\y_2 \in \supp(\omega)$ we have
\[
|\partial_{x_i} F(P\y_1) - \partial_{x_i} F(P\y_2)| = P^3 |\partial_{x_i} F(\y_1) - \partial_{x_i} F(\y_2)| \leq \frac{M_0}{2\cdot 8^n \sqrt{n!}} P^3.
\]
Furthermore, we have
\begin{align*}
|\partial_{x_1} F(P\y_1)| &\geq |\partial_{x_1} F(P\x_0)| - P^3|\partial_{x_1} F(\x_0) - \partial_{x_1} F(\y_1)| \\
&\geq M_0 P^3 - \frac{M_0}{2\cdot 8^n \sqrt{n!}} P^3    
\geq \frac{M_0}{2} P^3,
\end{align*}
so we have verified \eqref{eq:M_1}--\eqref{eq:M_2} with $M = M_0/2$, as required. 

We shall prove Proposition \ref{prop:minor_inhom} by induction on $\dim \Sing(X_0)$, the base step being the following  result for the non-singular case.

\begin{proposition}
\label{prop:minor_smooth}
Suppose that $n\geq 30$ and that the leading quartic form $F_0$ is non-singular. Let $W \in \cW_n$ and assume that
\eqref{eq:M_1}--\eqref{eq:M_2} hold. Then, for any $0 < \Delta < 1/5$, there exists $\phi_0>0$ such that for any $0<\phi\leq \phi_0$,
\[
S_\fm = O_{n,\Delta,\phi,\|F\|_P,\underline{\c}}(P^{n-4-\psi})
\]
for some $\psi = \psi(n,\Delta,\phi)> 0$, provided that $\theta \ll_{n,\Delta,\phi} 1$.
\end{proposition}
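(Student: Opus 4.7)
The plan is to combine three ingredients: the Kloosterman-type circle method of Proposition \ref{prop:Kloos}, the van der Corput differencing process of \cite{Browning-Heath-Brown09}, and the additional averaging device of \cite{Hanselmann}. The argument splits according to the two ranges of $q$ in \eqref{eq:minor}. For $1 \leq q \leq P^\Delta$, I would follow a classical Weyl-differencing approach applied to $S(a/q+z)$: since $|z|$ is bounded below by $P^{-4+\Delta}$ in this regime, repeated squaring-and-shifting yields a saving driven by the non-singularity of the leading form $F_0$, and the resulting pointwise bound is then integrated over the short $z$-interval and summed over $a \bmod q$ and $q$.

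The heart of the proof is the large-$q$ range $P^\Delta \leq q \leq Q = P^{8/5+\phi}$. Here I would apply van der Corput differencing directly to $S(q,z)$, rather than to the individual $S(a/q+z)$, with a difference box of side length $H$, a power of $P$ to be optimised. This replaces the quartic phase $F(\x)$ by the cubic phase $G_\h(\x) := F(\x+\h) - F(\x)$, averaged over $\h$. Combined with the Hanselmann averaging trick, which introduces a further shift and thereby reintroduces the quartic data, one is led to a bound featuring a cubic--quartic combination. Poisson summation in $\x$ modulo $q$ then yields a dual lattice sum, each term of which is a product of a complete exponential sum
\[
T_q(\h,\c) \;=\; \starsum_{a=1}^q \sum_{\y \bmod q} e_q\bigl(a\Phi_\h(\y) + \c\cdot\y\bigr)
\]
and an exponential integral $I_q(\h,\c,z)$, where $\Phi_\h$ encodes the relevant cubic/quartic combination. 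The decisive point is that the average over $a \bmod q$ is performed \emph{after} Poisson summation; this is precisely the feature enabled by Proposition \ref{prop:Kloos} that permits the aggressive choice $Q = P^{8/5+\phi}$ instead of Birch's $Q = P^2$.

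To estimate these pieces, I would use \eqref{eq:M_1} and repeated integration by parts to show that $I_q(\h,\c,z)$ decays rapidly once $\c$ leaves an admissible box of size roughly $qP^{-1}$ up to minor factors, thereby truncating the dual sum. For the arithmetic factor $T_q(\h,\c)$, I would factor $q$ into its squarefree and squarefull parts as in \eqref{eq:factors}. For squarefree $q$, the Chinese remainder theorem reduces the estimate to prime moduli, where $T_p(\h,\c)$ becomes a complete exponential sum on the affine variety cut out over $\FF_p$ by the cubic $\Phi_\h = 0$ and a linear twist; Deligne's theorem in the form established by Katz \cite{Katz} provides square-root cancellation away from the locus where this intersection of a cubic and a quartic hypersurface is singular. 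For the squarefull part of $q$, the cubic exponential sum bounds from \cite{Browning-Heath-Brown09} can be recycled directly.

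The main obstacle will be controlling the contribution from the exceptional locus on which the Deligne bound degrades. As noted in Section \ref{sub:key}, this locus is a dual variety that depends on the differencing parameter $\h$, so unlike in \cite{Heath-Brown83} one cannot simply bound it once and for all; instead one must control its fibres uniformly as $\h$ varies and sum the degraded exponential sum bounds over all relevant $\h$ without destroying the saving. Since the hypothesis $n \geq 30$ leaves essentially no slack, the joint optimisation of $H$, $\Delta$ and $\phi$, together with the bookkeeping of savings across squarefree and squarefull moduli, is where the bulk of the technical work will lie; the integration over $z \in [-(qQ)^{-1+\theta},(qQ)^{-1+\theta}]$ (which yields a factor $(qQ)^{-1+\theta}$) and the preliminary truncations via \eqref{eq:pbound1}--\eqref{eq:pbound2} must then be inserted to produce a positive $\psi = \psi(n,\Delta,\phi)$.
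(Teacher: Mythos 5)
Your outline captures the broad skeleton—Kloosterman averaging via Proposition~\ref{prop:Kloos}, van der Corput differencing applied to $S(q,z)$, Poisson summation, Katz/Deligne bounds for squarefree moduli, recycled Browning--Heath-Brown estimates for squarefull moduli—but there are several places where the mechanism you describe is not the one that actually works, and one genuinely missing ingredient without which the argument fails.

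First, you misattribute the source of the ``cubic--quartic combination.'' You say the Hanselmann averaging ``introduces a further shift and thereby reintroduces the quartic data.'' It does not. The Hanselmann averaging is a device for smoothing in $z$ (via the Gaussian weight in \eqref{eq:Mq}) whose purpose is to show that the $h_1$-sum can be truncated to $|h_1|\ll H\cL$ (Lemma~\ref{lem:negligible}). What actually keeps the quartic alive is the Kloosterman averaging over $a\bmod b_1$: after van der Corput differencing, $\cT_{a,\h}(q,z)$ in \eqref{eq:Tuhdef} contains a factor $e_{b_1}\bigl(s_1 F_\h(\x) + (s_1-s_2)F(\x)\bigr)$ with a double sum over $s_1,s_2\bmod b_1$, and it is the $(s_1-s_2)F(\x)$ term that preserves the quartic modulo the square-free part of $q$. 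Consequently, after Poisson summation the arithmetic factor is not a single $\starsum_a$ of a complete sum with a single polynomial $\Phi_\h$ as you write; it is the product $T(b_1,\bar{q_2}\v)\,T^*(q_2,\bar{b_1}\v)$ of \eqref{eq:Sumult}, where $T(b_1,\cdot)$ (from \eqref{Tudef}) is a genuinely two-character sum mixing $f=F$ and $g=F_\h$. Attempting to estimate the simpler sum you wrote down would miss the structure that enables the Katz bound for intersections of a cubic and a quartic.

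Second, and more seriously, the control of the locus where Deligne degrades is not a matter of ``joint optimisation'' alone: one needs the hyperplane-slicing machinery of Lemma~\ref{lem:subspace} together with Proposition~\ref{prop:dualform} and Lemma~\ref{lem:S^*(V)}. When $s_\infty'(F,F_\h)$ or some $s_p'$ is large, the bound per prime is weak; the paper rescues this by slicing down to an $(n-\eta)$-dimensional sublattice on which the singular loci are killed, and by estimating the number of $\h$'s for which each level of singularity occurs via Lemma~\ref{lem:salberger} and \eqref{eq:cardH_s}. Your sketch recognises that the $\h$-dependence of the dual variety is a difficulty, but offers no mechanism; without the slicing argument of Section~\ref{sec:vdC2}, the exceptional contributions would not be summable with the needed saving. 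Finally, the paper does not split simply into ``small $q$: Weyl, large $q$: van der Corput''; rather it juggles three bounds (van der Corput followed by Weyl, Proposition~\ref{prop:v1w3}; pure four-fold Weyl, Proposition~\ref{prop:w4}; van der Corput followed by Poisson/Deligne, Proposition~\ref{prop:v}) across a dyadic grid of $(t,R,\R)$ with an explicit two-variable linear optimisation, plus a separate pointwise van der Corput treatment when $T$ is very small. This three-way comparison and the fallback for tiny $T$ are essential to close the argument at $n\geq 30$ and are absent from your plan.
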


Our efforts in the Sections \ref{sec:vdC} through \ref{sec:vdC2} will culminate in the verification of Proposition \ref{prop:minor_smooth} in Section \ref{sec:minor}. The inductive argument leading to Proposition \ref{prop:minor_inhom} will be postponed to Section \ref{sec:slicing}, as it is similar to another slicing argument that we shall need to apply in the non-singular case as well.

In studying the minor arcs contribution, it will be convenient to split the integrals in \eqref{eq:minor} into 
suitable dyadic intervals. This allows us to consider the integrals
\begin{align}
\label{eq:Iqtdef}
\cI(q,t)= \int_{t \leq |z| \leq 2t} |S(q,z)|dz,
\end{align}
where $1\leq q\leq Q$, $0\leq t\leq (qQ)^{-1+\theta}$. Note that we can trivially establish $|\cI(q,t)|\ll qtP^n$. However, with some more work akin to Proposition \ref{prop:trivial}, using cancellations due to an average over $a$ in the definition \eqref{Su} of $S(q,z)$, for a large enough value of $P$, it is also possible to establish the bound $b_1^{1/2}q_2tP^n$, where $q=b_1q_2$, with $b_1$ being the square-free part of $q$. For any fixed choice of $t,R$ appearing in the minor arcs \eqref{eq:minor}, our final goal is to establish the bound
\begin{equation}\label{eq:I bound}
\sum_{R\leq q\leq 2R}\cI(q,t)\ll P^{n-4-\ve},
\end{equation}
which will be finally achieved in Section \ref{sec:minor} using various estimates we derive in the following sections.

\section{Van der Corput differencing}
\label{sec:vdC}
Van der Corput differencing provides a key tool in our analysis of bounding \eqref{eq:Iqtdef}. We will use both pointwise van der Corput differencing method in the spirit of \cite{Browning-Heath-Brown09} and  
averaged van der 
Corput differencing employed by Heath-Brown \cite{Heath-Brown07} and Hanselmann \cite{Hanselmann}, the latter being more advantageous when $t$ is not too small. A key advantage in using averaged van der Corput differencing is in the introduction of smoothed Gaussian averages in \eqref{eq:Mq}, which readily helps us establish Lemma \ref{lem:negligible}. 

We begin with a brief survey of van der Corput differencing. 
Let $f$ be a function on 
$\RR^n$ supported in the set $|\x|\ll P$ and let $\cH$ be a subset of 
$\ZZ^n\cap 
\{|\x|\ll P\}$. The starting point of van der Corput 
differencing is the following identity:
\begin{align*} 
\#\cH\sum_{\x\in\ZZ^n}f(\x)=\sum_{\h\in\cH}\sum_{\x\in\ZZ^n}f(\x+\h)=\sum_{
\x\in\ZZ^n}\sum_{\h\in\cH}f(\x+\h).
\end{align*}
A quick application of the Cauchy-Schwarz inequality implies
\begin{align}\label{eq:van der start}
\#\cH^2\left|\sum_{\x\in\ZZ^n}f(\x)\right|^2\ll 
P^n\sum_{\h}N(\h)\sum_{\y\in\ZZ^n}f(\y+\h)\overline{f(\y)},
\end{align}
where $$N(\h):=\#\{\h_1,\h_2\in\cH: \h=\h_1-\h_2\}, $$
and the sum over $\h$ is over $\h\in\ZZ^n$ such that $N(\h)\neq 0$. \eqref{eq:van der start} will denote a starting point for van der Corput differencing process for us. We shall apply it with $f(\x)$ being replaced by suitable exponential sums at various places in this section.

Our main goal is to utilise this process to estimate the sum
\[
S(q,z) = \starsum_{a=1}^q \sum_{\x\in \ZZ^n}W(\x/P)e((a/q+z)
F(\x))
\]
introduced above, where $W \in \cW_n$ and where $F$ is a quartic polynomial, not necessarily homogeneous.

\subsection{Pointwise van der Corput differencing} 
In the notation of \eqref{eq:factors}, we put $q = b_1q_2$, where $b_1$ is square-free and $q_2$ square-full. We plan to benefit from an extra averaging over the sum over $a\bmod{q}$ occurring in $S(q,z)$, achieved for a fixed value of $z$. However, this saving is essential only from the square-free part of $q$. To this end, we shall sum trivially over the square-full part of $q_2$ of $q$:
\begin{align}
\label{eq:sum_c}
 |S(q,z)|
&\leq \starsum_{a=1}^{q_2} |S_a(q,z)|,
\end{align}
where
\begin{equation}\label{eq:sum_c2}
S_a(q,z):= \starsum_{s=1}^{b_1} \sum_{\x\in 
\ZZ^n}W(\x/P)e((s/b_1+a/q_2+z)F(\x)).
\end{equation}
We now apply van der Corput differencing \eqref{eq:van der start} to the 
function 
$$f(\x)=\starsum_{s=1}^{b_1} W(\x/P)e((s/b_1+a/q_2+z)F(\x)), $$ to bound $|S_a(q,z)|$ 
for any fixed $a$:
\begin{align}
|S_a(q,z)|^2&\ll 
\#\cH^{-2}P^n\sum_{\h}N(\h)\sum_{\y\in\ZZ^n}f(\y+\h)\overline{f(\y)}
= \#\cH^{-2}P^n\sum_{\h}N(\h)\cT_{a,\h}(q,z)
\label{eq:vdc}
,\end{align}
where
\begin{multline}
\label{eq:Tuhdef}
\cT_{a,\h}(q,z):= \sum_{\x \in \Z^n} 
W_\h(\x/P)\starsum_{s_1,s_2=1}^{b_1} 
e_{b_1}\big(s_1 F_\h(\x) 
+(s_1-s_2)F(\x)\big) 
e_{q_2}\big(a F_\h(\x)\big)e(zF_\h(\x)), 
\end{multline}
with
$$W_\h(\y)=W(\y+P^{-1}\h)W(\y) \quad\text{ and }\quad F_\h(\x)=F(\x+\h)-F(\x). 
$$
Note that we may trivially bound $|\cT_{a,\h}(q,z)|\ll P^nb^2$.
We now let $H = H(q,z)$ be an integer with $ 1 \leq H \leq P$ and put
\begin{equation}
\cH:=\{\h\in\NN^n \mid 1\leq h_1,...,h_n\leq H\}.\label{eq:H_pointwise} 
\end{equation}
Then the bounds \eqref{eq:sum_c} and \eqref{eq:vdc}, along with the trivial bound $N(\h)\ll \#\cH$, imply the 
following result, which we refer to as the \emph{pointwise van der Corput 
bound} with respect to $z$.
\begin{lemma}
\label{lem:pointwise}
We have the bound
\[
|S(q,z)| \ll P^{n/2} H^{-n/2} \starsum_{a=1}^{q_2} \left(\sum_{|\h| \leq H} 
\left|\cT_{a,\h}(q,z)\right| \right)^{1/2}.
\]
\end{lemma}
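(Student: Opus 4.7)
The plan is to combine the already-assembled identities \eqref{eq:sum_c} and \eqref{eq:vdc} with a trivial counting bound on $N(\h)$. The first of these reduces the task to bounding each slice $S_a(q,z)$ separately for $a$ modulo the square-full part $q_2$; the second provides, for each fixed $a$, a van der Corput inequality for $|S_a(q,z)|^2$ in terms of the twisted sums $\cT_{a,\h}(q,z)$ weighted by the representation counting function $N(\h)$.

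First I would record that for the box $\cH = \{\h \in \NN^n : 1 \leq h_i \leq H\}$ defined in \eqref{eq:H_pointwise}, one has $\#\cH = H^n$, and that the representation count satisfies the trivial pointwise estimate $N(\h) \leq \#\cH = H^n$ (since for each admissible $\h$, the number of decompositions $\h = \h_1 - \h_2$ with $\h_1,\h_2 \in \cH$ is bounded by $\#\cH$). Substituting this into \eqref{eq:vdc} and passing to absolute values gives
\[
|S_a(q,z)|^2 \ll \frac{P^n}{\#\cH} \sum_{\h} |\cT_{a,\h}(q,z)| = \frac{P^n}{H^n} \sum_{|\h| \leq H} |\cT_{a,\h}(q,z)|,
\]
where the range of summation on the right reflects that differences of vectors in $\cH$ have coordinates lying in $[-H,H]$. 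Extracting square roots and inserting the result into \eqref{eq:sum_c} yields precisely the stated bound.

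Since every ingredient has already been prepared in the discussion preceding the lemma, I do not foresee a serious technical obstacle; this lemma is essentially a bookkeeping step that packages the pointwise differencing in a clean form convenient for later use. The one subtlety worth flagging is the justification that van der Corput differencing \eqref{eq:van der start} applies to $f(\x) = \starsum_{s=1}^{b_1} W(\x/P) e((s/b_1 + a/q_2 + z) F(\x))$, which requires observing that $f$ is compactly supported in $|\x| \ll P$ thanks to $W \in \cW_n$. The genuine analytic work, namely extracting cancellation from the exponential sums $\cT_{a,\h}(q,z)$, is deferred to the subsequent sections.
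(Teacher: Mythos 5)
Your proof is correct and follows exactly the same route the paper indicates: apply \eqref{eq:sum_c}, then \eqref{eq:vdc} with the trivial bound $N(\h)\ll\#\cH = H^n$, take absolute values, extract square roots, and sum over $a$. The bookkeeping is carried out accurately and matches the paper's (implicit, one-line) argument.
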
 
\begin{rem}A key advantage of step \eqref{eq:sum_c} is that after an application of Poisson summation, an overall factor of size $O(b_1^{1/2})$ can be saved via considerations of the resulting quartic exponential sums modulo the square-free part $b_1$ of $q$.  In the square-full case however, our sums coincide with the cubic sums in \cite{Browning-Heath-Brown09}, allowing us to reuse those bounds. See Lemmas \ref{lem:poisson} and \ref{lem:mult} in the following section for a more explicit version. This comment also applies to our bounds in Section \ref{Sec:Averaged} below. This significantly simplifies our work. Unfortunately, this treatment will not be enough to achieve $n=29$ variables, and some early calculations show that a small extra saving in the sum over $a$ in \eqref{eq:sum_c} will be vital to save one more variable.
\end{rem}
\subsection{Averaged van der Corput differencing}
\label{Sec:Averaged}
We next incorporate an averaging over $z$ to estimate the integrals $\cI(q,t)$ defined in \eqref{eq:Iqtdef}. 
At this stage, we shall impose the conditions \eqref{eq:M_1}--\eqref{eq:M_2} occuring in Proposition \ref{prop:minor_inhom}. Choose an arbitrary point $\x \in P\supp(W)$. By \eqref{eq:M_1}, $$|\partial_{x_1} F(\x)| \geq MP^3.$$ 
By \eqref{eq:sum_c}, we have
\begin{align*}
\cI(q,t) &\ll 
\starsum_{a=1}^{q_2}\int_{t\leq |z|\leq 2t} |S_a(q,z)|dz.
\end{align*}
Let $H = H(q,t)$ be an integer with $1 \leq H \leq P$. We may choose a set $T$ of cardinality $O(1+ tHP^3)$ such that
\[
[t,2t]\cup [-2t,-t] \subseteq \bigcup_{\tau \in T} \left[\tau-(HP^3)^{-1},\tau+(HP^3)^{-1}\right]. 
\]
Then we can write
\[
\cI(q,t) \ll 
\starsum_{a=1}^{q_2} \sum_{\tau \in T} \int_{|\tau-z|<(HP^3)^{-1}}|S_a(q,z)|dz. 
\]
An application of the Cauchy-Schwarz inequality next implies
\begin{align*}
\int_{|\tau-z|<(HP^3)^{-1}}|S_a(q,z)|dz&\ll(HP^3)^{-1/2}\left(\int_{|\tau-z|<(HP^3)^{-1}}|S_a(q,z)|^2\,
dz\right)^{1/2}.
\end{align*}
Inserting this into the above bounds, we may write
\begin{align*}
\cI(q,t) &\ll (HP^3)^{-1/2}
\starsum_{a=1}^{q_2} \sum_{\tau \in T} \cM_a(q,\tau)^{1/2},
\end{align*}
where
\begin{equation}
\label{eq:Mq}
\cM_a(q,\tau):=\int_{|\tau-z|<(HP^3)^{-1}}|S_a(q,z)|^2 dz \ll \int_{-\infty}^\infty\exp(-H^2P^6(\tau-z)^2)|S_a(q,z)|^2\,dz.
\end{equation}

We now employ van der Corput differencing \eqref{eq:vdc} with a modified 
differencing set. Let $1\geq c_1 > 0$ be a constant to be determined later, and put 
\begin{equation}
\cH_1:=\{\h\in\NN^n \mid 1\leq h_1\leq c_1 P,1\leq h_2,...,h_n\leq 
H\}.
\label{eq:H_average} 
\end{equation} 
to obtain
\begin{align}
\cM_a(q,\tau)&\ll \#\cH_1^{-1}P^n\sum_{\h \in \cH_1} 
\int_{-\infty}^\infty \exp(-H^2P^6(\tau-z)^2)\cT_{a,\h}(q,z)\, dz
\label{eq:vdc2}
,\end{align}
with $\cT_{a,\h}$ as in \eqref{eq:Tuhdef}. We have again used the trivial bound $N(\h)\ll \#\cH_1$ here. We now set $\cL:=\log P$ and use the lower bound $|\partial_{x_1}(F(\x))|\geq MP^3$ for all $\x\in P\supp(W)$, to obtain that the 
contribution in 
the above sum from the terms corresponding to $\h$ satisfying $|h_1|\gg H\cL$ is 
negligible, as derived in the following lemma.

\begin{lemma}
\label{lem:negligible}
Suppose that \eqref{eq:M_1} holds. Then there is a constant $c>0$ such that
for any $\h \in \cH_1$ such that $|h_1| \geq cH\cL$, 
we have
$$ \int_{-\infty}^\infty \exp(-H^2P^6(\tau-z)^2)\cT_{a,\h}(q,z)\, dz\ll_N P^{-N}. $$
\end{lemma}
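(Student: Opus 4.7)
The plan is to evaluate the inner integral over $z$ explicitly as a Gaussian Fourier transform and thereby reduce the lemma to a uniform lower bound for $|F_\h(\x)|$ on the support of $W_\h$. Interchanging integration and summation is permissible since the $z$-integrand is absolutely integrable, and inside $\cT_{a,\h}(q,z)$ the only $z$-dependent factor is $e(zF_\h(\x))$. Completing the square then yields, for each fixed $\x$,
\[
\int_\RR \exp\!\big(-H^2P^6(\tau-z)^2\big)\, e(zF_\h(\x))\, dz = \frac{\sqrt{\pi}}{HP^3}\, e(\tau F_\h(\x))\, \exp\!\left(-\frac{\pi^2 F_\h(\x)^2}{H^2P^6}\right).
\]
Combined with the trivial bound $b_1^2$ on the exponential sum over $s_1,s_2$, the uniform bound $W_\h\ll 1$, and the fact that the $\x$-sum has $O(P^n)$ non-zero terms, this reduces the lemma to showing that
\[
\exp\!\left(-\frac{\pi^2 F_\h(\x)^2}{H^2P^6}\right) \ll_N P^{-N}
\]
uniformly for $\x$ with $W_\h(\x/P)\neq 0$, whenever $|h_1|\geq cH\cL$.

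The crux is a lower bound of the form $|F_\h(\x)|\gg_M |h_1|P^3$ for such $\x$ and $\h$. I would use the mean value theorem to write
\[
F_\h(\x)=\sum_{i=1}^n h_i \int_0^1 \partial_{x_i}F(\x+t\h)\, dt.
\]
For the $i=1$ term, assumption \eqref{eq:M_1} gives $|\partial_{x_1}F(\x)|\geq MP^3$, while \eqref{eq:M_2} controls the variation of $\partial_{x_1}F$ by $\tfrac{M}{8^n\sqrt{n!}}P^3$ on $P\supp(W)$. Using convexity of $\supp(W)$ (which holds for $\omega$ as in \eqref{eq:W def} and is arranged in the general setting of Proposition \ref{prop:minor_inhom}), the segment $\{\x+t\h:0\leq t\leq 1\}$ lies in $P\supp(W)$, so $\partial_{x_1}F(\x+t\h)$ has constant sign and magnitude $\geq MP^3/2$ throughout. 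Hence
\[
\left|h_1 \int_0^1 \partial_{x_1}F(\x+t\h)\, dt\right| \geq \tfrac{M}{2}\,|h_1|\,P^3.
\]
For $i\geq 2$ I would bound the remaining terms trivially via $|h_i|\leq H$ and $|\partial_{x_i}F(\x+t\h)|\ll_{\|F\|_P} P^3$, obtaining total contribution $\ll_{\|F\|_P} HP^3$. Choosing $c$ sufficiently large in terms of $M$ and $\|F\|_P$, the hypothesis $|h_1|\geq cH\cL$ forces the $h_1$-term to dominate, giving $|F_\h(\x)|\geq \tfrac{M}{4}|h_1|P^3\geq \tfrac{Mc}{4}H\cL P^3$.

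Substituting this lower bound into the Gaussian yields
\[
\exp\!\left(-\frac{\pi^2 F_\h(\x)^2}{H^2P^6}\right) \leq \exp\!\left(-\frac{\pi^2 M^2 c^2 \cL^2}{16}\right) = P^{-\pi^2 M^2 c^2 \cL/16},
\]
which decays faster than any fixed negative power of $P$ as $P\to\infty$. Combined with the polynomial pre-factors (bounded by $P^{n+O(1)}$ in total), this yields the desired bound $\ll_N P^{-N}$. The main obstacle I anticipate is the domination step: ensuring that the linear term in the Taylor expansion of $F_\h$ genuinely controls $F_\h(\x)$ uniformly. The carefully tuned constants in \eqref{eq:M_1}--\eqref{eq:M_2} are precisely what make $\partial_{x_1}F$ bounded away from zero and sign-stable along any admissible segment, so that the integrated mean value theorem delivers a clean lower bound without interior cancellation.
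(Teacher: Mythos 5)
Your approach mirrors the paper's: evaluate the Gaussian $z$-integral exactly, reduce to a uniform lower bound $|F_\h(\x)| \gg H P^3 \cL$ for $\x$ in $\supp(W_\h)$, and obtain that bound by isolating the $h_1$-contribution. (Incidentally, your phase factor $e(\tau F_\h(\x))$ is the correct one; the paper writes $e(-\tau F_\h(\x))$, a harmless typo since only absolute values are used.)

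There is, however, a gap in the one step where you deviate. You use the integral representation $F_\h(\x) = \sum_i h_i \int_0^1 \partial_{x_i}F(\x + t\h)\,dt$ and apply \eqref{eq:M_2} along the entire segment $\{\x + t\h : 0 \leq t \leq 1\}$. This forces that segment to lie in $P\supp(W)$, which you justify by convexity of $\supp(W)$, asserting that convexity is ``arranged in the general setting of Proposition~\ref{prop:minor_inhom}''. That is not the case: the class $\cW_n$ is defined only by having support in $[-c,c]^n$ together with derivative bounds, and neither its definition nor the hypotheses \eqref{eq:M_1}--\eqref{eq:M_2} impose convexity of $\supp(W)$. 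The paper's proof sidesteps this entirely by Taylor-expanding $F_\h(\x) = h_1\,\partial_{x_1}F(\x) + O(HP^3) + O(h_1^2 P^2)$, invoking \eqref{eq:M_1} only at the single point $\x$ and bounding the remainder by constants depending on $\|F\|_P$ alone (which requires only $|\x|, |\h| \ll P$, with no reference to intermediate points); the term $O(h_1^2 P^2)$ is then kept subordinate by choosing $c_1$ small in \eqref{eq:H_average}, not by choosing $c$ large. Your argument is easily repaired --- for instance, Taylor-expand $\partial_{x_1}F(\x + t\h)$ at $t=0$ with error $O_{\|F\|_P}(|\h| P^2)$ and take $c_1$ small, rather than relying on \eqref{eq:M_2} along the segment --- but as written it uses a convexity hypothesis the paper neither states nor needs.
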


\begin{proof}
We follow the strategy in \cite[Section 4]{Heath-Brown07} and \cite[Section 5]{Hanselmann}. The parameter $a$ is considered fixed for now.
Using \eqref{eq:Tuhdef} we have 
\begin{equation}
\label{eq:Ihx}
\int_{-\infty}^\infty \exp(-H^2P^6(\tau-z)^2)\cT_{a,\h}(q,z)\, dz = \sum_{\x \in \Z^n} 
W_\h(\x/P) I(\h,\x),
\end{equation}
where
\begin{multline*}
I(\h,\x) = \starsum_{s_1,s_2=1}^{b_1} 
e_{b_1}\big(s_1 F_\h(\x) 
+(s_1-s_2)F(\x)\big) e_{q_2}\big(a F_\h(\x)\big) \\
\times \int_{-\infty}^\infty \exp(-H^2P^6(\tau-z)^2)e(zF_\h(\x)) \, dz.
\end{multline*}
Calculating the inner integral explicitly, we may write
\begin{multline*}
I(\h,\x) = \frac{\sqrt{\pi}}{HP^3} \exp\left(-\frac{\pi^2}{H^2P^6}|F_\h(\x)|^2\right) e(-\tau F_\h(\x)) \\
\times \starsum_{s_1,s_2=1}^{b_1} 
e_{b}\big(s_1 F_\h(\x) 
+(s_1-s_2)F(\x)\big) e_{q_2}\big(a F_\h(\x)\big).
\end{multline*}
We now claim that
$c$ can be chosen in such a way that if $\h \in \cH_1$ satisfies $|h_1| \geq cH\cL$, then we have
\begin{equation}
\label{eq:Fhbound}
|F_\h(\x)| \gg HP^3\cL
\end{equation}
for any $\x$ occurring in \eqref{eq:Ihx} with $W_\h(\x/P) \neq 0$. If this is true, then for such values of $\h$ and $\x$ we have
\[
|I(\h,\x)| \ll \frac{q^2}{HP^3} \exp(-c'(\log P)^2) 
\]
for some constant $c'>0$, so the quantity in \eqref{eq:Ihx} is
\[
\ll P^{n+1}  \exp(-c'(\log P)^2) \ll P^{-N},
\]
as required.

It remains to verify the above claim. For this we write
\[
F_\h(\x) = \frac{\partial F}{\partial x_1}(\x)\cdot h_1 + O(HP^3) + O(h_1^2P^2),
\]
where the implied constants depend on $\Vert F \Vert_P$ only (but not on $\Vert F \Vert$).
By \eqref{eq:M_1}, we have $|\partial_{x_1} F(\x)| \geq MP^3$, yielding
\[
|F_\h(\x)| \geq |h_1|P^2\left(
MP - O(|h_1|)\right) - O(HP^3).
\]
We may thus choose $c_1$ in the definition \eqref{eq:H_average} so that
\[
|F_\h(\x)| \geq \frac{|h_1|MP^3}{2} - O(HP^3),
\]
say, for relevant values of $\x$. This clearly gives the desired bound \eqref{eq:Fhbound} as soon as $|h_1| \gg H\cL$.
\end{proof}

Using Lemma \ref{lem:negligible}, and noting that the contribution from the range $|\tau - z| \geq \cL (HP^3)^{-1}$ is also $O(P^{-N})$, we get the following bound for \eqref{eq:Mq}:
\begin{align*}
\cM_a(q,\tau) &\ll P^{-N} + \frac{P^{n-1}}{H^{n-1}} \int_{|\tau-z|< \cL(HP^3)^{-1}} \left(\sum_{|\h|\leq H\cL} \cT_{a,\h}(q,z)\right) dz \\
&\ll  P^{-N} + \frac{\cL}{HP^3} \frac{P^{n-1}}{H^{n-1}} \max_{|\tau-z|<\cL(HP^3)^{-1}} \sum_{|\h|\leq H\cL} |\cT_{a,\h}(q,z)|.
\end{align*}
Thus we have proved the following result. 
\begin{lemma}
\label{lem:hanselmann}
For any $1 \leq H \leq P$, we have
\begin{equation*}
\cI(q,t) \ll P^{-N} + P^\ve \left(t + \frac{1}{HP^3}\right) \left(\frac{P}{H}\right)^{(n-1)/2}\starsum_{a=1}^{q_2} {\max_{z}}^{(1)} \left\{\sum_{|\h|\leq HP^\ve} |\cT_{a,\h}(q,z)|\right\}^{1/2},
\end{equation*}
where $\max^{(1)}$ is taken over $t \leq |z| \leq \max\left\{2t,t+ \frac{1}{HP^{3-\ve}}\right\}$. 
\end{lemma}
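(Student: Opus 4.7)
The plan is to follow the averaged van der Corput route set up in Section \ref{Sec:Averaged}. Starting from \eqref{eq:sum_c}, I would bound $\cI(q,t) \ll \starsum_{a=1}^{q_2}\int_{t\le |z|\le 2t} |S_a(q,z)|\,dz$ and cover the region $t\le |z|\le 2t$ by $O(1 + tHP^3)$ sub-intervals of length $2(HP^3)^{-1}$ centered at points $\tau\in T$. Applying Cauchy--Schwarz on each sub-interval yields
\[
\int_{|\tau-z|<(HP^3)^{-1}} |S_a(q,z)|\,dz \ll (HP^3)^{-1/2}\, \cM_a(q,\tau)^{1/2}.
\]
The sub-interval length is calibrated precisely so that the counting loss $|T|\asymp (HP^3)\bigl(t + (HP^3)^{-1}\bigr)$ combined with the $(HP^3)^{-1/2}$ factor from Cauchy--Schwarz produces exactly the $t+(HP^3)^{-1}$ factor appearing in the target bound.

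Next, I would upper-bound the sharp $L^2$ integral by the Gaussian-weighted quantity $\cM_a(q,\tau)$ on the right of \eqref{eq:Mq}. This smoothing is essential, because Lemma \ref{lem:negligible} is available only for the Gaussian-weighted form, and because the Gaussian factor decays to $O(P^{-N})$ as soon as $|\tau-z| \gg \cL(HP^3)^{-1}$. I would then expand $|S_a(q,z)|^2$ by van der Corput differencing with the set $\cH_1$ from \eqref{eq:H_average}, arriving at \eqref{eq:vdc2}. Lemma \ref{lem:negligible} discards all $\h\in\cH_1$ with $|h_1|\ge cH\cL$ up to a negligible $P^{-N}$ error, leaving only $|h_1|\le cH\cL$; bounding the remaining sum in $\h$ by its pointwise maximum over $z$ in the narrow window $|\tau-z|<\cL(HP^3)^{-1}$, and using $\#\cH_1 \asymp PH^{n-1}$ together with the length $\cL/(HP^3)$ of this window, gives
\[
\cM_a(q,\tau) \ll P^{-N} + \frac{\cL}{HP^3}\left(\frac{P}{H}\right)^{n-1}\max_{|\tau-z|<\cL(HP^3)^{-1}}\, \sum_{|\h|\le HP^\ve} |\cT_{a,\h}(q,z)|.
\]

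Finally, take square roots of this estimate, sum over $\tau\in T$ and over $a\bmod q_2$, and absorb the powers of $\cL=\log P$ into $P^\ve$. The resulting range of admissible $z$, coming from $\tau$ in the original annulus combined with $|\tau-z|<\cL(HP^3)^{-1}$, is exactly the range $t\le |z|\le \max\{2t,t+(HP^{3-\ve})^{-1}\}$ appearing in $\max^{(1)}$. Since all substantive inputs (the van der Corput identity, the Gaussian-smoothing trick, and Lemma \ref{lem:negligible}) are already available, the proof is essentially bookkeeping; the only place requiring thought is the calibration between the sub-interval length $(HP^3)^{-1}$ used in Cauchy--Schwarz and the scale $\cL(HP^3)^{-1}$ at which the Gaussian smoothing localises, but both are dictated by the setup in Section \ref{Sec:Averaged} and pose no real obstacle.
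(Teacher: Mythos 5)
Your proposal reproduces the paper's own argument: the dyadic cover of $[t,2t]\cup[-2t,-t]$ by $O(1+tHP^{3})$ sub-intervals, Cauchy--Schwarz on each, the Gaussian-smoothed $\cM_a(q,\tau)$ of \eqref{eq:Mq}, van der Corput differencing with the anisotropic box $\cH_1$, and Lemma~\ref{lem:negligible} to truncate to $|h_1|\ll H\cL$, exactly as in Section~\ref{Sec:Averaged}. The only slip is cosmetic: your ``calibration'' sentence credits a single factor of $(HP^3)^{-1/2}$ from Cauchy--Schwarz with producing $t+(HP^3)^{-1}$, when in fact a second $(HP^3)^{-1/2}$ from the window length $\cL(HP^3)^{-1}$ inside $\cM_a^{1/2}$ is also needed; you do use that factor correctly in the final bookkeeping step, so the argument is sound.
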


We note here that as a standard feature in applications of van der Corput differencing, $H$ will eventually be chosen in order to obtain the bound \eqref{eq:I bound}. A minimum value of $H$ is necessary to make the contribution from $\h=\0$ work. However, choosing larger values of $H$ increases the contribution from {\em generic} values of $\h$'s. Lemma \ref{lem:hanselmann} will be crucial in dealing with the case when $q,t$ are both large.

 We shall also state a corresponding result that is derived in the vein of the ``classical'' circle method used in \cite{Browning-Heath-Brown09}. Here, a typical minor arcs contribution takes the shape
\[
\cI(q,t) \leq \int_{t \leq |z| \leq 2t} \starsum_{a=1}^q|S(a/q+z)|\,dz.
\]   
An easy modification of the arguments above give us the following estimate, which in essence is contained in Hanselmann's treatment \cite[(5.11)]{Hanselmann}:
\begin{lemma}
\label{lem:hanselmann_classic}
For any $1 \leq H \leq P$, we have
\begin{equation*}
\cI(q,t) \ll P^{-N} + P^\ve \left(t + \frac{1}{HP^3}\right) \left(\frac{P}{H}\right)^{(n-1)/2}\starsum_{a=1}^q{\max_{z}}^{(1)} \left\{\sum_{|\h|\leq HP^\ve} |\cS_{\h}(a/q +z)|\right\}^{1/2},
\end{equation*}
where $\max^{(1)}$ is defined as above and 
\begin{equation}
\label{eq:S_h}
\cS_\h(\alpha) := \sum_{\x \in \ZZ^n} w(\x/P)e(\alpha F_\h(\x)).
\end{equation}
\end{lemma}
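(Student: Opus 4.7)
The plan is to mimic the proof of Lemma \ref{lem:hanselmann} almost verbatim, changing only the initial decomposition. Rather than passing through $S_a(q,z)$ via \eqref{eq:sum_c}, I start from the classical bound $\cI(q,t) \leq \int_{t\leq|z|\leq 2t}\starsum_{a=1}^q|S(a/q+z)|\,dz$, pulling the modular sum outside and treating each $\alpha = a/q+z$ as an ordinary real number. The key observation is that the averaging argument in Section \ref{Sec:Averaged} uses nothing about $\cT_{a,\h}(q,z)$ beyond the exponential factor $e(zF_\h(\x))$, so the argument carries over with $\cT_{a,\h}(q,z)$ replaced throughout by $\cS_\h(a/q+z)$.

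Concretely, I would cover $\{t\leq|z|\leq 2t\}\cup\{-2t\leq z\leq -t\}$ by $O(1+tHP^3)$ intervals of radius $(HP^3)^{-1}$ centered at points $\tau$ in a set $T$, apply Cauchy--Schwarz on each, and dominate the resulting $L^2$ integrals by Gaussian-smoothed integrals as in \eqref{eq:Mq}, obtaining
\[
\cM_a(\tau) \ll \int_{-\infty}^\infty \exp(-H^2P^6(\tau-z)^2)|S(a/q+z)|^2\,dz.
\]
Then van der Corput differencing \eqref{eq:van der start}, applied directly to $f(\x)=W(\x/P)e((a/q+z)F(\x))$ with the differencing set $\cH_1$ of \eqref{eq:H_average}, yields $|S(a/q+z)|^2 \ll \#\cH_1^{-1}P^n\sum_{\h\in\cH_1}\cS_\h(a/q+z)$ in the notation \eqref{eq:S_h}, with the trivial bound $N(\h) \ll \#\cH_1$ used as in Section \ref{Sec:Averaged}.

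Substituting into $\cM_a(\tau)$ and exchanging sum and integral, I would re-run the reasoning of Lemma \ref{lem:negligible}: its proof rests only on the explicit computation of $\int\exp(-H^2P^6(\tau-z)^2)e(zF_\h(\x))\,dz$ together with the lower bound $|F_\h(\x)|\gg HP^3\cL$ for $|h_1|\geq cH\cL$, neither of which involves the auxiliary modular factors present in $\cT_{a,\h}$. Hence the contribution from $|h_1|\geq cH\cL$ is $O(P^{-N})$ for $\cS_\h$ as well, and the surviving $\h$-range is $|\h|\leq HP^\ve$.

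The remainder is a direct transcription of the bookkeeping at the end of Section \ref{Sec:Averaged}: bound the truncated integral by $\cL(HP^3)^{-1}$ times the maximum in $z$ (over the enlarged window $|\tau-z|<\cL(HP^3)^{-1}$), sum over the $\ll 1+tHP^3$ values of $\tau\in T$, take square roots via $\sqrt{A+B}\leq \sqrt{A}+\sqrt{B}$ to peel off the negligible contribution, and simplify $\frac{1+tHP^3}{HP^3}\ll t+\frac{1}{HP^3}$. I do not anticipate any genuine obstacle; the only reason this lemma needs to be stated separately from Lemma \ref{lem:hanselmann} is notational, since $\cS_\h$ here carries the full exponential $e(\alpha F_\h(\x))$ rather than separating off the square-free-modulus sum that is built into $\cT_{a,\h}$.
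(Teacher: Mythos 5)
Your proof is correct and is precisely the ``easy modification'' of Lemma \ref{lem:hanselmann} that the paper alludes to without spelling out: the paper simply cites Hanselmann's treatment and leaves the modification to the reader, and you have filled it in faithfully. The one observation worth making explicit, which you do correctly, is that in the analogue of Lemma \ref{lem:negligible} the factor $e((a/q)F_\h(\x))$ detaches from the Gaussian integral over $z$ as a unimodular phase, so the decay estimate for $\int\exp(-H^2P^6(\tau-z)^2)e(zF_\h(\x))\,dz$ is unchanged.
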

A main difference here from Lemma \ref{lem:hanselmann} is that the sum \eqref{eq:S_h} is a purely cubic exponential sum only containing the form $F_\h$, as opposed to a mixture of cubic and quartic exponential sums \eqref{eq:Tuhdef}, appearing in Lemma \ref{lem:hanselmann}. In Section \ref{sec:weyl}, we will apply Weyl bound directly to estimate $\cS_\h(\alpha)$, via \cite[Lemma 3.3]{Browning_Prendiville}. This bound will turn out to be important for us. It will be used to supplement bounds from Lemma \ref{lem:hanselmann} and those from pure Weyl differencing.

\section{Evaluation of quartic exponential sums; initial considerations}
\label{sec:expsums_intro}

Our calculations above led us to the consideration of the exponential sums
$\cT_{a,\h}(q,z)$.
Here, we shall investigate a more general version of that sum. Let $f,g \in \Z[x_1,\dotsc,x_n]$ be polynomials and suppose that
\begin{equation}
\label{eq:degrees}
\deg(f) = d_1, \quad \deg(g) = d_2, \quad \text{where} \quad d_1 \geq d_2 \geq 2,
\end{equation}
and that
\[
\max\{\Vert f \Vert_P, 
\Vert g \Vert_P
\} 
\leq H 
\]
for some $H \geq 1$. Given a weight function $w\in \cW_n$ and a fixed integer $a$ relatively prime to $q$,
we define
\begin{multline}
\label{eq:Tuzdef}
\cT(q,z) = \cT_{n}(a,q,z;f,g,w,P) \\
:= \sum_{\x \in \Z^n} 
w(\x/P)\starsum_{s_1,s_2=1}^{b_1} 
e_{b_1}\big(s_1 g(\x) 
+(s_1-s_2)f(\x)\big)
e_{q_2}\big(a g(\x)\big)e(zg(\x)).
\end{multline}
For the result in this paper, we would only need to this result with $d_1=4,d_2=3$. However, we consider general degrees $d_1$ and $d_2$ since these bounds will be useful in bounding the contribution from the square-free part of $q$ in subsequent applications to higher degree forms. We can also obtain them here without too much extra work. From Section \ref{sec:cubefull} onwards however, we will specialise to the case $d_1=4,d_2=3$.
\subsection{Hyperplane intersection: main lemma} Here, we will deviate slightly from our need for bounding \eqref{eq:Tuzdef}, to consider a  question of handling singularities of an arbitrary finite set of varieties, via hyperplane intersections. This is achieved by Lemma \ref{lem:subspace} below. In this paper, it will only be necessary to simultaneously bound singular locus of a system of a cubic and a quartic form and of their complete intersection. However, a general result can be obtained with not much more extra work. This lemma will be instrumental in establishing an important bound obtained in Proposition \ref{prop:sum_h}. Moreover, the extra condition that $\e_1$ can be chosen in a certain way, mentioned at the end of Lemma \ref{lem:subspace} will be useful in Section \ref{sec:slicing}.

To this end, we introduce some notation, which will be used throughout this paper. We use the symbol $v$ to denote a place of $\QQ$, that is, $v = \infty$ or $v = p$ for a prime $p$. 
Given the forms $F_1,\dotsc,F_m \in \ZZ[x_1,\dotsc,x_n]$, we then denote by $V_v(F_1,\dotsc,F_m)$ the closed subvariety of 
$\PP^{n-1}_{\FF_v}$ defined by the image of the ideal $\langle F_1,\dotsc,F_m \rangle$ in $\FF_v[x_1,\dotsc,x_n]$, and 
let
\begin{align*}
s_v(F_1,\dotsc,F_m) &:= \dim (\Sing(V_v(F_1,\dotsc,F_m))), \\
\delta_v(F_1,\dotsc,F_m) &:= \dim(V_v(F_1,\dotsc,F_m)). 
\end{align*}

We will consider the aim of simultaneously reducing the dimension of the singular locus of a system of forms $F_1,...,F_m$, their complete intersection, and the complete intersection of any sub-collection $\{F_{i}:i\in I\}$ for any subset $I\subseteq \{1,...,m\}$. Given a collection of primes $\Pi$, Lemma \ref{lem:subspace} presents us with a {\em nice} lattice basis, which in turn give us an entire chain of subspaces which successively achieve this aim.

\begin{lemma}
\label{lem:subspace}
Let $F_1,\dotsc,F_m$ be forms of degree $d_1,\dotsc,d_m$ defining a complete intersection $X = V(F_1,\dotsc,F_m) \subset \PP^{n-1}_{\QQ}$ of dimension $n-1-m$. Let $\Pi$ be a collection of primes, with $\#\Pi = r \geq 0$, and write $\Pi_a := \{p \in \Pi \mid p > a\}$ for each $a \in \NN$. 
There is a constant $c = c(n,d_1,\dotsc,d_m)$ and a collection of primitive linearly independent integer vectors
\[
\e_1,\dotsc,\e_{n} \in \ZZ^n
\]
satisfying the following property for any integer $0 \leq \eta \leq n-1$, any subset $\varnothing \neq I \subseteq \{1,\dotsc,m\}$ and any $v \in \{\infty\} \cup \Pi_{cr}$:
\begin{itemize}
\item[$(\star)_{I,\eta,v}$]
Provided that the closed subscheme $X_I \subset \PP^{n-1}_\ZZ$ defined by $F_i = 0$ for all $i \in I$ satisfies $\dim (X_I)_v = n-1-|I|$, the subspace $\Lambda_\eta \subset \PP^{n-1}_{\FF_v}$ spanned by the images of $\e_1,\dotsc,\e_{n-\eta}$ is such that
\[
\dim (X_I \cap \Lambda_\eta)_v  = \max\{-1,\dim (X_I)_v -\eta\} 
\]
and
\[
\dim \Sing\big((X_I \cap \Lambda_\eta)_v \big) = \max\{-1,\dim \Sing\big((X_I)_v \big)-\eta\}. 
\]
\end{itemize}
Moreover, the basis vectors $\e_i$ may be chosen so that 
\begin{equation}
\label{eq:basis1}
L/2 \leq |\e_i| \leq L
\end{equation}
for all $i=1,\dotsc,n$ and
\begin{equation}
\label{eq:basis2}
L^n \ll \det(\e_1,\dotsc,\e_n) \ll L^n
\end{equation}
for a constant $L =O_{n,d_1,\dotsc,d_m}(r+1)$.

In addition, given any vector $\f\in \RR^n$, vector $\e_1$ in the basis above can be chosen such that it makes an angle of at most $\pi/3$ with $\f$.
\end{lemma}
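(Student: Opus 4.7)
My plan is to reduce $(\star)_{I,\eta,v}$ to generic flag conditions via Bertini's theorem, and then run a lattice-point sieve to find an integer basis of controlled size satisfying all these conditions simultaneously at every $v\in\{\infty\}\cup\Pi_{cr}$.

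\textbf{Bertini setup.} Fix a place $v$. For each $\varnothing \neq I \subseteq \{1,\dotsc,m\}$ with $\dim(X_I)_v = n-1-|I|$, iterated hyperplane Bertini (applicable over $\FF_v$ because the residue characteristic of any $v \in \Pi_{cr}$ exceeds $cr$ and can be made large relative to the $d_i$) shows that for a generic full flag $\Lambda_0 \supset \Lambda_1 \supset \dotsb \supset \Lambda_{n-1}$ in $\PP^{n-1}_{\FF_v}$, every intersection $(X_I)_v \cap \Lambda_\eta$ has the expected dimension $\dim(X_I)_v - \eta$ and its singular locus has dimension reduced by $\eta$. The locus $B_{I,v}$ of flags failing $(\star)_{I,\eta,v}$ for some $\eta$ is therefore a proper Zariski-closed subset of the flag variety $\cF$ over $\FF_v$, cut out by discriminant-type polynomials $D_{I,\eta}(\e_1,\dotsc,\e_{n-\eta}) \in \ZZ[\e_1,\dotsc,\e_{n-\eta}]$ whose degrees depend only on $n$ and the $d_i$.

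\textbf{Iterative construction.} I would build $\e_1,\dotsc,\e_n$ one at a time, picking each primitive $\e_i$ in the shell $\{L/2 \leq |\e_i| \leq L\}$ and keeping it at a definite angle from the $\RR$-span of $\e_1,\dotsc,\e_{i-1}$; this secures \eqref{eq:basis1}--\eqref{eq:basis2}. For the first step, restrict $\e_1$ further to the cone of half-angle $\pi/3$ around $\f$; this cone has positive solid angle in $\RR^n$ and still contains a positive density of primitive shell vectors. At step $i$, with previous vectors fixed, the conditions $(\star)_{I,n-i,v}$ for all relevant $I$ and all $v \in \{\infty\}\cup\Pi_{cr}$ become the non-vanishing of finitely many polynomials $D_{I,n-i}(\e_1,\dotsc,\e_{i-1},\cdot)$ in $\e_i$, each nonzero of bounded degree, with integer coefficients of absolute value $\ll L^{O(1)}$ (implied constant depending only on $n$ and the $d_i$).

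\textbf{Main obstacle: the sieve bound.} The central quantitative ingredient is the standard polynomial-congruence counting bound: for any nonzero integer polynomial $f$ of bounded degree and not divisible by $p$, $\#\{\x\in[-L,L]^n \cap \ZZ^n : f(\x)\equiv 0 \bmod{p}\}\ll L^n/p + L^{n-1}$, uniformly in $p$. The Archimedean bad locus excludes a bounded codimension-$\geq 1$ subset, contributing a fraction less than $1$ of the shell. Summing the above over the $\leq r$ primes in $\Pi_{cr}$ and using $p > cr$ yields a total bad count $\ll L^n\cdot r/(cr) + rL^{n-1} = L^n/c + rL^{n-1}$. Taking $L = C'(r+1)$ with $C'$ large and $c$ large in terms of $n$ and the $d_i$ makes both terms a small fraction of $L^n$, so a positive density of primitive $\e_i$'s in the shell remains good, and the induction closes. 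The subtle case in the sieve bound is when $p > L^{O(1)}$: here one appeals to the fact that $f(\x)\equiv 0\bmod{p}$ forces the integral identity $f(\x)=0$, whose $\ZZ$-zero set in the shell is a proper hypersurface with $\ll L^{n-1}$ integer points.
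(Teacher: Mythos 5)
Your proof takes a genuinely different route from the paper's, and while the sieve and Bertini ingredients are the same, the iterative construction introduces a gap that the paper's argument sidesteps.

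The paper does \emph{not} build the basis one vector at a time. It chooses the entire matrix $E = (\e_1,\dotsc,\e_n)$ at once: for each $\eta$, the Ghorpade--Lachaud/Bertini argument produces a hypersurface $\cZ_\eta$ in the Grassmannian $\GG_{n-\eta}$ whose complement is contained in the good set, and the pullback of $\bigcup_\eta \cZ_\eta$ to $\AA^{n^2}$ via the Pl\"ucker maps is a proper closed subset $W_v$. The bad matrices are counted via \cite[Lemma 4]{Browning-Heath-Brown09} applied in the space of $n\times n$ matrices, and the Lipschitz principle plus M\"obius inversion guarantees a positive density of primitive, well-spaced bases in a shell of radius $B \approx cr$. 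Because all conditions are imposed on the matrix simultaneously, the only fact needed from Bertini is that each $\cZ_\eta$ is a \emph{proper} subvariety, which is standard.

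Your iterative construction, by contrast, needs a stronger claim that you assert without justification: namely that, after fixing $\e_1,\dotsc,\e_{i-1}$, the specialized polynomial $D_{I,n-i}(\e_1,\dotsc,\e_{i-1},\cdot)$ is a \emph{nonzero} polynomial in $\e_i$ (over $\ZZ$ and mod $p$ for each $p \in \Pi_{cr}$). This is not an immediate consequence of the flag-variety Bertini you invoke. Bertini says a generic full flag is good; it does not say that every good partial flag $\Lambda_{n-1} \subset \dotsb \subset \Lambda_{n-i+1}$ admits a good extension $\Lambda_{n-i} \supset \Lambda_{n-i+1}$. Indeed, the set of subspaces containing $\Lambda_{n-i+1}$ is a Schubert cycle in $\GG_{n-\eta}$, and a priori a proper closed bad locus could contain an entire Schubert cycle. (Note also that your direction of iteration is the \emph{opposite} of the standard ``iterated Bertini'': $\Lambda_\eta$ is \emph{spanned} by $\e_1,\dotsc,\e_{n-\eta}$, so adding $\e_i$ enlarges the subspace rather than cutting it down. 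The standard iterated Bertini cuts by hyperplanes, which would correspond to specifying the dual forms $\ell_1,\dotsc,\ell_{n-1}$ in order, not the spanning vectors.) This ``going up'' step can in fact be salvaged — using that singular points of $Y$ stay singular on a proper hyperplane section $Y\cap H$, so a good smaller flag propagates smoothness at the base locus, together with a Bertini argument with base locus for the residual part — but this requires an extra lemma that your proposal neither states nor proves, and it must be verified simultaneously at all $v \in \{\infty\} \cup \Pi_{cr}$. Until that step is filled in, the claim that ``a positive density of primitive $\e_i$'s in the shell remains good'' is not established.

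One smaller point: the conclusion \eqref{eq:basis2} is a statement about $\det(\e_1,\dotsc,\e_n)$, not merely about each $\e_i$ having a definite angle to the span of the earlier vectors; your phrase ``keeping it at a definite angle from the $\RR$-span of $\e_1,\dotsc,\e_{i-1}$'' does deliver \eqref{eq:basis2}, but you should note that this is exactly the content of the paper's set $\cM_\psi(B)$, and that (as in the paper) a Lipschitz-type count is needed to show that the number of primitive lattice points in the resulting angular shell region is $\gg L^n$ before subtracting the bad count.
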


\begin{proof}
First we give a short outline of the proof. We shall represent the collection of vectors $\e_i$ by the matrix $E$ having the $\e_i$ as columns. There are now two main points to our argument. First we argue that each condition $(\star)_{I,\eta,v}$ is implied by the non-vanishing of some polynomial, over $\FF_v$, in the entries $e_{i,j}$. On the other hand, a positive proportion of all matrices $E$ correspond to bases with the desired properties \eqref{eq:basis1}--\eqref{eq:basis2}. 

We begin by noting that the case $r=0$ follows from the case $r=1$, so we may assume that $r\geq 1$, so that $\Pi$ is non-empty. For fixed $I$, $\eta$ and $v$, it is a consequence of Bertini's theorem, as observed by Ghorpade and Lachaud \cite[Prop. 1.3]{Ghorpade}, that the set $\cU_{\eta,I}$ of subspaces $\Lambda$ satisfying the condition $(\star)_{I,\eta,v}$ contains a Zariski-open subset of the Grassmannian $\GG_{n-\eta} := \GG(n-1-\eta,n-1)$. (Note that for those places $v$ where the forms $F_i$, $i \in I$, do not intersect completely, the condition is void.) Recall that $\GG_{n-\eta}$ is a closed subvariety of $\PP^{N}_{\FF_v}$ for a certain $N = N(n,\eta)$ by virtue of the Pl\"ucker embedding. By an argument very similar to the proof of \cite[Lemma 2.8]{Marmon08}, one sees that one may in fact find a hypersurface $\cZ_{\eta,I}\subset \PP^N$ of degree $O_{n,d_1,\dotsc,d_r}(1)$ independent of $v$, such that $\cU_{\eta,I}$ contains the complement of $\cZ_{\eta,I}$ in $\GG_{n-\eta}$. Consequently, the complement of the hypersurface $\cZ_\eta = \bigcup_I \cZ_{\eta,I}$ is contained in $\cU_{\eta} = \bigcap_I \cU_{\eta,I}$. The hypersurface $\cZ_\eta$ embeds as a hypersurface in $\GG_{n-\eta} \times \GG_\eta$ under the natural map $\GG_{n-\eta} \hookrightarrow \GG_{n-\eta} \times \GG_\eta$. 

Define $\cM(B)$, for any $B >0$, to be the set of matrices $E = (e_{i,j}) \in M_{n\times n}(\RR)$ such that each column vector $\e_j = (e_{1,j},\dotsc,e_{n,j})$ satisfies 
$B/2\leq \Vert \e_j \Vert \leq B$. (Here, we use $\Vert \cdot \Vert$ to denote the usual Euclidean norm on $\RR^n$.) For any $\psi \in (0,\pi/2]$, let $\cM_\psi(B)$ be the set of $E \in \cM(B)$ such that each column vector $\e_j$ makes an angle of at least $\psi$ with the hyperplane spanned by the remaining vectors $\e_{k}$, $k\neq j$. It is clear that the set $\cM_\psi(B)$ has a well-defined and positive volume, that
\[
\vol(\cM(B)) = B^{n^2} \vol(\cM) \qquad \text{and} \qquad \vol(\cM_\psi(B)) = B^{n^2} \vol(\cM_\psi),
\]
where $\cM :=\cM(1)$ and $\cM_\psi := \cM_\psi(1)$, and that
\[
\vol(\cM_\psi) \geq \vol(\cM) - n \vol(\cN_\psi),
\] 
say, where $\cN_\psi$ is the set of $E \in \cM$ where the vector $\e_n$ is either the zero vector or makes an angle of less than $\psi$ with the hyperplane spanned by the first $n-1$ column vectors. Now one has
\[
\vol(\cN_\psi) \ll_n \psi \vol(\cM),
\] 
so choosing $\psi$ small enough, we may conclude that there exists a constant $c_1 = c_1(n)$ such that 
\[
\vol(\cM_\psi) \geq c_1 \vol(\cM).
\]
If we now put $M(B) = \cM_\psi(B) \cap M_{n\times n}(\ZZ)$, then it follows from the Lipschitz principle \cite{Davenport_Lipschitz} that 
\begin{equation}
\label{eq:lipschitz}
\#M(B) = B^{n^2} \vol(\cM_\psi) + O_n(B^{n^2-1}).
\end{equation}
Letting $M_\ast(B)$ denote the set of matrices $E \in M(B)$ for which $\gcd(e_{1,j},\dotsc,e_{n,j}) = 1$ for $j = 1,\dotsc,n$, one may deduce from \eqref{eq:lipschitz}, using standard M\"obius inversion arguments, that
\[
\#M_\ast(B) = B^{n^2} \frac{\vol(\cM_\psi)}{\zeta(n)^n} + O_n(B^{n^2-1}).
\]
We conclude that there is a constant  $C_1 = C_1(n)$ such that 
\[
\#M_\ast(B) \geq C_1 B^{n^2}.
\]
Moreover, it is evident that each matrix in $M_\ast(B)$ satisfies \eqref{eq:basis1} and \eqref{eq:basis2}.

Let $\cW$ be the set of matrices $E \in M_{n\times n}(\ZZ)$ with entries in $[-B,B]$ for which the column vectors fail the condition $(\star)_{I,\eta,v}$ for some choice of $I$, $\eta$ and $v \in \{\infty\} \cup \Pi_a$, where $a$ is yet to be chosen. Our aim will be to show that $\cW$ has cardinality strictly less than $C_1 B^{n^2}$, implying the existence of 
a basis with the desired property.

Fix a place $v$, and let us identify $n \times n$-matrices with points in $\AA^{n^2}$. Let $S$ be the closed subset of $\AA^{n^2}$ defined by $\det(E) = 0$. For each $\eta$, an appropriate Pl\"ucker map $\Phi_\eta: \AA^{n^2}\setminus S \to \GG_{n-\eta} \times \GG_{\eta}$ maps $\AA^{n^2}\setminus S$ onto an affine open subset of $\GG_{n-\eta} \times \GG_{\eta}$, and we may define $W = W_v$ to be the union of the closures in $\AA^{n^2}$ of the inverse images $\Phi_\eta^{-1}(\cZ_\eta)$ of the subvarieties $\cZ_\eta$ introduced above. Then a matrix $E$ lies in $\cW$ only if either $E \in W_\infty$ or $[E]_p \in W_p(\FF_p)$ for some $p \in \Pi_{a}$, where $[E]_p$ denotes the matrix where each entry is the reduction (mod $p$) of the corresponding entry of $E$.
By \cite[Lemma 4]{Browning-Heath-Brown09}, the number of matrices $E$ such that $|e_{i,j}| \leq B$ for all $i,j$, and such that either $E \in W_\infty$ or $[E]_p \in W_p(\FF_p)$ for a prime $p$ is
\[
\leq C_2( B^{n^2} p^{-1} + B^{n^2-1})
\]
for some constant $C_2  = C_2(n,d_1,\dotsc,d_m)$. The number of matrices $E$ satisfying this condition for at least one prime $p \in \Pi_a$ is thus
\[
\leq C_2 \left( B^n \sum_{p \in \Pi_a} p^{-1} + r B^{n-1} \right) \leq \frac{C_2 r}{a} B^{n^2} + C_2 rB^{n^2-1}.
\]
Choosing $c = 4C_2/C_1$ and $a = cr = 4C_2 r/C_1$, this is
\[
\leq \frac{1}{2} C_1 B^{n^2} 
\]    
as soon as $B \leq a$, which is what we wanted to prove. We have thus established the assertion in the first part of the lemma, where we may take $L \approx cr$.

Now to prove the last condition, we now want to choose the vectors $\e_1,\dotsc,\e_n$ so that the vector $\e_1$ makes an angle of at most $\pi/3$, say, to the fixed vector $\f$.
 Recall the sets $\cM(B),\cM_\psi(B)$ considered before, and define $\cM^{\theta}(B)$, $\cM_{\psi}^{\theta}(B)$ to be the set of matrices $E$ in $\cM(B)$ or $\cM_\psi(B)$, respectively, such that the first column vector $\e_1$ makes an angle of at most $\theta$ with the vector $\f$. Let $\cM^{\theta} = \cM^{\theta}(1)$ and $\cM_{\psi}^{\theta} = \cM_{\psi}^{\theta}(1)$. Since $\cM_\psi$ is invariant under rotations, one sees that for any $0 < \theta \leq \pi/2$, the set $\cM_{\psi}^{\theta}$ has a well-defined and positive volume and that
\[
\vol(\cM_{\psi}^{\theta}) \geq \vol(\cM^\theta) - n \vol(\cN_\psi).
\]
One may now calculate that $\vol(\cM^\theta) \gg_n \theta^2 \vol(\cM)$, so upon replacing the constant $c_1(n)$ in the proof of Lemma \ref{lem:subspace} by a smaller yet positive constant, one may replace $\cM_{\psi}(B)$ by $\cM_{\psi}^{\pi/3}(B)$ in the definition of the sets $M(B)$ and $M_\ast(B)$, and still conclude that there exists a matrix $E \in M_\ast(B)$ with the desired properties, for $B=L$ large enough. 

\end{proof}

Let us compare Lemma \ref{lem:subspace} with \cite[Lemma 5]{Browning-Heath-Brown09}, which specialises to the case of one form $G$. An argument akin to Bertini's theorem was used in \cite[Lemma 5]{Browning-Heath-Brown09} to find a primitive integer vector $\m$ such that intersecting with the hyperplane $\m.\x = 0$ lowers the dimension of the singular locus of $V_v(G)$ for $v\in \Pi\cup \infty$ . In addition, at each stage, $\m$ could be chosen in a {\em nice} way. Lemma \ref{lem:subspace} can be seen a a generalisation of \cite[Lemma 5]{Browning-Heath-Brown09}. Being able to deal with  an arbitrary finite collection of forms of various degrees will be crucial in dealing with systems of forms of higher degree.

We end this analysis by explaining how Lemma \ref{lem:subspace} will be used to bound the sums $\cT(q,z)$. For $f,g$ as defined at the beginning of this section, above we let
\[
F(x_0,\dotsc,x_n) := x_0^{\deg(f)}\, f(\tfrac{x_1}{x_0},\dotsc,\tfrac{x_n}{x_0})
\quad\text{and}\quad
G(x_0,\dotsc,x_n) := x_0^{\deg(g)}\, g(\tfrac{x_1}{x_0},\dotsc,\tfrac{x_n}{x_0})
\]
be their homogenisations and 
$$F_0(x_1,\dotsc,x_n) := F(0,x_1,\dotsc,x_n), \quad G_0(x_1,\dotsc,x_n) := G(0,x_1,\dotsc,x_n)$$ the leading forms of $f$ and $g$. Now, if $n \geq 2$, we define
\begin{equation}
\label{eq:s'def}
s_v' = s_v'(f,g) := 
\max \left\{s_v(F_0),s_v(G_0),s_v(F_0,G_0)
\right\}, 
\end{equation}
provided that
\[
\delta_v(F_0) = \delta_v(G_0) = \delta_v(F_0,G_0)+1 = n-2,
\]
and
\(
s_v' = n-1
\)
otherwise.
Note that in this definition, $V_v(G_0)$ and $V_v(F_0,G_0)$ are considered as subvarieties of $\PP^{n-1}$. Let us give an example of how Lemma \ref{lem:subspace} will be used: The exponential sum $\cT(q,z)$ can be most efficiently estimated in the non-singular case, that is, when $s'_\infty = -1$. Therefore, when $s'_\infty \neq -1$, we will employ Lemma \ref{lem:subspace}.

\subsection{Bounding $\cT(q,z)$.} The starting point for our investigation of $\cT(q,z)$ is an application of the Poisson summation formula, a standard technique. The proof follows from a minor modification of \cite[Lemma 8]{Browning-Heath-Brown09} and we will skip it here.
\begin{lemma}
\label{lem:poisson}
We have
\[
\cT(q,z) = q^{-n} \sum_{\v \in \Z^n} S(q,\v) I(z,q^{-1}\v),
\]
where
\begin{equation*}
S(q,\v) :=
\starsum_{s_1,s_2 = 1}^{b_1} \sum_{\a \bmod q} e_{b_1}\big(s_1 g(\a) 
+(s_1-s_2)f(\a)\big) 
e_{q_2}(ag(\a)) e_q(\v.\a)
\end{equation*}
and
\[
I(z,\v) = \int w(\x/P)\e(z g(\x)-\v.\x )d\x.
\]
\end{lemma}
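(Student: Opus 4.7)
The plan is to apply Poisson summation after breaking the sum over $\x \in \Z^n$ into residue classes modulo $q$. The essential observation is that the factor $e_{b_1}(s_1 g(\x) + (s_1 - s_2) f(\x)) \, e_{q_2}(a g(\x))$ depends on $\x$ only through its residue class modulo $q = b_1 q_2$, because $f$ and $g$ have integer coefficients and $b_1 \mid q$, $q_2 \mid q$. Writing $\x = \a + q\y$ with $\a$ running over $(\Z/q\Z)^n$ and $\y \in \Z^n$ we therefore obtain
\[
\cT(q,z) = \sum_{\a \bmod q} \starsum_{s_1,s_2 = 1}^{b_1} e_{b_1}\bigl(s_1 g(\a) + (s_1-s_2)f(\a)\bigr) e_{q_2}\bigl(ag(\a)\bigr) \, F_\a,
\]
where $F_\a := \sum_{\y \in \Z^n} w\bigl((\a + q\y)/P\bigr) \, e\bigl(z g(\a+q\y)\bigr)$.

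Next I would apply Poisson summation to each $F_\a$. The function $\y \mapsto w((\a+q\y)/P) \, e(zg(\a+q\y))$ is a Schwartz class function (smooth, compactly supported), so Poisson summation applies without any convergence issue. Its Fourier transform at $\v \in \Z^n$ is, after the change of variable $\x = \a + q\y$ with Jacobian $q^n$,
\[
q^{-n} \int_{\RR^n} w(\x/P) \, e\bigl(zg(\x) - q^{-1} \v \cdot (\x - \a)\bigr) \, d\x = q^{-n} e_q(\v \cdot \a) \, I(z, q^{-1}\v),
\]
where $I$ is the integral defined in the statement. Substituting and swapping the sums over $\a \bmod q$ and $\v \in \Z^n$ yields
\[
\cT(q,z) = q^{-n} \sum_{\v \in \Z^n} I(z,q^{-1}\v) \sum_{\a \bmod q} \starsum_{s_1,s_2 = 1}^{b_1} e_{b_1}\bigl(s_1 g(\a) + (s_1-s_2)f(\a)\bigr) e_{q_2}\bigl(ag(\a)\bigr) e_q(\v \cdot \a),
\]
which is exactly the claimed formula, the bracketed quantity being precisely $S(q,\v)$.

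The only point that requires any care is the justification of the exchange of the $\v$-sum with the $\a$-sum, and more generally absolute convergence of the Poisson-transformed series. This follows from a standard integration-by-parts argument applied to $I(z,q^{-1}\v)$: integrating by parts $N$ times against the coordinate $x_i$ for which $|v_i|/q$ is largest, and exploiting the compact support and smoothness of $w$ together with polynomial bounds on $zg$ and its derivatives on $\supp w$, yields $I(z,q^{-1}\v) \ll_N P^n (1 + |\v|/q \cdot P)^{-N}$ for any $N$, which grants absolute convergence provided $P$ is polynomially bounded in $q$. Since the whole argument is a routine adaptation of \cite[Lemma~8]{Browning-Heath-Brown09}, with the only new feature being the presence of the second set of exponentials involving $s_2$ and $f$---which however are $q$-periodic in $\x$ just like everything else---no conceptual difficulty arises beyond what is already handled in the cited reference.
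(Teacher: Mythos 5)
Your proof is correct and follows the standard Poisson summation argument that the paper itself invokes by saying the result is ``a minor modification of \cite[Lemma 8]{Browning-Heath-Brown09}''; you have simply carried out that modification explicitly, correctly noting the $q$-periodicity of the arithmetic factors and computing the Fourier transform after the change of variables $\x = \a + q\y$.
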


A key feature in our approach is that, when applying Lemma \ref{lem:poisson} in the case where $f$ is a quartic polynomial and $g$ a cubic polynomial, only the cubic polynomial $g$ occurs in the exponential integral, allowing us to use \cite[Lemma 9]{Browning-Heath-Brown09}. Upon an application of \cite[Lemma 9]{Browning-Heath-Brown09} we get the following estimate.
\begin{proposition}
\label{prop:poisson+intbound}
Suppose that $d_1 = 4$ and $d_2=3$. Then we have the estimate
\begin{equation*}
\cT(q,z) \ll P^{-N} + q^{-n}P^n \max_{\v_0} \sum_{|\v - \v_0| \leq P^\ve V} |S(q,\v)|
\end{equation*}
where \begin{equation}
V=qP^{-1}\max\{1,(HP^3|z|)^{1/2}\}.
\label{eq:V def}
\end{equation}
\end{proposition}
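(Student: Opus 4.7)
The proof is essentially a combination of Lemma \ref{lem:poisson} with a standard exponential-integral bound. The plan is as follows. First, I would invoke Lemma \ref{lem:poisson} to write
\[
\cT(q,z) = q^{-n} \sum_{\v \in \ZZ^n} S(q,\v)\, I(z, q^{-1}\v).
\]
Next, I would estimate the oscillatory integral $I(z, q^{-1}\v) = \int w(\x/P)\, e(zg(\x) - q^{-1}\v \cdot \x)\,d\x$ by the bound given in \cite[Lemma 9]{Browning-Heath-Brown09}. The crucial structural point --- and the reason this setup is cleaner than the classical circle method --- is that with $d_1 = 4$ and $d_2 = 3$, the quartic polynomial $f$ appears only inside the finite exponential sum $S(q,\v)$, while only the cubic polynomial $g$ survives in the integral $I(z, q^{-1}\v)$. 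Hence we are exactly in the regime where \cite[Lemma 9]{Browning-Heath-Brown09} applies verbatim, with the cubic $g$ playing the role of the cubic form in that lemma.

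Concretely, after applying the integral bound, one obtains a ``stationary phase centre'' $\v_0 = \v_0(q,z)$, arising from the critical points of the phase $zg(\x) - q^{-1}\v \cdot \x$ on $P\cdot\supp(w)$, such that
\[
I(z, q^{-1}\v) \ll P^n \qquad \text{for } |\v - \v_0| \leq P^\ve V,
\]
while iterated integration by parts yields rapid decay $I(z, q^{-1}\v) \ll_N P^{-N}$ outside this ball. Here the factor $qP^{-1}$ in the definition \eqref{eq:V def} of $V$ encodes the band-limit coming from the compact support of $w(\x/P)$, and the factor $(HP^3|z|)^{1/2}$ captures the spread due to stationary phase once $|z|$ is large enough that the Hessian term $z\nabla^2 g(\x)$ becomes the dominant feature of the phase; the bound $\Vert g \Vert_P \leq H$ ensures that the Hessian of $g$ on $P\cdot\supp(w)$ has magnitude $O(HP)$, giving the correct calibration of the stationary-phase radius.

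Combining these two ingredients, the contribution from frequencies $\v$ with $|\v - \v_0| > P^\ve V$ is absorbed into the $P^{-N}$ error term, while the remaining part of the sum is bounded trivially by
\[
q^{-n} P^n \sum_{|\v - \v_0| \leq P^\ve V} |S(q,\v)| \leq q^{-n} P^n \max_{\v_0} \sum_{|\v - \v_0| \leq P^\ve V} |S(q,\v)|,
\]
where the maximum is taken over all $\v_0 \in \ZZ^n$ to account for the fact that the true stationary-phase centre need not be an integer vector. This yields the claimed estimate. There is no significant obstacle: the only subtlety is the correct bookkeeping of the stationary-phase radius producing $V$, which is entirely contained in \cite[Lemma 9]{Browning-Heath-Brown09}, so no substantive calculation beyond appealing to that lemma is required.
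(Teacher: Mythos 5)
Your proposal is correct and follows the same route as the paper: substitute the Poisson summation identity of Lemma \ref{lem:poisson} and then appeal to \cite[Lemma 9]{Browning-Heath-Brown09} for the exponential integral $I(z,q^{-1}\v)$, the key point being that for $d_1=4$, $d_2=3$ only the cubic $g$ survives in the phase so that the cubic integral estimate applies directly. The paper records exactly this (in the sentence immediately preceding the proposition) and offers no further detail, so your account --- including the heuristic justification of the radius $V$ and the absorption of the tail $|\v-\v_0|>P^{\ve}V$ into the $P^{-N}$ term via integration by parts --- is a faithful reconstruction of what is intended.
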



The following observation from \cite[Lemma 10]{Browning-Heath-Brown09} will be useful for establishing multiplicativity relations for our exponential sums.

\begin{lemma}
\label{lem:mult}
Let $r,s$ be integers with $(r,s)=1$, and let $\bar{r},\bar{s}$ be integers satisfying $r\bar{r}+ s\bar{s} = 1$. Then, for any polynomial $h \in \Z[x_1,\dotsc,x_n]$, and any $\x,\y \in \Z^n$, we have
\[
h(r\bar{r}\x + s\bar{s}\y) \equiv r\bar{r} h(\x) + s\bar{s} h(\y) \pmod{rs}.
\]
Moreover, for any rational function $R(\x) = \frac{h_1(\x)}{h_2(\x)}$, where $h_1,h_2 \in \Z[x_1,\dotsc,x_n]$, we have 
\[
R(r\bar{r}\x + s\bar{s}\y) \equiv r\bar{r} R(\x) + s\bar{s} R(\y) \pmod{rs},
\]
provided that $h_2(\x),h_2(\y)$ and $h_2(r\bar{r}\x + s\bar{s}\y)$ are invertible in $\Z/rs\Z$.
\end{lemma}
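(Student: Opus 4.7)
The proof plan is to reduce everything to the Chinese Remainder Theorem, exploiting the two key congruences that follow from $r\bar r + s\bar s = 1$:
\[
r\bar r \equiv 0, \quad s\bar s \equiv 1 \pmod{r}, \qquad r\bar r \equiv 1, \quad s\bar s \equiv 0 \pmod{s}.
\]
These let me evaluate both sides of the claimed congruence modulo $r$ and modulo $s$ separately, and then assemble the result via CRT using $(r,s) = 1$.

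For the polynomial assertion, reduce modulo $r$: the vector $r\bar r\x + s\bar s\y$ becomes $s\bar s \y \equiv \y \pmod{r}$, so (working entrywise) $h(r\bar r\x + s\bar s\y) \equiv h(\y) \pmod{r}$; on the other hand $r\bar r h(\x) + s\bar s h(\y) \equiv h(\y) \pmod{r}$. The same calculation modulo $s$ shows both sides are congruent to $h(\x)$. Since $(r,s)=1$, CRT promotes these two congruences to one modulo $rs$.

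The rational case proceeds identically, but one must be slightly careful because $R(\x)$, $R(\y)$, $R(r\bar r\x+s\bar s\y)$ only make sense in $\ZZ/rs\ZZ$ once we invoke the hypothesis that $h_2(\x)$, $h_2(\y)$ and $h_2(r\bar r\x+s\bar s\y)$ are units modulo $rs$. Given this, the reduction modulo $r$ works verbatim: $R(r\bar r\x+s\bar s\y) \equiv R(\y)$ and $r\bar rR(\x) + s\bar sR(\y) \equiv R(\y) \pmod{r}$; and symmetrically modulo $s$. CRT then delivers the claimed congruence modulo $rs$.

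I do not anticipate a genuine obstacle here. The only mild subtlety is bookkeeping in the rational case, namely verifying that the invertibility of $h_2$ at the three evaluation points (built into the hypothesis) really is enough to make every expression appearing in the CRT argument well-defined as an element of $\ZZ/rs\ZZ$; but this is immediate since $h_2$ invertible mod $rs$ implies $h_2$ invertible mod $r$ and mod $s$, so all the individual reductions make sense.
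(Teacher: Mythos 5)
Your proof is correct and rests on the same underlying observation as the paper's. The paper states the result as a consequence of the fact that the map $\phi:\ZZ/rs\ZZ\times\ZZ/rs\ZZ\to\ZZ/rs\ZZ$, $(x,y)\mapsto r\bar r x + s\bar s y$, is a ring homomorphism; your argument of reducing modulo $r$ and modulo $s$ separately and invoking CRT is exactly the verification that $\phi$ is this ring homomorphism, spelled out in concrete terms. So the two proofs are the same argument at different levels of explicitness, and your treatment of the rational case (checking the invertibility hypothesis lets you reduce mod $r$ and mod $s$) is a sound expansion of what the paper leaves implicit.
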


\begin{proof}
The above statements are consequences of the fact that the map
\[
\phi: \Z/rs\Z \times \Z/rs\Z \to \Z/rs\Z, \quad (x,y) \mapsto r\bar r x+ s\bar s y
\]
is a ring homomorphism, which is easily verifiable.
\end{proof}

Now we may prove the following multiplicativity property for the exponential sums: 

\begin{lemma}
Let $\bar{b_1},\bar{q_2}$ be integers such that 
\(
b_1\bar{b_1} + q_2 \bar{q_2} = 1.
\)
Then we have
\begin{equation}
\label{eq:Sumult}
 S(q,\v)=T(b_1,\overline{q_2}\v)T^*(q_2,
\overline{b_1} \v ),
\end{equation}
where 
\begin{equation}
\begin{split}
T(q,\v) = T(q,\v;f,g)=\sum_{\x\bmod {q}}\:\:\starsum_{s_1,s_2=1}^qe_{q}(
s_1g(\x)+(s_1-s_2)f(\x)+\v.\x)
\label{Tudef}
\end{split}
\end{equation}
and
\begin{equation}
T^*(q,\v)= T^*_a(q,\v;g) := \sum_{\x\bmod q} e_{q}\big(a g(\x)+\v.\x\big)\label{Tadef}
\end{equation}
for any $q \in \NN$.
Moreover, if $r,s$ are coprime integers and 
$\bar{r},\bar{s}$ are integers such that $r\bar{r}+s\bar{s}=1$, 
then
\begin{equation}
 T(rs,\v;f,g)=T(r,\bar{s}\v;\bar s f,\bar s g)T(s,\bar{r}\v;\bar r f,\bar r g)\label{Tumult}
\end{equation}
and
\begin{align}
\label{eq:Tamult}
T^*_a(rs,\v)=T^*_{\bar s a}(r,\bar s\v)T^*_{\bar r a}(s,\bar r \v).
\end{align}
\end{lemma}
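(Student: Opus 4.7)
My plan is to prove all three identities by the same Chinese Remainder Theorem strategy, implemented via Lemma \ref{lem:mult}. For the first identity \eqref{eq:Sumult}, since $q = b_1 q_2$ with $(b_1,q_2) = 1$, I would parametrise $\a \bmod q$ via the bijection $\a \leftrightarrow (\x,\y)$ given by $\a \equiv q_2\overline{q_2}\,\x + b_1\overline{b_1}\,\y \pmod{q}$, where $\x$ runs over residues mod $b_1$ and $\y$ over residues mod $q_2$. By Lemma \ref{lem:mult} we have $g(\a) \equiv q_2\overline{q_2} g(\x) + b_1\overline{b_1} g(\y) \pmod{q}$, and analogously for $f$. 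Since $e_{b_1}(\cdot)$ depends only on its argument mod $b_1$ and $q_2\overline{q_2} \equiv 1 \pmod{b_1}$, the quartic/cubic factor reduces to $e_{b_1}\bigl(s_1 g(\x) + (s_1-s_2) f(\x)\bigr)$, and likewise $e_{q_2}(a g(\a))$ reduces to $e_{q_2}(a g(\y))$. The linear character splits by partial fractions: $e_q(\v.\a) = e_{b_1}(\overline{q_2}\,\v.\x)\, e_{q_2}(\overline{b_1}\,\v.\y)$. Assembling these and factorising the sums over $\x,\y$ gives \eqref{eq:Sumult}.

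For \eqref{Tumult} I would proceed analogously, writing $\x \equiv s\overline{s}\,\u + r\overline{r}\,\z \pmod{rs}$ with $\u \bmod r$, $\z \bmod s$, and similarly decomposing the invertible variables as $s_i \equiv s\overline{s}\,s_i' + r\overline{r}\,s_i'' \pmod{rs}$, noting that $(s_i,rs)=1$ is equivalent to $(s_i',r) = (s_i'',s) = 1$. Applying Lemma \ref{lem:mult} to $f,g$ and using the idempotency $(s\overline{s})^2 \equiv s\overline{s} \pmod{rs}$, every mixed cross-term collapses, and a term such as $e_{rs}(s_1 g(\x))$ splits as $e_{r}(\overline{s}\, s_1' g(\u))\,e_s(\overline{r}\, s_1'' g(\z))$ via the identity $e_{rs}(s\overline{s}\,m) = e_r(\overline{s}\,m)$. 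Collecting all pieces and recognising the factors as $T(r,\overline{s}\v;\overline{s}f,\overline{s}g)$ and $T(s,\overline{r}\v;\overline{r}f,\overline{r}g)$ completes the proof. The formula \eqref{eq:Tamult} for $T^*_a$ follows by the very same substitution, and is in fact strictly easier since there is no auxiliary pair $(s_1,s_2)$ to track; the parameter $a$ simply gets multiplied by $\overline{s}$ and $\overline{r}$ in the respective factors.

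The argument is essentially pure bookkeeping, so there is no serious obstacle; the only subtlety is to keep track of the twisting constants $\overline{s}$ and $\overline{r}$ that appear as coefficients of $f$ and $g$ on the right-hand sides. These scalars emerge from the identity $e_{rs}(s\overline{s}\,m) = e_r(\overline{s}\,m)$ applied to each monomial, and they are precisely what explains the otherwise odd-looking twisted polynomials $\overline{s}f, \overline{s}g$ and $\overline{r}f, \overline{r}g$ in \eqref{Tumult}, as well as the twisted index $\overline{s}a, \overline{r}a$ in \eqref{eq:Tamult}. Once this scaling is tracked carefully, every step is a direct application of Lemma \ref{lem:mult} together with coprimality of $r$ and $s$.
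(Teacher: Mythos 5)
Your argument is correct and coincides in all essentials with the paper's own proof: both substitute $\a = q_2\overline{q_2}\x_1 + b_1\overline{b_1}\x_2$ and apply Lemma \ref{lem:mult} for \eqref{eq:Sumult}, and both prove \eqref{Tumult} by the CRT substitution $(s_1,s_2,\x) = s\bar s(s_1',s_2',\x') + r\bar r(s_1'',s_2'',\x'')$ together with the identity $e_{rs}(s\bar s m) = e_r(\bar s m)$ that produces the twisted polynomials $\bar s f,\bar s g$ and $\bar r f,\bar r g$. The only cosmetic difference is that for \eqref{eq:Tamult} the paper simply cites \cite[Lemma 10]{Browning-Heath-Brown09}, whereas you observe (correctly) that the same substitution gives it directly.
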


\begin{proof}
Substituting $\a = q_2\overline{q_2}\x_1 + b_1\overline{b_1}\x_2$ in the definition of $S(q,\v)$, where the vectors $\x_1$ and $\x_2$ run through all residue classes $\bmod{b_1}$ and $\bmod{q_2}$, respectively, Lemma \ref{lem:mult} gives
\begin{align*}
s_1 g(\a) +(s_1-s_2)f(\a) &\equiv s_1 g(\x_1) 
+(s_1-s_2)f(\x_1) \pmod{b_1},\\
ag(\a) &\equiv ag(\x_2) \pmod{q_2}, \textrm{ and }e_q(\v.\a) = e_{b_1}(\bar{q_2}\v.\x_1)e_{q_2}(\bar{b_1}\v.\x_2).
\end{align*}
This gives us the relation \eqref{eq:Sumult}.

To prove \eqref{Tumult}, we set
\begin{equation}
\label{eq:subst}
(s_1,s_2,\x) = s\bar{s}(s_1',s_2',\x') + r\bar{r}(s_1'',s_2'',\x'')
\end{equation}
in \eqref{Tudef}, where the $s_i'$ run through $(\Z/r\Z)^*$, the $s_i''$ run through $(\Z/s\Z)^*$, and $\x',\x''$ run through $(\Z/r\Z)^n$ and $(\Z/s\Z)^n$, respectively. Now, if we put
\[
U_\v(s_1,s_2,\x) = s_1g(\x)+(s_1-s_2)f(\x)+\v.\x,
\]
then Lemma \ref{lem:mult} gives
\[
e_q(U_\v(s_1,s_2,\x)) = e_r(\bar s U_\v(s_1',s_2',\x)) e_s(\bar r U_\v(s_1'',s_2'',\x)),
\]
which establishes \eqref{Tumult}. 

The multiplicativity relation \eqref{eq:Tamult}, finally, is precisely the one given in \cite[Lemma 10]{Browning-Heath-Brown09}.
\end{proof}

We end this section by introducing a quantity that will appear in our estimates for the exponential sums $\cT(q,z)$. 
For $q = b_1q_2$, 
define
\begin{equation}
\label{eq:D_def}
\cD(q) = \cD_{f,g}(q) = \prod_{i=1}^n \prod_{\substack{p \vert b_1 \\s_p'(f,g) = i-1}} p^{i/2} \prod_{\substack{p \vert q_2 \\s_p'(f,g) = i-1}} p^i,
\end{equation}
provided that $n \geq 2$. If $n=1$, we instead define $\cD(q)$ to be the product of all primes $p$ such that $G_0$ vanishes identically (mod $p$), that is
\[
\cD(q) = \prod_{p \mid (q,\content(G_0))} p. 
\]
Here,
\begin{equation}\label{def:cont}
\content(G_0)=\gcd \textrm{of all the coefficients of }G_0.
\end{equation} 
\section{Exponential sums to square-free moduli}
\label{sec:cubefree}

In this section, we shall provide bounds for the exponential sums $T(b_1,\v)$ defined in \eqref{Tudef} for square-free integers $b_1$. We would like to emphasize here that after a sufficient number of hyperplane intersections, we may end up with having to estimate exponential sums involving any number of variables $\leq n$. Since we did not want to introduce an extra notation for the number of variables defining the polynomials $f,g$, in this section, we will derive bounds which are valid for all $n$ including $n=1,2$. 

Here, we consider a more general version of the sum $T(b_1,\v)$ where $e_{b_1}(\cdot)$ is replaced by an arbitrary primitive additive character $\psi : \ZZ/b_1\ZZ \to \CC$; put
\[
T(\psi,\v) := \sum_{\x\bmod{b_1}}\:\:\starsum_{s_1,s_2=1}^{b_1}\psi\big(
s_1g(\x)+(s_1-s_2)f(\x)+\v.\x\big).
\]
This is crucial, since after using our multiplicativity relation \eqref{Tumult}, we end up with exponential sums involving different characters, and we need a uniform way for bounding them. We shall sometimes attach the subscript $b_1$ to the character $\psi$ for clarity. 

We should also note here that all the implied constants in this section are allowed to depend only on $n$, $d_1$ and $d_2$. 

Our main bound for the exponential sums to prime moduli is contained in the following result. 

\begin{lemma}
\label{lem:prime}
Let $n \geq 2$, $d_1>d_2$ and assume that $s_\infty' = -1$. 
There exists a non-zero homogeneous polynomial $\Phi= \Phi_{f,g} \in \ZZ[v_1,\dotsc,v_n]$
such that the estimate
\[
|T(\psi_p,\v)| \ll_{n,d_1,d_2} p^{(n+3+s_p')/2}\big(p,\Phi(\v))^{1/2}
\]
holds for any prime $p$ such that the leading form $F_0$ does not vanish identically modulo $p$ (i.e. $\delta_p(F_0)=n-2$), any non-trivial additive character $\psi_p$ on $\FF_p$ and any $\v \in \ZZ^n$. The polynomial may be chosen so that 
\begin{enumerate}
\item
\label{enum:nonvanishing}
coefficients of $\Phi$ are co-prime, i.e., $\content(\Phi) = 1$; 
\item
we have the bounds
\begin{equation*}
\deg \Phi \ll_{n,d_1,d_2} 1, \quad \log \Vert \Phi \Vert \ll_{n,d_1,d_2} \log \Vert f \Vert + \log \Vert g \Vert.
\end{equation*}
\end{enumerate}
\end{lemma}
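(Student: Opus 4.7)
The plan is to execute the inner sums over $s_1, s_2$ explicitly to reduce $T(\psi_p, \v)$ to a combination of character sums on affine varieties over $\FF_p$, and then invoke the Deligne--Katz bounds for such sums. Rewriting the exponent as $s_1(f+g)(\x) - s_2 f(\x)$ and carrying out the two Ramanujan sums over $s_1, s_2 \in \FF_p^*$, one obtains
\[
T(\psi_p, \v) = p^2 A(\v) - p B(\v) - p C(\v) + D(\v),
\]
where
\[
A(\v) = \sum_{\substack{\x \in \FF_p^n \\ f(\x) \equiv g(\x) \equiv 0}} \psi_p(\v \cdot \x),\quad
B(\v) = \sum_{\substack{\x \in \FF_p^n \\ f(\x) \equiv 0}} \psi_p(\v \cdot \x),\quad
C(\v) = \sum_{\substack{\x \in \FF_p^n \\ (f+g)(\x) \equiv 0}} \psi_p(\v \cdot \x),
\]
and $D(\v) = p^n$ if $\v \equiv \0$ modulo $p$, zero otherwise. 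Each of $A, B, C$ is a character sum on the affine part of a projective variety whose singular behaviour at infinity is controlled by $F_0, G_0$: since $d_1 > d_2$, the leading form of $f+g$ is $F_0$, so $B, C$ live on hypersurfaces with singular loci at infinity of dimension at most $s_p(F_0)$, while $A$ lives on the complete intersection $V(F, G)$ with singular locus of dimension at most $s_p(F_0, G_0)$. All three are bounded by $s_p'$.

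The core step is to apply the Deligne--Katz estimate: for a projective complete intersection $V \subset \PP^{n-1}_{\FF_p}$ of dimension $d$ and singular locus of dimension $\sigma$, cut out by forms of controlled degree, one has a character-sum bound of the form
\[
\left|\sum_{\x \in V^{\mathrm{aff}}(\FF_p)} \psi_p(\v \cdot \x)\right| \ll p^{(d + \sigma + 1)/2} (p, \Phi_V(\v))^{1/2}
\]
for a polynomial $\Phi_V \in \ZZ[v_1, \dots, v_n]$ of bounded degree and height. Plugging in $d = n - 2,\ \sigma \leq s_p(F_0, G_0)$ for $A$ yields $p^2 |A(\v)| \ll p^{(n + 3 + s_p')/2} (p, \Phi_A(\v))^{1/2}$, while the same estimate with $d = n - 1,\ \sigma \leq s_p(F_0)$ gives $p|B(\v)|, p|C(\v)| \ll p^{(n + 2 + s_p')/2}$. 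When $\v \not\equiv \0$ one has $D(\v) = 0$; when $\v \equiv \0$ the explicit main terms $p^2 \cdot p^{n-2} - p \cdot p^{n-1} - p \cdot p^{n-1} + p^n$ cancel exactly, and only the error contributions remain, all within the desired bound. Setting $\Phi := \Phi_A \Phi_B \Phi_C$ and dividing out by the content yields a polynomial satisfying $\content(\Phi) = 1$, $\deg \Phi \ll_{n, d_1, d_2} 1$, and $\log \|\Phi\| \ll \log \|f\| + \log \|g\|$, with the last two estimates coming from standard height bounds for the discriminant-type constructions underlying $\Phi_V$.

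The main obstacle is producing the Deligne--Katz bound above with an explicit polynomial $\Phi_V$, uniform in $\v$ and $p$. A natural route is to invoke Lemma \ref{lem:subspace}, applied simultaneously to $F_0, G_0$ and $V(F_0, G_0)$, to find an integral change of basis $\x = E\y$ slicing the problem down by $s_p' + 1$ variables to a non-singular situation; the sum over the $s_p' + 1$ trivially-estimated outer variables contributes $p^{s_p' + 1}$, while the remaining inner sum has non-singular leading forms and can be bounded by Deligne's Weil II estimate, the product yielding exactly $p^{(n + s_p' + 3)/2}$. The polynomial $\Phi$ then emerges as (a divisor of) the discriminant locus of the hyperplane sections; the standing hypothesis $s_\infty'(f, g) = -1$ ensures this construction is non-degenerate in characteristic zero, so $\Phi \not\equiv 0$ and can be normalised to have coprime coefficients.
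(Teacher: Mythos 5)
Your decomposition of $T(\psi_p,\v)$ into $p^2 A - pB - pC + D$ via the two Ramanujan sums is exactly the decomposition $\Sigma_1 - \Sigma_2 - \Sigma_3 + \Sigma_4$ used in the paper, and your treatment of the case $p\mid\v$ (exact cancellation of the main terms, Hooley-type error estimates) and of $B,C$ via Deligne-type bounds for hypersurfaces with controlled singular locus matches the paper's argument. The genuine gap is in your construction of the polynomial $\Phi$. The paper's route is Proposition \ref{prop:dualform}: the bound for $A(\v)$ coming from \cite[Thm.~4]{Katz} involves $\delta(\v)=s_p(F_0,G_0,L_\v)$, i.e.\ the dimension of the singular locus of the hyperplane section $V_p(F_0,G_0)\cap L_\v$; the exceptional set of $\v$ for which $\delta(\v)>s_p'$ lies on the dual variety $V_p(F_0,G_0)^*$, and $\Phi$ is produced by a Chevalley-constructibility argument giving universal equations for this dual variety, uniform in $p$. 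Your proposed alternative --- ``invoke Lemma \ref{lem:subspace} to slice down by $s_p'+1$ variables to a non-singular situation'' --- does not accomplish this for two reasons. First, Lemma \ref{lem:subspace} constructs a slicing basis adapted to a \emph{given finite} set of primes $\Pi$, whereas $\Phi$ must make the bound hold for \emph{every} prime $p$ with $(p,\content(F_0))=1$; nothing in Lemma \ref{lem:subspace} gives that uniformity. Second, and more fundamentally, slicing does not eliminate the $\v$-dependence: restricting $\psi_p(\v.\x)$ to a fixed coset of the slicing subspace $\Lambda$ produces a character in the linear form $\v|_\Lambda$, and Katz's bound for the restricted sum again depends on whether $\v|_\Lambda$ is tangent to the sliced variety. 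Summing trivially over the $p^{s_p'+1}$ cosets then loses a factor $p^{1/2}$ precisely when this tangency holds, and one would still need a dual-variety criterion for the sliced variety to detect this --- which is exactly what Proposition \ref{prop:dualform} provides and what your proposal does not. You have also not addressed the case $n=2$, where the complete intersection $V_p(F_0,G_0)$ is zero-dimensional and Katz's theorem does not apply directly; the paper handles this by a separate elementary estimate.
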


Lemma \ref{lem:prime} states, in other words, that we may find a polynomial $\Phi$ such that the optimal bound $|T(\psi_p,\v)| \ll p^{(n+3+s_p')/2}$ is attained for all $\v$ such that $p \nmid \Phi(\v)$.
Before giving the proof of Lemma \ref{lem:prime}, we state and prove a result that will supply the polynomial $\Phi$ in the statement.
For any $\v \in \ZZ^n$, we let $\delta(\v)$ be the dimension of the singular locus of the intersection of $V_p(F_0,G_0)$ with the hyperplane $L_\v$ in $\PP^{n-1}_{\FF_p}$, i.e. $\delta(\v) = s_p(F_0,G_0,L_\v)$. It turns out that the quantity $\delta(\v)$ will govern the strength of our bound for $T(\psi_p,\v)$.

\begin{proposition}
\label{prop:dualform}
There exists a non-zero homogeneous polynomial $\Phi = \Phi_{F_0,G_0} \in \ZZ[v_1,\dotsc,v_n]$ with the following properties:
\begin{itemize}
\item
we have
\(
\delta(\v) \leq s_p'(f,g)
\)
as soon as $p \nmid \Phi(\v)$;
\item
$\content(\Phi) = 1$; 
\item
we have the bounds
\begin{equation*}
\deg \Phi \ll_{n,d_1,d_2} 1, \quad \log \Vert \Phi \Vert \ll_{n,d_1,d_2} \log \Vert f \Vert + \log \Vert g \Vert.
\end{equation*}
\end{itemize}
\end{proposition}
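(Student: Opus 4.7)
The plan is to derive $\Phi$ as an effective defining polynomial of the ``bad hyperplane'' locus attached to the complete intersection $Y = V(F_0, G_0)$, combining a Bertini-type geometric input (valid in every characteristic) with an elimination-theoretic construction that makes the dependence on the coefficients of $f, g$ polynomial.

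\emph{Geometric content.} Over an algebraically closed field $k$ (of any characteristic), the Jacobian criterion gives the decomposition
\[
\Sing(Y_k \cap L_\v) \;=\; \big(\Sing(Y_k) \cap L_\v\big) \;\cup\; \big\{y\in (Y_k)_{\mathrm{sm}} : [\v]\in T_y Y_k^{\vee}\big\}.
\]
For generic $[\v]$, the first piece has dimension $\leq \max\{-1, s(F_0,G_0)-1\}$ because $L_\v$ avoids the top-dimensional components of $\Sing(Y_k)$, and the second piece is governed by the conormal variety of $(Y_k)_{\mathrm{sm}}$, which has generic fibre of dimension $\leq 0$ over its image in $(\PP^{n-1}_k)^\vee$. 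Thus the exceptional set of hyperplanes violating $\delta(\v) \leq s_k'(f,g)$ is contained in a proper Zariski-closed subset $Z_k$ of $(\PP^{n-1}_k)^\vee$; this is the Bertini statement used in \cite[Prop.~1.3]{Ghorpade} and in the proof of \cite[Lemma 2.8]{Marmon08}, and it is characteristic-free. The case where the complete intersection hypothesis fails is trivial, since then $s_k' = n-1$ and $\delta(\v) \leq n-2$ automatically.

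\emph{Effectivisation.} Carrying out the above construction universally over $\Spec\ZZ$, where the triples $(F_0, G_0, \v)$ are parametrised by their coefficients, the exceptional locus $Z_k$ arises as a projection of a universal incidence variety, hence is cut out by a polynomial $\Phi_0(\v) \in \ZZ[\v]$ whose coefficients are polynomials in the coefficients of $F_0,G_0$. Running the dimension/degree book-keeping of \cite[Lemma 2.8]{Marmon08} in our setting yields a bound $\deg \Phi_0 \ll_{n,d_1,d_2} 1$; since the coefficients of $\Phi_0$ are polynomial expressions of bounded degree in those of $F_0, G_0$, we get $\log\Vert \Phi_0\Vert \ll_{n,d_1,d_2} \log\Vert f\Vert + \log\Vert g\Vert$. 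Dividing by the gcd of its coefficients yields a primitive $\Phi$ which is non-zero (its vanishing would force $Z_k$ to be the whole $(\PP^{n-1})^\vee$, contradicting the generic non-emptiness of the complement).

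\emph{Specialisation and conclusion.} Because the construction commutes with arbitrary base change, $\Phi \bmod p$ defines the exceptional locus $Z_{\FF_p}$ for every prime $p$. Hence $p \nmid \Phi(\v)$ forces $[\v]\notin Z_{\FF_p}$, and the Bertini input in characteristic $p$ gives the desired bound $\delta(\v) \leq s_p'(f,g)$. The third listed property of $\Phi$ in the statement follows by construction.

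\emph{Main obstacle.} The bulk of the work is the effectivisation step: a soft application of Bertini provides only a dense Zariski-open subset with no control over the defining polynomial. Making $\deg \Phi$ and $\log\Vert \Phi\Vert$ quantitative and uniform in $p$ requires threading the universal incidence construction, exactly as in \cite[Lemma~2.8]{Marmon08}. One must also confirm that the primitivity operation does not destroy the key implication $p\nmid\Phi(\v) \Rightarrow \delta(\v)\leq s_p'(f,g)$, which follows from the fact that reduction mod $p$ of the primitive $\Phi$ remains non-zero for every prime $p$.
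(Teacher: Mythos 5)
Your geometric insight matches the paper's: the bad hyperplanes are those $L_\v$ belonging to the dual variety of $V(F_0,G_0)$, and the proof proceeds by constructing a universal defining object over the parameter space of coefficients and then specialising. However, your effectivisation/specialisation step contains a genuine gap. The claim that ``the construction commutes with arbitrary base change, $\Phi \bmod p$ defines the exceptional locus $Z_{\FF_p}$ for every prime $p$'' is false in general: images of morphisms, closures of constructible sets, and elimination ideals do \emph{not} commute with reduction mod $p$. The defining ideal of the fibre over $\FF_p$ need not be the reduction mod $p$ of the defining ideal over $\QQ$ (or over $\ZZ$), so a single polynomial $\Phi_0$ obtained by elimination over $\ZZ$ may cut out the wrong locus, or nothing at all, after reduction mod $p$. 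Your closing paragraph acknowledges this concern but the resolution offered --- that $\Phi \bmod p$ is non-zero for every $p$ --- only guarantees $\Phi \bmod p$ is not identically zero, not that it vanishes on $V_p^*$.

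The paper avoids this by never asserting a single polynomial works uniformly. It applies Chevalley's constructibility theorem to the universal bad locus $\cW$ over $\Spec\ZZ$, writes $\cW = \bigcup_i \cU_i \cap \cS_i$ as a finite union of locally closed pieces, and fixes once and for all a finite list of universal multihomogeneous polynomials $\Psi_1,\dotsc,\Psi_R \in \ZZ[\c^{(1)},\c^{(2)},\v]$ generating the ideals of all the closed pieces $\cS_i$. Specialising to the fixed pair $(F_0,G_0)$ gives polynomials $\Theta_j \in \ZZ[\v]$. The crucial observation is then made prime by prime: for each $p$ where $V_p(F_0,G_0)$ is irreducible, $V_p^*$ is contained in some fibre $(S_i)_{\FF_p} \subsetneq \PP^{n-1}_{\FF_p}$, so there is a generator $\Theta_j$ of $I(S_i)$ with $p \nmid \content(\Theta_j)$, vanishing on $V_p^*$. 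Taking $\Phi := \prod_j \tilde\Theta_j$ with $\tilde\Theta_j := \Theta_j/\content(\Theta_j)$ produces a single polynomial that is correct at every prime simultaneously, because at each $p$ at least one factor in the product is ``good''. (The case $V_p$ reducible is disposed of separately since then $s'_p$ is automatically large.) This finite-union-plus-product device is what your argument is missing.
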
 

\begin{proof}
A necessary condition for the inequality
\[
s_v(F_0,G_0,L_\v) > s_v(F_0,G_0)
\]
to hold is that $L_\v$ be tangent to the variety $V_v(F_0,G_0)$ at one of its non-singular points, or in other words, that $L_\v$ belong to the dual variety $V_v(F_0,G_0)^*$ in $(\PP^{n-1}_{\FF_v})^\vee$. We shall obtain universal equations defining this dual variety.

We parametrise homogeneous polynomials $F_i$ of degree $d_i$ by coefficient vectors $\c^{(i)}$, viewed as points in projective spaces $\PP^{N_i}$. 
By Chevalley's constructibility theorem, the set
\[
\cW = \left\{(F_1,F_2,\v) \mid \ L_\v \text{ is tangent to } V(F_1,F_2) \text{ at a non-singular point}\right\}
\]
is a constructible subset of $\PP^{N_1}_\ZZ \times \PP^{N_2}_\ZZ \times (\PP^{n-1}_\ZZ)^\vee$. Thus we may write
\[
\cW = \bigcup_{i=1}^k \cU_i \cap \cS_i,
\]
where each $\cU_i$ is open and each $\cS_i$ is closed. Clearly we may assume that the $\cU_i$ are non-empty. We may now fix, once and for all, a collection of non-zero homogeneous polynomials
\[
\Psi_1,\dotsc,\Psi_R 
\]
in the multigraded ring $\ZZ[\c^{(1)},\c^{(2)},\v]$ such that generators for the vanishing ideals of all the closed sets $\cS_i$ may be found among the $\Psi_j$.

For a fixed tuple $(F_1,F_2) =(F_0,G_0) \in \ZZ[\x]^2$, denote by $U_i$ and $S_i$ the fiber over $(F_1,F_2)$ of $\cU_i$ and $\cS_i$, respectively, and put
\[
W = \bigcup_{i=1}^k U_i \cap S_i, \qquad Z = \bigcup_{i=1}^k S_i.
\] 
By discarding some of the indices if necessary, we may again assume that the $U_i$ are all non-empty. As subsets of $(\PP^{n-1}_\ZZ)^\vee$, $W$ is constructible and $Z$ is closed. Furthermore, generators for the vanishing ideals $I(S_i)$ of the sets $S_i$ in $\ZZ[\v]$ may be found among the specialisations $\Theta_j$ of the polynomials $\Psi_j$ above at $(F_1,F_2)$. 

By definition, the dual variety $V^*_v \subset (\PP^{n-1}_{\FF_v})^\vee$ of the variety $V_v = V_v(F_1,F_2)$ is the Zariski closure of $W_{\FF_v} = W \otimes \Spec{\FF_v}$ in $(\PP^{n-1}_{\FF_v})^\vee$. If $V_v$ is irreducible, then $V_v^*$ is irreducible of dimension at most $n-2$. In general, if $V_v$ has a decomposition into irreducible components $V_v = C_1 \cup \dotsb \cup C_m$, then it is easy to see that $V_v^* = C_1^* \cup \dotsb \cup C_m^*$.

For each $1 \leq j \leq R$, put
\[
\tilde{\Theta}_j = \frac{\Theta_j}{\content(\Theta_j)} \in \ZZ[\v].
\]
Then we claim that
\[
\Phi:= \prod_{j=1}^R \tilde{\Theta}_{k}
\] 
is a polynomial satisfying the desired properties.
To see this, let $p$ be any prime. If $V_p$ is irreducible, then for some closed subscheme $S_{i}$ we have $V_p^* \subseteq (S_i)_{\FF_p} \neq \PP^{n-1}_{\FF_p}$. Thus there is a polynomial $\Theta_{j}$, with $(p,\content(\Theta_j)) = 1$, such that $\delta(\v) \leq s'_p(F_1,F_2)$ as soon as $p \nmid \Theta_j(\v)$. But then the polynomial $\tilde{\Theta}_j$ 
satifies the same property. On the other hand, if $V_p$ is not irreducible, then by definition we have $s'_p(F_1,F_2) \geq n-4$, so we trivially have $\delta(\v) \leq s'_p(F_1,F_2)$ for all $\v$. 
\end{proof}
\begin{rem}
We would like to note that Proposition \ref{prop:dualform} holds for an arbitrary choice of $d_1$ and $d_2$, i.e., it does not require the assumption $d_1>d_2$, as required by a future application. In fact, the differing degrees of $f$ and $g$ are only used in bounding the term $\Sigma_2$ in the proof of Lemma \ref{lem:prime} below.
\end{rem}

\begin{proof}[Proof of Lemma \ref{lem:prime}]
We may write
\begin{align*}
T(\psi_p,\v) &= \Sigma_1 - \Sigma_2 - \Sigma_3 + \Sigma_4, 
\end{align*}
where
\begin{align*}
\Sigma_1&=\sum_{a,b=1}^p \sum_{\x\bmod {p}}\psi(
ag(\x)+b f(\x)+\v.\x), & \Sigma_3 &= \sum_{b=1}^p \sum_{\x\bmod{p}}  \psi(bf(\x) + \v.\x) \\ 
\Sigma_2 &= \sum_{a=1}^p \sum_{\x\bmod{p}}  \psi(a(g(\x) + f(\x)) + \v.\x), & 
\Sigma_4 &= \sum_{\x\bmod{p}}  \psi(\v.\x).
\end{align*}
Let us first treat the case where $p \mid \v$. By a theorem of Hooley \cite{Hooley} (see \cite[Lemma 3.2]{Marmon08} for its affine reformulation), we then have
\begin{align*}
\Sigma_1&=p^2 \#\{\x \bmod{p} \mid f(\x) \equiv g(\x) \equiv 0 \bmod{p}\} = p^n + O\big(p^{(n+4+s_p')/2}\big).
\end{align*}
By the same argument we have
\[
\Sigma_2 = p\left(p^{n-1} + O\big(p^{(n+1+s_p')/2}\big)\right) = p^n + O\big(p^{(n+3+s_p')/2}) = \Sigma_3, 
\]
since $F_0(\x)$ is the leading form of $f(\x) + g(\x)$. Since $\Sigma_4 = p^n$, we conclude that
\[
T(\psi_p,\v) \ll  p^{(n+4+s_p')/2} = p^{(n+3+s_p')/2}(p,\v)^{1/2}.
\]
This agrees with our claim, since the polynomial $\Phi$ is homogeneous. 

Now we turn to the case where $p \nmid \v$. 
We observe that
\begin{equation}
\label{eq:Sigma_1}
\Sigma_1 = p^2 \sum_{\substack{\x \bmod{p} \\ p \mid g(\x),\ p \mid f(\x)}} \psi(\v.\x).
\end{equation}
Suppose first that $n \geq 3$. If $F_0$ and $G_0$ intersect properly (mod $p$), i.e. if the corresponding projective varieties intersect properly (mod $p$),
then the exponential sum over the variety defined by the equations $g=f=0$ in $\AA^n_{\FF_p}$ may be treated by means of a result of Katz \cite{Katz}. Indeed, by \cite[Thm. 4]{Katz}, we have
\[
\sum_{\substack{\x \bmod{p} \\ p \mid g(\x),\ p \mid f(\x)}} \psi(\v.\x) \ll p^{(n-1+\delta(\v))/2},
\]
where $\delta(\v) = s_p(F_0,G_0,L_\v)$. By a result of Zak and Fulton and Lazarsfeld, as explained in \cite[p.~897]{Katz_SE}, we have $\delta(\v) \leq s_p'+1$. Furthermore, by Proposition \ref{prop:dualform}, we have $\delta(\v) \leq s_p'$ as soon as $p \nmid \Phi(\v)$.
Thus we get
\[
\Sigma_1 \ll p^{(n+3+s_p')/2} (p,\Phi(\v))^{1/2} 
\]
if $F_0$ and $G_0$ intersect properly (mod $p$). 
On the other hand, if $F_0$ and $G_0$ do not intersect properly (mod $p$), 
then we have $s_p' = n-1$ by definition.
Then we certainly still have the estimate
\[
|\Sigma_1| \leq p^2 \sum_{\substack{\x \bmod{p} \\ p \mid g(\x),\ p \mid f(\x)}} 1 \ll p^{n+1} = p^{(n+3+s_p')/2},
\]
by our assumption that $F_0$ does not vanish entirely (mod $p$).

Furthermore, we may use \cite[Lemma 7]{Browning-Heath-Brown09} to obtain
\[
\left| \Sigma_3 \right| \leq \sum_{b\bmod{p}} \left|\sum_{\x\bmod{p}}\psi(bf(\x) + \v.\x)\right| \ll p^{(n+3+s_p')/2},
\]
and the sum $\Sigma_2$ satisfies the same bound, since the polynomial $g(\x) + f(\x)$ has $F_0(\x)$ as its leading form. Thus, we see that $\Sigma_2$ and $\Sigma_3$ both give negligible contributions to $T_{0,g}(p,\v)$. (We could equally well have proved this by the arguments used to estimate $\Sigma_1$ above.) The term $\Sigma_4$ vanishes in this case.

Finally, we treat the case where $n=2$. Here, we need not discern whether $p \mid \v$ or not. The bounds for $\Sigma_2$ and $\Sigma_3$ from the previous case remain valid for any $n \geq 1$ and any $\v$. Also, we trivially have $|\Sigma_4| \leq p^2 \leq p^{(n+3+s_p')/2}$ for $n=2$. To estimate $\Sigma_1$, note that we have $s_p' = 1$ if the binary forms $F_0$ and $G_0$ have a common projective zero, and $s_p' = -1$ otherwise. 
In the former case, we have
\[
|\Sigma_1| \leq p^3 + \sum_{a=1}^{p-1}\sum_{b=1}^p \left| \sum_{\x \bmod p} \psi(af(\x) + bg(\x) + \v.\x)\right| \ll p^3 = p^{(n+3+s_p')/2}
\]
by \cite[Lemma 7]{Browning-Heath-Brown09}, since each polynomial $af(\x) + bg(\x) + \v.\x$ has a multiple of $F_0$ as its leading form. In the latter case, there are only $O(1)$ solutions to $f(\x) \equiv g(\x) \equiv 0 \bmod p$, so 
\[
\Sigma_1 = p^2 \sum_{\substack{\x \bmod p \\f(\x) \equiv g(\x) \equiv 0 \bmod p}} \psi(\v.\x) \ll p^2 = p^{(n+3+s_p')/2}. 
\]
In both cases, we used the fact that $F_0$ was supposed not to vanish entirely (mod $p$). This concludes the proof of Lemma \ref{lem:prime}. We observe that we could have disposed of the factor $(p,\Phi(\v))^{1/2}$ in the case $n=2$, but this stronger bound is not needed in the sequel.
\end{proof}

In the case $n=1$, the quantity $s_v'$ is not meaningful. Instead, we have the following alternative bounds. Here $\resultant(f,g)$ denotes the usual resultant of two univariate polynomials.

\begin{lemma}
\label{lem:p_n=1}
Let $n=1$. Then we have
\[
|T(\psi_p,v)| \ll p(p,\resultant(f,g))
\]
for any $v \in \Z$, provided that $p$ does not divide the leading coefficient of $f$.
\end{lemma}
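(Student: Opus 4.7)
The plan is to mimic the strategy used for Lemma \ref{lem:prime}, namely to apply the same inclusion--exclusion decomposition that removed the constraints $s_1, s_2 \neq 0$ from the starred sum, and then to estimate each of the four resulting pieces separately. Specifically, I will write
\[
T(\psi_p,v) = \Sigma_1 - \Sigma_2 - \Sigma_3 + \Sigma_4,
\]
where
\begin{align*}
\Sigma_1 &= \sum_{a,b=1}^p \sum_{x\bmod p}\psi_p(ag(x)+bf(x)+vx), & \Sigma_2 &= \sum_{a=1}^p \sum_{x\bmod p}\psi_p(a(f+g)(x)+vx),\\
\Sigma_3 &= \sum_{b=1}^p \sum_{x\bmod p}\psi_p(bf(x)+vx), & \Sigma_4 &= \sum_{x\bmod p}\psi_p(vx).
\end{align*}
Trivially $|\Sigma_4|\leq p$.

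For $\Sigma_3$, executing the sum over $b$ first by orthogonality gives
\(
\Sigma_3 = p \sum_{x\bmod p,\ f(x)\equiv 0} \psi_p(vx),
\)
which is $O(p)$, since the assumption $p\nmid \mathrm{lc}(f)$ ensures that $f$ has degree $d_1$ modulo $p$ and hence at most $d_1$ zeros. The same argument handles $\Sigma_2$, using crucially the hypothesis $d_1>d_2$, which guarantees that $f+g$ has leading coefficient $\mathrm{lc}(f)$ and hence degree exactly $d_1$ mod $p$; thus $|\Sigma_2|\ll p$ as well.

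The main point, and the only place where the resultant enters, is the estimation of $\Sigma_1$. Orthogonality in both $a$ and $b$ yields
\[
\Sigma_1 = p^2 \sum_{\substack{x\bmod p\\ f(x)\equiv g(x) \equiv 0}} \psi_p(vx).
\]
If $p\nmid \resultant(f,g)$, then by the standard property of the resultant (valid since $p\nmid \mathrm{lc}(f)$) the polynomials $f$ and $g$ share no common root modulo $p$, so $\Sigma_1=0$. If instead $p\mid \resultant(f,g)$, then I bound the inner sum by the number of common zeros of $f$ and $g$ modulo $p$. When $g\not\equiv 0 \pmod p$, this number is at most $\min(d_1,d_2)$; when $g\equiv 0 \pmod p$ (which forces $p\mid \resultant(f,g)$ or $\resultant(f,g)=0$, in either case giving $(p,\resultant(f,g))=p$), the common-zero condition reduces to $f(x)\equiv 0$, bounded again by $d_1$ roots. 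In all cases $|\Sigma_1|\ll p^2 = p\,(p,\resultant(f,g))$.

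Assembling the four estimates gives $|T(\psi_p,v)|\ll p\,(p,\resultant(f,g))$, as claimed. I don't anticipate any real obstacle here; the only subtlety is a careful case analysis in $\Sigma_1$ to confirm that the factor $(p,\resultant(f,g))$ really does control the size of the common-zero set when $p\mid \mathrm{lc}(g)$ or $g\equiv 0\pmod p$.
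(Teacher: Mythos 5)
Your proof is correct and follows essentially the same route as the paper: the same inclusion--exclusion decomposition $T(\psi_p,v)=\Sigma_1-\Sigma_2-\Sigma_3+\Sigma_4$, trivial $O(p)$ bounds for $\Sigma_2,\Sigma_3,\Sigma_4$ via the assumption $p\nmid\mathrm{lc}(f)$ (and implicitly $d_1>d_2$), and for $\Sigma_1$ an orthogonality reduction to the common zero set of $f,g$ modulo $p$, which is empty unless $p\mid\resultant(f,g)$, and of bounded size otherwise. The only difference is that you spell out the case analysis around $g\equiv 0\pmod p$, which the paper leaves implicit.
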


\begin{proof}
We write
\[
T(\psi,v) = \Sigma_1 - \Sigma_2 - \Sigma_3 + \Sigma_4,
\]
as in the proof of Lemma \ref{lem:prime}. Here we have $|\Sigma_4| \leq p$, and $|\Sigma_2|$ and $|\Sigma_3|$ are bounded from above by $p$ times the number of zeroes of $f+g$ and $f$, respectively, in $\ZZ/p\ZZ$. Under our assumption, we therefore get $\Sigma_i \ll p$ for $i =2,3,4$. Finally,
\[
|\Sigma_1| = \left|\sum_{a,b=1}^p \sum_{x=1}^p \psi(af(x)+bg(x)+vx)\right| \leq p^2 \# \{x \in \ZZ/p\ZZ \mid f(x) \equiv g(x) \equiv 0 \pmod p\},
\]
so $\Sigma_1$ vanishes unless $f$ and $g$ have a common zero (mod $p$), in which case $\Sigma_4 \ll p^2$.
\end{proof}

Now let $q = b_1$ be an arbitrary squarefree integer. The estimates above assumed that $F_0$ does  not vanish identically (mod $p$). To say that this should hold for all primes $p \mid b_1$ amounts to the condition 
\[
(b_1,\content(F_0)) = 1,
\]
where $\content(F_0)$ is defined by \eqref{def:cont}. Now we observe that (an easy extension of) the multiplicativity property \eqref{Tumult} may be reformulated to state that if $\psi_{b_1}$ is a primitive additive character modulo $b_1 = rs$, where $(r,s)=1$, then there exist primitive additive characters $\psi_r$ modulo $r$ and $\psi_s$ modulo $s$ such that
\[
T(\psi_{b_1},\v;f,g) = T(\psi_r,\v;f,g) T(\psi_s,\v;f,g).
\] 
Decomposing $b_1$ into prime factors and multiplying together the bounds obtained for each factor by Lemma \ref{lem:prime} or \ref{lem:p_n=1}, we obtain a bound where the implied constant $C = C(n,d_1,d_2)$, say, is replaced by a factor which is at most $C^{\omega(b_1)} \ll b_1^\ve$. We thus arrive at the following results. 

\begin{lemma}
\label{lem:squarefree}
Let $n \geq 2$ and let $b_1$ be a square-free integer. For the polynomial $\Phi$ from Lemma \ref{lem:prime}, the estimate
\[
|T(\psi_{b_1},\v)| \ll b_1^{(n+2)/2 + \ve}\cD(b_1)(b_1,\Phi(\v))^{1/2}
\]
holds for any square-free number $b_1$ such that $(b_1,\content(F_0)) = 1$, any primitive additive character $\psi_{b_1}$ modulo $b_1$ and any $\v \in \ZZ^n$. 
\end{lemma}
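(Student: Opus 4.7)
The plan is to reduce to the prime modulus case via multiplicativity and then apply Lemma \ref{lem:prime} at each prime factor. Factor $b_1 = p_1 \cdots p_k$ into distinct primes. The first step is to extend the multiplicativity relation \eqref{Tumult} to general primitive additive characters: if $\psi_{b_1}$ is a primitive additive character modulo $b_1 = rs$ with $(r,s) = 1$, then by uniqueness of the decomposition $\psi_{b_1}(x) = \psi_r(\bar s x)\psi_s(\bar r x)$ for some primitive additive characters $\psi_r,\psi_s$ modulo $r,s$ respectively, and the substitution $(s_1,s_2,\x) = s\bar s (s_1',s_2',\x') + r\bar r(s_1'',s_2'',\x'')$ used in the proof of \eqref{Tumult} (together with Lemma \ref{lem:mult}) yields
\[
T(\psi_{b_1},\v;f,g) = T(\psi_r,\bar s \v;\bar s f,\bar s g)\, T(\psi_s, \bar r \v; \bar r f, \bar r g).
\]
Iterating this yields a factorisation $T(\psi_{b_1},\v) = \prod_{i=1}^k T(\psi_{p_i},\v_i)$, where each $\v_i$ is of the form $u_i \v$ for some unit $u_i \in (\ZZ/p_i\ZZ)^*$ and each factor is attached to suitably rescaled polynomials having the same leading form as $f,g$ up to a unit scalar.

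Next, for each prime $p_i$, the hypothesis $(b_1,\content(F_0)) = 1$ guarantees that the leading form $F_0$ does not vanish identically modulo $p_i$, so Lemma \ref{lem:prime} applies, giving
\[
|T(\psi_{p_i},\v_i)| \ll_{n,d_1,d_2} p_i^{(n+3+s_{p_i}')/2} (p_i,\Phi(\v_i))^{1/2}.
\]
Since $\Phi$ is homogeneous and $\v_i \equiv u_i \v \pmod{p_i}$ with $u_i$ a unit, we have $(p_i,\Phi(\v_i)) = (p_i,\Phi(\v))$. (Here we also use that $\Phi$ depends only on the leading forms of $f$ and $g$ up to unit rescaling, so the polynomial $\Phi$ is the same in each factor.) Multiplying the bounds together and using $p_i^{(n+3+s_{p_i}')/2} = p_i^{(n+2)/2} \cdot p_i^{(s_{p_i}'+1)/2}$ gives
\[
|T(\psi_{b_1},\v)| \ll C(n,d_1,d_2)^{\omega(b_1)} \, b_1^{(n+2)/2} \prod_{p \mid b_1} p^{(s_p'+1)/2} \prod_{p \mid b_1}(p,\Phi(\v))^{1/2}.
\]

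Finally, recall that for square-free $q = b_1$ the definition \eqref{eq:D_def} reduces to $\cD(b_1) = \prod_{p \mid b_1} p^{(s_p'+1)/2}$, and by coprimality of the prime factors $\prod_{p \mid b_1}(p,\Phi(\v))^{1/2} = (b_1,\Phi(\v))^{1/2}$. The factor $C(n,d_1,d_2)^{\omega(b_1)}$ is absorbed into $b_1^\ve$ via the standard bound $\omega(b_1) \ll \log b_1 /\log\log b_1$. Putting everything together gives the claimed estimate. The only nontrivial step is the bookkeeping in the iterated multiplicativity, in particular confirming that the rescalings of $f$ and $g$ by units do not change the polynomial $\Phi$ nor the quantities $s_p'$; this follows since $s_p'$ depends only on the projective varieties cut out by the leading forms, which are invariant under unit scaling, and likewise for the construction of $\Phi$ in Proposition \ref{prop:dualform}.
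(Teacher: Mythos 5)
Your proof is correct and follows essentially the same route as the paper's (very terse) argument: factor $b_1$ into primes via multiplicativity, apply Lemma~\ref{lem:prime} at each prime, and absorb $C^{\omega(b_1)}$ into $b_1^{\ve}$. The only cosmetic difference is that the paper reformulates the multiplicativity so that $T(\psi_{b_1},\v;f,g) = T(\psi_r,\v;f,g)T(\psi_s,\v;f,g)$ with the units $\bar{r},\bar{s}$ absorbed entirely into the choice of primitive characters $\psi_r,\psi_s$, leaving $\v$, $f$, $g$ (and hence $\Phi$ and $s_p'$) literally unchanged, whereas you carry the explicit twists $\bar{s}\v,\ \bar{s}f,\ \bar{s}g$ and then verify that unit rescaling mod $p$ does not affect $(p,\Phi(\v))$, $s_p'$, or the applicability of Proposition~\ref{prop:dualform}; both formulations are valid and your verification of the invariance under unit scaling is the right justification for the bookkeeping you chose.
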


\begin{lemma}
\label{lem:squarefree_n=1}
Let $n=1$. Then the estimate
\[
|T(\psi_{b_1},v)| \ll b_1^{3/2} \big(b_1,\resultant(f,g)\big)^{1/2} 
\]
holds for any square-free $b_1$ such that $(b_1,\content(F_0)) = 1$, any primitive additive character $\psi_{b_1}$ modulo $b_1$ and any $v \in \ZZ$.
\end{lemma}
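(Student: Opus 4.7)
The plan is to reduce the statement to the prime-modulus case already settled in Lemma~\ref{lem:p_n=1}, by invoking the multiplicativity property \eqref{Tumult}. Since $b_1$ is square-free, I factor $b_1 = p_1 \cdots p_k$ as a product of distinct primes. As noted in the paragraph preceding Lemma~\ref{lem:squarefree}, an easy extension of \eqref{Tumult} implies that if $\psi_{b_1}$ is a primitive character modulo $b_1$, then
\[
T(\psi_{b_1},v;f,g) = \prod_{i=1}^{k} T(\psi_{p_i},v_i;f_i,g_i),
\]
where each $\psi_{p_i}$ is a primitive additive character modulo $p_i$, each $v_i \in \ZZ$, and each $f_i,g_i$ is obtained from $f,g$ by scaling with an integer coprime to $p_i$.

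Next, I apply Lemma~\ref{lem:p_n=1} to each factor. Two facts must be checked: first, the hypothesis that $p_i$ does not divide the leading coefficient of $f_i$ holds because $(b_1,\content(F_0))=1$ implies $p_i$ does not divide the leading coefficient of $f$, and scaling by a unit mod $p_i$ preserves this. Second, since $\resultant(\lambda f, \lambda g) = \lambda^{d_1+d_2}\resultant(f,g)$ and the scaling factor is coprime to $p_i$, we have $p_i \mid \resultant(f_i,g_i)$ if and only if $p_i \mid \resultant(f,g)$. Therefore Lemma~\ref{lem:p_n=1} yields
\[
|T(\psi_{p_i},v_i;f_i,g_i)| \ll p_i \cdot (p_i,\resultant(f,g)),
\]
where the implied constant depends only on $d_1,d_2$.

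Multiplying these estimates and using that $b_1$ is square-free, so $(b_1,\resultant(f,g)) = \prod_{p_i \mid \resultant(f,g)} p_i$, I obtain
\[
|T(\psi_{b_1},v)| \ll b_1 \cdot (b_1,\resultant(f,g)) \leq b_1^{3/2}(b_1,\resultant(f,g))^{1/2},
\]
where the last inequality uses the trivial bound $(b_1,\resultant(f,g)) \leq b_1$. The accumulation of the implied constant from Lemma~\ref{lem:p_n=1} over $\omega(b_1)$ prime factors costs at most a factor of $b_1^{\ve}$, which is comfortably absorbed by the slack between $b_1(b_1,\resultant(f,g))$ and $b_1^{3/2}(b_1,\resultant(f,g))^{1/2}$. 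There is no real obstacle here; the only thing to watch is the bookkeeping when the polynomials and characters transform under \eqref{Tumult}, and the verification that the resultant divisibility condition is invariant under these transformations.
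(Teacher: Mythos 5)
Your approach is the same as the paper's: factor $b_1$ into distinct primes, apply Lemma~\ref{lem:p_n=1} prime-by-prime via the multiplicativity property \eqref{Tumult}, and multiply the bounds. The bookkeeping of how the characters, $v$, and the polynomials transform, and in particular the observation that the divisibility condition $p\mid\resultant(f,g)$ is invariant under scaling by a unit (since $\resultant(\lambda f,\lambda g)=\lambda^{d_1+d_2}\resultant(f,g)$), are all handled correctly. This is precisely what the one-sentence argument preceding Lemmas~\ref{lem:squarefree}--\ref{lem:squarefree_n=1} in the paper does.

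The one step that is not airtight is the final absorption claim: the factor $C^{\omega(b_1)}\ll b_1^{\ve}$ is \emph{not} in general absorbed by the slack between $b_1\,(b_1,\resultant(f,g))$ and $b_1^{3/2}(b_1,\resultant(f,g))^{1/2}$. That slack is $\big(b_1/(b_1,\resultant(f,g))\big)^{1/2}$, which collapses to $1$ when $(b_1,\resultant(f,g))$ is comparable to $b_1$ (e.g.\ when $b_1\mid\resultant(f,g)$, or when $\resultant(f,g)=0$). Strictly, the argument yields $|T(\psi_{b_1},v)|\ll b_1^{1+\ve}(b_1,\resultant(f,g))\ll b_1^{3/2+\ve}(b_1,\resultant(f,g))^{1/2}$, and the $\ve$ cannot be dropped by this reasoning alone. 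This imprecision is inherited from the paper, which explicitly notes the $C^{\omega(b_1)}\ll b_1^{\ve}$ factor in the surrounding text and retains the $\ve$ in Lemma~\ref{lem:squarefree} but omits it in the statement of Lemma~\ref{lem:squarefree_n=1}; in the downstream applications (e.g.\ Proposition~\ref{prop:1_n=1}) harmless $P^{\ve}$ factors already appear, so this costs nothing. Nonetheless you should not present the absorption as automatic — either keep the $\ve$ in the exponent, or argue that the degenerate regime $(b_1,\resultant(f,g))\gg b_1^{1-\ve}$ forces $\omega(b_1)=O(1)$ when $\resultant(f,g)\neq 0$.
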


\section{Exponential sums to square-full moduli}
\label{sec:cubefull}
Let $g \in \ZZ[x_1,\dotsc,x_n]$ be a cubic polynomial. Let $q_2$ be an arbitrary square-full integer. The exponential sums $T^*(q_2,\v)$ in \eqref{Tadef} that we consider coincide precisely with those investigated in \cite{Browning-Heath-Brown09}. We again begin by writing $q_2=b_2q_3$, where $q_3$ is cube-full part of $q_2$. We begin by estimating the exponential sum $T^*(b_2,\v)$ for an arbitrary integer $b_2$, which is purely a product of squares of distinct primes. The bound from \cite[Lemma 7]{Browning-Heath-Brown09} implies:

\begin{lemma}
\label{lem:cubefree}
Let $n\geq 1$. Then for any $\v \in \ZZ^n$, we have
\[
T^*(b_2,\v) \ll b_2^{n/2+\ve} \cD(b_2)
\]

\end{lemma}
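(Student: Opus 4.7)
My plan is to establish the lemma by a multiplicative decomposition of $T^*(b_2,\v)$ into prime-power pieces, followed by a direct appeal to the known exponential sum bound for cubic polynomials to prime power moduli, namely \cite[Lemma 7]{Browning-Heath-Brown09}.

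First, since $b_2$ is assumed to be a product of squares of distinct primes, we may write $b_2 = \prod_{p \mid b_2} p^2$, and repeated application of the multiplicativity relation \eqref{eq:Tamult} yields a factorisation
\[
|T^*(b_2,\v)| = \prod_{p \mid b_2} |T^*_{a_p}(p^2,\v_p)|,
\]
where for each $p \mid b_2$ the integer $a_p$ is coprime to $p$ and $\v_p \in \ZZ^n$ is a suitable reduction of $\v$. Since $T^*$ depends only on the cubic polynomial $g$, each factor on the right falls within the scope of \cite[Lemma 7]{Browning-Heath-Brown09}, which provides a bound of the form $|T^*_{a_p}(p^2,\v_p)| \ll p^{n+1+s_p(G_0)}$, with an implied constant depending only on $n$.

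Next, I would invoke the trivial inequality $s_p(G_0) \leq s_p'(f,g)$, which is immediate from the definition \eqref{eq:s'def}. This gives the uniform per-prime estimate $|T^*_{a_p}(p^2,\v_p)| \ll p^{n+1+s_p'}$. Taking the product over $p \mid b_2$ and comparing with the definition \eqref{eq:D_def} of $\cD(b_2) = \prod_{p \mid b_2} p^{s_p'+1}$, together with the identity $b_2^{n/2} = \prod_{p \mid b_2} p^n$, we obtain
\[
|T^*(b_2,\v)| \ll C^{\omega(b_2)} b_2^{n/2}\, \cD(b_2)
\]
for some absolute constant $C = C(n)$ arising from the implied constants at each prime. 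The standard estimate $C^{\omega(b_2)} \ll_\ve b_2^{\ve}$ then yields the desired conclusion.

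I anticipate no serious obstacle in this proof, as the lemma is essentially a bookkeeping exercise: multiplicativity separates the primes, a black-box prime-power estimate handles each factor, and the definition of $\cD$ is tailored precisely so as to match the exponents that arise. The only point that requires some care is the harmless replacement of $s_p(G_0)$ by $s_p'(f,g)$, which ensures that the statement here is compatible with the bound for square-free moduli in Lemma \ref{lem:squarefree}; beyond this, the argument proceeds entirely routinely.
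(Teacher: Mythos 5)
Your argument is essentially the same as the paper's, which also derives the lemma by a direct appeal to \cite[Lemma~7]{Browning-Heath-Brown09}; you simply spell out the two ingredients that the paper leaves implicit, namely the CRT/multiplicativity factorisation into prime-square moduli via \eqref{eq:Tamult} and the harmless weakening $s_p(G_0)\leq s_p'(f,g)$ needed to match the definition \eqref{eq:D_def} of $\cD(b_2)$. The bookkeeping $\prod_{p\mid b_2}p^{n+1+s_p'}=b_2^{n/2}\cD(b_2)$ and the absorption of the $O(1)^{\omega(b_2)}$ constant into $b_2^\ve$ are both correct. One small caveat: the lemma is asserted for all $n\geq 1$, but when $n=1$ the quantity $s_p'$ is not meaningful and $\cD$ is defined differently (as $\prod_{p\mid(b_2,\content(G_0))}p$); your proof as written only covers $n\geq 2$. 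The case $n=1$ still follows from Lemma~7 of \cite{Browning-Heath-Brown09}---one gets $T^*(p^2,v)\ll p$ when $p\nmid\content(G_0)$ and the trivial $p^2$ otherwise, matching $b_2^{1/2}\cD(b_2)$---but it warrants a separate sentence if one insists on full rigour, and this is a gap the paper's own terse derivation leaves equally unaddressed.
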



Rest of this section is dedicated to estimating exponential sums modulo cube-full integers $q_3=c^2d$, where $d$ is square-free. The following bound is proven in \cite[Lemma 11]{Browning-Heath-Brown09}:  
\begin{lemma}
For any $r \in \ZZ$ with $(r,q_3) = 1$, we have the bound
 \label{lem:cubefull}
 \begin{equation}
 \label{eq:cubefull}
|T^*_{ra}(q_3,r\v)|\ll q_3^{n/2}\sum_{\substack{\s\bmod {c}\\c\mid a\nabla 
g(\s)+\v
}} 
M_d(\s)^{1/2},
 \end{equation}
 where
\[
M_d(\s) := \big\{\t \in (\Z/d\Z)^n \mid \nabla^2 g(\s)\t \equiv \0 \pmod d\big\}.
\]
\end{lemma}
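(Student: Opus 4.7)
The plan is to reduce to prime power moduli using \eqref{eq:Tamult} and then exploit a $p$-adic Taylor expansion of $g$ around residue classes modulo $p^m$, where $m = v_p(c)$.

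First, I would iterate \eqref{eq:Tamult} to decompose $T^*_{ra}(q_3,r\v)$ as a product over primes $p\mid q_3$ of sums of the form $T^*_{r'a'}(p^\ell,r'\v)$, with new parameters $r'$, $a'$ produced by the Chinese remainder theorem and $\ell = v_p(q_3) \geq 3$ by cube-fullness. Writing $\ell = 2m$ or $\ell = 2m+1$, one has $m = v_p(c) \geq 1$ and $v_p(d) = \ell - 2m \in \{0,1\}$. Since both the congruence condition $c\mid a\nabla g(\s) + \v$ and the quantity $M_d(\s)$ factor multiplicatively over primes, it suffices to establish the claimed bound at a single prime power and then reassemble.

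Second, fix a prime power $q_3 = p^\ell$ and parametrise $\x$ uniquely as $\x = \s + p^m \t$ with $\s$ ranging over representatives of $\ZZ/p^m\ZZ$ and $\t$ over $\ZZ/p^{\ell - m}\ZZ$. The exact Taylor expansion of the cubic $g$ reads
\[
g(\s + p^m \t) = g(\s) + p^m \nabla g(\s)\cdot \t + p^{2m} H_g(\s,\t) + p^{3m} g_3(\t),
\]
where $H_g(\s,\t) = \tfrac{1}{2}\t^T \nabla^2 g(\s)\t$ and $g_3$ denotes the leading cubic form of $g$. The final term vanishes modulo $p^\ell$, since $3m \geq \ell$ (using $m \geq 1$ in the case $\ell = 2m+1$). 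Substituting, the phase $ra g(\x) + r\v\cdot\x$ splits into an $\s$-dependent factor and an inner $\t$-sum which is purely linear when $\ell = 2m$ and linear-plus-quadratic when $\ell = 2m+1$.

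Third, I would evaluate the inner $\t$-sum in each parity. When $\ell = 2m$ it equals $p^{mn}$ if $p^m\mid a\nabla g(\s)+\v$ and vanishes otherwise, so the trivial bound on the outer $\s$-factor yields the required local contribution $p^{\ell n/2}\,\mathbf{1}_{p^m\mid a\nabla g(\s)+\v}$; here $v_p(d)=0$, so $M_d(\s)^{1/2} = 1$ at $p$. When $\ell = 2m+1$, a further splitting $\t = \t_0 + p^m \t_1$ with $\t_0 \in \ZZ/p^m\ZZ$ and $\t_1 \in \ZZ/p\ZZ$ isolates a linear sum in $\t_1$ that forces $p\mid a\nabla g(\s)+\v$, and after an analogous splitting $\t_0 = \t_0^{(0)} + p \t_0^{(1)}$ a second linear sum forces $p^m\mid a\nabla g(\s)+\v$. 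The surviving innermost sum is a Gauss-type exponential sum over $(\ZZ/p\ZZ)^n$ with quadratic form $\tfrac{1}{2}\t^T\nabla^2 g(\s)\t$; the standard Gauss sum estimate bounds its modulus by $p^{n/2}\,\#\{\t \in (\ZZ/p\ZZ)^n : \nabla^2 g(\s)\t \equiv \0 \pmod p\}^{1/2} = p^{n/2} M_d(\s)^{1/2}$. Combined with the powers of $p$ accumulated from the preceding linear sums, this produces the local factor $p^{\ell n/2} M_d(\s)^{1/2}$, and reassembling across primes yields \eqref{eq:cubefull}.

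The main obstacle is the odd-exponent case, where extracting precisely the quantity $M_d(\s)^{1/2}$ from the innermost Gauss sum requires applying the Gauss sum estimate in the potentially singular setting where $\nabla^2 g(\s)$ has deficient rank modulo $p$. In addition, the factor $\tfrac{1}{2}$ appearing in $H_g$ must be treated carefully at the prime $p=2$, which I would circumvent by working throughout with the integer bilinear form $\t^T \nabla^2 g(\s)\t$ and tracking $p$-adic valuations explicitly.
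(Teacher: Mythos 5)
Your reconstruction is correct and follows the same argument as \cite[Lemma 11]{Browning-Heath-Brown09}, which is all the paper cites for this statement: factor over primes via \eqref{eq:Tamult}, write $\x=\s+p^m\t$ with $m=v_p(c)$, use the integral $p$-adic Taylor expansion of the cubic, and bound the surviving mod-$p$ quadratic Gauss sum in the odd-exponent case. Both concerns you flag at the end (possibly singular Hessian, and $p=2$) are dispatched by the usual Cauchy--Schwarz/squaring step, which reduces the Gauss sum to counting $\t$ with $\nabla^2 g(\s)\t\equiv\0\pmod p$ via the associated bilinear form and yields precisely $p^{n/2}M_p(\s)^{1/2}$ uniformly in $p$.
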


We shall need bounds for two kinds of averages of the exponential sums $T^*(q_3,\v)$. First, for any $\v_0 \in \RR^n$ and any $V \geq 1$, we shall evaluate the sum
\begin{equation}
\label{eq:cubefull_average}
\sum_{|\v-\v_0|\leq V} T^*(c^2d,\v).
\end{equation}
In view of Lemma \ref{lem:cubefull}, we then need to estimate the quantity 
\begin{equation}
\label{eq:S(V)}
\sum_{|\v-\v_0|\leq V} \cP(c^2d,\v), \quad \text{where} \quad \cP(c^2d,\v) := \sum_{\substack{\s\bmod {c}\\c\mid a\nabla 
g(\s)+\v
}} 
M_d(\s)^{1/2}.
\end{equation}
We shall assume that
the leading form $G_0$ is non-singular, in other words that
\begin{equation}
\label{eq:g_nonsing}
s_\infty(G_0) = -1
\end{equation}
in the notation of Section \ref{sec:expsums_intro}. Furthermore, we shall assume that
\begin{equation}
\label{eq:g_height}
\Vert g \Vert_P \leq H
\end{equation}
for some $H \geq 1$.
In \cite{Browning-Heath-Brown09}, two alternative bounds for the quantity in \eqref{eq:S(V)} are presented. In the present situation we shall only need the latter of these, given by Lemma 16. Together with the discussion concluding \cite[\S 5]{Browning-Heath-Brown09}, this implies the following estimate. 

\begin{lemma}
\label{lem:S(V)_BHB}
If the cubic polynomial $g \in \Z[x_1,\dotsc,x_n]$ satisfies \eqref{eq:g_nonsing} and \eqref{eq:g_height}, then we have the bound
\[
\sum_{|\v-\v_0|\leq V} \cP(q_3,\v) \ll q_3^\ve \cD(d)\big(V^n+(q_3H)^{n/3}\big),
\]
where
$\cD(\cdot)$ is the quantity defined in \eqref{eq:D_def}. 
\end{lemma}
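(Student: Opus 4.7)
The plan is to exchange the order of summation in \eqref{eq:S(V)}:
\[
\sum_{|\v-\v_0|\leq V} \cP(c^2d,\v) = \sum_{\s \bmod c} M_d(\s)^{1/2} N_\s(V),
\]
where $N_\s(V) := \#\{\v :|\v-\v_0|\leq V,\ c \mid a\nabla g(\s)+\v\}$. The congruence restricts $\v$ to a shifted sublattice of $\ZZ^n$ of index $c^n$, so $N_\s(V) \leq (V/c+1)^n$, and in particular $N_\s(V) \leq 1$ whenever $V < c$. The sum thus splits naturally into a ``generic'' regime $V \gg c$, where $N_\s(V) \ll (V/c)^n$ uniformly, and a ``localised'' regime $V \ll c$, where only those $\s$ for which $a\nabla g(\s) + \v_0$ lies within $O(V)$ of $\0 \bmod c$ contribute.

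For the generic regime the problem reduces to proving the bound $\sum_{\s \bmod c} M_d(\s)^{1/2} \ll q_3^\ve \cD(d) c^n$, which I would establish via CRT by writing $c = \prod_p p^{e_p}$ and $d = \prod_p p^{f_p}$ (with $f_p \in \{0,1\}$ and, by cube-fullness of $q_3$, $f_p = 1$ forcing $e_p \geq 1$), and then bounding each local sum $\sum_{\s \bmod p^{e_p}} M_{p^{f_p}}(\s)^{1/2}$ by $p^{e_p n}$ times a factor matching the $p$-part of the product in \eqref{eq:D_def}. The non-singularity assumption \eqref{eq:g_nonsing} enters here through Hensel lifting: it ensures that the rank-deficient locus of the Hessian $\nabla^2 g$ is a proper subvariety whose dimension is recorded by $s_p'(g)$. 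For the localised regime the key input is instead a uniform fibre bound
\[
\#\{\s \bmod c : \nabla g(\s) \equiv \w \pmod c\} \ll q_3^\ve (cH)^{n/3}
\]
for any $\w \bmod c$, which is essentially the content of \cite[Lemma 16]{Browning-Heath-Brown09}; its proof uses non-singularity of $G_0$ to ensure that $\nabla g$ is a dominant map and a determinantal argument that produces the exponent $n/3$ from the cube-fullness of $c^2$ together with the factor $H^{n/3}$ from the bound $\Vert g \Vert_P \leq H$. Combined with appropriately averaged estimates for $M_d(\s)^{1/2}$, this yields the $(q_3 H)^{n/3}$ term.

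The main obstacle I expect is the coherent emergence of the factor $\cD(d)$ from both regimes, since the two local analyses track different geometric invariants---the rank-deficiency of the Hessian $\nabla^2 g$ in the first, and the ramification locus of the map $\nabla g$ itself in the second. Since cube-fullness forces every prime dividing $d$ also to divide $c$, both analyses contribute at exactly the same primes, so their $p$-power exponents must line up precisely with those in the definition \eqref{eq:D_def}. The delicate bookkeeping required to carry this out cleanly is presumably what is compressed into the reference to ``the discussion concluding \cite[\S 5]{Browning-Heath-Brown09}'' in the statement of the lemma.
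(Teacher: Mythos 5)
The paper does not give a self-contained proof of this lemma; it is simply imported from \cite[Lemma~16]{Browning-Heath-Brown09} together with the discussion concluding \cite[\S 5]{Browning-Heath-Brown09}. Your attempt to reconstruct that argument starts correctly---exchange the order of summation and split according to whether $V$ is large or small relative to $c$---and the generic regime $V\gg c$ is handled cleanly: the uniform estimate $N_\s(V)\ll(V/c)^n$ combined with Cauchy--Schwarz and \cite[Lemma~14]{Browning-Heath-Brown09} gives $\sum_{\s\bmod c}M_d(\s)^{1/2}\ll q_3^\ve c^n\cD(d)^{1/2}$, which is in fact slightly stronger than what you state; the CRT/Hensel detour you sketch for this part is unnecessary.

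The localised regime, however, contains a concrete gap. The ``uniform fibre bound'' $\#\{\s\bmod c:\nabla g(\s)\equiv\w\bmod c\}\ll q_3^\ve(cH)^{n/3}$ is not the content of \cite[Lemma~16]{Browning-Heath-Brown09}---that lemma is the averaged estimate being asserted here, not a pointwise fibre bound---and it is in fact false under the hypotheses of the present lemma. Take $g(\x)=x_1^3+\dotsb+x_n^3$, so $\Vert g\Vert_P\ll1$ and $G_0$ is non-singular, with $c=p^{2k}$, $d=1$ and $\w=\0$: the fibre $\{\s\bmod c:3s_i^2\equiv 0\bmod c\text{ for all }i\}$ has exactly $p^{kn}=c^{n/2}$ elements, which is not $\ll q_3^\ve(cH)^{n/3}\asymp c^{n/3+\ve}$. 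Moreover, even if you replace your fibre bound by the correct size $c^{n/2}$, the bookkeeping ``(number of admissible residues $\w$) $\times$ (max fibre size) $\times$ (max of $M_d(\s)^{1/2}$)'' overshoots the target at intermediate scales: with $d=1$ and $V\sim c^{1/2}$ it produces $\asymp V^n\cdot c^{n/2}=c^n$, whereas the claimed bound is $\ll\cD(d)(q_3H)^{n/3}\asymp c^{2n/3}$. The actual argument of \cite[Lemma~16]{Browning-Heath-Brown09} is a global lattice-point count for the quadratic map $\nabla g$ over a box (this is where the exponent $n/3$ genuinely comes from), and the weights $M_d(\s)^{1/2}$ must be treated on average rather than via the trivial maximum; the product ``box size times max fibre'' decomposition cannot recover the result.
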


We also need to consider versions of the sum \eqref{eq:cubefull_average} where the vectors $\v$ are restricted to the ones satisfying the equation $\Phi(\v) = 0$ for a certain polynomial $\Phi \in \ZZ[v_1,\dotsc,v_n]$, or a congruence $\Phi(\v) \equiv 0 \bmod m$ for some integer $m$. 

\begin{lemma}
\label{lem:S^*(V)}
Let $\Phi \in \ZZ[v_1,\dotsc,v_n]$, where $n \geq 1$, be a non-zero polynomial. Then we have the bound
\begin{equation}
\label{eq:sparse_trivial}
\sum_{\substack{|\v-\v_0|\leq V\\ \Phi(\v) = 0}} \cP(q_3,\v) \ll_{n,\deg(\Phi)} q_3^\ve c^n \cD(d)^{1/2} \left(1 + \frac{V}{c} \right)^{n-1}.
\end{equation}
Furthermore, for any squarefree integer $m$ with $(m,c) = 1 = (m,\content(\Phi))$, we have
\begin{equation}
\label{eq:sparse_congruence}
\sum_{\substack{|\v-\v_0|\leq V\\ \Phi(\v) \equiv 0\bmod m}} \cP(q_3,\v) \ll_{n,\deg(\Phi)} (mq)^\ve c^n \cD(d)^{1/2} \left(1 + \left(\frac{V}{c}\right)^{n-1} +  \left(\frac{V}{c}\right)^n m^{-1}\right).
\end{equation}
\end{lemma}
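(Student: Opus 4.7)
The plan is to swap the order of summation in the definition of $\cP(q_3,\v)$, fixing the inner variable $\s \bmod c$ first and then counting the $\v$ that contribute. For each fixed $\s$, the divisibility condition $c \mid a\nabla g(\s) + \v$ determines $\v$ in a single residue class $\v_\s \bmod c$, so one parametrizes $\v = \v_\s + c\w$ with $\w \in \ZZ^n$. The constraint $|\v-\v_0| \leq V$ then restricts $\w$ to a box $\cB$ of side $O(V/c)$ (reducing to a bounded set of lattice points when $V < c$). The identity
\[
\sum_{\substack{|\v-\v_0|\leq V\\ \Phi(\v) = 0}} \cP(q_3,\v) = \sum_{\s \bmod c} M_d(\s)^{1/2}\, \cN(\s)
\]
then reduces both claims to (i) bounding $\cN(\s)$, the number of $\w \in \cB$ satisfying the relevant condition on $\Phi$, and (ii) bounding $\sum_{\s \bmod c} M_d(\s)^{1/2}$.

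For \eqref{eq:sparse_trivial}, after the change of variables the condition $\Phi(\v)=0$ becomes $\tilde\Phi(\w) := \Phi(\v_\s + c\w) = 0$, which is a nonzero polynomial in $\w$ of degree $O_{\deg \Phi}(1)$. A routine Schwartz--Zippel style induction on $n$ (project onto the first $n-1$ coordinates and bound the number of roots of a nonzero univariate polynomial in the last) gives $\cN(\s) \ll (1 + V/c)^{n-1}$. For \eqref{eq:sparse_congruence}, one needs to count $\w \in \cB$ with $\tilde\Phi(\w) \equiv 0 \bmod m$. The hypotheses $(m,c)=1$ and $(m,\content(\Phi))=1$ ensure that $\w \mapsto c\w$ is invertible modulo every $p \mid m$ and that $\tilde\Phi$ is not identically zero modulo any such $p$. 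Writing $\w = \w_0 + m\w'$ with $\w_0$ running through a complete residue system modulo $m$, standard square-free hypersurface counting (a prime-by-prime Lang--Weil type argument, acceptable up to $m^\ve$) gives $O(m^{n-1+\ve})$ admissible $\w_0$, while $\w'$ runs through $O(1 + (V/(cm))^n)$ values. Multiplying out and treating the regimes $V/c < 1$, $1 \leq V/c \leq m$, $V/c > m$ separately to collect terms yields
\[
\cN_m(\s) \ll (mq)^{\ve} \Big(1 + (V/c)^{n-1} + (V/c)^n m^{-1}\Big).
\]

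For the sum over $\s$, one wants $\sum_{\s \bmod c} M_d(\s)^{1/2} \ll (cd)^{\ve}\, c^n\, \cD(d)^{1/2}$. By CRT (over the square-free $d$) and multiplicativity, this reduces to a prime-by-prime statement: for $p \mid d$, one shows $\sum_{\s \bmod p} |\ker(\nabla^2 g(\s) \bmod p)|^{1/2} \ll p^{n} \cdot p^{(s'_p+1)/2} / p^{\ldots}$, where the exponent is exactly the one matching \eqref{eq:D_def}. This is essentially the content of the corank analysis of $\nabla^2 g \bmod p$ carried out in the proof of \cite[Lemma~16]{Browning-Heath-Brown09}, and the passage from prime moduli to square-free moduli loses only a factor $d^{\ve}$. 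Combining the two ingredients gives both claims.

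The main obstacle will be the book-keeping in the congruence count: obtaining the sharp three-term split $1 + (V/c)^{n-1} + (V/c)^n m^{-1}$, as opposed to the cruder aggregate $m^{n-1+\ve}(1+(V/(cm))^n)$, requires treating the three regimes of $V/c$ versus $m$ separately and verifying that the hypersurface count mod $m$ is uniform in $\s$. A secondary technical point is the justification that $\cD(d)^{1/2}$ (rather than the trivial $d^{n/2}$) emerges from the average of $M_d(\s)^{1/2}$; this relies on the fact that the corank of $\nabla^2 g(\s) \bmod p$ is controlled by the singular locus of the cubic form $g$ modulo $p$ in exactly the manner encoded in \eqref{eq:D_def}.
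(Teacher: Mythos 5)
Your approach is structurally the same as the paper's: swap summation to fix $\s\bmod c$, parametrize $\v=\v_\s+c\w$, count $\w$ subject to the polynomial (in)congruence, and separately bound the $\s$-average of $M_d(\s)^{1/2}$. The paper simply invokes \cite[Lemma 4]{Browning-Heath-Brown09} for the lattice-point count (both the vanishing and the congruence version) and \cite[Lemma 14]{Browning-Heath-Brown09} together with Cauchy--Schwarz for the $\s$-sum, whereas you re-derive the first ingredient and gesture at a corank analysis for the second.

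Two soft spots in your re-derivations are worth flagging. First, for the congruence count your block decomposition $\w=\w_0+m\w'$ and a Lang--Weil count of $O(m^{n-1+\ve})$ roots per full residue box does not by itself give the sharp exponent in the regime $1\leq V/c<m$: there is less than one full period, so counting $\w_0$ over all of $(\ZZ/m\ZZ)^n$ overshoots. What actually delivers $(V/c)^{n-1}m^\ve$ there is the projection argument you used for \eqref{eq:sparse_trivial} (fix $n-1$ coordinates of $\w$, reduce to a nonzero univariate polynomial mod $m$, and use that it has $O_{\deg}(m^\ve)$ roots), combined with the regime $V/c\geq m$ to supply the third term. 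You correctly identify the regime-splitting as the main obstacle but do not carry it out; the paper sidesteps all of this by citing BHB09's Lemma~4, which packages exactly the bound $U_{\r,m}(V)\ll m^\ve((V/c)^{n-1}+(V/c)^n m^{-1})$. Second, for the $\s$-sum, the cleanest route is not a prime-by-prime bound on $\sum_{\s\bmod c}M_d(\s)^{1/2}$ but rather Cauchy--Schwarz: $\sum_{\s\bmod c}M_d(\s)^{1/2}\ll c^{n/2}\bigl(\sum_{\s\bmod c}M_d(\s)\bigr)^{1/2}$, after which \cite[Lemma 14]{Browning-Heath-Brown09} (note: Lemma~14, not Lemma~16) gives $\sum_{\s\bmod c}M_d(\s)\ll q_3^\ve c^n\cD(d)$. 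Your prime-by-prime plan can be made to work, but you would still need Cauchy--Schwarz at the prime level, and you must also handle primes $p\mid c$, $p\nmid d$ (which contribute a factor $(c/d)^n$ rather than a corank saving), which your sketch glosses over.
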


\begin{proof}
We may write
\[
\sum_{\substack{|\v-\v_0|\leq V\\ \Phi(\v) = 0}} \cP(q_3,\v) \leq \sum_{\s \bmod c} M_d(\s)^{1/2} \max_{\r \bmod c} U_\r(V),
\]
where
\[
U_\r(V) := \#\{\v \in \ZZ^n \mid |\v-\v_0| \leq V,\ \Phi(\v) = 0,\ \v \equiv \r \bmod c\}.
\]
Analogously, we have
\[
\sum_{\substack{|\v-\v_0|\leq V\\ \Phi(\v) \equiv 0\bmod m}} \cP(q_3,\v) \leq \sum_{\s \bmod c} M_d(\s)^{1/2} \max_{\r \bmod c} U_{\r,m}(V),
\]
where
\[
U_{\r,m}(V) := \#\{\v \in \ZZ^n \mid |\v-\v_0| \leq V,\ \Phi(\v) \equiv 0 \bmod m,\ \v \equiv \r \bmod c\}.
\]
If $c \geq V$, then we obviously have $U_\r(V) \leq 1$ and $U_{\r,m}(V) \leq 1$. If $c < V$, then we may write
\[
U_{\r}(V) \leq \#\left\{\u \in \ZZ^n ; |\u|\leq \frac{2V}{c}, \Psi(\u) = 0 \right\}
\]
and
\[
U_{\r,m}(V) \leq \#\left\{\u \in \ZZ^n ; |\u|\leq \frac{2V}{c}, \Psi(\u) \equiv 0 \bmod m \right\},
\]
where $\Psi(\u) = \Phi(\v_1 + c\u)$ for some $\v_1 \in \ZZ^n$. The polynomial $\Psi \in \ZZ[x_1,\dotsc,x_n]$ is not the zero polynomial, and by our assumption that $(m,c)= 1 = (m,\content(\Phi))$, its image in $\FF_p[x_1,\dotsc,x_n]$ is also non-vanishing for all $p \mid m$. It then follows from \cite[Lemma 4]{Browning-Heath-Brown09} that
\[
U_\r(V) \ll \left(\frac{V}{c}\right)^{n-1} \quad \text{and} \quad U_{\r,m}(V) \ll m^\ve \left(\left(\frac{V}{c}\right)^{n-1} + \left(\frac{V}{c}\right)^{n}m^{-1}\right).
\]
In general, we therefore have the bounds
\[
U_\r(V) \ll 1 + \left(\frac{V}{c}\right)^{n-1} \quad \text{and} \quad U_{\r,m}(V) \ll m^\ve \left(1 + \left(\frac{V}{c}\right)^{n-1} + \left(\frac{V}{c}\right)^{n}m^{-1}\right).
\]

By \cite[Lemma 14]{Browning-Heath-Brown09}
we have 
\[
\sum_{\s \bmod c} M_d(\s) \ll \left(\frac{c}{d}\right)^n \sum_{\s \bmod d} M_d(\s) \ll q_3^\ve c^n \cD(d).
\]
An application of the Cauchy-Schwarz inequality gives
\[
\sum_{\s \bmod c} M_d(\s)^{1/2} \ll c^{n/2} \left\{\sum_{\s \bmod c} M_d(\s)\right\}^{1/2} \ll q_3^\ve c^n \cD(d)^{1/2}.
\]
We conclude that
\[
\sum_{\substack{|\v-\v_0|\leq V\\ \Phi(\v) = 0}} \cP(q,\v) \ll \left(1 + \frac{V}{c}\right)^{n-1} \sum_{\s \bmod c} M_d(\s)^{1/2} \ll q^\ve c^n \cD(d)^{1/2} \left(1 + \frac{V}{c} \right)^{n-1}
\]
and similarly
\[
\sum_{\substack{|\v-\v_0|\leq V\\ \Phi(\v) \equiv 0\bmod m}} \cP(q_3,\v) \ll (mq_3)^\ve c^n \cD(d)^{1/2} \left(1 + \left(\frac{V}{c}\right)^{n-1} +  \left(\frac{V}{c}\right)^n m^{-1}\right),
\]
as claimed.

\end{proof}

Note that if $n \geq 2$ and the polynomial $\Phi$ is absolutely irreducible of degree at least $2$, then using a bound by Serre \cite[Chapter 13]{Serre97}, the exponent in the bound \eqref{eq:sparse_trivial} can be improved to $n-3/2$. However the current bound suffices to establish Theorem \ref{thm:main thm}.


\section{Evaluation of quartic exponential sums; further considerations}
\label{sec:expsums_final}

We shall use the bounds from the previous sections to evaluate the exponential sum $\cT(q,z)$. From now on, we again restrict the degrees of the polynomials to be $\deg(f) = 4$ and $\deg(g) = 3$ (unless $g$ vanishes entirely). Thus, the implied constants in our estimates depend only on $n$ and $\ve$. In the same vein as Section \ref{sec:cubefree}, we will obtain bounds for all $n$ including $n=1,2$, since $f$ and $g$ will correspond to the forms obtained from applying various stages of hyperplane intersections to $F$ and $F_\h$ respectively. 
We write
\[
q = bq_3 = b_1 b_2 q_3 \quad \text{and} \quad q_3 = c^2d
\]
as above. Here $b_1$ is the square-free and $q_3$ is the cube-full part of $q$, as per our notation \eqref{eq:factors}. Moreover, $d$ is square-free as chosen in Section \ref{sec:cubefull}.
We shall assume that
\begin{equation}
\label{eq:F_nonzero}
(b_1,\content(F_0)) = 1
\end{equation}
and furthermore that 
\begin{equation}
\label{eq:g_height'}
\Vert g \Vert_P \leq H \leq P^A
\end{equation}
for some $A>0$. To begin with, let us in addition assume that
\begin{equation}
\label{eq:nonsing}
s'_\infty = -1.
\end{equation}
Recall the bound in Proposition \ref{prop:poisson+intbound} for $\cT(q,z)$ and the factorisation \eqref{eq:Sumult} for the sums $S(q,\v)$ occurring there. Assume that $n \geq 2$. For the first factor in \eqref{eq:Sumult} we note that
\begin{align*}
T(b_1,\bar{q_2}\v) &= 
\sum_{\x\bmod {b_1}}\:\:\starsum_{s_1,s_2=1}^{b_1}e_{b_1}(
s_1g(\x)+(s_1-s_2)f(\x)+\bar{q_2}\v.\x) \\
&= \sum_{\x\bmod {b_1}}\:\:\starsum_{s_1,s_2=1}^{b_1}e_{b_1}\Big(\bar{q_2}\big(
s_1'g(\x)+(s_1'-s_2')f(\x)+\v.\x\big)\Big)\\
&= T(\psi',\v),
\end{align*}
where we have put $s_i' = q_2s_i$ for $i=1,2$ in the second step, and where $\psi'(\cdot) = e_{b_1}(\bar{q_2}\,\cdot)$ is a primitive additive character modulo $b_1$.
Thus we have the bound
\[
|T(b_1,\bar{q_2}\v)| \ll b_1^{(n+2)/2 + \ve} \cD(b_1) (b_1,\Phi(\v))^{1/2},
\]
by Lemma \ref{lem:squarefree}, where the polynomial $\Phi = \Phi_{f,g}$ satisfies the properties listed there. We split the second factor in \eqref{eq:Sumult} further into a cubefree and a cubefull part, thus writing 
\[
T^*_a(q_2,
\overline{b_1} \v ) = T^*_{\bar{q_3}a}(b_2,\bar{q_3}\bar{b_1}\v)T^*_{\bar{b_2}a}(q_3,\bar{b_2}\bar{b_1}\v).
\]
For the first factor, Lemma \ref{lem:cubefree} applies to give
\[
T^*_{\bar{q_3}a}(b_2,\bar{q_3}\bar{b_1}\v) \ll b_2^{n/2+ \ve} \cD(b_2).
\]

Inserting these bounds into Proposition \ref{prop:poisson+intbound}, and assuming that $q \leq P^2$, say, we get
\begin{multline}
\label{eq:poisson+cubefree}
\cT(q,z) \ll_N P^{-N} + 
q^{-n}P^{n+\ve} b_1 b^{n/2} \cD(b) \max_{\v_0} \sum_{|\v - \v_0| \leq P^\ve V} |T^*_{\bar{b_2}a}(q_3,\bar{b_2}\bar{b_1}\v)| (b_1,\Phi(\v))^{1/2}.
\end{multline}
Using Lemma \ref{lem:cubefull}, we then get
\begin{equation}
\label{eq:legoland}
\cT(q,z) \ll b_1 P^{n+\ve} q^{-n/2} \cD(b) \max_{\v_0} \sum_{|\v - \v_0| \leq P^\ve V} \cP(q_3,\bar{b_1}\v) (b_1,\Phi(\v))^{1/2}. 
\end{equation}
We estimate the sum over $\v$ using Lemma \ref{lem:S^*(V)}, writing
\begin{align*}
\sum_{|\v - \v_0| \leq P^\ve V} &\cP(q_3,\bar{b_1}\v) (b_1,\Phi(\v))^{1/2} \leq \sum_{m \mid b_1} m^{1/2} \sum_{\substack{|\v - \v_0| \leq P^\ve V\\\Phi(\v) \equiv 0 \bmod m}} \cP(q_3,\bar{b_1}\v) \\
&\ll \cD(d) P^\ve \sum_{m \mid b_1} m^{1/2} (c^n + cV^{n-1} + V^n m^{-1})\ll \cD(d) P^\ve \Big(b_1^{1/2}(c^n + c V^{n-1}) + V^n\Big).
\end{align*}
Inserting this bound into \eqref{eq:legoland}, we obtain
\[
\cT(q,z) \ll b_1 P^{n+\ve} q^{-n/2} \cD(q) \Big(b_1^{1/2}(c^n + c V^{n-1}) + V^n\Big).
\]
Alternatively, we may use the trivial bound $(b_1,\Phi(\v))^{1/2} \leq b_1^{1/2}$ and apply Lemma \ref{lem:S(V)_BHB} to obtain
\begin{align*}
\sum_{|\v - \v_0| \leq P^\ve V} \cP(q_3,\bar{b_1}\v) (b_1,\Phi(\v))^{1/2} &\leq b_1^{1/2} \sum_{|\v - \v_0| \leq P^\ve V} \cP(q_3,\bar{b_1}\v)
\ll b_1P^\ve \cD(d)(V+(q_3H)^{1/3})^n,
\end{align*}
which in turn yields
\[
\cT(q,z) \ll b_1^{3/2} P^{n+\ve} q^{-n/2} \cD(q) \Big(V^n + (q_3H)^{n/3}\Big).
\]
Observing that $\min\{c^n,V^n\} \leq cV^{n-1}$, we conclude as follows.

\begin{proposition}
\label{prop:1}
Suppose that $n \geq 2$. 
Then, under the conditions \eqref{eq:F_nonzero}--\eqref{eq:nonsing}, we have the bound
\begin{equation}
\label{eq:prop1}
\cT(q,z) \ll b_1 q^{-n/2} \cD(q) P^{n+\ve} \Big(V^n  +  b_1^{1/2}\big(cV^{n-1} + 
(q_3H)^{n/3}\big)\Big), 
\end{equation}
where
$V=\frac qP\max\{1,(HP^3|z|)^{1/2}\}$ as in \eqref{eq:V def}.
\end{proposition}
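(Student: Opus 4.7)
The plan is to assemble the bound by combining the Poisson summation output with the multiplicativity of $S(q,\v)$ and the pointwise/averaged estimates from Sections \ref{sec:cubefree} and \ref{sec:cubefull}. I would start from Proposition \ref{prop:poisson+intbound}, which already reduces the problem to estimating
\[
q^{-n}P^n \max_{\v_0} \sum_{|\v-\v_0|\leq P^\ve V}|S(q,\v)|,
\]
and then use \eqref{eq:Sumult} together with \eqref{eq:Tamult} to write
\[
S(q,\v) = T(b_1,\bar{q_2}\v)\,T^*_{\bar{q_3}a}(b_2,\bar{q_3}\bar{b_1}\v)\,T^*_{\bar{b_2}a}(q_3,\bar{b_2}\bar{b_1}\v),
\]
so that the square-free, square-full-but-cube-free, and cube-full parts can be treated independently.

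Next, I would bound each of the first two factors pointwise in $\v$. For $T(b_1,\bar{q_2}\v)$, a change of variables $s_i'=q_2 s_i$ turns it into $T(\psi',\v)$ for a primitive character $\psi'$ modulo $b_1$, so Lemma \ref{lem:squarefree} yields the bound $b_1^{(n+2)/2+\ve}\cD(b_1)(b_1,\Phi(\v))^{1/2}$, where $\Phi = \Phi_{f,g}$ is the polynomial supplied by Lemma \ref{lem:prime}. For $T^*_{\bar{q_3}a}(b_2,\cdot)$, Lemma \ref{lem:cubefree} gives $b_2^{n/2+\ve}\cD(b_2)$. Substituting these into the Poisson bound leaves the cube-full factor $T^*_{\bar b_2 a}(q_3,\bar b_2 \bar b_1 \v)$ to be averaged over $\v$; Lemma \ref{lem:cubefull} then reduces this average to
\[
\max_{\v_0}\sum_{|\v-\v_0|\leq P^\ve V}\cP(q_3,\bar b_1 \v)(b_1,\Phi(\v))^{1/2},
\]
times a factor $b_1 q^{-n/2}\cD(b)P^{n+\ve}$ that already has the shape appearing in \eqref{eq:prop1}.

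The main technical point is the dichotomy used to bound this final $\v$-sum in two different ways. In the first approach, I would split according to $m=(b_1,\Phi(\v))$ and apply the congruence estimate \eqref{eq:sparse_congruence} of Lemma \ref{lem:S^*(V)} with the squarefree modulus $m\mid b_1$ (using that $(b_1,\content(F_0))=1$ by \eqref{eq:F_nonzero}, which via the construction of $\Phi$ in Proposition \ref{prop:dualform} ensures $(m,\content(\Phi))=1$). Summing the resulting bound $m^{1/2}\cD(d)P^\ve(c^n + cV^{n-1} + V^n m^{-1})$ over $m\mid b_1$ gives
\[
\cD(d) P^\ve\bigl(b_1^{1/2}(c^n + cV^{n-1}) + V^n\bigr).
\]
In the second approach, I would use the trivial estimate $(b_1,\Phi(\v))^{1/2}\leq b_1^{1/2}$ and apply Lemma \ref{lem:S(V)_BHB} to $\sum \cP(q_3,\bar b_1 \v)$, which gives $b_1^{1/2}\cD(d)P^\ve(V+(q_3H)^{1/3})^n$. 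The expected obstacle is merely bookkeeping: one must keep careful track of $\cD(b_1)\cD(b_2)\cD(d)=\cD(q)$ and of the factor $b_1$ versus $b_1^{3/2}$ arising in the two regimes.

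Finally, I would combine these two alternative bounds, taking the better of them in each regime. The elementary inequality $\min\{c^n,V^n\}\leq cV^{n-1}$ allows one to absorb the $c^n$ and the $(q_3 H)^{n/3}$ terms into a uniform expression, yielding
\[
\cT(q,z)\ll b_1\, q^{-n/2}\cD(q)P^{n+\ve}\Bigl(V^n + b_1^{1/2}\bigl(cV^{n-1} + (q_3H)^{n/3}\bigr)\Bigr),
\]
which is exactly the claim of Proposition \ref{prop:1}. The restriction $q\leq P^2$ needed for the negligible error $P^{-N}$ from Proposition \ref{prop:poisson+intbound} to be absorbed is harmless since we are ultimately interested in $q\leq Q=P^{8/5+\phi}$.
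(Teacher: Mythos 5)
Your proposal follows the paper's proof step for step: the same reduction via Proposition \ref{prop:poisson+intbound}, the same three-fold splitting of $S(q,\v)$ using \eqref{eq:Sumult} and \eqref{eq:Tamult}, the same applications of Lemma \ref{lem:squarefree} (after the change of variables $s_i'=q_2 s_i$), Lemma \ref{lem:cubefree}, and Lemma \ref{lem:cubefull}, and the same dichotomy between the $(b_1,\Phi(\v))$-weighted congruence estimate \eqref{eq:sparse_congruence} and the trivial bound plus Lemma \ref{lem:S(V)_BHB}, concluded by the elementary observation $\min\{c^n,V^n\}\leq cV^{n-1}$. The only small imprecision is your bookkeeping identity $\cD(b_1)\cD(b_2)\cD(d)=\cD(q)$, which is actually a one-sided inequality $\cD(b_1)\cD(b_2)\cD(d)\leq\cD(q)$ (since $\cD(d)$ carries exponent $i/2$ on primes $p\mid d$ while $\cD(q)$ carries exponent $i$ on all primes $p\mid q_3$); as both sides appear as upper bounds this does not affect the argument.
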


In the case $n=1$, we repeat the arguments above, replacing Lemma \ref{lem:squarefree} by \ref{lem:squarefree_n=1}, to obtain the following bound.

\begin{proposition}
\label{prop:1_n=1}
If $n=1$ and \eqref{eq:F_nonzero}--\eqref{eq:nonsing} hold, then we have
\[
\cT(q,z) \ll b_1 q^{-1/2} \cD(q_2) P^{1+\ve} (b_1,\resultant(f,g))^{1/2}(V+(q_3H)^{1/3}).
\]
\end{proposition}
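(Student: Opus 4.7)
The plan is to mimic, mutatis mutandis, the derivation of Proposition \ref{prop:1} in the $n \geq 2$ case, substituting the $n = 1$ bounds from Lemma \ref{lem:squarefree_n=1} at the squarefree stage. First I would invoke Proposition \ref{prop:poisson+intbound}, which applies verbatim for $n=1$, to write
\[
\cT(q,z) \ll P^{-N} + q^{-1}P^{1+\ve}\max_{v_0}\sum_{|v-v_0|\leq P^\ve V}|S(q,v)|.
\]
Then I would factor $S(q,v)$ according to \eqref{eq:Sumult} and \eqref{eq:Tamult} as
\[
S(q,v)=T(b_1,\bar{q_2}v)\,T^*_{\bar{q_3}a}(b_2,\bar{q_3}\bar{b_1}v)\,T^*_{\bar{b_2}a}(q_3,\bar{b_2}\bar{b_1}v),
\]
exactly as in the derivation preceding \eqref{eq:poisson+cubefree}.

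The crucial step is bounding the squarefree factor. I would apply Lemma \ref{lem:squarefree_n=1} (whose hypothesis is guaranteed by \eqref{eq:F_nonzero}) to obtain
\[
|T(b_1,\bar{q_2}v)| \ll b_1^{3/2}\bigl(b_1,\resultant(f,g)\bigr)^{1/2},
\]
where the key feature, in contrast to the $n\geq 2$ case, is that this estimate is \emph{uniform in $v$}. For the squarefull factor I would use Lemma \ref{lem:cubefree} to get $|T^*_{\bar{q_3}a}(b_2,\cdot)|\ll b_2^{1/2+\ve}\cD(b_2)$, and Lemma \ref{lem:cubefull} to get $|T^*_{\bar{b_2}a}(q_3,\bar{b_2}\bar{b_1}v)|\ll q_3^{1/2}\cP(q_3,v)$. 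Putting these together yields
\[
\cT(q,z) \ll P^{-N} + P^{1+\ve} b_1^{1/2}b_2^{-1/2}q_3^{-1/2}\cD(b_2)\bigl(b_1,\resultant(f,g)\bigr)^{1/2}\sum_{|v-v_0|\leq P^\ve V}\cP(q_3,v),
\]
after combining the powers $b_1^{3/2}b_2^{1/2}q_3^{1/2}\cdot q^{-1} = b_1^{1/2}b_2^{-1/2}q_3^{-1/2} = b_1\,q^{-1/2}$.

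Since the $v$-dependence in the squarefree factor has been discarded, there is no need for the sparse-counting refinement (Lemma \ref{lem:S^*(V)}) that was required in the $n\geq 2$ proof; I may instead apply Lemma \ref{lem:S(V)_BHB} directly (with $n=1$) to deduce
\[
\sum_{|v-v_0|\leq P^\ve V}\cP(q_3,v)\ll q_3^\ve \cD(d)\bigl(V+(q_3H)^{1/3}\bigr),
\]
using that the hypotheses \eqref{eq:g_nonsing}--\eqref{eq:g_height} on $g$ are subsumed by \eqref{eq:nonsing} and \eqref{eq:g_height'} (interpreting $s_\infty(G_0)=-1$ in the one-variable case as the non-vanishing of the leading coefficient of $g$). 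Finally, using the multiplicativity of $\cD$ and the coprimality of $b_2$ and $d$, I would bound $\cD(b_2)\cD(d)=\cD(b_2d)\leq \cD(q_2)$, which collapses the two $\cD$-factors into the single $\cD(q_2)$ appearing in the statement and completes the proof.

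The routine arithmetic of tracking exponents of $b_1,b_2,q_3$ through the product is the only calculational point to verify carefully; the main conceptual point—and the reason the $n=1$ case is \emph{easier} than $n\geq 2$—is that Lemma \ref{lem:squarefree_n=1} produces a $v$-independent bound, which removes the need for the sparse-variety counting of Lemma \ref{lem:S^*(V)}. I do not foresee any genuine obstacle.
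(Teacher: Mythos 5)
Your proof is correct and is precisely the argument the paper intends: it repeats the derivation of Proposition \ref{prop:1}, substituting Lemma \ref{lem:squarefree_n=1} for Lemma \ref{lem:squarefree} at the squarefree stage, and you correctly observe that since the resulting bound is uniform in $v$ the sparse-counting Lemma \ref{lem:S^*(V)} becomes unnecessary and Lemma \ref{lem:S(V)_BHB} can be applied directly. The only cosmetic imprecision is writing $\cP(q_3,v)$ rather than $\cP(q_3,\bar{b_1}v)$, which is immaterial since the bound takes a maximum over the centre $v_0$.
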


We shall also derive a 'trivial' bound, which is useful also in the case when $g$ vanishes identically. In that particular case, the condition \eqref{eq:nonsing} is automatically violated. Instead, we only impose a non-singularity condition on $f$.

\begin{proposition}
\label{prop:trivial}
Suppose that $P\geq q^{1/2+1/n}$, that \eqref{eq:F_nonzero} holds, and in addition that $s_p(F_0) = -1$ for all primes $p \mid b_1$. Then we have the bound
\[
\cT(q,z) \ll b_1 P^{n+\ve}.
\]
\end{proposition}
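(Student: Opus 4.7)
The first step is to factorise the inner double sum over $s_1, s_2$. Since the exponent splits as $s_1 g(\x) + (s_1-s_2)f(\x) = s_1(f+g)(\x) - s_2 f(\x)$, one has
\[
\starsum_{s_1, s_2 = 1}^{b_1} e_{b_1}\bigl(s_1 g(\x) + (s_1-s_2) f(\x)\bigr) = c_{b_1}\bigl((f+g)(\x)\bigr)\,c_{b_1}\bigl(f(\x)\bigr),
\]
where $c_{b_1}(\cdot)$ is the Ramanujan sum. Since the factors $e_{q_2}(ag(\x))$ and $e(zg(\x))$ have absolute value $1$, and $|c_{b_1}(m)| \leq (m,b_1)$, the triangle inequality yields
\[
|\cT(q,z)| \leq \sum_\x w(\x/P)\,(f(\x),b_1)\,\bigl((f+g)(\x),b_1\bigr).
\]

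Expanding the two gcds via $(n,b_1) = \sum_{d\mid(n,b_1)}\varphi(d)$ converts this into a double divisor sum
\[
|\cT(q,z)| \leq \sum_{d,d' \mid b_1}\varphi(d)\,\varphi(d')\,\sum_\x w(\x/P)\,\mathbf{1}\bigl[d\mid f(\x)\bigr]\,\mathbf{1}\bigl[d'\mid (f+g)(\x)\bigr].
\]
For fixed $d,d'$, the joint congruence $d\mid f$, $d'\mid (f+g)$ reduces to membership in at most $O_\ve(L^{n-1+\ve})$ residue classes modulo $L := \operatorname{lcm}(d,d')$. Here the Lang--Weil-type estimate uses both $(b_1,\content(F_0)) = 1$ and the non-singularity assumption $s_p(F_0) = -1$ for $p \mid b_1$ to guarantee that $f$ is non-constant modulo each prime factor of $L$. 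Consequently, the inner sum is $\ll P^n/L + E(d,d')$, where the boundary error $E(d,d')$ is handled below.

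The main contribution, using $L = dd'/(d,d')$, then simplifies via an Euler-product calculation:
\[
P^n \sum_{d,d' \mid b_1}\frac{\varphi(d)\varphi(d')(d,d')}{dd'} = P^n\prod_{p\mid b_1}\frac{p^2 + p - 1}{p} \ll b_1\, P^{n+\ve},
\]
which is exactly the target bound. The main obstacle is controlling the boundary error $E(d,d')$ for pairs with $L > P$. Such pairs occur only when $b_1 > P$, which is allowed for $n \geq 3$ under the hypothesis $P \geq q^{1/2 + 1/n}$ (giving $b_1 \leq P^{2n/(n+2)}$). For these pairs the naive lattice-point count must be replaced by a Weyl-type exponential-sum estimate for $\sum_\x w(\x/P)\,e_L(\cdot)$, obtained by combining repeated integration by parts (exploiting the smoothness of $w \in \cW_n$) with square-root cancellation in the complete sums modulo $L$ afforded by the non-singularity assumption $s_p(F_0) = -1$. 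The hypothesis $P \geq q^{1/2+1/n}$ is calibrated precisely so that these Weyl savings compensate the $b_1^2$ loss coming from counting pairs $(d,d')$, absorbing $E(d,d')$ into the main term and closing the bound $|\cT(q,z)| \ll b_1 P^{n+\ve}$.
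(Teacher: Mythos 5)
Your starting point is the same as the paper's: recognizing that the sum over $s_1,s_2$ factorizes into a product of Ramanujan sums, giving the bound $|\cT(q,z)|\leq\sum_\x w(\x/P)(b_1,f(\x))\,(b_1,(f+g)(\x))$. From there the paths diverge, and I think yours has a genuine gap.

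The paper at this point makes a crucial simplification: it bounds $(b_1,(f+g)(\x))\leq b_1$ \emph{trivially}, reducing the problem to showing $\sum_\x w(\x/P)(b_1,f(\x))\ll P^{n+\ve}$. This single-gcd sum is handled by Lemma~\ref{lem:india}, whose proof is a Poisson-summation argument: expand as $\sum_{m\mid b_1}m\sum_{m\mid f(\x)}w(\x/P)$, Poisson-sum the inner count, and split the dual sum according to whether the frequency vector $\v$ is coprime to $m$ or not. For $(m,\v)=1$ one has square-root cancellation in the complete sums $\Sigma_m(\v)\ll m^{n/2+\ve}$; the hypothesis $P\geq m^{1/2+1/n}$ is precisely what lets this dominate the number $\sim(m/P)^n$ of contributing dual frequencies. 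For $(m,\v)>1$ the complete sum is only $\ll m^{n-1+\ve}$, and an induction on $\omega(m)$ is needed to recycle the bound on the subsum over $\v$ divisible by a fixed divisor. Your sketch does not carry out any of this. The sentence ``obtained by combining repeated integration by parts with square-root cancellation in the complete sums modulo $L$'' names the strategy but not the argument; in particular the recursive handling of non-primitive $\v$, which is where the real work is, is absent.

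More seriously, your approach keeps \emph{both} congruence conditions $d\mid f(\x)$ and $d'\mid(f+g)(\x)$, so the Poisson-summed error involves complete sums of the form $\sum_{\x\bmod p}e_p\bigl(a f(\x)+b(f+g)(\x)+\v.\x\bigr)=\sum_{\x\bmod p}e_p\bigl((a+b)f(\x)+bg(\x)+\v.\x\bigr)$ at primes $p\mid(d,d')$. Square-root cancellation in such sums requires control on the whole pencil $\{(a+b)F_0+bG_0\}$, but the hypothesis of the proposition only asserts $s_p(F_0)=-1$; nothing is assumed about $g$ or about $F_0,G_0$ jointly. Indeed, Proposition~\ref{prop:trivial} is invoked precisely in situations where the condition \eqref{eq:nonsing} on the pair $(f,g)$ fails (for instance when $g$ vanishes identically), so you cannot assume the joint structure is well-behaved. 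The paper sidesteps this entirely by the trivial bound on the second gcd factor, which is why its version of the argument only ever touches $f$. Adopting that reduction first would repair the structural problem, but you would still need to reproduce the full proof of Lemma~\ref{lem:india} rather than gesture at it.
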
  

Proposition \ref{prop:trivial} will be a consequence of the following observation. 

\begin{lemma}
\label{lem:india}
Let $f \in \ZZ[x_1,\dotsc,x_n]$ be a polynomial of degree $d \geq 2$ and let $w \in \cW_n$. Let $m$ be a squarefree number such that the leading form $F_0$ of $f$ is non-singular (mod $p$) for every prime $p \mid m$. Then we have the bound 
\[
\sum_{\substack{\x \in \ZZ^n \\ m \mid f(\x)}} w(\x/P) \ll P^{n+\ve} m^{-1}
\] 
whenever
$P \geq m^{1/2+1/n}$.
\end{lemma}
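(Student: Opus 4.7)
The plan is to detect the condition $m \mid f(\x)$ by additive characters and then apply Poisson summation. Writing
\[
\sum_{\x:\, m\mid f(\x)} w(\x/P) = \frac{1}{m} \sum_{a \bmod m} \cS(a), \qquad \cS(a) := \sum_{\x \in \ZZ^n} w(\x/P)\, e_m(a f(\x)),
\]
I will apply the standard Poisson summation in $\x$ (partitioning $\ZZ^n$ into residues mod $m$) to obtain
\[
\cS(a) = \Big(\frac{P}{m}\Big)^n \sum_{\v \in \ZZ^n} \hat w(P\v/m)\, T(a,\v;m), \qquad T(a,\v;m) := \sum_{\y \bmod m} e_m(a f(\y) + \v\cdot \y).
\]
Since $w \in \cW_n$ is smooth and compactly supported, $\hat w$ is Schwartz, so the effective range of $\v$ is $|\v| \ll m P^{-1+\ve}$.

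The main term arises from $(a,\v) = (0,\0)$, contributing $P^n \hat w(\0)/m \ll P^n/m$, exactly matching the target bound; contributions with $a=0$, $\v \neq \0$ are negligible because $T(0,\v;m)$ vanishes unless $m\mid\v$, which forces $|\v| \geq m$ and puts $\hat w(P\v/m)$ into its rapidly decaying regime. The technical core is thus to control the contribution from $a \not\equiv 0\pmod m$.

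For such $a$, let $d = \gcd(a,m)$ and write $a = d a'$, $m = d m'$ with $(a',m')=1$. Splitting $\y = \y_1 + m'\y_2$ with $\y_1$ mod $m'$ and $\y_2$ mod $d$, and using $f(\y_1 + m'\y_2) \equiv f(\y_1)\pmod{m'}$, a direct calculation shows that $T(a,\v;m)$ vanishes unless $d \mid \v$, in which case $T(a,\v;m) = d^n\, T(a', \v/d;\, m')$. Since $m$ is squarefree and $F_0$ is non-singular mod every prime $p \mid m$, the leading form of $a' f$ is non-singular mod every $p \mid m'$, so Lemma 7 of \cite{Browning-Heath-Brown09} applied prime-by-prime together with multiplicativity for squarefree $m'$ yields the Deligne-type estimate $|T(a',\w;m')| \ll (m')^{n/2+\ve}$.

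Collecting these pieces, the error contribution becomes
\[
\frac{(P/m)^n}{m} \sum_{d \mid m} d^n \,\phi(m')\,(m')^{n/2+\ve} \sum_{\w \in \ZZ^n} |\hat w(Pd\w/m)|,
\]
after substituting $\v = d\w$. The inner $\w$-sum is $\ll \max\{1,(m/(Pd))^n\}$ by the Schwartz decay of $\hat w$. Carrying out the $d$-summation, the total error splits as $O(P^n m^{\ve-1} + m^{n/2+\ve})$; the second piece is absorbed by the first precisely when $m^{n/2+1} \leq P^{n+\ve}$, i.e.\ $m \leq P^{2n/(n+2)}$, which is exactly equivalent to the hypothesis $P \geq m^{1/2+1/n}$. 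The main obstacle is the regime $m > P$ (permitted for $n \geq 3$): here the naive counting $\sum_{\y \bmod m,\,m\mid f(\y)} \#\{\z: \y+m\z \in P\supp(w)\} \ll m^{n-1+\ve}$ is insufficient, and one genuinely needs square-root cancellation in $T(a,\v;m)$ from Deligne's bound, which is why the non-singularity assumption on $F_0$ modulo each prime divisor of $m$ enters in an essential way.
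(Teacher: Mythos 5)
Your proof is correct, and it takes a genuinely different route from the paper's. The paper applies Poisson summation first, reducing to the complete sums $\Sigma_m(\v) = \sum_{\x \bmod m,\, m\mid f(\x)} e_m(\v\cdot\x)$, for which it records the dichotomy $\Sigma_m(\v) \ll m^{n/2+\ve}$ when $\gcd(m,\v)=1$ versus $\Sigma_m(\v) \ll m^{n-1+\ve}$ in general; the case $m \leq P$ then follows trivially, while the case $m > P$ is handled by an induction on $\omega(m)$, peeling off a divisor $m_1 \mid \gcd(m,\v)$ and using multiplicativity together with the trivial bound for the factor $\Sigma_{m_1}$. You instead insert the orthogonality relation for $m \mid f(\x)$ \emph{before} applying Poisson summation, obtaining sums $T(a,\v;m)$ which you analyse by the factorisation $T(a,\v;m) = d^n\, T(a',\v/d;m')$ along $d = \gcd(a,m)$ (your CRT-type computation here is correct, using that $m$ squarefree forces $\gcd(d,m')=1$). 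This replaces the paper's induction on $\omega(m)$ with a single explicit divisor sum over $d$, and replaces the slicing by $\gcd(m,\v)$ with a slicing by $\gcd(a,m)$ that \emph{forces} the constraint $d\mid\v$; the key Deligne-type input (essentially \cite[Lemma 7]{Browning-Heath-Brown09} together with multiplicativity across squarefree moduli) is of course the same in both, as is the arithmetic that makes the hypothesis $P \geq m^{1/2+1/n}$ precisely the threshold for the term $m^{n/2+\ve}$ to be dominated by $P^{n+\ve}/m$. Your version is arguably cleaner, making the role of the hypothesis on $P$ explicit rather than hiding it in an induction; the one slight imprecision is that the absorption condition should read $m^{n/2+1} \leq P^n$ (up to adjusting $\ve$), not $P^{n+\ve}$, but this does not affect the conclusion.
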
 

\begin{proof}
By Poisson summation we have
\begin{align*}
\sum_{\substack{\x \in \ZZ^n \\ m \mid f(\x)}} w(\x/P) &= \sum_{\substack{\z \bmod m\\ m \vert f(\z)}} \sum_{\y \in \ZZ^n} w(P^{-1}(\z + m\y)) = \frac{P^n}{m^n} \sum_{\v \in \ZZ^n} \hat{w}(\tfrac{P}{m} \v) \Sigma_m(\v),
\end{align*}
where
\[
\Sigma_m(\v) = \sum_{\substack{\x \bmod m \\ m \vert f(\x)}} e_m(\v.\x).
\]
By Lemma \ref{lem:mult} we have the multiplicativity relation
\[
\Sigma_{m_1m_2}(\v) = \Sigma_{m_1}(\bar{m}_2\v) \Sigma_{m_2}(\bar{m}_1 \v)
\]
if $(m_1,m_2) = 1$, where $\bar{m}_1,\bar{m}_2 \in \ZZ$ are such that $m \bar{m}_1 + m_2\bar{m}_2 = 1$. Suppose that $p\mid m$ is a prime. If $p \nmid \v$, then we have
\[
\Sigma_p(\v) = p^{-1} \sum_{a=1}^{p-1} \sum_{\x \bmod{p}} e_p(af(\x) + \v.\x) \ll p^{n/2}
\] 
by \cite[Lemma 7]{Browning-Heath-Brown09}.
In the opposite case we still have the bound $\Sigma_p(\v) \ll p^{n-1}$. In particular it follows that one has
\begin{equation}
\label{eq:sigma_trivial}
\Sigma_m(\v) \ll m^{n-1 + \ve}
\end{equation}
for all $\v \in \ZZ^n$ and
\begin{equation}
\label{eq:sigma_squareroot}
\Sigma_m(\v) \ll m^{n/2 + \ve}
\end{equation}
if $\gcd(m,\v) := \gcd(m,v_1,\dotsc,v_n) = 1$. The asserted bound follows if we can prove that
\begin{equation}
\label{eq:emporia}
\sum_{\v \in \ZZ^n} \hat{w}(\tfrac{P}{m} \v) |\Sigma_m(\v)| \ll m^{n-1+\ve}.
\end{equation}
Using repeated integration by parts, one may show that
\[
\hat{w}(\t) \ll_k (1+|\t|)^{-k}
\]
for any $k \geq 1$, from which it follows that
\[
\sum_{\v \in \ZZ^n} \hat{w}(\tfrac{P}{m} \v) \ll \left(1+\frac{m}{P}\right)^n. 
\]
In case $m \leq P$, the bound \eqref{eq:emporia} therefore follows directly from \eqref{eq:sigma_trivial}. We may thus suppose from now on that $m \geq P$. In this case we prove \eqref{eq:emporia} by induction on the number $\omega(m)$ of prime divisors of $m$. The case where $\omega(m) = 0$, that is, $m=1$, is trivial. For the induction step, we write 
\[
\sum_{\v \in \ZZ^n} \hat{w}(\tfrac{P}{m} \v) |\Sigma_m(\v)|  = \Sigma_1 + \Sigma_2,
\]
where $\Sigma_1$ denotes the sum ranging over $\v \in \ZZ^n$ satisfying $(m,\v) = 1$, and $\Sigma_2$ denotes the sum over those $\v$ for which there exists a divisor $m_1 > 1$ of $m$ such that $m_1 | \v$. There then exists one such divisor $m_1$, for which $\Sigma_2 \ll m^\ve \Sigma_2'$, say, where $\Sigma_2'$ is the sum over $\v$ such that $m_1 | \v$. Writing $m = m_1 m_2$ and $\v = m_1 \w$, we have
\[
\Sigma_2'  = \sum_{\substack{\v \in \ZZ^n\\ m_1 | \v}}  \hat{w}(\tfrac{P}{m} \v) |\Sigma_{m_1}(\bar{m}_2\v)||\Sigma_{m_2}(\bar{m}_1\v)|  \ll m_1^{n-1+\ve} \sum_{\w \in \ZZ^n} \hat{w}(\tfrac{P}{m_2} \w)|\Sigma_{m_2}(\w)| \ll m^{n-1+\ve},
\]
by \eqref{eq:sigma_trivial} and the induction hypothesis.

Furthermore, by \eqref{eq:sigma_squareroot} one has
\[
\Sigma_1 \ll m^{n/2+\ve} \sum_{\v \in \ZZ^n} \hat{w}(\tfrac{P}{m} \v) \ll m^{n/2+\ve} \frac{m^n}{P^n} \leq m^{n-1+\ve}, 
\]
so we have proved the bound \eqref{eq:emporia}. 
\end{proof}

\begin{proof}[Proof of Proposition \ref{prop:trivial}]
We may write
\[
|\cT(q,z)| \leq \sum_{\x \in \ZZ^n} w(\x/P) \left|Z\big(b_1,f(\x)+g(\x),f(\x)\big)\right|,
\textrm{ where }
Z(r,x,y) = \starsum_{s_1,s_2 = 1}^{r} e_{r}\big(s_1 x - s_2 y\big).
\]
It is easy to see that $Z(r,x,y)$ is a multiplicative function of $r$ for fixed $x$ and $y$, and that for prime $p$ one has $Z(p,x,y) = 1$ if $x$ and $y$ are both non-zero (mod $p$), $Z(p,x,y) = 1-p$ if precisely one of $x$ and $y$ vanish (mod $p$) and $Z(p,x,y) = (p-1)^2$ if $x \equiv y \equiv 0 \bmod p$. It follows that
$
|Z(r,x,y)| \leq (r,x)(r,y).
$ 
Thus we have
\[
|\cT(q,z)| \leq \sum_{\x \in \ZZ^n} w(\x/P) (b_1,f(\x)+g(\x))(b_1,f(\x)) \leq b_1 \sum_{\x \in \ZZ^n} w(\x/P) (b_1,f(\x)).
\]
But
\[
\sum_{\x \in \ZZ^n} w(\x/P) (b_1,f(\x)) \leq \sum_{m \mid b_1} m \sum_{\substack{\x \in \ZZ^n \\ m \mid f(\x)}} w(\x/P) \ll P^n d(b_1) \ll P^{n+\ve}
\]
by Lemma \ref{lem:india}.
\end{proof}


\section{Finalisation of van der Corput differencing}
\label{sec:vdC2}

In this section, we use the results from Section \ref{sec:expsums_final} to estimate the quantity
\begin{equation}
\label{eq:sum_h}
\sum_{|\h| \leq H} \cT_{a,\h}(q,z)
\end{equation}
occurring in Section \ref{sec:vdC}. $q$ will be a fixed arbitrary number throughout this section. Our final aim is to prove the minor arcs bound in Proposition \ref{prop:minor_smooth}, so we recall that $F$ is a quartic polynomial whose leading form $F_0$ is non-singular. From now on, the implied constants in our estimates are allowed to depend on the height $\Vert F \Vert_P$ and the quantity $M$ introduced in \eqref{eq:M_1}--\eqref{eq:M_2}.
We shall prove the following result:

\begin{proposition}
\label{prop:sum_h}
Provided that $q \leq P^{2-4/(n+2)}$, we have the bound 
\begin{equation}
\label{eq:maundy}
\sum_{|\h| \leq H} \cT_{a,\h}(q,z)
\ll b_1 P^{n+\ve} \left\{
1 + \sum_{\eta=0}^{n-1} \cY_\eta 
\right\},
\end{equation}
where
\[
\cY_\eta := \frac{H^{n-\eta}}{q^{(n-\eta)/2}} \cX_\eta
\]
for any $0 \leq \eta \leq n-1$, with
\begin{equation*}
\cX_\eta := V^{n-\eta}  
+ b_1^{1/2}\left(q_3^{1/2} V^{n-\eta - 1}
+ (q_3H)^{(n-\eta)/3}\right)
\end{equation*}
for $0 \leq \eta \leq n-2$, and
\[
\cX_{n-1} := V + q_3H^{1/3}.
\]
Here $V=qP^{-1}\max\{1,(HP^3|z|)^{1/2}\}$, as per \eqref{eq:V def}.

\end{proposition}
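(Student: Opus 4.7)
My plan is to bound $\sum_{|\h| \leq H} \cT_{a,\h}(q,z)$ by stratifying the range of summation via the hyperplane-slicing machinery of Lemma \ref{lem:subspace}. The key observation is that $\cT_{a,\h}(q,z)$ is an exponential sum of the general type $\cT(q,z)$ studied in Sections \ref{sec:expsums_intro}--\ref{sec:expsums_final} with $f = F$ (quartic) and $g = F_\h$ (cubic, of height $\ll H\|F\|_P$); thus Propositions \ref{prop:1} and \ref{prop:1_n=1} are applicable when $s'_\infty(F_0,(F_\h)_0) = -1$. The exponent-matching in the sought bound (with the factor $\cY_\eta$ carrying $H^{n-\eta}$, $q^{-(n-\eta)/2}$ and $\cX_\eta$ having $V^{n-\eta}$, $(q_3 H)^{(n-\eta)/3}$) strongly suggests that, after a codimension-$\eta$ slicing in the variable $\x$, the inner sum can be handled as an $(n-\eta)$-dimensional instance of Proposition \ref{prop:1}.

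First, I would apply Lemma \ref{lem:subspace} to the leading quartic form $F_0$ together with a finite collection of auxiliary cubic forms controlling, uniformly in $\h$, the singular loci of $(F_0, (F_\h)_0)$; crucially, since $(F_\h)_0 = (\h \cdot \nabla)F_0$ is linear in $\h$, these auxiliary forms can be taken independent of $\h$. This produces a primitive integer basis $\e_1,\dotsc,\e_n$ of $\ZZ^n$ with $|\e_i| = O(1)$ and $\det(\e_1,\dotsc,\e_n) = O(1)$, such that the restrictions of $(F_0,(F_\h)_0)$ and of $F_0$ and $(F_\h)_0$ individually to every subspace $\Lambda_\eta = \langle \e_1,\dotsc,\e_{n-\eta}\rangle$ have singular loci decreasing in dimension at the expected rate as $\eta$ grows. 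Use this basis to perform a unimodular change of variable $\x \mapsto \y$, transforming $\cT_{a,\h}(q,z)$ into an analogous sum in the new coordinates.

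Next, I would slice the resulting $\y$-sum along the flag $\Lambda_0 \supset \Lambda_1 \supset \cdots \supset \Lambda_{n-1}$: write $\y = \y' + \y''$ with $\y' \in \Lambda_\eta$ of ``size'' $P$ and $\y'' \in \Lambda_\eta^\perp$ (realized combinatorially via the basis) summed trivially. For each fixed $\y''$, the inner sum over $\y'$ is an exponential sum of the type studied in Section \ref{sec:expsums_final}, in $n-\eta$ variables, and with leading forms that, by the choice of basis, satisfy the non-singularity condition $s'_\infty = -1$. Apply Proposition \ref{prop:1} when $1 \leq \eta \leq n-2$ (or \ref{prop:1_n=1} when $\eta = n-1$) in the reduced dimension, with parameter $V$ inherited from the definition \eqref{eq:V def} — noting that the condition $q \leq P^{2-4/(n+2)}$ ensures the size hypotheses required by these propositions remain valid throughout the reduction. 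This produces the factor $b_1 q^{-(n-\eta)/2} P^{n-\eta+\ve}\cX_\eta$; the trivial summation over $\y''$ contributes a clean factor $P^\eta$, combining to the advertised $b_1 P^{n+\ve}$.

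The main obstacle lies in accounting for the sum over $\h$ and in obtaining the factor $H^{n-\eta}$ in $\cY_\eta$ rather than a naive $H^n$. The resolution, and the subtlety that distinguishes this argument from \cite{Heath-Brown83}, is exactly the one flagged in the introduction: the ``dual varieties'' cutting out the bad $\h$ vary with $\h$. Concretely, the contribution of $\h$ to the $\eta$-stratum is controlled by Lemma \ref{lem:S^*(V)} applied to the polynomial $\Phi = \Phi_{F,F_\h}$ from Lemma \ref{lem:prime}; for $\h$'s constrained by an $\eta$-codimensional condition, the effective count is $O(H^{n-\eta})$, giving $\cY_\eta$. An additional care point is the $\eta = 0$ term, where no slicing is performed and the bound follows from Proposition \ref{prop:1} directly combined with the trivial estimate $|\{|\h| \leq H\}| \ll H^n$, and the boundary case $\eta = n-1$, where the switch to Proposition \ref{prop:1_n=1} accounts for the simpler form of $\cX_{n-1}$. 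Gathering the contributions from $\eta = 0,\dotsc,n-1$, and adding the constant term $1$ absorbing the sparse set of entirely singular $\h$'s handled via Proposition \ref{prop:trivial}, yields the bound \eqref{eq:maundy}.
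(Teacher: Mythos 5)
Your overall architecture is right — stratify the $\h$-sum by the singularity type of the pair $(F_0,(F_\h)_0)$, apply Lemma~\ref{lem:subspace} to slice down to an $(n-\eta)$-dimensional non-singular instance, invoke Proposition~\ref{prop:1} (or \ref{prop:1_n=1}, or \ref{prop:trivial}) in the reduced dimension, multiply by the $\eta$-codimensional count of bad $\h$ and by $O(P^\eta)$ cosets, and sum over $\eta$. That is the skeleton of the paper's proof. But there are two substantive gaps.

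First, your claim that one may run Lemma~\ref{lem:subspace} with ``auxiliary cubic forms\ldots independent of $\h$'' because $(F_\h)_0$ is linear in $\h$ is a hand-wave that does not obviously work: Lemma~\ref{lem:subspace} takes a \emph{fixed} complete intersection as input, and as $\h$ ranges over $\cH'_s$ the scheme $V(F_0, F_0^\h)$ genuinely changes, so a single Bertini-type choice of subspace cannot be expected to reduce the singular locus of all of them simultaneously. The paper instead applies Lemma~\ref{lem:subspace} separately for each $\h \in \cH'_s$, with $\Pi = \{p : p \mid q\}$ and $\{F_1,F_2\} = \{F_0, F_0^\h\}$, and then pigeonholes: since conditions~\eqref{eq:basis1}--\eqref{eq:basis2} allow only $O(q^\ve) = O(P^\ve)$ bases, one of them works for a $P^{-\ve}$-fraction of the $\h$'s, which is what the primed sum $\dashsum$ records. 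You need this pigeonhole step, or a genuine argument that a uniform-in-$\h$ choice exists.

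Second, and more seriously, you attribute the $H^{n-\eta}$ count of $\h$ in the $\eta$-th stratum to Lemma~\ref{lem:S^*(V)}. That lemma controls the \emph{$\v$-sum} over the dual variety $\Phi(\v)\equiv 0$ and is already absorbed into the derivation of $\cX_\eta$ in Proposition~\ref{prop:1}; it says nothing about how many $\h$ have large $s_p'(F,F_\h)$. The ingredient you are missing is Lemma~\ref{lem:salberger}: the set $V_{v,i} = \{\h : s'_v(F,F_\h) \geq i-1\}$ is the cone over a projective variety of dimension $n-i-1$ and bounded degree, with a controlled set of exceptional primes. That geometric input, combined with \cite[Lemma~4]{Browning-Heath-Brown09}, yields the count $\#\cH_s' \ll q^\ve \max_{\eta \geq s+1} H^{n-\eta}/\prod_{i>\eta}(b_{1,i}\tilde q_{2,i})^{i-\eta}$ in \eqref{eq:cardH_s}, and it is the denominator there — not Lemma~\ref{lem:S^*(V)} — that cancels against the $\cD_{\tilde f,\tilde g}(q)$ factor emerging from the sliced Proposition~\ref{prop:1}. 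Without Lemma~\ref{lem:salberger} you do not have the codimension-$\eta$ constraint on $\h$ at all, so you cannot get the factor $H^{n-\eta}$. There is also a minor omission in the boundary case $\eta = n-1$: Proposition~\ref{prop:1_n=1} gives a $(b_1, \resultant(\tilde f_\t,\tilde g_\t))^{1/2}$ factor, and removing the resulting worst-case $b_1^{1/2}$ to land on $\cX_{n-1} = V + (q_3H)^{1/3}$ requires averaging over the coset representatives $\t$, which you do not mention.
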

The terms $\cY_\eta$ appearing on the equation \eqref{eq:maundy} roughly correspond to $\h$'s for which the maximum dimension of the singular locus for a system of projective varieties corresponding to $F$, the leading form of $F_\h$ or their complete intersection is $\eta-1$. Given the nature of the bounds presented, the terms $1, \cY_0,\cY_{n-1} $ and $\cY_{n-2}$ will be critical to our analysis. In this paper, the main contribution is presented by the terms $1$ and $\cY_0$. Moreover, the term $b_1^{1/2}q_3^{1/2}V^{n-1}$ turns out to give the dominating contribution in $\cX_0$, which is partly why we are unable to establish the result for $n=29$.

Given $q = b_1q_2$ and $\h \in \ZZ^n$, we write 
\[
b_1 = b_{1,0}(\h) \dotsb b_{1,n}(\h) \quad \text{and} \quad q_2 = q_{2,0}(\h) \dotsb q_{2,n}(\h),
\]
where 
\[
 b_{1,i}(\h) = \prod_{\substack{p \Vert b_1\\ s_p'(\h) = i-1}} p \quad \text{and} \quad
q_{2,i}(\h) = \prod_{\substack{p^e\mid q_2\\ s_p'(\h) = i-1}} p^e.
\]
Here $s_p'(\h)=s_p'(F,F_\h)$. Firstly, since $q$ is fixed, there are then only $O(P^\ve)$ possible different choices for $2(n+1)$-tuples
\[
(b_{1,0}(\h),\dotsc,b_{1,n}(\h),q_{2,0}(\h),\dotsc,q_{2,n}(\h)) \in \NN^{n+1} \times \NN^{n+1}.
\]
Thus there is one such tuple $(b_{1,0},\dotsc,b_{1,n},q_{2,0},\dotsc,q_{2,n})$, for which
\begin{align*}
\sum_{|\h| \leq H} \cT_{a,\h}(q,z) &\ll P^\ve \sum_{\h \in \cH'} \cT_{a,\h}(q,z)
 = P^\ve \sum_{s=-1}^{n-1} \sum_{\h \in \cH'_s} \cT_{a,\h}(q,z),
\end{align*}
say, where 
\[
\cH' := \{ \h \in \ZZ^n \mid |\h| \leq H, b_{1,i}(\h) = b_{1,i} \text{ and } q_{2,i}(\h) = q_{2,i}  \text{ for all } i = 0,\dotsc,n\}
\]
 is a subset of $\cH=\{|\h|\leq H\} $ and
\[
\cH'_s := \{ \h \in \cH' \mid s_\infty'(\h) = s\}
\]
for each $s=-1,\dotsc,n-1$. Furthermore, put
\[
\tilde{q}_{2,i} = \prod_{p\mid q_{2,i}}p.
\]
for any $i$. We note that since $s'_p(\h) \geq s'_\infty(\h)$, we have $\cH'_s = \varnothing$ unless $b_{1,i} = q_{2,i} = 1$ for $i \leq s$. To estimate the cardinality of the set $\cH'_s$, we need the following result.

\begin{lemma}
\label{lem:salberger}
The set $V_{v,i} = \{\h \in \AA^n_{\FF_v} \mid s'_v(F,F_\h) \geq i-1\}$ is a closed subvariety in $\AA^n_{\FF_v}$ of degree $O(1)$, and there is a constant $C$, with $\log C \ll_n \log \Vert F_0 \Vert$, such that
\[
\dim(V_{v,i}) \leq n -i
\]
as soon as $v=\infty$ or $v=p > C$. More precisely, $V_{v,i}$ is the affine cone over a closed subvariety of $\PP^{n-1}_{\FF_v}$ of dimension $n-i-1$.
\end{lemma}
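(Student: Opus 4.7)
The proof breaks into (i) closedness and cone structure, (ii) the dimension bound over $\QQ$, and (iii) spreading-out to large primes, with a brief remark on the main obstacle.

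Unwinding \eqref{eq:s'def}, the condition $s'_v(F,F_\h) \geq i-1$ is a finite disjunction of closed conditions: a rank drop in one of the Jacobian matrices of $F_0$, $(F_\h)_0$ or $(F_0,(F_\h)_0)$ exceeds $i-1$, or $F_0$ and $(F_\h)_0$ fail to form a proper complete intersection (the ``else'' clause). Since $(F_\h)_0(\x) = \nabla F_0(\x)\cdot \h$ is linear in $\h$, each condition is cut out by minors of Jacobian matrices that are polynomial in $\h$ of degree $O_n(1)$, giving closedness of $V_{v,i}$ and the degree bound $O_n(1)$. For the cone property, note that $(F_{\lambda \h})_0 = \lambda (F_\h)_0$ for $\lambda \in \FF_v^\times$, so $V_v((F_{\lambda \h})_0)$ and $V_v(F_0,(F_{\lambda \h})_0)$ are unchanged; hence every condition defining $V_{v,i}$ is $\GG_m$-invariant in $\h$. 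Since $\0 \in V_{v,i}$ (because $F_\0 \equiv 0$ triggers the else-clause with $s'_v = n-1$), $V_{v,i}$ is the affine cone over a closed $\bar V_{v,i} \subseteq \PP^{n-1}_{\FF_v}$, and the asserted bound $\dim V_{v,i} \leq n-i$ is equivalent to $\dim \bar V_{v,i} \leq n-i-1$.

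Over $v = \infty$ I would decompose by which clause of $s'_\infty \geq i-1$ is responsible. The ``$(F_\h)_0 \equiv 0$'' branch of the else-clause is the kernel of the linear map $\h \mapsto \sum_i h_i \partial_i F_0$, and this kernel equals $\{\0\}$: if $\sum \lambda_i \partial_i F_0 \equiv 0$ with $\lambda \neq 0$, then $F_0(\x + t\lambda) \equiv F_0(\x)$, so $F_0$ is translation-invariant along $\lambda$ and depends on only $n-1$ independent variables, hence is singular, contradicting the hypothesis. The ``$F_0 \mid (F_\h)_0$'' branch is impossible by comparing degrees $4$ and $3$. For the rank-drop clauses, I would observe that the linear system of polars $\{(F_\h)_0\}_{[\h]\in\PP^{n-1}} = \{\sum h_i \partial_i F_0\}$ on $\PP^{n-1}$ is base-point-free, because its base locus equals $\Sing V_\infty(F_0) = \varnothing$, and the same holds for its restriction to the smooth hypersurface $V_\infty(F_0)$. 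An iterated Bertini-type statement---for a base-point-free linear system on a smooth projective variety, the locus of members whose singular locus has dimension $\geq j$ has codimension $\geq j+1$ in the parameter space---then gives $\dim\{[\h] : \dim \Sing V_\infty((F_\h)_0) \geq i-1\} \leq n-i-1$ and likewise $\dim\{[\h] : \dim \Sing V_\infty(F_0,(F_\h)_0) \geq i-1\} \leq n-i-1$.

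To pass from $v = \infty$ to $v = p > C$, I would use the spreading-out argument of \cite[Lemma 4]{Browning-Heath-Brown09}: the dimension and base-point-freeness statements over $\QQ$ are encoded by polynomial identities whose integer coefficients are bounded polynomially in $\Vert F_0 \Vert$, so the same statements hold mod $p$ provided $p$ avoids the primes dividing $\disc(F_0)$ (so that $F_0$ stays non-singular mod $p$) together with the leading coefficients controlling base-point-freeness of the polar linear system. The product of these excluded primes has logarithm $\ll_n \log \Vert F_0 \Vert$, yielding the required bound on $C$. The main obstacle is the iterated Bertini step: while base-point-freeness is immediate from non-singularity of $F_0$, justifying the codimension-$\geq j+1$ bound requires an incidence-variety dimension count together with upper semicontinuity, and extra care is needed in the degenerate-Hessian regime (a Gordan--Noether phenomenon possible for non-singular $F_0$ in large $n$), where the ``expected codimension'' is not obvious from a naive count---one would likely proceed by induction on $i$ via successive hyperplane sections of the parameter space $\PP^{n-1}$ in $\h$, in the spirit of Lemma \ref{lem:subspace}.
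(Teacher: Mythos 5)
Your approach matches the paper's own treatment: both decompose $V_{v,i}$ according to whether the singularity comes from $V(F_0)$, $V(F^\h)$ with $F^\h = \h\cdot\nabla F$, or $V(F_0,F^\h)$ (plus the degenerate "else" branch, which you handle slightly more explicitly than the paper's remark does, observing that $(F_\h)_0 \equiv 0$ forces $\h = \0$ and that $F_0 \nmid (F_\h)_0$ by degree), and both reduce the crux to a quantitative iterated Bertini statement over $\QQ$ followed by a spreading-out argument that furnishes the constant $C$, with the characteristic-zero hypothesis in Bertini being exactly why the lower bound on $p$ appears. You correctly identify that iterated Bertini step as the substantive content; the paper likewise does not write it out but defers to Salberger's unpublished note and to \cite[Lemma 2.2]{Marmon10}, whose hyperplane-section induction is precisely the route you sketch at the end.
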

 
\begin{rem}
If $F$ is a form of degree $d_1$, then the homogeneous part of $F_\h$ of degree $d_1-1$ is the form
\[
F^\h(\x) := \h.\nabla F(\x).
\]
For any $i$, the set $V_{v,i}$ under consideration may be written as a union of the three sets $V^{(j)}_{v,i}$, say, for $j=1,2,3$, defined by the conditions
\[
s_v(F) \geq i-1, \quad s_v(F^\h) \geq i-1, \quad \text{and} \quad s_v(F,F^\h) \geq i-1, 
\]
respectively. It is clear that $V^{(1)}_{v,i} = \AA^n$ or $\emptyset$, and 
the latter holds as soon as $i \geq 1$ and $v \gg_F 1$. The fact that $V^{(2)}_{v,i}$ satisfies the conclusions in the lemma was proven in \cite{Browning-Heath-Brown09}. Thus it suffices to prove the assertion with $V_{v,i}$ replaced by $V^{(3)}_{v,i}$. 
Such a statement was first proven by Salberger in an unpublished note \cite{Salberger-Integral_points}. The same arguments were used to prove a similar result in a paper by the first author \cite[Lemma 2.2]{Marmon10}, the proof of which may be used with only minor  modifications to obtain a proof of Lemma \ref{lem:salberger}. We omit the proof here, but it is worth pointing out that it uses a version of Bertini's theorem that is only valid in characteristic zero, hence producing a condition on $p$ that was not present in the corresponding result in \cite{Browning-Heath-Brown09}.  
\end{rem}

To estimate $\#\cH_s'$, we follow the argument in \cite[\S 7]{Browning-Heath-Brown09}, with a difference that the absolute constant $c$ in \cite{Browning-Heath-Brown09} is replaced by a constant $C =O_{\Vert F_0 \Vert}(1)$. Since 
\[
\cH_s' \subset \left\{ \h \in V_{\infty,s+1}\cap [-H,H]^n \mid [\h]_p \in V_{p,i} \text{ for all } p \mid b_{1,i}q_{2,i} \right\},
\] 
we may use \cite[Lemma 4]{Browning-Heath-Brown09} to show that
\begin{equation}
\label{eq:cardH_s}
\#\cH_s' \ll q^\ve\max_{s+1 \leq \eta \leq n} \frac{H^{n-\eta}}{\displaystyle\prod_{i = \eta+1}^n (b_{1,i} \tilde{q}_{2,i})^{i-\eta}}.
\end{equation}

\begin{proof}[Proof of Proposition \ref{prop:sum_h}]
Let $-1\leq s \leq n-1$ be fixed and put
\[
\cU_s = \sum_{\h \in \cH_s} \cT_{a,\h}(q,z).
\]
Each exponential sum $\cT_{a,\h}(q,z)$ is an instance
\[
\cT(q,z) = \cT_n(a,q,z;F,F_\h,W_\h,P)
\]
of the general sum introduced in Section \ref{sec:expsums_intro}. We may assume that the condition \eqref{eq:F_nonzero} holds. Indeed, if $b_1' = (b_1,\content(F_0))$, then $b_1' \leq \Vert F_0 \Vert \leq \Vert F \Vert_P$, so all exponential sums $T(b',\v)$ satisfy a trivial upper bound $O_{\Vert F \Vert_P}(1)$. By our convention on the implied constant, it suffices to bound $\cT_{a,\h}(q/b_1',z)$. Moreover, one sees that $\Vert F_\h \Vert_P \ll H$, so that the condition \eqref{eq:g_height'} is also verified, at least upon replacing $H$ with $CH$ for some $C \ll 1$.

Choose $\eta$ such that  the maximum in \eqref{eq:cardH_s} is attained. Let us first treat the case where $\eta \leq n-2$. In this case, for each $\h \in \cH_s'$, we apply Lemma \ref{lem:subspace}, with $\Pi$ being the set of primes $p \mid q$, so that $r = \omega(q)$, and with $\{F_1,F_2\} = \{F_0,F_0^\h\}$. We obtain a lattice $\Lambda_\eta$ of rank $n-\eta$ and a basis $\e_1,\dotsc,\e_{n-\eta}$ for $\Lambda_\eta$, with the property that for any $\t \in \ZZ^{n}$, the polynomials
\[
\tilde f_\t(y_1,\dotsc,y_{n-\eta}) := F(\t + \textstyle\sum y_i \e_i) \quad \text{and} \quad
\tilde g_\t(y_1,\dotsc,y_{n-\eta}) := F_\h(\t + \textstyle\sum y_i \e_i)
\]
satisfy
\begin{equation}
\label{eq:iterated_bertini}
s'_v(\tilde{f}_\t,\tilde{g}_\t) = \max \{-1,s'_v(F,F_\h)-\eta\}
\end{equation}
for all $v \in \{\infty\} \cup \Pi_{cr}$. Indeed, denote by $\tilde{F}_0$ the leading form of $\tilde{f}_\0$, which is simultaneously the leading form of $\tilde{f}_\t$ for all $\t$. Similarly, let $\tilde{G}_0$ be the common leading form of the $\tilde{g}_\t$. Then we see that for each $v \in \{\infty\} \cup \Pi_{cr}$, we have
\[
V_v(\tilde{F}_0) \cong V_v(F_0) \cap \Lambda_\eta,\quad V_v(\tilde{G}_0) \cong V_v(G_0) \cap \Lambda_\eta
\quad \text{and} \quad V_v(\tilde{F}_0,\tilde{G}_0) \cong V_v(F_0,G_0) \cap \Lambda_\eta,
\]
so \eqref{eq:iterated_bertini} is precisely the condition that is asserted in Lemma \ref{lem:subspace}. It also follows that $\deg{\tilde{F}_0} = 4$ and $\deg(\tilde{G}_0) = 3$, and that
\[
(b_1,\content(\tilde{F}_0)) = 1.
\] 

As there are at most $O(q^\ve) = O(P^\ve)$ choices for a basis satisfying \eqref{eq:basis1}, there is in fact one such choice for which we may write
\[
\cU_s \ll P^\ve \dashsum_{\h \in \cH_s'} |\cT_{a,\h}(q,z)|,
\] 
where the superscript $'$ denotes that the sum is taken over those vectors $\h$ occurring in the original sum for which the condition \eqref{eq:iterated_bertini} holds for the designated basis $\e_1,\dotsc,\e_n$. 
For such an element $\h$, we may partition the sum over $\x \in \ZZ^n$ defining $\cT_{a,\h}(q,z)$ into cosets $\Lambda_{\eta} + \t$ of the lattice $\Lambda_{\eta}$, where $\t$ runs over some subset $T_\eta \subset (\ZZ\cap [-P,P])^n$. We claim that the set $T_\eta$ may be chosen of cardinality $O(P^{\eta})$. To see this, 
consider a general linear combination 
\[
\x = \sum_{i=1}^n y_i \e_i.
\]
Denoting by $\pi_i$, for each $1 \leq i \leq n$, the projection onto the orthogonal complement of the subspace spanned by the vectors $\e_j$, $j \neq i$, one then has 
\[
\Vert \x \Vert \geq \Vert \pi_i \x \Vert = |y_i| \Vert \pi_i \e_i \Vert = |y_i| \frac{|\det(\Lambda)|}{|\det(\Lambda_i)|}, 
\]
where $\Lambda \subseteq \ZZ^n$ denotes the full-dimensional lattice spanned by $\e_1,\dotsc,\e_n$ and $\Lambda_i$ the lattice spanned by all $\e_j$, $j\neq i$. Now it follows from the conditions \eqref{eq:basis1} and \eqref{eq:basis2} that 
\begin{equation}
\label{eq:annalinda}
|y_i| \ll \frac{\Vert \x \Vert}{L}.
\end{equation}
From this we conclude that we may choose $T_\eta$ to be the set of vectors of the form 
$\sum_{i=n-\eta + 1}^{n} \lambda_i \e_i$, 
with $\lambda_i \ll P$.
 
Defining new weight functions
\[
\tilde{w}_{\t}(y_1,\dotsc,y_{n-\eta}) := W_\h(P^{-1}\t + L^{-1} \sum y_i\e_i),
\]
we then have
\begin{equation}
\label{eq:iterated_slicing}
\cT_{a,\h}(q,z) \leq  \sum_{\t \in T_\eta} \cT_{a,\h,\t}(q,z), \textrm{ where }\cT_{a,\h,\t}(q,z) = \cT_{n-\eta}(q,z;\tilde{f}_\t,\tilde{g}_\t,\tilde{w}_\t,P/L)
\end{equation} 
in the notation of \eqref{eq:Tuzdef}. The verification that $\tilde{w}_\t \in \cW_{n-\eta}$ for $\t \ll P$ follows as in the proof of \cite[Prop. 2]{Browning-Heath-Brown09}. Indeed, the deciding property of the basis $\e_1,\dotsc,\e_{n-\eta}$ is the bound \eqref{eq:annalinda} derived above.
One also sees, following \cite{Browning-Heath-Brown09}, that 
\[
 \Vert \tilde{g}_\t \Vert_{P/L} \ll L^3 \Vert F_\h \Vert_P \ll P^\ve H \Vert F \Vert_P \ll P^\ve H.
\]
Thus the conditions \eqref{eq:F_nonzero} and \eqref{eq:g_height'} are verified, at least upon replacing $H$ by $\tilde{H}$ for some $\tilde{H} \ll P^\ve H$. By \eqref{eq:iterated_bertini} and our assumption that $\eta \geq s+1$, we also have \eqref{eq:nonsing}, so that $\cT_{a,\h,\t}(q,z)$ indeed qualifies as an instance of the exponential sum treated in Proposition \ref{prop:1}, with dimension $n$ replaced by $n-\eta$. At this point it is worth noting that this is the main reason behind obtaining bounds for exponential sums defined over an arbitrary number of variables including $1$ and $2$ in Sections \ref{sec:cubefree} through \ref{sec:expsums_final}.
Using \eqref{eq:cardH_s}, we may now write 
\begin{equation}
\label{eq:brysselkex}
\cU_s \ll  \frac{P^{\eta + \ve} H^{n-\eta}}{\displaystyle\prod_{i = \eta+1}^n (b_{1,i}\tilde{q}_{2,i})^{i-\eta}} \cdot \dashmax_{\h \in \cH_s'} \ \max_{\t \in T_\eta} \cT_{a,\h,\t}(q,z).
\end{equation}
By \eqref{eq:iterated_bertini} it follows, again arguing as in the proof of \cite[Prop. 2]{Browning-Heath-Brown09}, that for each $\t$ occurring in \eqref{eq:brysselkex} one has
\begin{align*}
\cD_{\tilde{f}_\t,\tilde{g}_\t}(q) &= \prod_{i=1}^{n-\eta} \prod_{\substack{p \vert b_1 \\s_p'(\tilde{f}_\t,\tilde{g}_\t) = i-1}} p^{i/2} \prod_{\substack{p \vert q_2 \\s_p'(\tilde{f}_\t,\tilde{g}_\t) = i-1}} p^{i} \ll 
q^\ve \prod_{i=\eta +1}^{n} \prod_{\substack{p \vert b_1 \\s_p'(F,F_\h) = i-1}} p^{(i-\eta)/2} \prod_{\substack{p \vert q_2 \\s_p'(F,F_\h) = i-1}} p^{i-\eta}  \\
&\ll q^\ve \prod_{i=\eta+1}^{n} (b_{1,i}^{1/2} \tilde{q}_{2,i})^{i-\eta}.
\end{align*}
Inserting the bound from Proposition \ref{prop:1} into \eqref{eq:brysselkex} thus yields
\[
\cU_s \ll b_1 P^{n+\ve} \frac{H^{n-\eta}}{q^{(n-\eta)/2}} \cX_{\eta},
\]
where $\cX_\eta$ is the quantity defined in the statement of the proposition.

It remains to deal with the possibility that $\eta \in \{n-1,n\}$. Suppose first that $\eta = n-1$. Since $s \leq \eta-1 = n-2$, the variety defined by $F = F_\h = 0$ in $\AA^{n}_\QQ$ has dimension $n-2$ for all $\h$ occurring in the sum. We apply Lemma \ref{lem:subspace} again, although the non-singularity condition in the lemma is automatic in this case, since the resulting varieties all have dimension at most zero. Using similar arguments as in the previous case, we may then write
\begin{equation}
\label{eq:U_s_n-1}
\cU_s \ll  \frac{P^{\ve} H^{n-1}}{b_{1,n}\tilde{q}_{2,n}} \cdot \dashmax_{\h \in \cH_s} \ \sum_{\t \in T_{n-1}} \cT_{a,\h,\t}(q,z).
\end{equation}
For each $\t$ occurring in the sum, we may apply Proposition \ref{prop:1_n=1} to obtain
\[
\cT_{a,\h,\t}(q,z) \ll b_1 q^{-1/2} \cD(q_2) P^{1+\ve} (b_1,\resultant(\tilde{f}_\t,\tilde{g}_\t))^{1/2}(V+(q_3H)^{1/3}). 
\]
In order to capitalise the average over $\t$, we use the explicit description
\[
T_{n-1} = \left\{\t = \sum_{i=2}^{n} z_i \e_i;\ z_i \in \ZZ, |z_i| \ll P\right\}.
\]
The resultant $\resultant(\tilde{f}_\t,\tilde{g}_\t)$ is then a polynomial $\cR(\z)$ in the variables $z_2,\dotsc,z_{n}$ with integer coefficients. The fact that $F$ and $F^\h$ intersect completely implies that $\cR(\t)$ does not vanish identically in $\t$. Similarly, the congruence $\cR(\t) \equiv 0 \bmod p$ defines a hypersurface in $\AA^{n-1}_{\FF_p}$ unless either $p \mid b_{1,n}$ or $p \ll P^\ve$.
Thus we write
\begin{align*}
\sum_{\t \in T_{n-1}} \left(b_1,\resultant(\tilde{f}_\t,\tilde{g}_\t)\right)^{1/2} 
&\ll P^\ve b_{1,n}^{1/2} \sum_{\substack{\z \in \ZZ^{n-1}\\ \z \ll P^{1+\ve}}} \left(b_1^*,\cR(\z)\right)^{1/2}\\
&= P^\ve b_{1,n}^{1/2} \sum_{\ell \mid b_1^*} \ell^{1/2} \#\{\z \in \ZZ^{n-1} \mid \z \ll P^{1+\ve},\ \ell \mid \cR(\z)\},
\end{align*}
where we have put $b_1^* = b_{1,n}^{-1}b_1 = b_{1,0}\dotsb b_{1,n-1}$.
By \cite[Lemma 4]{Browning-Heath-Brown09} we have
\[
\#\{\z \in \ZZ^{n-1} \mid \z \ll P^{1+\ve},\ \ell \mid \cR(\z)\} \ll P^\ve(P^{n-2} + P^{n-1}\ell^{-1}),
\]
which in turn gives the bound
\[
\sum_{\t \in T_{n-1}} \left(b_1,\resultant(\tilde{f}_\t,\tilde{g}_\t)\right)^{1/2} \ll b_1^{1/2} P^{n-2+\ve} + b_{1,n}^{1/2} P^{n-1+\ve}.
\]
Inserting this into \eqref{eq:U_s_n-1}, and arguing as above that $\cD_{\tilde{f}_\t,\tilde{g}_\t}(q) \ll q^\ve \tilde{q}_{2,n}$, gives
\begin{align*}
\cU_s &\ll b_1 q^{-1/2} \frac{H}{ b_{1,n}\tilde{q}_{2,n}}P^{n+\ve} \left(b_1^{1/2} \tilde{q}_{2,n} \frac{V+(q_3H)^{1/3}}{P} +  b_{1,n}^{1/2} \tilde{q}_{2,n}(V+(q_3H)^{1/3})\right)
\\
&\ll b_1 \frac{H}{q^{1/2}} P^{n+\ve}(V+(q_3H)^{1/3})\left(1 + \frac{q^{1/2}}{P}\right) \ll b_1 \frac{H}{q^{1/2}} P^{n+\ve}(V+(q_3H)^{1/3})
\end{align*}
as claimed.

Finally, suppose that $\eta = n$. Invoking the hypothesis that $q \leq P^{2-4/(n+2)}$, we may use Proposition \ref{prop:trivial} for arbitrary $\h$, yielding the bound
\[
\cU_s \ll b_1 P^{n+\ve}.
\]
Summing the contribution from all possible values of $s$ and $\eta$, we arrive at the bound stated in the lemma.
\end{proof}

\section{Weyl differencing}
\label{sec:weyl} 
 
We now describe an approach parallel to our main argument. To bound the exponential sum in \eqref{eq:S_h}, we shall then use a result of Browning and Prendiville-\cite[Lemma 3.3]{Browning_Prendiville}, with $d=3$. This can be summarised as applying three consecutive Weyl differencing steps to the cubic exponential sum $\cS_\h(\alpha)$. Whenever $\h \neq \0$,  this produces the bound
\begin{equation}
\label{eq:BP}
\cS_\h(a/q+z) \ll P^{n+\ve}\Big(P^{-2} + q|z|H + qP^{-3} + (q|z|P^3)^{-1}\Big)^{n/8}.
\end{equation}
(It follows from the nonsingularity of $F$ that the cubic part of $F_\h$ does not vanish, for example as a special case of Lemma \ref{lem:salberger}, so it is indeed a cubic polynomial.) Inserting this bound into Lemma \ref{lem:hanselmann_classic} and adding the contribution from $\h = \0$, yields
\begin{multline}
\label{eq:fotsvamp}
\cI(q,t) \ll P^{n-1/2 + \ve}q(t+\tfrac{1}{HP^3})H^{-(n-1)/2}\\
\times\left\{1 + H^{n/2}\Big(P^{-n/8} + (qtH)^{n/16} + (qP^{-3})^{n/16} + (qtP^3)^{-n/16}\Big)\right\}.
\end{multline}
 
Alternatively, we may apply Weyl differencing four times to the original quartic exponential sum. We then obtain a bound
\[
S(a/q+z) \ll P^{n+\ve} \Big(q|z| + q^{-1}|z|^{-1}P^{-4}\Big)^{n/24}, 
\]  
or, if one will,
\begin{equation}
\label{eq:ariana}
\cI(q,t) \ll P^{n+\ve}qt\Big(qt + q^{-1}t^{-1}P^{-4}\Big)^{n/24}. 
\end{equation}

\section{Bounds for the minor arc contribution, nonsingular case}
\label{sec:minor}

In this section we shall combine several different approaches to achieve the bound $$S_\fm = O(P^{n-4-\psi})$$ in Proposition \ref{prop:minor_smooth}. First we observe that when estimating the integrals comprising $S_\fm$, we may replace the integrand $|p_q(z)S(q,z)|$ by $|S(q,z)|$. Indeed, by \eqref{eq:pbound1}, this happens at the cost of introducing a dependence on the parameter $\theta$ in the implied constant. 
We divide the ranges for $q$ and its factors $b_1$, $b_2$, $q_3$ into $O(Q^\ve)$ dyadic intervals 
\begin{equation}
\label{eq:dyadic}
R < q \leq 2R,\quad  R_1 < b_1 \leq 2R_1,\quad R_2 < b_2 \leq 2R_{2},\quad R_3 < q_3 \leq 2R_3,
\end{equation}
(Note that our notation differs from that in \cite[(9.4)]{Browning-Heath-Brown09}).
We then put
\begin{equation}
K(t,R,\R) := \sum_{q, \eqref{eq:dyadic}} \int_{t \leq |z| \leq 2t} |S(q,z)| \,dz = \sum_{q, \eqref{eq:dyadic}} \cI(q,t)
\end{equation}
for any $t$,
where $\R := (R_1,R_2,R_3)$. The quantity $K(t,R,\R)$ is relevant for the minor arcs estimate only if 
\begin{equation}
\label{eq:Rt_condition}
R \leq Q, \quad R_1 R_2 R_3\asymp R, \quad t \leq (RQ)^{-1+\theta},
\end{equation}
where $A\asymp B$ stands for the usual notation that $A\ll B\ll A$.
In addition, either
\begin{equation}
\label{eq:minor_arcs_condition}
t \geq P^{-4+\Delta} \quad \text{or} \quad R \geq P^{\Delta}.
\end{equation}
 Clearly we then have
\begin{equation}
\label{eq:minor_dyadic}
S_\fm \ll P^\ve \max_{t,R,\R \text{ satisfying } \eqref{eq:Rt_condition} + \eqref{eq:minor_arcs_condition}} K(t,R,\R) + O(P^{n-5}),
\end{equation}
where the error term arises from very small values of $t$, say $|t| \leq P^{-10}$. 

What follows in this section is a delicate comparison of various estimates that we have obtained so far to establish the bound 
$$K(t,R,\R)\ll P^{n-4-\ve},$$
for every choice $t,R,\R$ relevant to the minor arcs contribution. We will start by comparing three main bounds. The first bound is obtained by an application of van der Corput differencing followed by Weyl differencing in \eqref{eq:fotsvamp}, which gives rise to Proposition \ref{prop:v1w3} below. This bound is sufficient, unless at least one of \eqref{eq:vw1}, \eqref{eq:vw2} or \eqref{eq:vw3} is violated, which will be assumed henceforth. When $t$ is not very small, we further compare the Weyl differencing bound in Proposition \ref{prop:w4} (obtained from \eqref{eq:ariana}) and our main bound in Proposition \ref{prop:v} (a consequence of Proposition \ref{prop:sum_h}). This delicate optimisation is carried in Propositions \ref{prop:avdC} and \ref{prop:prop10}. 

Proposition \ref{prop:avdC} provides a satisfactory bound for all the contribution from all terms in \eqref{eq:maundy}, except from a part of bounds coming from terms $\cY_{n-2}$ and $\cY_0$. The comparison of these bounds with those from Proposition \ref{prop:w4} is rather challenging. To this end, we introduce variables $Z$ and $\alpha$ via \eqref{eq:opti}, and view the problem of comparing these contributions as a rational linear optimisation problem in $Z$ and $\alpha$. We are thus able to calculate an explicit minimum value for each of these, using Mathematica. Finally, when $t$ is very small, our averaged van der Corput bound in Proposition \ref{prop:v} is rather wasteful, and we instead need to use an alternate point-wise van der Corput bound in Lemma \ref{lem:pointwise} and compare it with Proposition \ref{prop:w4} to achieve a satisfactory bound.

Let us put $B_1 = R_1R_2$ for short, and introduce the quantity
\[
B_2 := B_1 R_3^{1/3} = R_1 R_2 R_3^{1/3}.
\]
The significance of $B_2$ is explained by the bound
\begin{equation}
\label{eq:lem20}
\sum_{q, \eqref{eq:dyadic}} 1 \ll R_1 R_2^{1/2} R_3^{1/3} \leq B_2,
\end{equation}
following from Lemma \ref{lem:20}. In other words, $B_2$ denotes a suitable upper bound for the number of $q$'s satisfying \eqref{eq:dyadic}. We seemingly give up a factor of $R_2^{1/2}$ here. However, this is firstly due to the fact that this saving is not necessary for us, and secondly, since this simplifies our linear optimisation process later by allowing us to work with only two variables $Z$ and $\alpha$ (see \eqref{eq:opti}).
We next put 
\(
T := Rt
\)
and note that \eqref{eq:Rt_condition} implies
\[
T \leq Q^{-1+2\theta} \leq P^{-8/5-\phi/2}
\]
if only $\phi \geq 8\theta$, say, which we may assume from now on. 

We are now ready to record the bounds for $K(t,R,\R)$ obtained by the Weyl differencing procedure in Section \ref{sec:weyl}. The first alternative (one van der Corput differencing step followed by three Weyl differencing steps) provides the following bound:

\begin{proposition}
\label{prop:v1w3}
We have the bound
\[
K(t,R,\R) \ll P^{n-4-\phi/4}
\]
provided that 
$n \geq 21$ and 
the following three conditions are satisfied:
\begin{gather}
\label{eq:vw1}
B_2 \leq P^{\frac{4n}{45}-\frac{179}{90}},\\
\label{eq:vw2}
T \geq B_2^{\frac{16}{n-17}}P^{-\frac{3n-59}{n-17}+4\phi},\\
\label{eq:vw3}
T \geq B_2^{-2/(n+1)} R^{1-2/(n+1)} P^{-3-1/(n+1)}.
\end{gather}
\end{proposition}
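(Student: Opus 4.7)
The plan is to combine Lemma \ref{lem:hanselmann_classic} with the cubic Weyl bound \eqref{eq:BP}, obtaining precisely the estimate \eqref{eq:fotsvamp} for each $\cI(q,t)$ with $q\asymp R$. I would then sum trivially in $q$ over the dyadic box \eqref{eq:dyadic}, using Lemma \ref{lem:20} to bound the number of such $q$ by $O(B_2)$. Writing $T=Rt$, this gives
\[
K(t,R,\R) \ll B_2 P^{n-\tfrac12+\ve}\left(T+\tfrac{R}{HP^3}\right)\Big\{H^{-\tfrac{n-1}{2}} + H^{1/2}\big(P^{-n/8} + (TH)^{n/16} + (RP^{-3})^{n/16} + (TP^3)^{-n/16}\big)\Big\}
\]
for any integer $1\leq H\leq P$.

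The bulk of the argument consists in choosing $H$ optimally and verifying that each of the nine resulting monomials is $\ll P^{n-4-\phi/4}$. The two dominant contributions are the diagonal term $T\cdot H^{-(n-1)/2}$ (coming from $\h=\0$) and the generic Weyl term $T\cdot H^{1/2+n/16}T^{n/16}$ (coming from $\h\neq \0$); balancing them suggests $H^{9n/16}\asymp T^{-n/16}$, i.e.\ $H\asymp T^{-1/9}$, modulated by a small factor depending on $B_2$ so as to produce precisely the target saving $P^{-\phi/4}$. Under this choice, demanding that the balanced dominant pair give an acceptable contribution produces condition \eqref{eq:vw2}; demanding the same for the subsidiary term $B_2 T\cdot H^{1/2}P^{-n/8}$ (in which the factor $B_2$ counting the number of $q$ plays a critical role) produces condition \eqref{eq:vw1}; and demanding it for the product of the additive piece $R/(HP^3)$ with the diagonal $H^{-(n-1)/2}$ produces condition \eqref{eq:vw3}. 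The remaining four monomials, which involve $(RP^{-3})^{n/16}$ or $(TP^3)^{-n/16}$, are dominated by the preceding ones using the ranges $R\leq Q\leq P^{8/5+\phi}$ and $TP^3\geq P^{\Delta-1}$ respectively, and so impose no further constraint. The hypothesis $n\geq 21$ enters by ensuring that the optimal $H$ stays inside $[1,P]$ and that the exponents $16/(n-17)$, $2/(n+1)$, $4n/45-179/90$ appearing in \eqref{eq:vw1}--\eqref{eq:vw3} are meaningful and of the right sign.

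The main obstacle is purely combinatorial bookkeeping: nine monomials in the five variables $B_2, T, R, H, P$ with delicate rational exponents in $n$, and a single choice of $H$ must subdue all of them simultaneously while preserving the $P^{-\phi/4}$ saving. The cleanest way to organise the calculation is to take logarithms in base $P$ and recast the problem as a small linear programme in $(\log B_2, \log T, \log R, \log H)$; from that viewpoint the three hypotheses \eqref{eq:vw1}--\eqref{eq:vw3} are exactly the dual inequalities characterising feasibility of the optimum.
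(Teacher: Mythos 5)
Your overall route is the right one — apply Lemma \ref{lem:hanselmann_classic} with the cubic Weyl bound \eqref{eq:BP} to get \eqref{eq:fotsvamp}, sum over $q$ in the dyadic box via Lemma \ref{lem:20} to pick up the factor $B_2$, then pick $H$ and estimate each monomial. This is exactly what the paper does, with $H=(B_2TP^{7/2})^{2/(n-1)}P^{c\phi}$ chosen so that the diagonal contribution $B_2TP^{7/2}H^{-(n-1)/2}$ is $\ll P^{-\phi/2}$. However, your bookkeeping has a genuine gap in the attribution of the three hypotheses to the terms they are supposed to control.

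The key error is the claim that the $(TP^3)^{-n/16}$ monomial ``imposes no further constraint'' via a putative bound $TP^3\geq P^{\Delta-1}$. This is wrong on two counts. First, the exponent of $T$ in the contribution
\[
B_2 T P^{7/2}H^{1/2}(TP^3)^{-n/16} \asymp B_2^{n/(n-1)} T^{n(17-n)/(16(n-1))}P^{\cdots}
\]
is negative once $n>17$, so the term \emph{grows} as $T\to 0$ and must be tamed by a nontrivial lower bound on $T$. Second, even granting $TP^3\geq P^{\Delta-1}$ one only obtains $(TP^3)^{-n/16}\leq P^{(1-\Delta)n/16}$, a large positive power of $P$ that is useless. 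In the paper, hypothesis \eqref{eq:vw2} is introduced \emph{precisely} to handle this term: inserting $T\geq B_2^{16/(n-17)}P^{-(3n-59)/(n-17)+4\phi}$ makes the contribution $\ll P^{-\phi/4}$ for $n\geq 21$. Your account attributes \eqref{eq:vw2} to ``demanding that the balanced dominant pair give an acceptable contribution'', but balancing the diagonal against $(TH)^{n/16}$ can only ever yield an \emph{upper} bound on $T$ (and a different exponent), whereas \eqref{eq:vw2} is a \emph{lower} bound; the directions do not match. In the paper, \eqref{eq:vw1} (via \eqref{eq:luskam}) is what keeps both the $P^{-n/8}$ and the $(TH)^{n/16}$ terms in check, while the role of \eqref{eq:vw3} is to ensure $tHP^3\geq 1$; this last inequality also explains why $(RP^{-3})^{n/16}\leq (TH)^{n/16}$ (since $TH/(RP^{-3})=tHP^3$), not the size restriction $R\leq Q$ that you invoke.
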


\begin{proof}
By \eqref{eq:fotsvamp} we get
\begin{equation}
\label{eq:lamisil}
\begin{split}
K(t,R,\R) \leq \sum_{q, \eqref{eq:dyadic}} &\cI(q,t)
\ll B_2 R (t+\tfrac{1}{HP^3})P^{n-1/2 + \ve}H^{-(n-1)/2}\\
&\hphantom{\ll} \times \left\{1 + H^{n/2}\Big(P^{-n/8} + (RtH)^{n/16} + (RP^{-3})^{n/16} + (RtP^3)^{-n/16}\Big)\right\}.
\end{split}
\end{equation}
We put
\[
H := B_2^{2/(n-1)}T^{2/(n-1)}P^{7/(n-1) + c\phi}=(B_2TP^{7/2} )^{2/(n-1)}P^{c\phi},
\]
for some suitable constant $c$. The upper bound $T\ll P^{-8/5-\phi/2} $ implies that $1 \leq H \leq P$ as required, for $n$ in the required range. Furthermore, the lower bound \eqref{eq:vw3} implies that we always have $t \geq (HP^3)^{-1}$, allowing us to simplify \eqref{eq:lamisil} to
\begin{equation}
\label{eq:arboga}
P^{-n+4}K(t,R,\R) \ll \frac{B_2 T P^{7/2 + \ve}}{ H^{(n-1)/2}}\left\{1 + H^{n/2}\Big(P^{-n/8} + (TH)^{n/16} +(TP^3)^{-n/16}\Big)\right\}.
\end{equation}
Our choice of $H$ implies that the contribution from the term '1' inside the brackets is 
\[
\ll P^{-\phi/2},
\]
say, provided only that $c \geq 2/(n-1)$ and $\phi \geq 2\ve$. 

Note that the hypothesis \eqref{eq:vw1} implies that
\begin{equation}
\label{eq:luskam}
\tR TP^{7/2} \leq P^{4(n-1)/45 - \phi/2}.
\end{equation}
The contribution from the term $P^{-n/8}$ is then
\[
\ll (\tR TP^{7/2})^{n/(n-1)} P^{-n/8 + c\phi/2 + \ve} \ll P^{-13n/360 + (c-1)\phi/2 + \ve},
\]
which is clearly admissible for any $n$, provided $c \leq 1$ and $\ve$ is chosen small enough.

The hypothesis \eqref{eq:vw1} also ensures that the term $(TH)^{n/16}$ gives an admissible contribution. Indeed, by \eqref{eq:luskam}, this contribution is 
\begin{align*}
&\ll B_2TP^{7/2+\ve}H^{(n+8)/16}T^{n/16} \ll (B_2TP^{7/2})^{9n/(8(n-1))} P^{-n/10 - n\phi(1/2-c)/16+c\phi/2 + \ve}\\
 &\ll P^{- n\phi(1/2-c)/16 + \ve} \ll P^{-\phi/4}
\end{align*}
provided that $n\geq 16, c \leq 1/8$ and $\phi \geq 8\ve$.

Finally, under the hypothesis \eqref{eq:vw2}, the contribution from the term $(TP^3)^{-n/16}$ is
\begin{align*}
&\ll (B_2TP^{7/2})^{n/(n-1)} (TP^3)^{-n/16} P^{\ve+c\phi/2}\\
&\ll B_2^{n/(n-1)}P^{-n(3n-59)/(16(n-1)) }T^{-n(n-17)/(16(n-1)) }P^{\ve+c\phi/2} \ll P^{-\phi+\ve+c\phi/2}\ll P^{-\phi/4},
\end{align*}provided that $n\geq 21 $, $c \leq 1$ and $\phi \geq 4\ve$. It is thus possible to choose $c$ and $\ve$ such that all these bounds are satisfied.
\end{proof} 

\begin{rem}
In the case $n = 30$, the conditions in Proposition \ref{prop:v1w3} read
\[
\tR \leq P^{61/90}, \quad T \geq \tR^{16/13} P^{-31/13+4\phi} \quad \text{and} \quad T \geq \tR^{-2/31}R^{29/31}P^{-94/31}.
\] 
\end{rem}

The approach featuring four consecutive Weyl differencing steps produces the following bound, where $\delta > 0$ is a parameter to be chosen at a later stage.

\begin{proposition}
\label{prop:w4}
Suppose that 
\begin{equation}
\label{eq:weyl_condition}
T \geq \min\{B_2^{24/(n-24)} P^{-4 + \delta},P^{-2}\}.
\end{equation}
Then we have the estimate
\[
K(t,R,\R) \ll B_2 T^{1+n/24}  P^{n+\ve} + P^{n-4-\delta(n/24-1)+\ve}.
\]
\end{proposition}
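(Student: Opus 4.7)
The plan is to invoke the pure Weyl bound \eqref{eq:ariana}, sum it trivially over the dyadic box \eqref{eq:dyadic} using Lemma \ref{lem:20}, and then split according to which of the two competing terms inside the bracket of \eqref{eq:ariana} dominates.

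First, for each $q$ with $R<q\le 2R$ we have $qt\asymp Rt=T$, so \eqref{eq:ariana} reads
\[
\cI(q,t)\ll P^{n+\ve} T\bigl(T+T^{-1}P^{-4}\bigr)^{n/24}.
\]
The bound is now uniform in $q$ within the dyadic range. Summing over all $q$ satisfying \eqref{eq:dyadic} and applying \eqref{eq:lem20} contributes a factor of $B_2$, giving
\[
K(t,R,\R)\ll B_2 T P^{n+\ve}\bigl(T+T^{-1}P^{-4}\bigr)^{n/24}.
\]

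Now I would split into two cases according to whether $T\ge P^{-2}$ or $T<P^{-2}$, which is precisely the threshold at which the two summands $T$ and $T^{-1}P^{-4}$ coincide. In the first case $T\ge P^{-2}$, the first summand dominates, and one obtains $K(t,R,\R)\ll B_2 T^{1+n/24}P^{n+\ve}$, which is the first term in the claimed bound. In the second case $T<P^{-2}$, the second summand dominates, yielding
\[
K(t,R,\R)\ll B_2 T^{1-n/24}P^{n-n/6+\ve}.
\]
Here is where the hypothesis \eqref{eq:weyl_condition} enters: since we are in the regime where $\min\{\cdot,\cdot\}=B_2^{24/(n-24)}P^{-4+\delta}$, we may rearrange it to $B_2\ll T^{(n-24)/24}P^{(\delta-4)(n-24)/24}$, so that the factor $B_2T^{1-n/24}=B_2T^{-(n-24)/24}$ is bounded by $P^{(\delta-4)(n-24)/24}$. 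A short arithmetic simplification then gives
\[
B_2T^{1-n/24}P^{n-n/6+\ve}\ll P^{n-4-\delta(n/24-1)+\ve},
\]
which is exactly the second term in the stated bound.

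The argument is essentially a book-keeping exercise; there is no serious obstacle, since both \eqref{eq:ariana} and the counting bound \eqref{eq:lem20} are already in hand. The only step requiring a little care is checking that the exponent of $P$ arising from the substitution in the second case collapses to the clean form $n-4-\delta(n/24-1)$, which follows from the identity $(4-\delta)(n-24)/24+n-n/6 = n-4-\delta(n-24)/24$. Combining the bounds from both regimes yields the claim.
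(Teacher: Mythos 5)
Your proof is correct and follows essentially the same route as the paper: apply the quartic Weyl bound \eqref{eq:ariana}, sum over the dyadic box via Lemma \ref{lem:20} to pick up the factor $B_2$, and then resolve the two competing summands using the hypothesis \eqref{eq:weyl_condition}. One minor slip: the intermediate claim that $B_2T^{-(n-24)/24}$ is bounded by $P^{(\delta-4)(n-24)/24}$ has the wrong sign — it should be $P^{(4-\delta)(n-24)/24}$ — but the identity you then use, $(4-\delta)(n-24)/24 + n - n/6 = n-4-\delta(n-24)/24$, is stated with the correct sign, so the conclusion is unaffected.
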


\begin{proof}
By \eqref{eq:ariana} we get
\begin{equation*}
K(t,R,\R) \leq \sum_{q, \eqref{eq:dyadic}} \cI(q,t) \\
\ll B_2 T P^{n+\ve}\Big(T + T^{-1}P^{-4}\Big)^{n/24}.
\end{equation*}
Let us assume that $T \geq B_2^{24/(n-24)} P^{-4 + \delta}$. Then we have 
\begin{align*}
K(t,R, \R) &\ll B_2 T^{1+n/24} P^{n+\ve} + B_2 T^{1-n/24}P^{n-n/6+\ve} \\
&\ll B_2 T^{1+n/24} P^{n+\ve} + P^{n-4-(n-24)\delta/24 + \ve}.
\end{align*}
Alternatively, if $T \geq P^{-2}$, say, then 
$
T^{-1}P^{-4} \leq T, 
$
so we get
\[
K(t,R,\R) \ll B_2 T^{1+n/24}  P^{n+\ve}.
\]
\end{proof}

Next we give the estimate for $K(t,R,\R)$ coming from our main approach, where only one van der Corput differencing step was carried out. To this end we define the quantity
\[
B_3 := B_1^{1/2} R_3^{1/3},
\]
assuming the role of $B_2$ in the previous two results. The saving here by of a factor of $R_1^{1/2}$ over $B_2$ is a major achievement of our van der Corput differencing method devised in Section \ref{sec:vdC}.

\begin{proposition}
\label{prop:v}
Provided that $R \leq P^{2-4/(n+2)}$, one has
\begin{equation*}
K(t,R,\R) \ll B_3 R(t + \tfrac{1}{HP^3}) P^{n-1/2 + \ve} H^{-(n-1)/2} \left(1 + \cY_0 + \dotsb +  \cY_{n-1} \right)^{1/2},
\end{equation*}
where 
\[
\cY_\eta := \frac{H^{n-\eta}}{R^{(n-\eta)/2}} \cX_\eta
\]
for any $0 \leq \eta \leq n-1$, with
\begin{equation*}
\cX_\eta := V^{n-\eta}  
+ R_1^{1/2}\left(R_3^{1/2} V^{n-\eta - 1}
+ (R_3H)^{(n-\eta)/3}\right)
\end{equation*}
for $0 \leq \eta \leq n-2$, and
\[
\cX_{n-1} := V + (R_3H)^{1/3}, \textrm{ where }
V = \frac{R}{P}\max\{1,\sqrt{tHP^3}\}.
\]
\end{proposition}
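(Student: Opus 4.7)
The plan is to combine Lemma \ref{lem:hanselmann}, which encodes averaged van der Corput differencing, with Proposition \ref{prop:sum_h}, which handles the resulting averages over $\h$ of the quartic exponential sums. Starting from $K(t,R,\R) = \sum_{q, \eqref{eq:dyadic}} \cI(q,t)$, I would first apply Lemma \ref{lem:hanselmann} with the parameter $H$ from the statement to obtain
\[
\cI(q,t) \ll P^{-N} + P^\ve\Big(t + \tfrac{1}{HP^3}\Big)\Big(\tfrac{P}{H}\Big)^{(n-1)/2} \starsum_{a=1}^{q_2} {\max_{z}}^{(1)} \left\{\sum_{|\h| \leq HP^\ve} |\cT_{a,\h}(q,z)|\right\}^{1/2}.
\]
The hypothesis $R \leq P^{2-4/(n+2)}$ ensures $q \leq P^{2-4/(n+2)}$ throughout the dyadic range, so I may invoke Proposition \ref{prop:sum_h} (with $H$ replaced by $HP^\ve$, which perturbs the resulting $\cY_\eta$ only by an absorbable factor of $P^\ve$) to bound the inner sum over $\h$ by $b_1 P^{n+\ve}\bigl(1 + \sum_{\eta=0}^{n-1} \cY_\eta\bigr)$. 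Using the trivial estimate $\starsum_{a=1}^{q_2} 1 \leq q_2$ for the average over $a$ and taking square roots then yields
\[
\cI(q,t) \ll P^{-N} + P^\ve\Big(t + \tfrac{1}{HP^3}\Big)\Big(\tfrac{P}{H}\Big)^{(n-1)/2} q_2 \, b_1^{1/2} P^{n/2}\left(1 + \sum_{\eta=0}^{n-1} \cY_\eta\right)^{1/2}.
\]

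The second step is to sum over $q$ in the dyadic range. Since $b_1 \asymp R_1$, $q_2 \asymp R_2 R_3$, and $q \asymp R$ for such $q$, and since the quantities $V$, $b_1$, $q_3$ entering $\cY_\eta$ are equivalent (up to constants absorbable in $P^\ve$) to their counterparts in the statement, the factor $(1 + \sum_\eta \cY_\eta)^{1/2}$ is uniform in $q$ and can be pulled outside. Applying Lemma \ref{lem:20} to count at most $O(R_1 R_2^{1/2} R_3^{1/3})$ values of $q$, together with the uniform estimate $q_2 b_1^{1/2} \ll R_1^{1/2} R_2 R_3$, then gives
\[
\sum_{q, \eqref{eq:dyadic}} q_2 \, b_1^{1/2} \ll R_1^{3/2} R_2^{3/2} R_3^{4/3} = (R_1 R_2)^{1/2} R_3^{1/3} \cdot R_1 R_2 R_3 = B_3 R,
\]
which is precisely the prefactor in the statement, after absorbing the innocuous $P^\ve$ and $P^{-N}$ terms.

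Once these two ingredients are assembled, the argument is essentially bookkeeping; the one subtlety worth emphasising is that the exponent of $R_1$ in the final bound is $3/2$ rather than $2$. This reflects the choice made in \eqref{eq:sum_c} of Section \ref{sec:vdC} to separate the square-free part of $q$ prior to differencing, so that summing trivially over $a \in (\ZZ/q_2\ZZ)^*$ contributes only $q_2$ (not $q$), while Proposition \ref{prop:sum_h} contributes only $b_1^{1/2}$ (not $b_1$) after taking the square root. This saving of $R_1^{1/2}$ is exactly what distinguishes $B_3$ from the coarser quantity $B_2$ appearing in Propositions \ref{prop:v1w3} and \ref{prop:w4}, and is the source of the strength of the averaged van der Corput approach in the regime where $R_1$ is large.
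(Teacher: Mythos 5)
Your proposal is correct and follows essentially the same route as the paper's proof: apply Lemma \ref{lem:hanselmann}, invoke Proposition \ref{prop:sum_h} for the inner sum over $\h$ (using the hypothesis $R \leq P^{2-4/(n+2)}$ to justify this), take square roots and sum trivially over $a$ to get the factor $q_2 b_1^{1/2} \asymp R_1^{1/2}R_2R_3$, and finally sum over $q$ via \eqref{eq:lem20} to produce the prefactor $B_3 R$. Your closing remark correctly identifies where the crucial saving of $R_1^{1/2}$ (i.e.\ $B_3$ in place of $B_2$) originates.
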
 

Note that there is a slight abuse of notation in that we have reused the same letters for the quantities $V$, $\cX_\eta$ and $\cY_\eta$ although replacing $q,b_1,q_3$ and $|z|$ by $R,R_1,R_3$ and $t$, respectively, in their definition.

\begin{proof}
Suppose that $q$ satisfies \eqref{eq:dyadic}, and recall the bound for $\cI(q,t)$ from Lemma \ref{lem:hanselmann}. We employ Proposition \ref{prop:sum_h} for each $z$ in the range $t \leq |z| \leq \max\left\{2t,t+ \frac{1}{HP^{3-\ve}}\right\}$, yielding
\[
\cI(q,t) \ll R_1^{1/2} R_2 R_3\left(t + \frac{1}{HP^3}\right) P^{n-1/2+\ve} H^{-(n-1)/2}\left(1 + \cY_0 + \dotsb +  \cY_{n-1} \right)^{1/2}.
\] 
Indeed, the shape of $V$ and the presence of a factor $P^\ve$ in the bound allows us to replace $|z|$ by $t$ in the final estimate, although slightly larger values of $|z|$ are also considered. The asserted bound now follows from \eqref{eq:lem20}.
\end{proof}

We shall later choose the parameter $\phi$ to be fairly small, so we may certainly assume that $Q \leq P^{2-4/(n+2)}$, as required for Proposition \ref{prop:v}.
Let 
\[
K'(t,R,\R) := B_3 R(t + \tfrac{1}{HP^3})P^{n-1/2 + \ve} H^{-(n-1)/2},
\]
corresponding to the contribution to $K(t,R,\R)$ from the term '$1$' in the expression in brackets in Proposition \ref{prop:v}.
We put
\[
H = \max\{H_1,H_2\},
\]
where
\begin{equation}
\label{eq:H_bottom}
\begin{split}
H_1 &= B_3^{2/(n-1)} T^{2/(n-1)} P^{36/(5(n-1))},\\
H_2 &= B_3^{2/(n+1)} R^{2/(n+1)} P^{6/(5(n+1))},
\end{split}
\end{equation}
a choice that produces the bound 
\begin{equation}\label{eq:1 case}K'(t,R,\R) \ll P^{n-4-1/10+\ve}.
\end{equation}

$H_1$ and $H_2$ are chosen such that when $H_1\geq H_2$, then $HP^3t\geq 1$ and vice versa. This can be easily checked by setting $H_1=H_2$, which in turn implies that 
\[
T=Rt=B_3^{-2/(n+1)}R^{1-2/(n-1)}P^{-18/5+3/5-6/(5(n+1))}=R(H_2P^3)^{-1}.
\] 
Let us now treat the other terms. Our choice of $H$ guarantees that for the other terms to be admissible, it is enough to check that the the remaining terms satisfy the following bound: $$\cY_0,\cdots,\cY_{n-1}\ll P^{1/5-4\ve}.$$ 

For $0 \leq \eta \leq n-2$, we more precisely write
\[
\cX_\eta \ll \cX'_\eta + \cX''_\eta,
\]
where, for $0\leq \eta\leq n-2$,
\begin{align*}
\cX'_\eta& = {V}^{n-\eta} 
+ B_1^{1/2}R_3^{1/2} {V}^{n-\eta-1},\cX''_\eta = B_1^{1/2} (R_3H)^{(n-\eta)/3},
\end{align*} 
\[\cX'_{n-1}=V,\quad\quad \cX''_{n-1}=(R_3H)^{1/3}. \]
Note that $V \asymp R/P+H^{1/2}R^{1/2}P^{1/2}T^{1/2}=R^{1/2}V_0$, say, where \[V_0=R^{1/2}/P+H^{1/2}P^{1/2}T^{1/2}.\]

We may then in turn write
\[
\cY_\eta \ll \cY'_\eta + \cY''_\eta,
\]
where
\[
\cY'_\eta = \frac{H^{n-\eta}}{R^{(n-\eta)/2}} \cX'_\eta, \qquad \cY''_\eta = \frac{H^{n-\eta}}{R^{(n-\eta)/2}} \cX''_\eta.
\]
Upon inspecting the shape of these expressions, one easily sees that
\begin{align}
\label{eq:ysum}
1+\cY_0 + \dotsb + \cY_{n-1} &\ll 1+\cY'_{0} + \cY''_{0} +  \cY'_{n-2} +\cY''_{n-2}+\cY''_{n-1}.
\end{align}
 We begin by inspecting the contribution from the $\cY'_\eta$ terms. First we simplify the expression to obtain
\begin{align}
\label{eq:cYeta}
\cY'_{\eta} &\ll H^{n-\eta}V_0^{n-\eta -1}(V_0+1),
\end{align}
the key observation here being that $B_1R_3\ll R$.
For $\eta = 0$, this gives
\begin{align*}
\cY'_{0} &\ll H^{n}V_0^{n-1}\left(V_0+1\right). 
\end{align*}

Note that the expressions for $H$ and $V_0$ contain non-negative powers of $R$, $B_3$ and $T$. Therefore, the maximum value of these expressions is achieved when 
\begin{equation}\label{eq:condition 1}B_3\asymp R^{1/2}\asymp P^{4/5+\phi/2}\quad \textrm{ and }\quad T\asymp P^{-8/5-\phi/2}.\end{equation} Inserting these bounds, we get 
\begin{align}
\label{eq:H_1}H_1 &= P^{8/5(n-1)+\phi/(n-1)} P^{-16/5(n-1)-\phi/(n-1)} P^{36/(5(n-1))}=P^{28/5(n-1)},\\
\label{eq:H_2}H_2 &= P^{8/5(n+1)} P^{16/5(n+1)+2\phi/(n+1)} P^{6/5(n+1)}=P^{6/(n+1)+3\phi/(n+1)}.
\end{align}
Note that $H_2\geq H_1$, when $n\geq 29$. Thus, $V_0 \asymp R^{1/2}/P \asymp P^{\phi/2-1/5} $. For a small enough value of $\phi$, we may assume that $V_0\ll 1$.

Upon using this value of $V_0$, we get that
\begin{align*}
\cY_0' &\ll H_2^{n}V_0^{n-1}\ll P^{6n/(n+1)+3\phi-(n-1)/5+(n-1)\phi/2}\ll P^{1/5-6/31+n\phi},
\end{align*}
for $n\geq 30$, which is an admissible contribution, for a small enough value of $\phi$.

Next, we consider $\eta = n-2$, where we have 
\begin{align*}
\cY_{n-2}'\ll H^2V_0(V_0+1).
\end{align*}
Again, the right hand side is non-decreasing in $B_3,R, T$, and therefore the maximum value here is reached when \eqref{eq:condition 1} holds. Replacing the corresponding values of $H_2$ and $V_0$, we get that
\begin{align*}
\cY_{n-2}'\ll P^{12/(n+1)+6\phi/(n+1)}P^{-1/5+\phi/2}\ll P^{1/5-2/155+\phi}
\end{align*}
for $n \geq 30$, which is again an admissible contribution for a small enough value of $\phi$.
Finally, 
\begin{align*}\cY''_{n-1}&\ll H^{4/3}R^{-1/6}=\max\{R^{4/3(n-1)}P^{16/3(n-1)}, R^{4/(n+1)} P^{8/5(n+1)}\}R^{-1/6}\\
&\ll P^{16/3(n-1)}\ll P^{1/5-0.01},
\end{align*}
for $n\geq 30$. We summarise our findings in the following result.
\begin{proposition}
\label{prop:avdC}
Suppose that $n\geq 30$. There exist absolute constants $\phi_0,\ve_0 >0$, depending only on $n$, such that the estimate
\begin{equation*}
K(t,R,\R) \ll_\ve P^{n-4-\ve} 
+ P^{n-4-1/10+\ve}(\cY_0''+\cY_{n-2}'')^{1/2}
\end{equation*}
holds for any $\ve \leq \ve_0$, provided that $\phi \leq \phi_0$.
\end{proposition}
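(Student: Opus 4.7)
The strategy is to apply Proposition \ref{prop:v} with the specific choice $H = \max\{H_1,H_2\}$ from \eqref{eq:H_bottom}, and then carefully dispose of all terms in the resulting bound except $\cY_0''$ and $\cY_{n-2}''$, which are left to be handled subsequently by the Weyl-type estimates. Concretely, this choice of $H$ is calibrated so that the ``trivial'' portion $K'(t,R,\R)$ of the bound already saves $P^{1/10-\ve}$ over the target, as recorded in \eqref{eq:1 case}. Thus Proposition \ref{prop:v} yields
\[
K(t,R,\R) \ll P^{n-4-1/10+\ve}\bigl(1 + \cY_0 + \dotsb + \cY_{n-1}\bigr)^{1/2}.
\]

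Next I would apply the splitting $\cY_\eta \ll \cY_\eta' + \cY_\eta''$ together with the reduction \eqref{eq:ysum}, which collapses the full list of $\cY_\eta$'s to the five quantities $\cY_0'$, $\cY_0''$, $\cY_{n-2}'$, $\cY_{n-2}''$ and $\cY_{n-1}''$. Using subadditivity of the square root, it then suffices to show that each of the three ``primed'' terms $\cY_0'$, $\cY_{n-2}'$ (and the separate term $\cY_{n-1}''$) is bounded by $P^{1/5-\delta}$ for some $\delta = \delta(n) > 0$, uniformly over the minor-arc parameters satisfying \eqref{eq:Rt_condition}.

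The key observation, which is what makes the argument go through, is that after inserting $H = \max\{H_1,H_2\}$, the quantities $\cY_0'$, $\cY_{n-2}'$ and $\cY_{n-1}''$ are monomials in $B_3, R, T$ with non-negative exponents, because $H_1$ and $H_2$ are themselves such monomials and because the simplification $\cY_\eta' \ll H^{n-\eta} V_0^{n-\eta-1}(V_0 + 1)$ (noting $B_1 R_3 \ll R$) removes all dependence on $B_1$ and $R_3$ separately. Consequently, their suprema over the admissible region are attained at the extreme endpoint $B_3 \asymp R^{1/2} \asymp P^{4/5+\phi/2}$, $T \asymp P^{-8/5-\phi/2}$ dictated by \eqref{eq:Rt_condition}. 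At this point the threshold $n \geq 29$ makes $H_2 \geq H_1$ dominant and produces $V_0 \asymp P^{-1/5+\phi/2}$, and a direct numerical substitution yields $\cY_0' \ll P^{1/5 - 6/31 + n\phi}$, $\cY_{n-2}' \ll P^{1/5 - 2/155 + \phi}$, and $\cY_{n-1}'' \ll P^{1/5 - 0.01}$, all admissible for $n \geq 30$ once $\phi, \ve$ are chosen small enough.

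The main obstacle, which is purely computational rather than conceptual, is verifying the monotonicity in $B_3, R, T$ — in particular, making sure that the maximum of $H_1$ and $H_2$ behaves correctly across the crossover $tHP^3 = 1$ built into \eqref{eq:H_bottom}, so that the same bound for $V_0$ applies on either side. Assembling the three pointwise bounds with the contribution from the ``1'' in the parenthesis absorbs into a single term $P^{n-4-\ve}$, while the uncontrolled terms $(\cY_0'')^{1/2}$ and $(\cY_{n-2}'')^{1/2}$ remain multiplied by the prefactor $P^{n-4-1/10+\ve}$, yielding the form of the bound asserted in the proposition.
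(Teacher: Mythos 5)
Your proposal matches the paper's argument essentially step for step: apply Proposition~\ref{prop:v} with $H = \max\{H_1,H_2\}$ as in \eqref{eq:H_bottom} so that the ``trivial'' term is $\ll P^{n-4-1/10+\ve}$ by \eqref{eq:1 case}, reduce the list of $\cY_\eta$ via \eqref{eq:ysum}, dispose of $\cY'_0$, $\cY'_{n-2}$ and $\cY''_{n-1}$ by evaluating at the extreme corner $B_3 \asymp R^{1/2} \asymp P^{4/5+\phi/2}$, $T \asymp P^{-8/5-\phi/2}$, and absorb everything except $(\cY''_0)^{1/2}$ and $(\cY''_{n-2})^{1/2}$ into $P^{n-4-\ve}$. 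One small imprecision: $\cY''_{n-1} = H^{4/3}R_3^{1/3}R^{-1/2}$ is \emph{not} a monomial with non-negative exponents in $(B_3,R,T)$ even after using $R_3\ll R$ (the net exponent of $R$ is negative once $n\geq 9$), so the monotone-endpoint argument you invoke for $\cY'_0,\cY'_{n-2}$ does not apply to $\cY''_{n-1}$ verbatim; the paper instead substitutes $B_3\ll R^{1/2}$, $T\ll P^{-8/5}$ into $\max\{H_1,H_2\}$ and notes the resulting $R$-exponent is non-positive, which gives the same bound $P^{16/3(n-1)}\ll P^{1/5-0.01}$ you quote. This is a cosmetic issue rather than a gap, as your final estimates agree with the paper's.
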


\begin{rem}
With some more work, appealing to the previously mentioned bound of Serre \cite[Chapter 13]{Serre97} to improve the exponent in (\ref{eq:sparse_trivial}), the conclusion of Proposition \ref{prop:avdC} could also have been obtained for $n=29$. However, satisfactorily handling contribution from the remaining terms $\cY_0''$, $\cY_{n-2}''$ in the case $n=29$ requires substantially new arguments. We defer the necessary refinements of our method to a forthcoming paper.
\end{rem}

It remains to treat the contributions from $\cY''_\eta$ for $\eta=0,n-2$.  We begin by writing 
\[
\cY''_\eta\ll \cY''_{\eta,1}+\cY''_{\eta,2},
\]
where $\cY''_{\eta,j}$ denotes the contribution obtained by replacing $H$ with $H_j$. Let $K''_{j,\eta}(t,R,\R)$, for $\eta \in \{0,n-2\}, j\in\{0,1\}$, denote the corresponding contribution to $K(t,R,\R)$ from the term $\cY''_{j,\eta}$. More explicitly,
\begin{align}
P^{-n+4}K_{\eta,1}''(t,R,\R)&\ll P^{-1/10+\ve}H_1^{2(n-\eta)/3}R_1^{1/4}R_3^{(n-\eta)/6}R^{-(n-\eta)/4}\nonumber\\
&= (B_3 T)^{\frac{4(n-\eta)}{3(n-1)}} P^{{\frac{24(n-\eta)}{5(n-1)}}-\frac{1}{10}+\ve}R_1^{1/4}R_3^{(n-\eta)/6}R^{-(n-\eta)/4},\label{eq:B1}\\
P^{-n+4}K_{\eta,2}''(t,R,\R)&\ll P^{-1/10+\ve}H_2^{2(n-\eta)/3}R_1^{1/4}R_3^{(n-\eta)/6}R^{-(n-\eta)/4}\nonumber\\
&=(B_3 R)^{\frac{4(n-\eta)}{3(n+1)}}  P^{\frac{4(n-\eta)}{5(n+1)}-\frac{1}{10}+\ve}R_1^{1/4}R_3^{(n-\eta)/6}R^{-(n-\eta)/4}.\label{eq:B2}
\end{align}
We will compare these contributions to the relevant contribution coming from Proposition \ref{prop:w4}, which we denote by
\[
K_{\operatorname{Weyl}}(t,R,\R) := \tR T^{1+n/24}P^{n+\ve}. 
\]
Thus we shall assume until further notice that 
\begin{equation}
\label{eq:T_notsmall}
T \geq \min\{B_2^{24/(n-24)} P^{-4 + \delta},P^{-2}\},
\end{equation}
so that Proposition \ref{prop:w4} applies. The remaining range where $T$ is very small will be treated at the end of this section by different means.

We  put 
\begin{equation}\label{eq:opti}
B_1 = P^\alpha, \quad R = P^Z,
\end{equation}
so that $R_3 \asymp P^{Z-\alpha}$ and
\[
B_2\asymp P^{ \frac{Z}{3} + \frac{2\alpha}{3}}, \qquad B_3\asymp P^{\frac{Z}{3} + \frac{\alpha}{6}}. 
\]
Our strategy will be to express our bounds for the quantities $\cY''_{\eta,j}$ as functions of $Z $ and $\alpha$, and check that the desired bounds hold throughout the region $\Omega$ defined by
\begin{equation}
\label{eq:Omega}
0\leq \alpha \leq Z \leq \frac{8}{5}.
\end{equation}
Note that the Weyl bound contribution roughly grows as $R$ increases, and the contributions in \eqref{eq:B1} and \eqref{eq:B2} increase as $R$ decreases. Thus, it is natural to expect the minimum value is obtained when $Z$ is in the intermediate range between $1$ and $8/5$. This is asserted by our findings in Appendix \ref{sec:Math}.

Recall that $H=H_2$ if and only if 
\begin{equation}
\label{eq:range1}
T\leq R(H_2P^3)^{-1} = B_3^{-2/(n+1)} R^{1-2/(n+1)} P^{-6/(5(n+1))-3}.
\end{equation}
Let us first examine this range. In the case $\eta=0$, \eqref{eq:B2} implies that 
\begin{align*}
P^{-n+4}K''_{0}(t,R,\R)P^{-n+4} &\asymp K''_{0,2}(t,R,\R) \ll B_3^{\frac{4n}{3(n+1)}}  P^{\frac{4n}{5(n+1)}-\frac{1}{10}+\ve}R_1^{1/4}R_3^{n/6}R^{\frac{4n}{3(n+1)}-\frac{n}{4}}\\
&\ll P^{h_2(Z,\alpha)+\ve},
\end{align*}
say, where
\begin{align*}
h_2(Z,\alpha)&=\frac{4n}{3(n+1)}\left(\frac{Z}{3}+\frac{\alpha}{6}\right)+\frac{n}{6}\left(Z-\alpha\right)+\frac{\alpha}{4}+\left(\frac{4n}{3(n+1)}-\frac{n}{4}\right)Z+\frac{4n}{5(n+1)}-\frac{1}{10}.
\end{align*}
One may check that $h_2(Z,\alpha)$ is decreasing as a function of $n$ for admissible values of $Z$ and $\alpha$. This feature will repeat in all the bounds that we will derive in this section. Therefore we may assume that $n=30$ and thus
\begin{align}\label{eq:h_2def}
h_2(Z,\alpha)=-\frac{145}{186}Z-\frac{1687}{372}\alpha+\frac{209}{310}.
\end{align} 
In this case we have
\begin{align*}
P^{-n+4}K_{\operatorname{Weyl}}(t,R,\R) &\ll B_2 \left(B_3^{-2/(n+1)} R^{1-2/(n+1)} P^{-6/(5(n+1))-3}\right)^{1+n/24}  P^{4+\ve}\\
&\ll B_2B_3^{-\frac{2}{n+1} \frac{24+n}{24}}R^{\frac{n-1}{n+1}\frac{24+n}{24}}P^{4-\frac{15n+21}{5(n+1)}\frac{24+n}{24} + \ve} \\
&\ll P^{w_2(Z,\alpha) + \ve},
\end{align*}
where
\begin{align*}
w_2(Z,\alpha)&=\frac{Z}{3}+\frac{2\alpha}{3}-\frac{24+n}{12(n+1)}\left(\frac{Z}{3}+\frac{\alpha}{6}\right)+\frac{(n-1)(24+n)}{24(n+1)}Z+4-\frac{(15n+21)(24+n)}{120(n+1)}.
\end{align*}
Again, this is decreasing in $n$, and for $n=30$ translates to 
\begin{align}\label{eq:w_2def}
w_2(Z,\alpha)=\frac{889}{372}Z+\frac{239}{372}\alpha-\frac{1759}{620}.
\end{align}
One now reaches a simple linear optimization problem of checking whether
\[
\max_{(Z,\alpha) \in \Omega} \min\{h_2(Z,\alpha),w_2(Z,\alpha)\} < 0.
\]
Indeed, it is then obvious that
\[
\min\{K''_{\eta}(t,R,\R),K_{\operatorname{Weyl}}(t,R,\R)\} \ll P^{n-4-\phi/4},
\]
say, provided that $\phi$ and $\ve$ are chosen small enough.
These quantities may be messy to compute by hand, but could be easily computed using a simple code in Mathematica/sage. We will attach these codes in the appendix, for the aid of the reader. Using this, it is easy to check that the required maximum is $< -0.1$, giving an admissible bound in the range \eqref{eq:range1}. 

In the case of $\eta=n-2$, \eqref{eq:B2} implies that
\begin{align*}
P^{-n+4}K''_{n-2}(t,R,\R)& \asymp P^{-n+4}K''_{n-2,2}(t,R,\R) \ll (B_3 R)^{\frac{8}{3(n+1)}}  P^{\frac{8}{5(n+1)}-\frac{1}{10}+\ve}B_1^{1/4}R_3^{1/3}R^{-1/2}\\
&\ll R^{\frac{4}{(n+1)}-\frac{1}{6}}  P^{\frac{8}{5(n+1)}-\frac{1}{10}+\ve} \ll P^{\frac{8}{5(n+1)}-\frac{1}{10}+\ve} \ll P^{8/155-1/10+\ve},
\end{align*}
which gives an admissible contribution. 
Here we have again used the fact that $B_1R_3 \asymp R$.

We now consider the complementary range
\begin{equation}
\label{eq:locoid}
T > B_3^{-2/(n+1)} R^{1-2/(n+1)} P^{-6/(5(n+1))-3}.
\end{equation}
where we always have $H = H_1$. Observe that we only need to study the range where Proposition \ref{prop:v1w3} fails. Let us split that range up into three parts as follows:
\begin{gather}
\label{eq:range2}
B_3^{-2/(n+1)} R^{1-2/(n+1)} P^{-3-6/(5(n+1))} < T \leq B_2^{\frac{16}{n-17}}P^{-\frac{3n-59}{n-17}+4\phi},\\
\label{eq:range3}
B_3^{-2/(n+1)} R^{1-2/(n+1)} P^{-3-6/(5(n+1))} < T \leq B_2^{-2/(n+1)} R^{1-2/(n+1)} P^{-3-1/(n+1)},\\
\label{eq:range4}
B_2 \geq P^{\frac{4n}{45}-\frac{179}{90}}.
\end{gather}

Suppose first that \eqref{eq:range2} applies. 
Then, inserting the upper bound for $T$, we have
\begin{align*}
&P^{-n+4}K''_{\eta}(t,R,\R) \asymp P^{-n+4}K''_{\eta,1}(t,R,\R)\\
&\ll B_3^{\frac{4(n-\eta)}{3(n-1)}} B_2^{\frac{64 (n-\eta)}{3(n-1)(n-17)}} P^{\frac{24(n-\eta)}{5(n-1)}-\frac{3n-59}{n-17}\frac{4(n-\eta)}{3(n-1)} + c_1\phi-1/10 + \ve} B_1^{1/4} R_3^{(n-\eta)/6} R^{-(n-\eta)/4} \\
& \ll P^{h_1(Z,\alpha,\eta) + c_1\phi + \ve}, 
\end{align*}
say. Here, and in the sequel, $c_i$'s will denote absolute constants whose precise value is immaterial to our arguments. These constants may have an implicit dependence on $\eta$ and $n$ only.
The cases of interest for us are $\eta=0$ and $n-2$. More explicitly, 
\begin{align*}
h_1(Z,\alpha, 0) &= \frac{4n}{3(n-1)}\left(\frac{Z}{3} + \frac{\alpha}{6}\right) +\frac{64n}{3(n-1)(n-17)}\left(\frac{Z}{3} + \frac{2\alpha}{3}\right)  + \frac{1}{4} \alpha + \frac{n}{6}(Z-\alpha) - \frac{n}{4} Z\\
&+ \frac{24n}{5(n-1)}-\frac{3n-59}{n-17}\frac{4n}{3(n-1)}-\frac{1}{10}.
\end{align*}
For $n=30$ this gives
\[
h_1(Z,\alpha,0) = 
-\frac{115}{78}Z - \frac{15329}{4524} \alpha + \frac{5943}{3770}.
\]
Similarly, 
\begin{align*}
h_1(Z,\alpha, n-2) &= \frac{8}{3(n-1)}\left(\frac{Z}{3} + \frac{\alpha}{6}\right) +\frac{128}{3(n-1)(n-17)}\left(\frac{Z}{3} + \frac{2\alpha}{3}\right)  + \frac{1}{4} \alpha + \frac{1}{3}(Z-\alpha) \\
&- \frac{1}{2} Z + \frac{48}{5(n-1)}-\frac{3n-59}{n-17}\frac{8}{3(n-1)}-\frac{1}{10},
\end{align*}
which for $n=30$ translates to 
\begin{align*}
h_1(Z,\alpha, n-2) &= -\frac{23}{234}Z+\frac{101}{13572}\alpha+\frac{133}{11310}.
\end{align*}

 If we instead feed the upper bound for $T$ into the bound in Proposition \ref{prop:w4}, we get
\begin{align*}
P^{-n+4}K_{\operatorname{Weyl}}(t,R,\R) &= B_2 T^{1+n/24}  P^{4+\ve}\ll B_2^{1+\frac{16}{n-17} \frac{24+n}{24}}P^{4-\frac{3n-59}{n-17}\frac{24+n}{24}+c_2\phi + \ve} \\
&\ll P^{w_1(Z,\alpha) + c_2\phi + \ve},
\end{align*}
where
\[
w_1(Z,\alpha) = \left(1+\frac{16}{n-17} \frac{24+n}{24}\right)\left(\frac{Z}{3} + \frac{2\alpha}{3}\right)+4-\frac{3n-59}{n-17}\frac{24+n}{24}.
\]
Again for $n=30$ one has 
\[
w_1(Z,\alpha) = \textstyle \frac{49}{39} Z + \frac{98}{39} \alpha - \frac{71}{52}.
\]
Using linear optimization again one checks that
\[
\max_{(Z,\alpha) \in \Omega} \min\{h_1(Z,\alpha,0),w_1(Z,\alpha)\} \leq -0.01,\quad \max_{(Z,\alpha) \in \Omega} \min\{h_1(Z,\alpha,n-2),w_1(Z,\alpha)\} \leq -0.02.
\]

Next we will treat the range \eqref{eq:range3}. Again using $H = H_1$ and the upper bound for $T$ in \eqref{eq:range3},
we have
\begin{align*}
&P^{-n+4}K''_{\eta}(t,R,\R) \asymp P^{-n+4}K''_{\eta,1}(t,R,\R)\\
&\ll B_3^{\frac{4(n-\eta)}{3(n-1)}} B_2^{-\frac{8(n-\eta)}{3(n+1)(n-1)}} R^{\frac{4(n-\eta)}{3(n+1)}-\frac{n-\eta}4} P^{-\frac{4(n-\eta)}{3(n-1)}\frac{3n+4}{n+1}+\frac{24(n-\eta)}{5(n-1)}-\frac{1}{10} + \ve } B_1^{1/4} R_3^{(n-\eta)/6}  \\
& \ll P^{h_3(Z,\alpha,\eta) + \ve}, 
\end{align*}
where
\begin{align*}
h_3(Z,\alpha,0) &= \frac{4n}{3(n-1)}\left(\frac{Z}{3} + \frac{\alpha}{6}\right) -\frac{8n}{3(n+1)(n-1)}\left(\frac{Z}{3} + \frac{2\alpha}{3}\right) + \left(\frac{4n}{3(n+1)}-\frac{n}{4}\right)Z \\
& + \frac{1}{4} \alpha + \frac{n}{6}(Z-\alpha) -\frac{4n}{3(n-1)}\frac{3n+4}{n+1}+\frac{24n}{5(n-1)}-\frac{1}{10}, \\ 
h_3(Z,\alpha,n-2) &= \frac{8}{3(n-1)}\left(\frac{Z}{3} + \frac{\alpha}{6}\right) -\frac{16}{3(n+1)(n-1)}\left(\frac{Z}{3} + \frac{2\alpha}{3}\right) + \left(\frac{8}{3(n+1)}-\frac{1}{2}\right)Z \\
& + \frac{1}{4} \alpha + \frac{1}{3}(Z-\alpha) -\frac{8}{3(n-1)}\frac{3n+4}{n+1}+\frac{48}{5(n-1)}-\frac{1}{10}.\end{align*}
Specialising to $n=30$, we get
\begin{align*}
h_3(Z,\alpha,0) = -\frac{145}{186} Z - \frac{49403}{10788}\alpha + \frac{6141}{8990},\,\,\,
h_3(Z,\alpha,n-2) = -\frac{29}{558} Z - \frac{2329}{32364} \alpha -\frac{1289}{26970}.
\end{align*}
The Weyl differencing bound in this range for $T$ yields
\begin{align*}
P^{-n+4} K_{\operatorname{Weyl}}(t,R,\R) &\ll B_2 \left(B_2^{-2/(n+1)} R^{1-2/(n+1)} P^{-1/(n+1)-3}\right)^{1+n/24}  P^{4+\ve} \\
&\ll P^{w_3(Z,\alpha)  + \ve},
\end{align*}
where
\begin{align*}
w_3(Z,\alpha)&=\left(\frac{Z}{3}+\frac{2\alpha}{3}\right)\left(1-\frac{2}{n+1}\left(\frac{24+n}{24}\right)\right)+\frac{(n-1)(24+n)}{(n+1)24}Z\\
&+4-\left(\frac{3n+4}{n+1}\right)\left(\frac{24+n}{24}\right).
\end{align*}
For $n=30$, this becomes
$
w_3(Z,\alpha) = \frac{889}{372} Z + \frac{53}{93} \alpha - \frac{175}{62}.$
Clearly, the exponent $$h_3(Z,\alpha,n-2) \leq -\frac{1289}{26970} $$ suffices. In the other case,  one calculates
\[
\max_{(Z,\alpha) \in \Omega} \min\{h_3(Z,\alpha,0),w_3(Z,\alpha)\} \leq -0.008,
\]
giving an admissible bound in the range \eqref{eq:range3}.

Finally, suppose that \eqref{eq:range4} applies. In this range, in some cases, it will be more convenient to refer to an earlier bound in Proposition \ref{prop:v} directly:
\begin{align*}
P^{-n+4}K''_{\eta}(t,R,\R)&\ll B_3 R(t + \tfrac{1}{HP^3}) P^{7/2+ \ve} H^{-(n-1)/2} (\cY_\eta'')^{1/2}\\
&\ll B_3 R(t + \tfrac{1}{HP^3})  P^{7/2+ \ve}B_1^{1/4}H^{(1-\eta)/2}(R_3H)^{(n-\eta)/6}R^{-(n-\eta)/4}.
\end{align*}  Again, we start with the case $\eta=0$ first. Comparing with the bounds \eqref{eq:H_1} and \eqref{eq:H_2}, one also always has $H \leq P^{1/5}$, so that
\[
 H^{1/2}(R_3H)^{n/6} R^{-n/4}\asymp H^{(n+3)/6} B_2^{-n/4} \leq P^{\frac{n+3}{30} + \frac{179n}{360}-\frac{n^2}{45}} \leq P^{-239/60}
\]
for $n \geq 30$. This yields
\begin{align*}
P^{-n+4}K''_{0}(t,R,\R) &\ll  B_3B_1^{1/4} (T+RP^{-3}) P^{7/2+\ve} H^{1/2}(R_3H)^{n/6} R^{-n/4}   \\
&\ll RP^{-7/5+\phi} P^{7/2-239/60 +\ve}\ll P^{-17/60+2\phi+\ve }  
\end{align*}
a bound which is clearly admissible.

For the contribution $K''_{n-2}(t,R,\R)$, we start by writing 
\begin{align*}
P^{-n+4}K''_{n-2}(t,R,\R)&\ll P^{-1/10}(\cY''_{n-2})^{1/2}\ll P^{-1/10+\ve}R^{-1/2}\bR_1^{1/4}R_3^{1/3}H^{4/3}.
\end{align*}
As a consequence,
\begin{align*}
P^{-n+4}K''_{n-2,1}(t,R,\R)&\ll P^{-1/10+\ve}R^{-1/2}\bR_1^{1/4}R_3^{1/3}(B_3 T)^{\frac{8}{3(n-1)}}P^{\frac{48}{5(n-1)}}.
\end{align*}
We insert the trivial upper bound $T \leq P^{-8/5}$ to obtain
\[
P^{-n+4}K''_{n-2,1}(t,R,\R) \ll B_3^{\frac{8}{3(n-1)}} P^{\frac{16}{3(n-1)}-\frac{1}{10} + \ve}  \bR_1^{1/4} R_3^{1/3} R^{-1/2}.
\]
By \eqref{eq:range4}, one now has
\begin{align*}
B_3^{\frac{8}{3(n-1)}} \bR_1^{1/4} R_3^{1/3} R^{-1/2} &\ll B_1^{\frac{4}{3(n-1)}-\frac{1}{4}} R_3^{\frac{8}{9(n-1)}-\frac{1}{6}} \ll B_2^{\frac{4}{3(n-1)}-\frac{1}{4}} \ll P^{(\frac{4}{3(n-1)}-\frac{1}{4})(\frac{4n}{45}-\frac{179}{90})}.
\end{align*}
implying that
\begin{align*}
P^{-n+4}K''_{n-2,1}(t,R,\R) &\ll P^{\frac{16}{3(n-1)}-\frac{1}{10}+(\frac{4}{3(n-1)}-\frac{1}{4})(\frac{4n}{45}-\frac{179}{90})+ \ve }\ll P^{-0.05+\ve},
\end{align*}
for $n\geq 30$.

We have now obtained a bound which is valid everywhere except for very small values of $T$.

\begin{proposition}
\label{prop:prop10}
Suppose that $n \geq 30$ and that 
\[
T \geq \min\{B_2^{24/(n-24)} P^{-4 + \delta},P^{-2}\}.
\]
Then we have the bound 
\[
K(t,R,\R) \ll P^{n-4} \Big(P^{-5\delta/24} + P^{-\phi/4}\Big)
\]
provided only  that 
$\phi \leq \phi_0$, where $\phi_0$ is an absolute constant.
\end{proposition}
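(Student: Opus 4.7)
The plan is to synthesise the three bounds established earlier in the section: Proposition \ref{prop:v1w3} (one van der Corput step plus three Weyl differencing steps), Proposition \ref{prop:w4} (four iterated Weyl differencing steps), and Proposition \ref{prop:avdC} (the averaged van der Corput bound, which handles everything except the residual contributions $\cY_0''$ and $\cY_{n-2}''$). First I would observe that whenever the three hypotheses \eqref{eq:vw1}--\eqref{eq:vw3} of Proposition \ref{prop:v1w3} all hold, one obtains directly that $K(t,R,\R)\ll P^{n-4-\phi/4}$. It is therefore enough to treat the complementary case in which at least one of \eqref{eq:vw1}--\eqref{eq:vw3} fails. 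This case splits according to whether the effective cutoff is $H = H_1$ or $H = H_2$ (equivalently, whether $tHP^3\geq 1$): the former subdivides further into the three sub-cases \eqref{eq:range2}, \eqref{eq:range3}, \eqref{eq:range4}, corresponding to which hypothesis is violated, while the latter forms the single regime \eqref{eq:range1}.

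In each sub-regime one has at one's disposal the bounds \eqref{eq:B1}--\eqref{eq:B2} for the residual contributions $K''_{0}(t,R,\R)$ and $K''_{n-2}(t,R,\R)$, together with the Weyl main term $K_{\operatorname{Weyl}}(t,R,\R)\ll B_2 T^{1+n/24}P^{n+\ve}$ coming from Proposition \ref{prop:w4}, into each of which one substitutes the relevant upper bound on $T$ available in that regime. The critical move is to parametrise $B_1 = P^\alpha$ and $R = P^Z$, so that $B_2\asymp P^{Z/3+2\alpha/3}$, $B_3\asymp P^{Z/3+\alpha/6}$ and $R_3\asymp P^{Z-\alpha}$; then the exponents of interest become affine linear functions $h_i(Z,\alpha,\eta)$ and $w_i(Z,\alpha)$ on the triangle $\Omega = \{(Z,\alpha) : 0 \leq \alpha \leq Z \leq 8/5\}$. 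After reducing to $n = 30$ by monotonicity in $1/n$, the task becomes verifying that
\[
\max_{(Z,\alpha)\in\Omega}\min\bigl\{h_i(Z,\alpha,\eta),\ w_i(Z,\alpha)\bigr\} \leq -c
\]
for some absolute $c > 0$ across all relevant sub-regimes and for $\eta \in \{0, n-2\}$; this is a finite collection of linear programs, solvable numerically. The range \eqref{eq:range4} does not fit this framework directly and is handled separately, using the bounds $H \leq P^{1/5}$ and $B_1 R_3 \asymp R$ to show that $K''_0$ and $K''_{n-2}$ are individually small.

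The $P^{-5\delta/24}$ saving in the conclusion is inherited from the error term $P^{n-4-\delta(n/24-1)+\ve}$ in Proposition \ref{prop:w4}: since $\delta(n/24-1)\geq\delta/4\geq 5\delta/24$ for $n\geq 30$, the quantity $5\delta/24$ is a convenient lower bound. The $P^{-\phi/4}$ saving is simply the bottleneck carried over from Proposition \ref{prop:v1w3}. The chief obstacle is the narrowness of the numerical margin: the linear programs come out negative by only an absolute amount of order $10^{-2}$, and the identical analysis collapses at $n = 29$, which is precisely why the method in its present form cannot reach one variable fewer. Executing the argument therefore requires meticulous bookkeeping of every coefficient in the affine forms $h_i, w_i$ and a systematic (though routine) numerical resolution of each of the accompanying linear programs.
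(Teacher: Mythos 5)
Your proposal is correct and follows essentially the same route as the paper: reduce via Proposition \ref{prop:avdC} to the residual terms $\cY''_0$, $\cY''_{n-2}$, then in each of the regimes \eqref{eq:range1}--\eqref{eq:range4} (partitioned by $H=H_1$ versus $H=H_2$ and by which of \eqref{eq:vw1}--\eqref{eq:vw3} fails) compare $K''_\eta$ against $K_{\operatorname{Weyl}}$ through the $(Z,\alpha)$ parametrisation and a handful of linear programs, treating \eqref{eq:range4} by hand. The only slight imprecision is attributing $P^{-\phi/4}$ solely to Proposition \ref{prop:v1w3} — the same exponent also absorbs the small absolute negative margins from the linear programs after choosing $\phi$ small — but this does not affect the correctness of the argument.
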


In the remaining range, we shall employ the pointwise van der Corput differencing bound in Lemma \ref{lem:pointwise}. Using this result in place of Lemma \ref{lem:hanselmann} in the above argument, one obtains the bound
\begin{align*}
K(t,R,\R) &\ll B_3 T P^{n+\ve} H^{-n/2} \Big(1 + \cY_0 + \dotsb + \cY_{n-1}\Big)^{1/2}.
\end{align*} 

Let us put 
\[
H := 1 + B_2^{4/n}(B_3 T P^4)^{2/n}, %
\] 
yielding
\[
B_3 T P^{n+\ve} H^{-n/2} \ll P^{n-4+\ve} B_2^{-2}.
\]
Recall that we are assuming that 
$T \leq \min\{B_2^{24/(n-24)} P^{-4 + \delta},P^{-2}\}.$
Since $n \geq 30$, we have in particular $T \leq B_2^4 P^{-4+\delta}$.
On the other hand, by \eqref{eq:minor_arcs_condition} we have either 
\[
T \geq RP^{-4+\Delta} \qquad \text{or} \qquad R \geq P^{\Delta}.
\]
Since $R^{1/3} \ll B_2 \ll R$, we may conclude in both these cases that
\[
B_2 \gg P^{(\Delta-\delta)/3}.
\]

In the range in the contention,
\begin{equation}
\label{eq:Hbound}
H\leq B_2^{(4(n-12)/(n(n-24))}B_3^{2/n}P^{2\delta/n}.
\end{equation}
The inequality \eqref{eq:ysum} is still valid, and therefore, it is enough to only look at the corresponding terms. Therefore, we shall now estimate the different contributions $K'_\eta(t,R,\R)$, $K''_\eta(t,R,\R)$ for $\eta \in \{0,n-2\}$ and $K_{n-1}''(t,R,\R)$, named according to the contribution from the corresponding terms in \eqref{eq:ysum}.  We begin by investigating the term $K'_{0}(t,R,\R)$ first:
\begin{align*}
P^{-n+4} K'_\eta(t,R,\R) \ll B_3 T P^{4+\ve} R^{-(n-\eta)/4} V^{(n-\eta)/2} \Big(1 + R^{1/4}V^{-1/2}\Big). 
\end{align*}
We note that when inserting the chosen value of $H$, we have $B_3$, $B_2$, $R$ and $T$ occurring to non-negative exponents, as long as $\eta\leq n-2 $ so it suffices to estimate this contribution when $B_3^2 \asymp B_2 \asymp R \asymp P^{8/5+\phi}$ and $T \asymp P^{-2}$. In that case one has
\[
tHP^3 \asymp HPR^{-1} \asymp PR^{-1}(B_2^2 B_3 P^2)^{2/n} \asymp P^{1+4/n} R^{-1+5/n} \asymp P^{-3/5 + 12/n-(n-5)\phi/n} \ll 1
\]
as soon as $n > 20$. Thus we may assume that $V \asymp R/P$. Then we get
\begin{align*}
P^{-n+4} K'_0(t,R,\R) &\ll B_3 T P^{4+\ve} R^{n/4} P^{-n/2} \Big(1 + R^{-1/4}P^{1/2}\Big)\\
&\ll B_3 P^{2+\ve - n/10 + n\phi/4} \Big(1 + P^{1/10}\Big) \ll P^{-(n-29)/10+ (n+2)\phi/4 + \ve},
\end{align*}
so an admissible contribution as soon as $n \geq 30$. 

We next observe that in this range, we have $V\ll R/P<R^{1/2}$, which means that
\begin{align*}
P^{-n+4} K'_{n-2}(t,R,\R)&\ll B_2^{-2}P^{\ve} (HR^{-1/2}V + HR^{-1/4}V^{1/2})\ll  B_2^{-2}P^{\ve} HR^{-1/4}V^{1/2}\\
&\ll B_2^{4/n}(B_3 P^2)^{2/n}P^{\ve-1/2} B_2^{-2}R^{1/4}\\
&\ll P^{\ve-1/2+4/n}R^{5/n-7/4}\ll P^{\ve-1/2+4/n},
\end{align*}
clearly, for $n\geq 3$, rendering us with an appropriate contribution.

Similarly,
\begin{align*}
P^{-n+4}K_{n-1}''(t,R,\R)&\ll P^{\ve}B_2^{-2}H^{1/2}R^{-1/4} R_3^{1/6}H^{1/6}\ll P^{\ve}B_2^{-3/2}H^{2/3}R^{-1/4}\\
&\ll P^{\ve+4\delta/3n}B_2^{-3/2}R^{-1/4}B_2^{8(n-12)/(3n(n-24))}B_3^{4/(3n)}\\
&\ll P^{\ve+4\delta/3n}B_2^{8(n-12)/(3n(n-24))+4/(3n)-7/4}\\
&\ll P^{\ve+4\delta/75}B_2^{-31/100} \ll P^{\ve-(\Delta-2\delta)/10}, 
\end{align*}
for $n\geq 25$, giving a reasonable contribution as long as $\delta<\Delta/4$, say. We have not tried to optimise the exponents here.

Next, we have
\begin{align*}
P^{-n+4} K''_0(t,R,\R) &\ll P^{\ve}B_2^{-2} H^{2n/3} R^{-n/4} \bR_1^{1/4} R_3^{n/6} \\
&\ll P^{\ve+4\delta/3}B_2^{8(n-12)/(3(n-24))-2}B_3^{4/3}R^{-n/4} \bR_1^{1/4} R_3^{n/6} \\
&\ll P^{\ve+4\delta/3}\bR_1^{8(n-12)/(3(n-24))-13/12-n/4}R_3^{8(n-12)/(9(n-24))-n/12-2/9}\\
&\ll P^{\ve+4\delta/3}\bR_1^{-1/2}R_3^{-0.05}\ll P^{\ve+4\delta/3}B_2^{-0.15}\ll P^{\ve-\delta/6}
\end{align*}
for $n \geq 30$, provided that $\delta \leq \Delta/31$.

Finally, for $n\geq 30$ we have
\begin{align*}
P^{-n+4}K''_{n-2}(t,R,\R) &\ll B_2^{-2} P^\ve\frac{H}{R^{1/2}} \bR_1^{1/4} (R_3H)^{1/3} \\
&\ll B_2^{-2} P^\ve \bR_1^{1/4} R_3^{1/3} R^{-1/2} (B_3 B_2^{2(n-12)/(n-24)} P^\delta)^{8/(3n)}\\
&\ll B_2^{-2} P^\ve \bR_1^{1/4} R_3^{1/3} R^{-1/2} (B_3 B_2^6 P^\delta)^{4/45} \\
&= B_2^{-22/15} B_3^{4/45} \bR_1^{1/4} R_3^{1/3} R^{-1/2} P^{4\delta/45 + \ve}\ll B_2^{-22/45} P^{4\delta/45 + \ve} \\
&\ll P^{-22(\Delta-\delta)/135 + 4\delta/45 + \ve}\ll P^{-\delta + \ve},
\end{align*}
under the previous assumption on $\delta$. 
Choosing $\delta = \Delta/31$,
and $\ve$ small enough, we have now finally verified the assertion in Proposition \ref{prop:minor_smooth}, where we may take $\psi = \min\{\phi/4, \Delta/186\}$ for any $\phi \leq \phi_0$.

\section{Bounds for the minor arcs contribution, general case}
\label{sec:slicing}

In this section we shall give the inductive argument to deduce the Proposition \ref{prop:minor_inhom} from the Proposition \ref{prop:minor_smooth}. Assume that $\dim \Sing(X_0) \geq 0$.
We subject the exponential sum
\[
S(q,z) = S(q,z;F,W,P,n) = \starsum_{a=1}^q \sum_{\x\in \ZZ^n} W(\x/P)e((a/q+z)
F(\x))
\]
to a similar slicing argument to that used in the proof of Proposition \ref{prop:sum_h}. We may apply Lemma \ref{lem:subspace} for the single form $F_0$, with $\Pi = \varnothing$, and with  $\f = (f_1,\dotsc,f_n) = \nabla F(P\x_0)$, where $\x_0$ is the point in \eqref{eq:W def}, to obtain a linearly independent set of vectors $\e_1,\dotsc,\e_n$ with the properties listed there and such that the angle of $\e_1$ with $\f$ is at most $\pi/3$.
We then split the sum above into cosets of the lattice spanned by $\e_1,\dotsc,\e_{n-1}$, as in the proof of Proposition \ref{prop:sum_h}, to obtain
\begin{align*}
S(q,z) &= \sum_{\t \in T} S_\t(q,z) =  \sum_{\t \in T} S(q,z;F_\t,W_\t,P/L,n-1),
\end{align*}
say, where
\[
F_\t(\y) = F(\t + \sum y_i \e_i) \quad \text{and} \quad W_\t(\y) = W(P^{-1}\t + L^{-1} \sum y_i \e_i)
\]
for some $L=O(1)$, and where $T$ is a set of cardinality $O(P)$. The sum over $\t$ can be moved outside to obtain
\begin{equation}
\label{eq:minor_slice}
S_\fm = S_\fm(F,W,P,n) = \sum_{\t} S_{\fm,\t},
\end{equation}
where
\begin{align*}
S_{\fm,\t}= S_\m(F_\t,W_\t,P',n-1)&= 
\sum_{1\leq q\leq P^\dl}\int_{P^{-4+\dl} \leq |z| \leq (qQ)^{-1+\theta}} |p_q(z)||S(q,z;F_\t,W_\t,P',n-1)|\,dz \\
&+\sum_{P^{\Delta}\leq q\leq Q}\int_{|z| \leq (qQ)^{-1+\theta}}
|p_q(z)||S(q,z;F_\t,W_\t,P',n-1)|\,dz.
\end{align*}
Here we have put $P' := P/L$. We must then investigate whether the polynomials $F_\t$ and the weight functions $W_\t$ fulfil the hypotheses in Proposition \ref{prop:minor_inhom}, in order to invoke our induction hypothesis.

Much like in the proof of Proposition \ref{prop:sum_h}, one checks that $W_\t \in \cW_{n-1}$ and
\[
\Vert F_\t \Vert_{P'} \ll L^4 \Vert F \Vert_P \ll \Vert F \Vert_P. 
\] 
We shall now find $M'> 0$ such that the conditions \eqref{eq:M_1}--\eqref{eq:M_2} hold with $F,W,P$ replaced by $F_\t,W_\t,P'$ and $M$ replaced by $M'$. 

If $\t \in \ZZ^n$ is arbitrary and $\y \in \ZZ^n$ is such that $\x = \t + \sum_{i=1}^{n-1}y_i \e_i$ occurs in the exponential sums $S_\t(q,z)$, then we have
\[
\nabla F_\t(P\y) = \nabla F(P\x).E,
\]
where we denote by $E$ the $n \times (n-1)$-matrix with the basis vectors $\e_1,\dotsc,\e_{n-1}$ as columns.  By \eqref{eq:M_1} we then have
\[
|f_1| \geq M P^3.
\]
Now, let $\y_1,\y_2 \in P'\supp(W_\t)$ and put $\x_j = \t + \sum_i y_i \e_i \in P\supp(W)$ for $j = 1,2$. If $\f_j := \nabla F(\x_j)$ and $\g_j:= \nabla F_\t(\y_j)$ then we have $\g_j = \f_j.E$ for $j = 1,2$. Thus we see that
\begin{align*}
|g_{1,1}| &= |\f_1.\e_1| \geq |\f.\e_1| - |(\f_1-\f).\e_1|.
\end{align*} 
Since $\e_1$ was chosen to have an angle of at most $\pi/3$ to the vector $\f$, we get
\[
|g_{1,1}| \geq \frac{1}{2}|\f|_2|\e_1|_2 - |\f_1-\f|_2 |\e_1|_2 \geq \frac{L}{2}\left(\frac{1}{2}|\f|_\infty - \sqrt{n} |\f_1-\f|_\infty \right).
\]

Using \eqref{eq:M_1}--\eqref{eq:M_2} the expression in brackets satisfies
\[
\frac{1}{2}|\f|_\infty - \sqrt{n}|\f_1-\f|_\infty \geq \frac{M P^3}{2} - \frac{M P^3}{8^n \sqrt{(n-1)!}} \geq \frac{MP^3}{4}, 
\]
so the condition \eqref{eq:M_1} holds with $M$ replaced by
\[
M' = \frac{L^4 M}{8} \ll M.
\]
Furthermore, we have
\[
|(\g_1)_i -(\g_2)_i| = |\e_i.(\f_1-\f_2)| \leq |\e_i|_2 |\f_1-\f_2|_2 \leq L \sqrt{n} \cdot \frac{MP^3}{8^n \sqrt{n!}} = \frac{M'(P')^3}{8^{n-1}\sqrt{(n-1)!}}, 
\]
so that \eqref{eq:M_2} also holds for $M'$.

Since 
\[
\dim \Sing(X_0 \cap H) = \dim \Sing(X_0) - 1,
\]
where $H$ is the projective hyperplane spanned by the vectors $\e_1,\dotsc,\e_{n-1}$, we may now assume by induction that for each $\t$ occurring in \eqref{eq:minor_slice}, the bound
\[
S_{\fm,\t} = O((P')^{n-5-\psi}) = O(P^{n-5-\psi})
\]
holds for some $\psi = \psi(\Delta)$, for an appropriate choice of $\phi$ and $\theta$. The implied constant depends on the quantity $\Vert F_\t \Vert_{P'}$, but we easily see that$\Vert F_\t \Vert_{P'} \ll \Vert F \Vert_P$ for any $\t \in [-P,P]^n$. Since the sum in \eqref{eq:minor_slice} includes at most $O(P)$ choices of $\t \in [-P,P]^n$, we immediately obtain the desired bound
\[
S_\fm = O(P^{n-4-\psi}),
\]
which is valid for the same choice of $\phi$ and $\theta$. This completes the proof of Proposition \ref{prop:minor_inhom}.

\appendix

\section{Mathematica code}
\label{sec:Math}

Here, we give the Mathematica code verifying our linear optimisation bounds in Section 
\ref{sec:minor}. 

\mbox{
\includegraphics[scale=0.7]{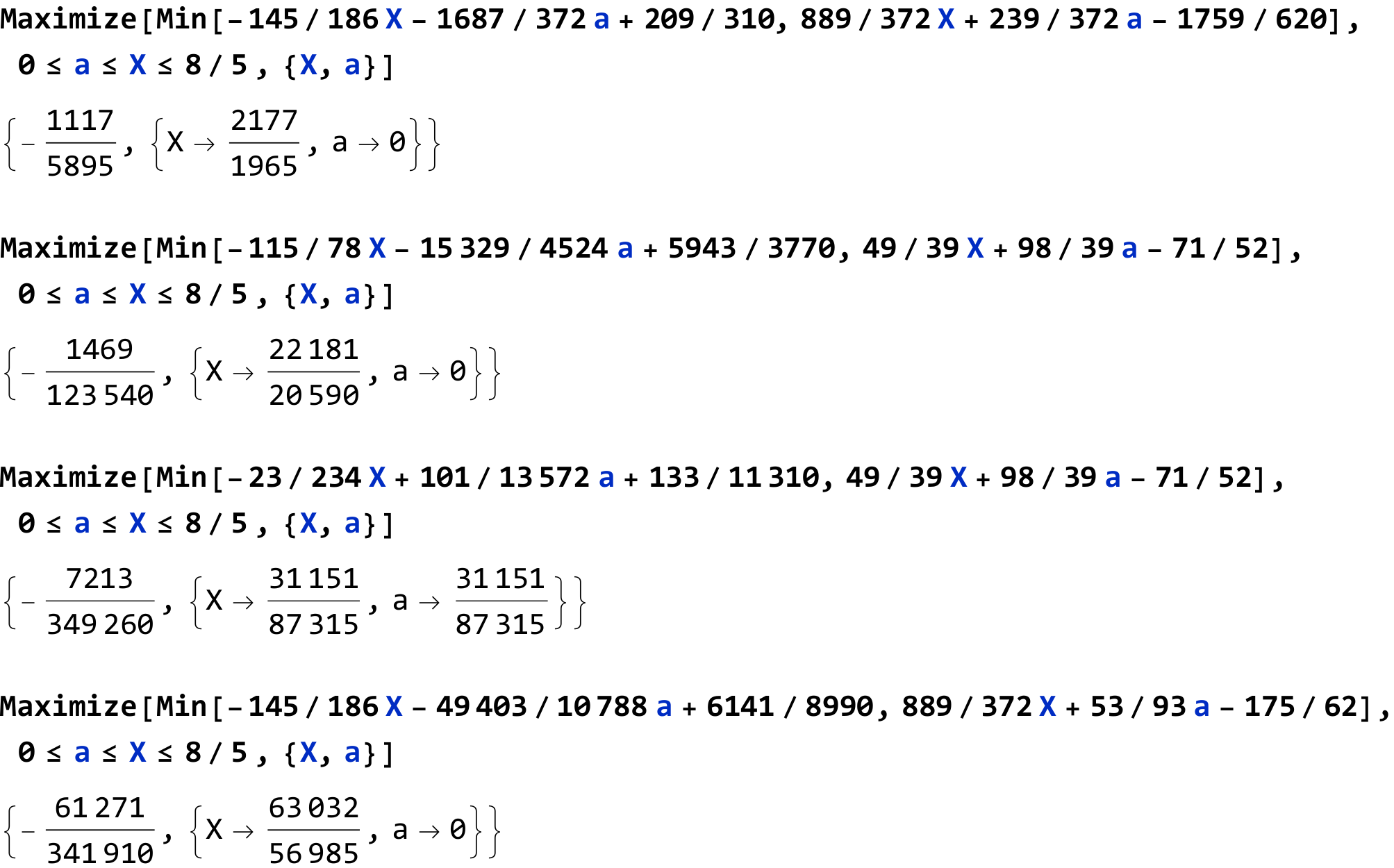}

}

\bibliographystyle{plain}
\end{document}